\def\@currentlabel{2.1}\label{e:dispaa}
\def\@currentlabel{2.21}\label{e:dispau}
\def\@currentlabel{2.22}\label{e:dispav}
\def\@currentlabel{2.23}\label{e:dispaw}
\def\@currentlabel{2.24}\label{e:dispax}
\def\theequation{\thesection.\@arabic\c@equation}
\newcommand{\tg}{{\tt g}}
\newcommand{\inn}{{\quad\hbox{in } }}
\newcommand{\onn}{{\quad\hbox{on } }}
\newcommand{\ttt}{\tilde }
\newcommand{\TT}{{\mathcal T}  }
\newcommand{\nn}{ {\nabla}  }
\newcommand{\py}{ {\tt y } }
\newcommand{\hh}{ {\tt h } }
\newcommand{\uu}{ {\bf u } }
\newcommand{\vv}{ {\bf v} }
\newcommand{\pp}{ {\partial} }
\newcommand{\A}{\alpha }
\newcommand{\vp}{\varphi}
\newcommand{\B}{\beta }
\newcommand{\h}{\hat  }
\newcommand{\JJ}{{\mathcal J}}
\newcommand{\UU}{ {\mathcal U}}
\newcommand{\NN}{ {\mathcal N}}
\newcommand{\R} {\mathbb R}
\newcommand{\cuad}{{\sqcap\kern-.68em\sqcup}}
\newcommand{\BB}{{\tilde B}}
\newcommand{\rr }{{\tt r}}
\newcommand{\foral}{\quad\mbox{for all}\quad}
\newcommand{\ve}{\varepsilon}
\newcommand{\be}{\begin{equation}}
\newcommand{\ee}{\end{equation}}
\newcommand{\la}{\lambda}
\newcommand{\equ}[1]{(\ref{#1})}
\renewcommand{\theequation}{\thesection.\arabic{equation}}
 \newtheorem{lemma}{Lemma}[section]
\newtheorem{definition}{Definition}
\newtheorem{teo}{Theorem}
\newtheorem{proposition}{Proposition}[section]
\newtheorem{corollary}{Corollary}[section]
\newtheorem{remark}{Remark}[section]
\newcommand{\bremark}{\begin{remark} \em}
\newcommand{\eremark}{\end{remark} }
\long\def\red#1{{\color{black}#1}}
\long\def\blue#1{{\color{black}#1}}
\title{Serrin's overdetermined problem and constant mean curvature surfaces}
\author[M. del Pino]{Manuel del Pino}
\address{\noindent   Departamento de Ingenier\'{\i}a  Matem\'atica and Centro de Modelamiento Matem\'atico (UMI 2807 CNRS), Universidad de Chile, Casilla 170 Correo 3, Santiago, Chile.}
\email{delpino@dim.uchile.cl}
\author[F. Pacard]{Frank Pacard}
\address{\noindent  - Centre de Math\'ematiques Laurent Schwartz UMR-CNRS 7640, \'Ecole polytechnique, Palaiseau 91128, France.}
\email {frank.pacard@math.polytechnique.fr}
\author[J. Wei]{Juncheng Wei}
\address{\noindent  Department of Mathematics, University of British Columbia, Vancouver, BC V6T 1Z2, Canda, and Department of Mathematics, Chinese University of Hong Kong, Shatin, NT, Hong Kong
} \email{jcwei@math.ubc.ca}
\begin{document}

\begin{abstract}

For all $N \geq 9$, we find smooth entire epigraphs in $\R^N$, namely smooth domains  of the form
$\Omega : = \{  x\in \R^N\ /  \  x_N > F (x_1,\ldots, x_{N-1})\}$, which are not half-spaces and in which a problem of the form
 $\Delta u + f(u) = 0 $ in $\Omega$ has a  positive, bounded solution with $0$ Dirichlet boundary data and constant Neumann boundary data on $\partial \Omega$. This answers negatively for large dimensions a question by Berestycki, Caffarelli and Nirenberg \cite{bcn2}. In 1971, Serrin \cite{serrin} proved that a bounded domain where such an overdetermined problem is solvable must be a ball, in analogy to a famous result by Alexandrov that states that an embedded compact surface with constant mean curvature (CMC) in Euclidean space must be a sphere. In lower dimensions we succeed in providing examples
 for domains whose boundary is close to large dilations of a given CMC surface where Serrin's overdetermined problem is solvable.
\end{abstract}

\date{}\maketitle







\setcounter{equation}{0}
\section{Introduction and statement of the main results}

Let $\Omega$ be a domain in $\R^N$ with smooth boundary, and $\nu$ its inner normal. This paper deals with the {\em overdetermined} boundary value problem
\be
\Delta u + f(u) = 0,\ u>0  \inn \Omega, \quad u\in L^\infty (\Omega),
\label{1}\ee
\be
u=0, \quad \frac{\pp u}{\pp \nu} = constant  \onn \pp\Omega
\label{2}\ee
where $f$ is a locally Lipschitz function. The question we want to analyze in this paper is what type of domains are admissible for this problem to have a solution.

  \medskip
  In 1971, Serrin \cite{serrin} established the following  result:

  \medskip
  {\em If $\Omega$ is {\em bounded} and Problem $\equ{1}$-$\equ{2}$ has a solution,
  then  $\Omega$ must necessarily be an Euclidean ball.}

    \medskip
    Serrin's proof was based on  the {\em Alexandrov reflection principle}, introduced in 1956 by Alexandrov \cite{alexandrov}  to prove the following famous result:

  \medskip
   {\em A compact, connected, embedded hypersurface in $\R^N$ whose mean curvature is constant, must necessarily be an Euclidean sphere.}

     \medskip
     The reflection  maximum principle based procedure
  was used in 1979 by Gidas Ni and Nirenberg \cite{GNN}  to derive radial symmetry results for positive solutions of semilinear equations. The reflection principle, named after \cite{GNN} as the {\em moving plane method}, has become a standard and powerful tool for the analysis of symmetries of solutions of nonlinear elliptic equations.

  \medskip
  Serrin had a clever insight into the geometric structure of Problem \equ{1}-\equ{2} to prove his result as an analog of Alexandrov's.
  The purpose of this paper is to  further explore the parallel between Alexandrov's and Serrin's statements. The underlying question is: how do
  (non-compact) embedded constant mean curvature (CMC) surfaces relate with (unbounded) domains where Serrin's problem \equ{1}-\equ{2} is solvable?

  \medskip
  A natural class of unbounded domains to be considered are epigraphs, namely domains $\Omega$ of the form
  \be\label{epi}
  \Omega = \{ x \in \R^{N} \ /\ x_N > \vp (x_1,\ldots, x_{N-1}) \}
  \ee
  where $\vp: \R^{N-1}\to \R$ is a smooth function. In 1997, Berestycki, Caffarelli and Nirenberg \cite{bcn2} proved the following result: If  $\vp$ is uniformly Lipschitz and {\em asymptotically flat} at infinity, and Problem \equ{1}-\equ{2} is solvable, then $\vp$ must be a linear function, in other words $\Omega$ must be a {\em half-space}. This result was improved by Farina and Valdinoci \cite{FV}, by lifting the asymptotic flatness condititon, under the dimension constraint $N\le 3$.

  \medskip
  In \cite{bcn2} the following question was raised: is it true that an unbounded domain $\Omega$ where \equ{1}-\equ{2} is solvable must be either

  $\bullet $ a half-space, or

  $\bullet$ a cylinder $\Omega = B_k\times \R^{N-k}$, where $B_k$ is a $k$-dimensional Euclidean ball, or

  $\bullet$ the complement of a cylinder?

  \medskip
  In particular, the question is whether or not an epigraph \equ{epi} where Serrin's problem is solvable must be a half-space, under no constraints for the smooth function $\vp$. Our first result, Theorem \ref{teo1} below,  establishes that {\bf this is not the case} if $N\ge 9$.

\medskip
In all what follows we shall consider a {\em monostable} nonlinearity $f$
 for which \equ{1}-\equ{2} is indeed solvable in a half-space.
We assume that $f$ is a smooth function such that
\be
f(0) = 0=f(1), \quad f(s) > 0\foral s\in (0,1), \quad f'(1)<0.
\label{f}\ee
Under these conditions, there exists a unique positive solution $w(t)$, which is also increasing, to the problem
\begin{equation}
\label{eqnforw}
w'' + f(w) = 0  \inn (0,\infty), \quad w(0) = 0, \quad w(+\infty) = 1 ,
\end{equation}
which is implicitly defined by the formula
$$
t = \int_{0}^{w(t)} \frac{ ds}{ \sqrt{ 2 \int^1_s f(\tau)d\tau } }  .
$$
Conditions \equ{f} are satisfied by the standard Fisher-Kolmogorov and Allen-Cahn nonlinearities,
$$
f(s) =  s(1-s) ,\quad f(s) = s(1-s^2)  .
 $$
In the latter case, we explicitly have $w(t) = \tanh \left ( t/{\sqrt{2}} \right )$.
Let us observe that the function
 $u(x) = w(x_N)$ solves \equ{1}-\equ{2} in the half-space
  $\Omega = \{ x \in \R^{N} \ /\ x_N > 0 \} $. Our first main result is the following.

  \begin{teo}\label{teo1} Let $f$ satisfy conditions $\equ{f}$. If $N\ge 9$, there exists an epigraph domain $\Omega$ of the form $\equ{epi}$,
  which is not a half-space, such that Problem $\equ{1}$-$\equ{2}$ is solvable.
  \end{teo}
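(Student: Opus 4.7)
The plan is to build $\partial\Omega$ as a small, suitable normal perturbation of a large dilation of a nontrivial entire minimal graph in $\R^N$. For $N\ge 9$ the classical Bombieri--De~Giorgi--Giusti counterexamples to Bernstein's problem supply smooth minimal graphs $\Sigma_0 = \{x_N = F_0(x_1,\ldots,x_{N-1})\}$ that are not hyperplanes. I would work with the dilated profile $F_\ve(x'):=\ve^{-1}F_0(\ve x')$ for small $\ve>0$; the resulting graph $\Sigma_\ve$ still has vanishing mean curvature and principal curvatures $O(\ve)$. The domain will then be $\Omega = \{x_N > F_\ve(x') + h(x')\}$ for a small scalar function $h$ to be determined.

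The natural first approximation is $u_0(x) := w(d(x))$, where $d$ is the signed distance from $x$ to $\partial\Omega$, positive inside $\Omega$. By construction, $u_0 = 0$ and $\partial_\nu u_0 = w'(0)$ on $\partial\Omega$, so \emph{both} boundary conditions in $\equ{2}$ are encoded in the ansatz itself. A direct computation gives $\Delta u_0 + f(u_0) = w'(d)\,\Delta d$, and $\Delta d$ at a point of $\Omega$ equals, to leading order, the mean curvature of the corresponding parallel surface. Since this mean curvature is $O(\ve)$ (vanishing identically when $h\equiv 0$) and $w'$ decays exponentially in $d$, the error is small and exponentially localized near $\partial\Omega$.

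The correction is produced via an infinite-dimensional Lyapunov--Schmidt scheme of the type used in gluing constructions along minimal surfaces. Writing $u = w(d) + \phi$ with $\phi$ small, and imposing the Dirichlet condition on $\partial\Omega$, one splits the equation into its $w'(d)$-projected part and its orthogonal part. The orthogonal part is solved for $\phi$ uniformly in $h$ in a weighted Sobolev space with exponential decay in $d$. Projecting the remaining part against $w'(d)$ along the normal fibres of $\partial\Omega$ produces a \emph{reduced equation} for $h$ of Jacobi type on the minimal graph,
\[
J_\Sigma(h) + \mathcal{N}(h,\phi) \,=\, 0, \qquad J_\Sigma := \Delta_{\Sigma} + |A_{\Sigma}|^{2},
\]
where $\mathcal{N}$ collects the quadratic and higher error terms. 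The remaining overdetermined requirement that $\partial_\nu u$ be constant on $\partial\Omega$ is absorbed by keeping the value of this constant as a free scalar parameter.

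The principal obstacle, and the source of the restriction $N\ge 9$, is the invertibility of $J_\Sigma$ in a function space rich enough to absorb the residual errors yet constrained enough that the perturbed boundary remains an entire graph. On a BDG minimal graph $J_\Sigma$ admits a positive Jacobi field, generated by the one-parameter family of dilations of $\Sigma_0$; this barrier is what permits the construction of a right inverse for $J_\Sigma$ in weighted $L^\infty$ spaces with prescribed growth at infinity. Choosing $\ve$ sufficiently small then turns the reduced problem into a contraction in a suitable ball, yielding a fixed point $(h,\phi)$ and hence a nontrivial $\vp$ in $\equ{epi}$ together with a bounded positive solution of $\equ{1}$--$\equ{2}$. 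Establishing sharp uniform estimates for $J_\Sigma^{-1}$ along the non-compact ends of the BDG surface, and matching these with the exponentially weighted estimates for the orthogonal linearized problem, is the most delicate technical step.
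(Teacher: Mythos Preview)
Your overall architecture matches the paper's: start from a large dilation of a BDG minimal graph, take $w$ of the normal coordinate as leading profile, run an infinite-dimensional Lyapunov--Schmidt reduction, and close by inverting the Jacobi operator of $\Gamma$ in weighted spaces. Where the proposal goes astray is the mechanism that ties the overdetermined Neumann condition to the reduced equation.

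On the half-line with Dirichlet data at $t=0$, the one-dimensional operator $\partial_t^2 + f'(w(t))$ has \emph{trivial} kernel, because $w'(0)\neq 0$ excludes $w'$ from the Dirichlet domain. So there is no ``projection onto $w'$'' in the entire-space sense of \cite{dkwdg}: for any bounded right-hand side the Dirichlet problem for the correction $\phi$ is already uniquely solvable, with no Lagrange multiplier needed. The Jacobi equation for $h$ therefore cannot come from a kernel projection. It comes from the \emph{Neumann} over-determination: once one imposes both $\phi(y,0)=0$ and $\partial_t\phi(y,0)=\beta(y)$, the problem is overdetermined by one function on $\Gamma_\ve$, and solvability forces an extra term $\alpha(y)\,w'(t)$ on the right-hand side. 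This linear Dirichlet-to-Neumann problem (given $g,\beta$, find $(\phi,\alpha)$ with both boundary conditions) is the new analytic ingredient relative to \cite{dkwdg}; it is what Section~\ref{s6} and Proposition~\ref{prop2} establish. The reduced equation is then precisely $J_\Gamma[\hh]=\alpha$, not a separate orthogonality condition. Your sentence that the Neumann requirement ``is absorbed by keeping the value of this constant as a free scalar parameter'' cannot work: one scalar cannot kill a functional constraint on $\partial\Omega$; it removes a single mode at most. In the paper the Neumann constant is in fact pinned to $w'(0)$ and no scalar freedom is used.

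Two smaller corrections. The positive Jacobi field that yields the maximum principle for $J_\Gamma$ is $\nu\cdot e_N=(1+|\nabla F|^2)^{-1/2}$, coming from vertical translation; the dilation field changes sign across the Simons cone and is not a barrier. And generic weighted invertibility of $J_\Gamma$ is not enough here: the leading errors involve $\sum k_i^3\sim r^{-3}$ and $\sum k_i^4\sim r^{-4}$, which lie at or below the decay threshold of Proposition~\ref{prop11}. The paper first constructs explicit approximate solutions on the model surface $\Gamma_0$ for these specific right-hand sides (Lemma~\ref{lemahc}, Proposition~\ref{prophc}) and only then invokes the weighted inverse on the remainder. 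Your last paragraph correctly flags this as the most delicate step, but the resolution is more structured than a single a~priori estimate.
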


Let us roughly describe the epigraph of Theorem 1. In 1969, Bombieri, De Giorgi and Giusti \cite{BDG} found an example of an entire
function
 in $\R^8$ whose graph $\Gamma$ is a minimal surface in $\R^9$ and it is not a hyperplane (the BDG minimal graph). Let us call $\Omega_{bdg}$ its epigraph.
 Then, for a sufficiently small $\ve>0$,  the epigraph $\Omega$ in Theorem 1 lies in a $O(\ve)$-neighborhood
of $\ve^{-1}\Omega_{bdg}$.
The solution $u$ will be at main order given by $u(x) = w(z) + O(\ve)$ where $z$ designates the normal inner coordinate to  $\pp\Omega$.

\medskip
The result in \cite{BDG} is a counterexample in large dimensions to {\em Bernstein's conjecture},   which asserts that all entire minimal graphs in $\R^N$ must be hyperplanes.  This statement holds true in dimensions $N\le 8$, see \cite{simons} and its references, so that in analogy, it is natural to think that the question in \cite{bcn2} for epigraphs may have an affirmative answer in low dimensions, but this not even known  in dimension $N=2$. Another PDE analogue of Bernstein's problem is De Giorgi's conjecture (1978) \cite{dg}, which states that entire solutions, monotone in one direction must have level sets which are parallel hyperplanes. This is true in dimensions $N=2,3$ \cite{gg,ac}, and under a certain additional condition for $4\le N\le 8$ \cite{savin}. This statement is indeed false for $N\ge 9$ as proven in \cite{dkwdg} by the construction of an example of a monotone solution whose
level sets resemble largely dilated BDG minimal graphs.
Serrin's epigraph question in \cite{bcn2} seems to be much harder.

\medskip
The principle behind Theorem 1 applies, more generally, to domains enclosed by {\bf a large dilation of an embedded CMC surface}, provided
that sufficient information about the surface (such as nondegeneracy) is available.

\medskip
Our second results exhibits two such examples, consisting of non-cylindrical domains of revolution in $\R^3$ where \equ{1}-\equ{2} is solvable for $f$ satisfying \equ{f}. Let us consider first the solid region enclosed by the catenoid $r=\cosh z$,

\be
\Omega_{c}  =  \{  (r\cos \theta ,  r\sin \theta, z) \ /\  0\le r < \cosh z , \  z\in \R \}.
\label{c}\ee

\medskip
\begin{teo}\label{teo2} For each $\ve>0$ sufficiently small
there exists a domain of revolution $\Omega$, which lies within a $\ve$-neighborhood of the dilated solid catenoid
$\ve^{-1}\Omega_c$, such that Problem $\equ{1}$-$\equ{2}$
with $f$ satisfying $\equ{f}$ is solvable.
\end{teo}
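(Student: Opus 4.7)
The plan is to realize $\Omega$ as a rotationally symmetric normal perturbation of the dilated catenoid surface $\Sigma_{\varepsilon} := \varepsilon^{-1}\partial\Omega_{c}$. Let $\nu$ denote the inward unit normal. For a smooth function $h$ on $\Sigma_{\varepsilon}$ depending only on the arclength parameter, set
\[
\Sigma_{\varepsilon}^{h}:=\{y+h(y)\nu(y):y\in\Sigma_{\varepsilon}\},
\]
and let $\Omega^{h}$ be the solid region it encloses. Introducing Fermi coordinates $(y,z)$ about $\Sigma_{\varepsilon}^{h}$, with $z$ the signed distance into $\Omega^{h}$, we pose the ansatz
\[
u(x) = w(z) + \phi(y,z).
\]
The Dirichlet condition $u=0$ on $\partial\Omega^{h}$ reduces to $\phi(y,0)=0$, while the overdetermined constant-Neumann condition requires $\partial_{z}\phi(y,0)$ to be independent of $y$. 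The task is therefore to find a small pair $(h,\phi)$ satisfying this coupled system together with $\Delta u + f(u) = 0$ in $\Omega^{h}$.

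Expanding the Laplacian in Fermi coordinates and using $w''+f(w)=0$, the bulk equation takes the form
\[
L\phi = -H(y,z)\,w'(z) + \mathcal{Q}(\phi,h),
\]
with $L\phi := \partial_{z}^{2}\phi + f'(w)\phi + \Delta_{\Sigma_{\varepsilon}}\phi$ up to negligible geometric corrections, $H(y,z)$ the mean curvature of the parallel surface at signed distance $z$ from $\Sigma_{\varepsilon}^{h}$, and $\mathcal{Q}$ a nonlinear/lower-order remainder. Since $\Sigma_{\varepsilon}$ is minimal, at main order $H(y,z) = J_{\Sigma_{\varepsilon}}h(y) - z\,|A_{\Sigma_{\varepsilon}}|^{2}(y) + O(h^{2}+z^{2})$, with catenoidal Jacobi operator $J_{\Sigma_{\varepsilon}} := \Delta_{\Sigma_{\varepsilon}} + |A_{\Sigma_{\varepsilon}}|^{2}$. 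For fixed admissible $h$ we first solve the Dirichlet problem for $\phi$. The key observation here is that, on the half-line $(0,\infty)$ with Dirichlet data at $0$, the one-dimensional operator $\partial_{z}^{2}+f'(w)$ is injective on decaying functions, since its natural kernel element $w'$ satisfies $w'(0)>0$ and cannot fulfill the boundary condition. Together with an analysis of $\Delta_{\Sigma_{\varepsilon}}$ in weighted norms adapted to the logarithmic catenoidal ends, this determines $\phi = \phi[h]$ uniquely. Imposing that $\partial_{z}\phi(y,0)$ be constant in $y$ then yields, at main order, a reduced equation on $\Sigma_{\varepsilon}$ of the form
\[
J_{\Sigma_{\varepsilon}}h = c_{0}\,|A_{\Sigma_{\varepsilon}}|^{2} + c_{1} + \mathcal{R}(h),
\]
where $c_{0},c_{1}$ are explicit constants arising from Green's-function integrals in the $z$-variable (with $c_{1}$ free, absorbed into the Neumann value), and $\mathcal{R}$ is higher-order in $(h,\varepsilon)$.

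The final step is to invert $J_{\Sigma_{\varepsilon}}$ in the rotationally symmetric class. The only bounded element of $\ker J_{\Sigma}$ in this class is the Jacobi field generated by vertical translations, which is \emph{odd} with respect to the equatorial plane of the catenoid; since $|A_{\Sigma_{\varepsilon}}|^{2}$ and the constant function are \emph{even}, the corresponding solvability condition holds automatically, and the residual translational ambiguity is fixed by centering $\Sigma_{\varepsilon}^{h}$. Inverting $J_{\Sigma_{\varepsilon}}$ modulo this one-dimensional kernel in weighted H\"older spaces tailored to the catenoidal ends --- in the spirit of Mazzeo--Pacard's theory for non-compact minimal and CMC surfaces --- a contraction-mapping argument applied to the coupled system for $(h,\phi)$ then produces, for every sufficiently small $\varepsilon>0$, a solution with $\|h\| = O(\varepsilon)$, whose graph lies within the claimed $\varepsilon$-neighborhood of $\varepsilon^{-1}\Omega_{c}$. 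The main technical obstacle is precisely this weighted linear theory on the asymptotically planar ends of the catenoid: $J_{\Sigma_{\varepsilon}}$ fails to be Fredholm in standard Sobolev spaces on a non-compact surface, and one must set up weighted norms yielding quantitative estimates that remain uniform in the dilation parameter $\varepsilon$ as $\varepsilon \to 0$.
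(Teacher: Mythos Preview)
Your Lyapunov--Schmidt scheme matches the paper's: approximate by $w(t)$ in Fermi coordinates about the perturbed surface, solve a Dirichlet problem for the correction $\phi$, and reduce the constant-Neumann constraint to a Jacobi-type equation for $h$. The substantive difference lies in how the kernel of $J_\Sigma$ is handled. The paper does not invert $J_\Gamma$ directly; it solves a \emph{projected} reduced problem carrying Lagrange multipliers $c_i$ against the bounded Jacobi fields $z_i$, obtains a solution of a modified overdetermined problem $\Delta u+f(u)=\sum_i c_i\,\tfrac{z_i}{1+r^3}\,w'(t)$, and then invokes a Pohozaev-type identity (exploiting translation and rotation invariance of $\Delta+f$) to force all $c_i=0$ a posteriori. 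Your alternative---restrict to axially symmetric functions that are even under the equatorial reflection $z\mapsto -z$, and note that the sole bounded axially symmetric Jacobi field $\nu_3=\tanh z$ is odd and hence absent from the working space---is correct and more elementary for the catenoid, but it relies on the extra reflection symmetry; the Pohozaev route is what lets the paper treat arbitrary nondegenerate finite-total-curvature surfaces in Theorem~\ref{teo5}.

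One correction: the constant $c_1$ in your reduced equation $J_{\Sigma_\varepsilon}h=c_0|A|^2+c_1+\mathcal R(h)$ is not genuinely free. The catenoidal ends are asymptotically planar, and $J_\Sigma h=c_1$ with $c_1\neq 0$ admits no bounded solution (on $\R^2$, $\Delta h=\text{const}$ forces quadratic growth). Solvability in your bounded weighted class forces $c_1=0$, which pins the Neumann constant to $w'(0)$---in agreement with the paper's conclusion $\partial_\nu u=-w'(0)$. You should also be aware that the paper invests substantial work in two places you pass over in a sentence: an iterated improvement of the approximation (explicit choices of $h_0,h_1,h_2$ and correctors $\phi_0,\phi_1$ so that the error reaches $O(\varepsilon^4 r_\varepsilon^{-4-\mu})$ before the fixed point is set up), and a bespoke linear theory for the half-cylinder problem
\[
\partial_t^2\phi+\Delta_{\Gamma_\varepsilon}\phi+f'(w)\phi=\alpha(y)\,w'+g,\qquad \phi|_{t=0}=0,\quad \partial_t\phi|_{t=0}=\beta,
\]
with uniform weighted H\"older estimates on $(\phi,\alpha)$ in terms of $(\beta,g)$. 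Your appeal to half-line injectivity of $\partial_z^2+f'(w)$ under Dirichlet data captures the right mechanism but not the analysis needed to close the contraction.
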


The boundary of $\Omega_c$ is a minimal surface. This result is a part of a more general statement regarding {\em embbeded finite-total curvature
minimal surfacesin $\R^3$} a class that includes for instance the Costa and Costa-Hoffmann-Meeks surfaces, which we shall discuss in the next section.

\medskip
On the other hand, a statement similar to Theorem 2 holds for the classical {\em Delaunay surfaces},  a one parameter family of constant mean curvature surfaces of revolution in ${\R }^3$ which are periodic along one axis which, up to a rigid motion, can be taken to be the  $x_3$-axis. These surfaces, which are called Delaunay surfaces and denoted by $\mathcal D_\tau$, are the boundary of a smooth domain $U_\tau$ and can be parameterized by
\begin{equation}
\label{Delaunay1}
X_\tau (s, \theta) : =  (\varphi (s) \cos \theta , \varphi(s) \sin \theta, \psi (s)) ,
\end{equation}
where the function $\varphi$ is a non constant smooth solution of
\begin{equation}
\label{Delaunay2}
\dot \varphi^2 + (\varphi^2+\tau)^2 = \varphi^2 ,
\end{equation}
and where the function $\psi$ is obtained from
\[
\dot \psi = \varphi^2 + \tau ,\ \ \mbox{with}\  \psi (0)=0.
\]
Here $\tau \in (0, \frac{1}{2}]$ is a parameter which is usually reverend to as the Delaunay parameter.

We have the validity of the following result.

\medskip
\begin{teo}\label{teo3} For each $\ve>0$ sufficiently small
there exists a domain of revolution $\Omega$, which lies within a $\ve$-neighborhood of the region
$\ve^{-1} D_\tau $, such that Problem $\equ{1}$-$\equ{2}$
with $f$ satisfying $\equ{f}$ is solvable.
\end{teo}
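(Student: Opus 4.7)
The plan is to construct $\partial \Omega$ as a small normal graph over the dilated Delaunay surface $\varepsilon^{-1}\mathcal{D}_\tau$, with Delaunay parameter $\tau$ itself adjusted as a function of $\varepsilon$, and to reduce the overdetermined problem \equ{1}--\equ{2} to a scalar equation on $\mathcal{D}_\tau$ by a Lyapunov--Schmidt scheme analogous to Theorems \ref{teo1} and \ref{teo2}. Write the candidate boundary as
\[
\partial \Omega \;=\; \bigl\{\, \varepsilon^{-1} X_\tau(s,\theta) + h(s)\, n_\tau(s,\theta) : s\in \R,\ \theta\in S^1 \,\bigr\},
\]
where $n_\tau$ is the inner unit normal to $\mathcal{D}_\tau$ and $h$ is a small function of the Delaunay arclength $s$ only, preserving rotational symmetry. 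Use the approximate solution $u_0(x) = w(z)$ with $z$ the signed Fermi distance to $\partial \Omega$. Then $u_0 = 0$ with constant normal derivative $w'(0)$ on $\partial \Omega$ automatically, so the Neumann condition in \equ{2} is satisfied at main order and only the interior PDE has to be corrected.

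First I solve the Dirichlet problem $\Delta u + f(u)=0$ in $\Omega$ with $u=0$ on $\partial\Omega$ by a contraction argument, writing $u = w(z) + \phi(h)$ in a tubular Fermi neighborhood and using the expansion
\[
\Delta u_0 + f(u_0) \;=\; - H_{\partial \Omega}(z)\, w'(z)
\]
plus tangential and higher-order terms, where $H_{\partial \Omega}(z)$ is the mean curvature of the parallel surface at signed distance $z$. Imposing the remaining overdetermined condition $\partial_\nu u = \text{const}$ on $\partial \Omega$ then becomes a scalar equation on $\mathcal{D}_\tau$ for $h$. Expanding around $h=0$, $\varepsilon = 0$, and using that the mean curvature of $\varepsilon^{-1}\mathcal{D}_\tau$ is the constant $\varepsilon H_\tau$, one obtains the reduced equation
\[
\mathcal{L}_\tau\, h \;=\; \varepsilon\, H_\tau + \mathcal{R}(h,\varepsilon),
\]
where $\mathcal{L}_\tau := -\partial_s^2 - |A_\tau|^2$ is the Jacobi operator of $\mathcal{D}_\tau$ restricted to axially symmetric functions, and $\mathcal{R}$ collects nonlinear and higher-order terms.

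The heart of the argument is the inversion of $\mathcal{L}_\tau$ in the class of bounded axially symmetric functions. It is classical that this operator carries a two-dimensional bounded kernel spanned by $\dot\psi$ (infinitesimal axial translation) and by $\partial_\tau X_\tau \cdot n_\tau$ (infinitesimal change of Delaunay parameter). I therefore set up a Lyapunov--Schmidt reduction, solving the equation modulo this kernel by a fixed-point argument in weighted H\"older spaces adapted to the Delaunay period, and then killing the two solvability conditions by (i) invoking translation invariance along the axis of revolution to absorb the pairing with $\dot\psi$, which in any case vanishes by parity over one Delaunay period, and (ii) tuning the Delaunay parameter $\tau = \tau(\varepsilon)$ via the implicit function theorem to absorb the pairing with $\partial_\tau X_\tau \cdot n_\tau$. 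The nondegenerate dependence $\partial_\tau H_\tau \neq 0$ along the Delaunay family guarantees that this last adjustment is possible.

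The main obstacle will be producing a uniformly bounded right inverse for $\mathcal{L}_\tau$ on the orthogonal complement of its bounded kernel, since $\mathcal{D}_\tau$ is periodic rather than asymptotically flat, so one cannot mimic the decay-at-infinity arguments available for the catenoid in Theorem \ref{teo2}. This requires either a Floquet/Bloch decomposition of the periodic operator $\mathcal{L}_\tau$ together with an explicit identification of the bounded kernel, or the use of periodic H\"older spaces with a suitable matching condition on a fundamental Delaunay period. Once this linear theory is in place, the full nonlinear fixed-point argument for $\phi(h)$ and the choice $\tau = \tau(\varepsilon)$ proceed by standard methods and deliver a domain of revolution $\Omega$ lying within an $\varepsilon$-neighborhood of $\varepsilon^{-1} U_\tau$ for which Serrin's problem \equ{1}--\equ{2} is solvable.
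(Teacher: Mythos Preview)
Your Lyapunov--Schmidt framework and error expansion are on the right track, but the linear analysis of the Jacobi operator contains errors that would make the proposed argument fail.

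First, the translation Jacobi field is $e_3\cdot n_\tau = \dot\varphi/\varphi$, not $\dot\psi$; more importantly, the bounded axially symmetric kernel of $\mathcal{L}_\tau$ on $\R/t_\tau{\bf Z}$ is only one-dimensional, spanned by $\dot\varphi/\varphi$ (see \cite{Maz-Pac}). The field $\partial_\tau X_\tau\cdot n_\tau$ is indeed a Jacobi field, but since the Delaunay period $t_\tau$ varies with $\tau$ it is not $t_\tau$-periodic and in fact grows linearly in $s$, so it does not lie in the bounded kernel and produces no extra solvability condition.

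Second, your mechanism for tuning $\tau$ cannot work: in the normalization \equ{Delaunay1}--\equ{Delaunay2} every $\mathcal{D}_\tau$ has the \emph{same} constant mean curvature, so $\partial_\tau H_\tau=0$ identically and the implicit-function step you propose has no leverage.

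The paper bypasses all of this. It regards $\mathcal{D}_\tau$ as a compact CMC hypersurface in the periodic manifold $M_\tau=\R^2\times(\R/t_\tau{\bf Z})$ and imposes, in addition to axial symmetry, the reflection $x_3\mapsto -x_3$. Since $\dot\varphi/\varphi$ is odd in $s$ while the entire construction can be carried out in the class of functions even in $s$, the Jacobi operator has no invariant kernel ($\mathfrak G$-nondegeneracy in the sense of Definition~\ref{de:2.1}), and the general compact-CMC result Theorem~\ref{teo8} applies directly with $\tau$ held fixed. Your parity remark is essentially this reflection symmetry; once it is imposed from the outset there is a single one-dimensional kernel which the symmetry already kills, and no need to vary $\tau$.
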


The Delaunay surface is compact when regarded as a submanifold of $\R^3$ with the period of the surface mod out. (See Section \ref{S2S} for explanations.)
We shall provide in the next section a more general statement, regarding a general manifold and a compact CMC surface in it,
 from which the above result follows.
We will also express in more detail the result of the nontrivial epigraph and state the result regarding a general minimal surface
with finite total curvature in $\R^3$.
In the later sections we will provide the proof of Theorems 1-3.

\begin{remark}
 In \cite{pacard}
and $f\equiv 0$, $\equ{1}$-$\equ{2}$ is found to be solvable in the domain
$$ \Omega = \{ x\in \R^2 \ /\ |x_2| < \frac \pi 2 + \cosh (x_1) \},$$
except that the solution found is unbounded. These domains are called {\em exceptional domains}.
On the other hand, for $f(u) = \la u$ a class of non-trivial domains in $\R^N$  bifurcating from the cylinder $B_{N-1}\times \R$ and periodic in the last variable, where $\equ{1}$-$\equ{2}$ is solvable, are found in \cite{scicbaldi}.

\end{remark}

\setcounter{equation}{0}
\section{More general statements}\label{S2S}

In this section we make more precise the statements that lead to Theorems \ref{teo1}-\ref{teo3}.
Concerning Theorem \ref{teo1},
we will be able to find a positive, bounded solution of \equ{1}-\equ{2} when $\Omega$ is a small perturbation
of a large dilation of the epigraph of a nontrivial minimal graph in $\R^9$, found by Bombieri, De Giorgi and Giusti in \cite{BDG}
$$ \Gamma  = \{ x\in \R^9 \ /\ x_9 = F (x_1,\ldots, x_8)\, \} . $$
 Let $\nu(y)$ denote the unit normal to $\Gamma$ with $\nu_9>0$.
We consider normal perturbations to a large dilation of $\Gamma$,
namely sets  of the form
\be   \Gamma_\ve := \ve^{-1}\Gamma, \quad  \Gamma_\ve^h  = \{ x= y + h(\ve y) \nu(\ve y) \ / \ y\in \Gamma_\ve \} \label{eps}\ee
for a small positive number  $\ve$  and a smooth function $h$ defined on $\Gamma$.
 We will prove the following result, which makes more precise
the statement of Theorem \ref{teo1}.

\begin{teo}\label{teo4}
For any sufficiently small $\ve>0$ there exists a function $h$   defined on $\Gamma$, with
a uniform $C^2$ bound independent of $\ve$,
such that $\Gamma_\ve^h $ in $\equ{eps}$ is the graph of a smooth entire function, and
letting $\Omega$ be its epigraph, then
 Problem $\equ{1}$-$\equ{2}$ admits a  solution $u_\ve$, with the property that
$$
u_\ve(x) = w(t) + O(\ve ) ,\quad x = y + (t+ h(\ve y))\nu(\ve y)
$$
uniformly for $0< t< \delta \ve^{-1}$, some $\delta  >0$.
Besides,
$$
\pp_\nu u = -w'(0)  \onn  \Gamma_\ve^h .$$
\end{teo}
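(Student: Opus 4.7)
The plan is an infinite--dimensional Lyapunov--Schmidt reduction around the one--dimensional profile $w(t)$, carried out in Fermi coordinates adapted to the unknown perturbed hypersurface $\Gamma_\ve^h$. Introduce $(y,t)\mapsto x = y + t\,\nu(y)$ with $y\in\Gamma_\ve^h$ and $|t| < \delta \ve^{-1}$; this tubular neighborhood is valid because the second fundamental form of $\Gamma_\ve^h$ has size $O(\ve)$. In these coordinates $\Delta = \pp_t^2 - H_t(y)\,\pp_t + \Delta_{\Gamma_t}$, where $H_t$ is the mean curvature of the parallel surface at distance $t$ and $H_0$ that of $\Gamma_\ve^h$ itself. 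Build a global approximate solution $u_0(x) = \chi(\ve t)\,w(t) + \bigl(1-\chi(\ve t)\bigr)$, with $\chi$ a smooth cutoff equal to $1$ near the boundary, and seek the true solution as $u = u_0 + \phi$ with $\phi(y,0) = 0$ so that the Dirichlet condition is automatic.

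Substituting, the interior equation becomes $L\phi = -E - N[\phi]$, with $L = \Delta + f'(u_0)$, $N$ a quadratic remainder and dominant error $E = -H_0(y)\,w'(t)$. A standard geometric expansion yields
\[
H_0(y) \; = \; \ve^2 \bigl( J_\Gamma\,h + \ve\,Q[h] \bigr)(\ve y) + O(\ve^4),
\]
where $J_\Gamma = \Delta_\Gamma + |A_\Gamma|^2$ is the Jacobi operator of the BDG graph and $Q$ is a smooth nonlinear differential operator. The key observation, which distinguishes the overdetermined problem from the De Giorgi construction on all of $\R^N$, is that the half--line operator $L_0 = \pp_t^2 + f'(w)$ with Dirichlet condition at $t=0$ and boundedness at infinity has no bounded kernel: although $L_0 w' = 0$, one has $w'(0) > 0$ so $w'$ is not admissible. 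Hence $L_0\phi = g$ with $\phi(0)=0$ and $\phi$ bounded is uniquely solvable for any reasonable $g$, and integration by parts against $w'$ gives
\[
\pp_t \phi(y,0) \; = \; - \frac{1}{w'(0)} \int_0^{\infty} g(y,t)\,w'(t)\, dt \; + \; \text{h.o.t.}
\]
A fixed point argument in weighted norms then produces $\phi = \phi[h]$ with $\pp_t \phi(y,0) = -a_0\,H_0(y) + \text{h.o.t.}$ and $a_0 = \int_0^\infty (w')^2 dt / w'(0)$.

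The overdetermined condition $\pp_\nu u \equiv -w'(0)$ is equivalent to $\pp_t \phi(\,\cdot\,,0) \equiv 0$, and after dividing by $\ve^2 a_0$ it becomes the reduced equation on $\Gamma$:
\[
J_\Gamma\,h + \ve\,\mathcal{R}[h] \; = \; 0,
\]
with $\mathcal{R}$ a smooth nonlinear operator bounded in suitable weighted H\"older spaces. The heart of the proof is the invertibility of $J_\Gamma$ on the BDG minimal graph. I plan to borrow wholesale the weighted linear theory developed in \cite{dkwdg} for the construction of counterexamples to De Giorgi's conjecture in dimension nine: the existence of a positive, bounded Jacobi field on $\Gamma$, combined with the classification of its polynomially bounded solutions, supplies a right inverse $J_\Gamma^{-1}$ with norm independent of $\ve$ in the appropriate weighted spaces. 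A contraction mapping argument then yields $h$ with $\|h\|_{C^2(\Gamma)} = O(\ve)$. The main obstacle is precisely this weighted linear theory, which requires weights finely tuned to the Simons cone structure of $\Gamma$ at infinity and a delicate decomposition into radial and cone--spherical modes; a secondary, mild verification is that $\Gamma_\ve^h$ remains the graph of an entire function, which follows from the uniform bound $\|h\|_\infty = O(1)$ and the fact that $\Gamma$ is itself globally graphical.
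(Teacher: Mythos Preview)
Your outline has the right architecture (Fermi coordinates, Lyapunov--Schmidt, reduction to a Jacobi equation on $\Gamma$), but there is a genuine gap in the error analysis and in what you can ``borrow wholesale'' from \cite{dkwdg}. The dominant error is not $-H_0(y)w'(t)$: in the coordinates you set up, $S[w]=-H_t(y)w'(t)$ with $H_t=H_0+t|A_{\Gamma_\ve^h}|^2+\cdots$, and since $|A_{\Gamma_\ve^h}|^2=\ve^2|A_\Gamma(\ve y)|^2+O(\ve^3)$, the term $-\ve^2 t\,|A_\Gamma(\ve y)|^2 w'(t)$ is of the \emph{same} order $\ve^2$ as the piece you kept. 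When you project onto $w'$ to read off the Neumann data, this contributes $-\ve^2|A_\Gamma|^2\int_0^\infty t(w')^2\,dt$, so the reduced equation is not $J_\Gamma h+\ve\mathcal R[h]=0$ but rather
\[
J_\Gamma h \;=\; -c_*\,|A_\Gamma|^2 \;+\; \ve\,(\text{terms involving }\textstyle\sum_i k_i^3)\;+\;\ve^2(\cdots),
\qquad c_*=\frac{\int_0^\infty t(w')^2}{\int_0^\infty (w')^2},
\]
with an $O(1)$ right-hand side decaying only like $r^{-2}$.

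This is exactly where the theory in \cite{dkwdg} fails to apply directly: the invertibility result for $J_\Gamma$ proved there (and restated in the paper as Proposition~\ref{prop11}) requires the right-hand side to decay like $r^{-\nu}$ with $4<\nu<5$. The terms $|A_\Gamma|^2\sim r^{-2}$, $\sum k_i^3\sim r^{-3}$, $\sum k_i^4\sim r^{-4}$ all lie outside that window. The paper handles this not by a single fixed-point but by an iterated improvement of the approximation: a constant shift $h_0$ is chosen so that $\int_0^\infty(t+h_0)(w')^2=0$ (killing the $|A_\Gamma|^2$ contribution after projection), then explicit correctors $\phi_0,\phi_1$ and explicit pieces $h_1,h_2$ of $h$ are built to push the genuine error down to $O(\ve^4 r^{-4-\mu})$. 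Only at that stage does the residual Jacobi equation $J_\Gamma\hh=\mathcal B[\hh]$ have a right-hand side in the admissible weighted space. Producing $h_1,h_2$ in turn requires solving $J_\Gamma h=\sum k_i^3$ and $J_\Gamma h=\sum k_i^4$, which is new relative to \cite{dkwdg} and is done by hand via separation of variables on the approximate cone $\Gamma_0$ (Section~\ref{s10} of the paper). You should also note that the linear theory actually used is not ``solve with Dirichlet, read off Neumann'' but the overdetermined problem of Proposition~\ref{lema5}: both $\phi(y,0)=0$ and $\partial_t\phi(y,0)=\beta(y)$ are imposed, at the price of an auxiliary $\alpha(y)w'(t)$ on the right; this is what couples cleanly to the gluing with the outer region and ultimately produces the reduced equation.
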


As we have mentioned in the introduction, this result is analogous to that in \cite{dkwdg}. The construction in this paper is considerably more delicate and require new ideas. The linear theory required here deals with a Dirichlet to Neumann map, and it is more subtle than that in \cite{dkwdg}. As in that work, an infinite-dimensional Lyapunov-Schmidt procedure reduces the problem to a nonlinear, nonlocal equation involving the Jacobi operator.
The lack of symmetry of the seeked surface (unlike the BDG graph itself)
 induces the presence of large errors, and this is a substantial difficulty in the construction. We succeed in overcoming it, by means of a non-trivial refinement on the invertibility theory for the
 Jacobi operator.

\medskip

Next we restrict our attention to the case $N=3$.  The catenoid is the simplest example (besides the plane) of a complete, embedded minimal surface
$\Gamma$ with {\em final total curvature}\, if
$$
\int_\Gamma |K |\, dV\, < \, +\infty
$$
where $K$ denotes the Gauss curvature of the manifold $\Gamma$. Such surfaces are known to have a finite number of ends, which are either planes or
 catenoids with a common axis of rotational symmetry. The first non-trivial example of such a manifold, with genus $1$, was found in 1982 by Costa \cite{costa}. The example was later generalized by Hoffman and Meeks \cite{hm} to arbitrary genus $k\ge 1$. These minimal surfaces $\Gamma$
  are known
 to be {\em non-degenerate}, after the works by Nayatani and Morabito \cite{morabito,nayatani}, in the following sense:

{\em  The only bounded Jacobi fields, namely functions on $\Gamma$ that anhilate the Jacobi operator $\JJ_\Gamma := \Delta_\Gamma -2K$
are originated in rigid motions: rotations around the axis and translations,} namely they are linear combinations of the vector fields
$\nu(x)\cdot e_i$, $i=1,2,3$ and $\nu(x)\cdot x$, where $\nu$ is a unit normal vector field (these surfaces are orientable, they split the space into two components).

We fix such a unit normal $\nu$ for $\Gamma$ and definethe manifolds $\Gamma_\ve$ and $\Gamma_\ve^h$   as in \equ{eps}.


Given this, we have the validity of the following result, that extends Theorem \ref{teo2}.
\begin{teo}\label{teo5}
Let $\Gamma$ be a  complete, embedded minimal surface in $\R^3$
with finite total curvature and non-degenerate. Then for any sufficiently small $\ve>0$ there exists a function $h$
defined on $\Gamma$, with
a uniform $C^2$ bound independent of $\ve$,
such that $\Gamma_\ve^h $ is an embedded and orientable surface, and
letting $\Omega$ be the component of $\R^3$ in the $\nu(\ve y)$-direction, then
 Problem $\equ{1}$-$\equ{2}$ admits a positive bounded solution $u_\ve$, with the property that
$$
u_\ve(x) = w(t) + O(\ve ) ,\quad x = y + (t+ h(\ve y))\nu(\ve y)
$$
uniformly for $0< t< \delta \ve^{-1}$, some $\delta  >0$.
Besides,
$$
\pp_\nu u = -w'(0)  \onn  \Gamma_\ve^h .$$
In the case of a catenoid, the domain and the solution are axially symmetric.
\end{teo}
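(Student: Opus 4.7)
The plan is to adapt the infinite-dimensional Lyapunov-Schmidt scheme of Theorem \ref{teo4} to three dimensions, exploiting the explicit asymptotic geometry of finite-total-curvature minimal surfaces (catenoidal or planar ends) in place of the BDG structure. I would parametrize a tubular neighborhood of $\Gamma_\ve^h$ by Fermi coordinates $(y,t)$, with $x = y + t\,\nu(\ve y)$, set $y' = \ve y$ for the slow variable on $\Gamma$, and look for a solution of the form
\[
u(x) \;=\; w(t) + \phi(y,t),
\]
where $w$ is the heteroclinic profile defined by \eqref{eqnforw}, extended smoothly to the constant $1$ far away from the surface.

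The first step is to expand the Laplacian in these coordinates. Since $H_\Gamma \equiv 0$, the mean curvature of the parallel surface at signed distance $t$ is $\ve^2 |A_\Gamma|^2(y')\, t + O(\ve^3 t^2)$, so the bulk error $\Delta(w(t))+f(w(t))$ and the Neumann discrepancy on $\pp\Omega$ are both small in appropriate weighted norms. Projecting the bulk equation against $w'(t)$ in the transverse variable yields, at leading order, the Jacobi equation
\[
\JJ_\Gamma h \;=\; \ve\,\mathcal{R}(\ve,h)
\]
on $\Gamma$, where $\mathcal{R}$ is a nonlocal nonlinear remainder which also encodes the bulk correction $\phi$ through a Dirichlet-to-Neumann-type operator; the boundary condition $\pp_\nu u = -w'(0)$ on $\Gamma_\ve^h$ is precisely what forces this nonlocal piece.

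Next I would develop the linear theory: for each fixed $h$, solve the bulk problem for $\phi$ with zero Dirichlet data on $\Gamma_\ve^h$ by inverting an operator which, after transverse Fourier decomposition, behaves like $\pp_t^2 + \Delta_{\Gamma_\ve} + f'(w(t))$ restricted orthogonally to the approximate kernel spanned by $w'(t)$. This requires injectivity and a priori estimates in function spaces weighted to match the decay of bounded Jacobi fields at the ends of $\Gamma$; the non-degeneracy hypothesis guarantees that $\JJ_\Gamma$ is an isomorphism modulo its four-dimensional bounded kernel spanned by the rigid-motion Jacobi fields $\nu \cdot e_i$ ($i=1,2,3$) and $\nu \cdot x$. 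A Lyapunov-Schmidt reduction then produces an equation for $h$ in a weighted $C^{2,\alpha}$ space, together with four Lagrange multipliers; these multipliers are absorbed by exploiting the freedom of rigid motions of $\Gamma$, while in the catenoid case axial symmetry imposed from the outset eliminates them and further reduces the equation for $h$ to an ODE in the axial variable.

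The main obstacle is the linear theory for the Dirichlet-to-Neumann map around a non-compact minimal surface whose ends decay only logarithmically (catenoidal) or not at all (planar): the interplay between the slowly varying surface geometry and the fast transverse variable $t$ generates large errors that must be absorbed via a refined invertibility estimate for $\JJ_\Gamma$, in the spirit of the refinement used for the BDG graph in Theorem \ref{teo4}. Once this is in place, a contraction-mapping argument yields $h$ with a uniform $C^2$ bound independent of $\ve$, and the surface $\Gamma_\ve^h$ remains embedded and orientable since $h = O(\ve)$ is a small normal perturbation of the embedded dilation $\Gamma_\ve$.
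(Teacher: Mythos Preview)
Your overall scheme matches the paper's: same Fermi-coordinate ansatz, same transverse projection onto $w'$, same reduction to a Jacobi-type equation for $h$, and the same recognition that non-degeneracy leaves a four-dimensional cokernel spanned by the rigid-motion Jacobi fields, so that one must first solve a \emph{projected} reduced problem carrying four Lagrange multipliers $c_1,\ldots,c_4$. The paper also agrees with you that in the catenoid case one can work equivariantly from the start and the multipliers disappear by symmetry.

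There is, however, a genuine gap in your treatment of the general case. You write that the four multipliers are ``absorbed by exploiting the freedom of rigid motions of $\Gamma$.'' If by this you mean varying the position and orientation of $\Gamma$ to match four constraints, that cannot work: the equation $\Delta u+f(u)=0$ and both boundary conditions are invariant under Euclidean isometries, so translating or rotating $\Gamma$ produces an isometric copy of the same projected solution with the \emph{same} multipliers. The paper's mechanism is different and more delicate: one first constructs a solution of
\[
\Delta u+f(u)=\sum_{i=1}^4 c_i\,\frac{z_i}{1+r^3}\,w'(t)\quad\text{in }\Omega_\ve,\qquad u=0,\ \partial_\nu u=\text{const on }\partial\Omega_\ve,
\]
and then shows $c_i=0$ a posteriori via Pohozaev-type identities obtained by testing the equation against the Killing-field-generated functions $\partial_{x_i}u$ and $(x\wedge\nabla u)$. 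The crucial, and not obvious, point is that in Serrin's overdetermined setting the boundary integrals in these identities \emph{vanish} (because $u=0$ and $\partial_\nu u$ is constant on $\partial\Omega_\ve$), so the identities reduce to algebraic relations forcing each $c_i=0$. This step is the analogue of the argument in \cite{jdgdkw}, pp.~99--102, adapted to the presence of a boundary; without it the construction yields only a solution of the inhomogeneous equation above, not of $\equ{1}$--$\equ{2}$.

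A secondary remark: you identify the main obstacle as needing a BDG-style ``refined invertibility estimate for $\JJ_\Gamma$'' to handle slowly decaying end geometry. In fact the paper stresses that the finite-total-curvature situation is \emph{easier} than the BDG graph: the curvatures and metric coefficients decay faster in $r$, so the Jacobi operator is at leading order just the Laplace--Beltrami operator on each end, and the linear theory for the projected problem is the standard weighted-H\"older theory from \cite{jdgdkw} rather than the bespoke barrier construction of the appendix. The hard analysis you anticipate is not needed here; the only new ingredient beyond Theorem~\ref{teo4} is the Pohozaev step above.
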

The corresponding analogue for entire solutions of the Allen-Cahn equation was established in \cite{jdgdkw}.

\medskip
Theorems \ref{teo4} and \ref{teo5} deal with minimal surfaces which have zero mean curvature. It is no surprising  that the right analogue of Serrin's overdetermined problem is the CMC surfaces, namely surfaces with constant mean curvatures.  For a  Riemannian manifold $M$ and a non-degenerate  CMC compact surface  $\Gamma=\pp\Omega $, we get a similar statement, which we will make precise next.

  \medskip

  Let $M$ be a Riemannian manifold and $\Gamma$ a smooth hypersurface,
the boundary of a smooth domain in $M$,
 and $\ve^{-1} M$ its canonical dilation for a small number $\ve>0$. We consider the problem of finding a domain $\Omega$ in $M_\ve$ whose boundary is close to  $\Gamma_\ve$ and encloses a domain $\Omega$
 for which the problem \equ{1}-\equ{2}.

\medskip

For some (small) function $h$ defined on $\Gamma$, the normal graph of $h$ over $\Gamma$ is a hypersurface which will be denoted by $\Gamma_h$. We have assumed that $\Gamma$ is  and we will denote by $\Omega_h$ the domain whose boundary is $\Gamma_h$. There are two choices since the complement of such a domain is also a domain whose boundary if $\Gamma_h$ and to remove the ambiguity we assume that $h \mapsto \Omega_h$ depends continuously on $h$ (in the Hausdorff topology).

\medskip

The mean curvature function of $\Gamma_h$ is denoted by $H(h)$ and its differential at $h=0$ is, by definition, the {\em Jacobi operator} about $\Gamma$. The explicit expression of the Jacobi operator about $\Gamma$ is given by \cite{BCE}
\[
J_\Gamma : =  \Delta_{\Gamma} + |A_\Gamma|^2 + \mbox{Ric} ( {\bf n}, {\bf n}),
\]
where $\Delta_\Gamma$ is the Laplace Beltrami operator on $\Gamma$,  $|A_\Gamma|^2$ is the square of the norm of $A_\Gamma$, namely the sum of the square of the principal curvatures of $\Gamma$ and $\text{Ric}$ is the Ricci tensor on $M$. We recall the following~:
\begin{definition}
A compact hypersurface $\Gamma$ is said to be {\em non degenerate} if  $J_\Gamma$ is injective.
\label{teo61}
\end{definition}

Granted all the definitions above, we now have the:
\begin{teo}
Assume that $\Omega_0 \subset M$ is a smooth bounded domain whose boundary $\partial \Omega_0$ is a non degenerate hypersurface whose mean curvature is constant. Then, for all $\varepsilon >0$ close enough to $0$, there exists $h_\varepsilon \in \mathcal C^{2, \alpha} (\Gamma)$ and  $u_\varepsilon$, a solution of $\equ{1}$-$\equ{2}$  in $\Omega_\varepsilon : = \Omega_{h_\varepsilon}$, such that the family of functions $u_\varepsilon$ tends to $1$ uniformly on compact domains of $\Omega$ as $\varepsilon$ tends to $0$. Moreover, there exists a constant $C >0$ such that
\[
\| h_\varepsilon\|_{\mathcal C^{2, \alpha} (\Gamma)} \leq C \, \varepsilon^2.
\]
\label{teo7}
\end{teo}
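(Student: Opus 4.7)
The plan is to implement an infinite-dimensional Lyapunov--Schmidt reduction in the spirit of Theorems \ref{teo4} and \ref{teo5}, now taking full advantage of the fact that the compactness of $\Gamma$ makes the linear theory essentially elementary. After passing to the dilated ambient $M_\varepsilon = \varepsilon^{-1} M$, in which $\Gamma_\varepsilon = \varepsilon^{-1}\Gamma$ has small constant mean curvature $\varepsilon H_0$, I would look for a domain whose boundary is a normal graph $\Gamma_\varepsilon^{h}$ over $\Gamma_\varepsilon$ for a small function $h$ defined on $\Gamma$. In Fermi coordinates $(y, t)$ around $\Gamma_\varepsilon^{h}$, with $t$ the signed geodesic distance into $\Omega_\varepsilon$, the Laplace--Beltrami operator expands as
\[
\Delta \, = \, \partial_t^2 \, + \, H(y,t) \, \partial_t \, + \, \Delta_{\Gamma_\varepsilon^{h},\, t},
\]
where $H(y,t)$ is the mean curvature of the parallel hypersurface at signed distance $t$ and $\Delta_{\Gamma_\varepsilon^{h},\, t}$ is its induced Laplacian.

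As a first approximation I take $\bar u (y,t) := w_\varepsilon (t)$, where $w_\varepsilon$ is the unique heteroclinic solution of
\[
w_\varepsilon'' \, + \, \varepsilon H_0 \, w_\varepsilon' \, + \, f(w_\varepsilon) \, = \, 0, \qquad w_\varepsilon(0) = 0, \qquad w_\varepsilon(+\infty) = 1,
\]
so that the $y$-independent part of the mean curvature is absorbed into the profile. By construction $\bar u = 0$ on $\{t = 0\}$ and $\partial_\nu \bar u|_{t=0} = w_\varepsilon'(0)$ is a $y$-independent constant, so the Dirichlet and constant-Neumann boundary conditions \equ{2} hold automatically. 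Using the first variation of mean curvature one has
\[
H_{\Gamma_\varepsilon^{h}}(y) \, = \, \varepsilon H_0 \, + \, \varepsilon^2 \, (J_\Gamma h)(\varepsilon y) \, + \, O\bigl( \varepsilon^3 \bigr),
\]
and in combination with $w_\varepsilon'' + \varepsilon H_0 w_\varepsilon' + f(w_\varepsilon) = 0$ the interior error $\Delta \bar u + f(\bar u)$ becomes genuinely of size $\varepsilon^2 J_\Gamma h \cdot w_\varepsilon' + O(\varepsilon^2)$ in the tubular neighborhood.

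Writing $u = \bar u + \phi$ and imposing the orthogonality condition
\[
\int_0^{+\infty} \phi(y,t) \, w_\varepsilon'(t) \, dt \, = \, 0 \qquad \mbox{for every } y,
\]
the linearization $\partial_t^2 + f'(w_\varepsilon(t))$ on $[0, +\infty)$, with Dirichlet datum at $t=0$ and exponential decay at $+\infty$ (provided by $f'(1) < 0$), is uniformly invertible in weighted H\"older spaces; combining this with Schauder estimates in the tangential variable $y$ yields $\phi = \phi(h, \varepsilon)$ solving the PDE modulo a multiple of $w_\varepsilon'$ while preserving the Dirichlet and constant-Neumann boundary conditions. Projection of the full equation onto $w_\varepsilon'$ produces the reduced equation on $\Gamma$ whose principal part is $c_0 \, \varepsilon^2 \, J_\Gamma h$ with $c_0 := \int_0^{+\infty} (w_\varepsilon'(t))^2 \, dt > 0$, and hence after dividing through it takes the form
\[
J_\Gamma h \, = \, G(\varepsilon, h),
\]
with $\|G(\varepsilon, h)\|_{\mathcal C^{0, \alpha}(\Gamma)} \leq C \, \varepsilon^2$ and $G$ Lipschitz in $h$ with small Lipschitz constant on a ball $\{\|h\|_{\mathcal C^{2, \alpha}} \leq C \varepsilon^2\}$. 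Non degeneracy of $\Gamma$ means precisely that $J_\Gamma : \mathcal C^{2,\alpha}(\Gamma) \to \mathcal C^{0,\alpha}(\Gamma)$ is an isomorphism, and a contraction-mapping argument on the ball of radius $C \varepsilon^2$ then produces the desired $h_\varepsilon$.

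The hard part is of a bookkeeping nature: one must check that the replacement of $w$ by $w_\varepsilon$ cleanly eliminates the a priori $O(\varepsilon)$ contribution coming from the CMC mean curvature, and that the cutoff used to paste $\bar u + \phi$ onto the constant $1$ in the interior of $\Omega_\varepsilon$ does not reintroduce terms of size larger than $\varepsilon^2$ in the projected equation. The CMC hypothesis is exactly what permits these cancellations, since it makes the first-order correction purely radial and independent of $y$; and the non degeneracy of $J_\Gamma$ on the compact surface $\Gamma$ then closes the argument without any of the weighted analysis required for the unbounded surfaces of Theorems \ref{teo4} and \ref{teo5}.
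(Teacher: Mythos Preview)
Your overall Lyapunov--Schmidt scheme is the right one and is close to what the paper does, but there is a concrete error in the size estimate for the reduced right-hand side $G$, and this gap prevents you from concluding $\|h_\varepsilon\|_{\mathcal C^{2,\alpha}} \le C\varepsilon^2$.

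You correctly observe that after using the profile $w_\varepsilon$ the interior error is $\varepsilon^2 (J_\Gamma h)\, w_\varepsilon' + O(\varepsilon^2)$. The mistake is in the next step: once you project onto $w_\varepsilon'$ and divide by $c_0\varepsilon^2$, that residual $O(\varepsilon^2)$ becomes $O(1)$ in $G$, not $O(\varepsilon^2)$. Concretely, the expansion of the mean curvature of the parallel hypersurfaces to $\Gamma_\varepsilon^h$ contains, at the same order as $\varepsilon^2 J_\Gamma h$, the term $\varepsilon^2\, t\, |A_\Gamma(\varepsilon y)|^2\, w_\varepsilon'$ coming from $\partial_t H|_{t=0} = |A|^2$. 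Its projection onto $w_\varepsilon'$ equals $\varepsilon^2 |A_\Gamma|^2 \int_0^\infty t\,(w_\varepsilon')^2\,dt$, and since $|A_\Gamma|^2 = {\rm Tr}(A^2)$ is \emph{not} constant on $\Gamma$ (the CMC assumption only makes ${\rm Tr}\,A$ constant), this produces an $O(1)$, genuinely $y$-dependent inhomogeneity in the reduced Jacobi equation. Your contraction then closes only on a ball of radius $O(1)$, so you get $\|h_\varepsilon\|\le C$, which is fine for existence but does not give the stated $\varepsilon^2$ bound.

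The paper fixes this by carrying the improvement of the approximation two orders further. Your $w_\varepsilon$ trick is exactly the paper's first correction $\phi_0 = -{\rm Tr}(A)\,p_0(t)$ in disguise (both exploit that ${\rm Tr}\,A$ is constant so the correction can be taken to depend on $t$ only). The paper then adds further corrections $\phi_1,\phi_2,\phi_3$ to the profile and pre-solves for $h_0,h_1,h_2$ in the boundary displacement, choosing each so that the corresponding $\partial_t\phi_j|_{t=0}$ vanishes; this kills precisely the terms like ${\rm Tr}(A^2)(t+h_0)w'$, ${\rm Tr}(A^3)(t+h_0)^2 w'$, etc., and pushes the residual error to $O(\varepsilon^5)$. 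Only after this iteration does the remaining unknown piece of $h$ come out small, yielding the claimed $\mathcal C^{2,\alpha}$ bound. If you want the theorem as stated, you need to reinstate this improvement-of-approximation step; a single pass with $w_\varepsilon$ does not suffice.
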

Hypersurfaces whose mean curvature is a constant function are know to exist in abundance and the result of \cite{Whi} (see also \cite{Maz-Pac}) shows that, for a generic choice of the ambient metric, they are non degenerate in the sense of Definition~\ref{teo61}. For example, solutions of the iso-perimetric problem, when they are smooth, give rise to hypersurfaces whose mean curvature function is constant.

\medskip
   Theorem \ref{teo7} corresponds to a parallel of the one by Pacard and Ritor\'e \cite{pr}.

Observe that Theorem~\ref{teo7} does not apply to the Delaunay surface, which is non-compact, nor does it apply    the unit ball in Euclidean space since in the case the Jacobi operator about the unit sphere $S^m$ is given by
\[
\Delta_{S^m} + m,
\]
which is not injective since the 	coordinate functions ${\tt x} \mapsto x_j$, for $j=1, \ldots, m+1$, belong to its kernel. However there is an equivariant version of Theorem~\ref{teo7}.

\begin{definition}
Let $\mathfrak G \subset \text{Isom} (M, g)$ be a discrete group of isometries. A compact hypersurface $\Gamma$ is said to be $\mathfrak G$-{\em non degenerate} if there is no nontrivial element in the kernel of $J_\Gamma$ which is invariant by the elements of $\mathfrak G$.
\label{de:2.1}
\end{definition}

We have the validity of the following result, from which Theorem \ref{teo3} follows.
\begin{teo} Assume that $\Omega_0 \subset M$ is a smooth bounded domain and $\mathfrak G \subset \text{Isom} (M, g)$ is a discrete group of isometries which leave $\Omega_0$ globally invariant, namely, $\mathfrak g (\Omega_0) = \Omega_0$ for all  $\mathfrak g \in \mathfrak G$. Further assume that  $\partial \Omega_0$ is a $\mathfrak G$-non degenerate hypersurface whose mean curvature is constant. Then, the conclusion of Theorem~\ref{teo7} hold for a domain $\Omega_\varepsilon$ and a solution $u_\varepsilon$ which are invariant under the action of the elements of $\mathfrak G$.
\label{teo8}
\end{teo}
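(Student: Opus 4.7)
The plan is to carry out the entire construction behind Theorem~\ref{teo7} inside the subcategory of $\mathfrak G$-invariant objects, so that equivariance is preserved at every step and the only obstacle removed by the hypothesis of Definition~\ref{de:2.1} is the triviality of the kernel of the Jacobi operator on invariant functions. Write $\Gamma := \partial \Omega_0$ and let $\mathcal C^{k,\alpha}_{\mathfrak G}(\Gamma)$ denote the closed subspace of $\mathcal C^{k,\alpha}(\Gamma)$ consisting of those $h$ with $h \circ \mathfrak g = h$ for every $\mathfrak g \in \mathfrak G$. Since each $\mathfrak g$ is an isometry of $(M,g)$ that preserves $\Omega_0$, the normal-graph map $h \mapsto \Omega_h$ sends $\mathfrak G$-invariant data to $\mathfrak G$-invariant domains, and the equation $\Delta u + f(u) = 0$ commutes with the $\mathfrak G$-action.

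Next, I repeat the Lyapunov--Schmidt reduction behind Theorem~\ref{teo7}. Starting from a $\mathfrak G$-invariant $h$ with small $\mathcal C^{2,\alpha}$ norm, build an approximate solution by gluing the one-dimensional profile $w$ of \equ{eqnforw} to the constant $1$ deep inside $\Omega_h$, solve the interior problem $\Delta u + f(u) = 0$, $u\!\mid_{\partial \Omega_h} = 0$ by a contraction argument in a suitable weighted Schauder space, and read off the Neumann datum on $\partial \Omega_h$. Because the metric, cutoffs, linearization and fixed-point iteration are all $\mathfrak G$-equivariant, an iteration that begins in the closed $\mathfrak G$-invariant subspace never leaves it, so the resulting solution $u_\varepsilon[h]$ is automatically $\mathfrak G$-invariant. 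The overdetermined condition
\[
\partial_\nu u_\varepsilon[h] + w'(0) = 0 \quad \text{on } \partial \Omega_h
\]
reduces, exactly as in Theorem~\ref{teo7}, to a nonlinear equation $\mathcal F(\varepsilon, h) = 0$ on $\Gamma$ with $\mathcal F(0,0) = 0$ and linearization $D_h \mathcal F(0,0) = c_0\, J_\Gamma$ for a nonzero constant $c_0$; equivariance forces $\mathcal F$ to map $\mathcal C^{2,\alpha}_{\mathfrak G}(\Gamma) \times \R$ into $\mathcal C^{0,\alpha}_{\mathfrak G}(\Gamma)$.

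The key analytic point is invertibility of $J_\Gamma$ restricted to $\mathfrak G$-invariant functions. Since $\Gamma$ is compact, $J_\Gamma$ is an elliptic, formally selfadjoint operator of Fredholm index zero; it commutes with the $\mathfrak G$-action and hence restricts to $\mathcal C^{k,\alpha}_{\mathfrak G}(\Gamma)$. Definition~\ref{de:2.1} is precisely the statement that this restriction has trivial kernel, so it is an isomorphism
\[
J_\Gamma : \mathcal C^{2,\alpha}_{\mathfrak G}(\Gamma) \longrightarrow \mathcal C^{0,\alpha}_{\mathfrak G}(\Gamma).
\]
The implicit function theorem, applied in the $\mathfrak G$-invariant category, then produces a unique small $h_\varepsilon \in \mathcal C^{2,\alpha}_{\mathfrak G}(\Gamma)$ solving $\mathcal F(\varepsilon, h_\varepsilon) = 0$ for all small $\varepsilon > 0$, with $\|h_\varepsilon\|_{\mathcal C^{2,\alpha}} \leq C\,\|\mathcal F(\varepsilon,0)\|_{\mathcal C^{0,\alpha}} \leq C\,\varepsilon^2$; the last bound, coming from the quadratic vanishing of the overdetermined defect at the unperturbed CMC surface, is imported verbatim from the proof of Theorem~\ref{teo7}. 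The domain $\Omega_\varepsilon := \Omega_{h_\varepsilon}$ and solution $u_\varepsilon := u_\varepsilon[h_\varepsilon]$ are $\mathfrak G$-invariant by construction.

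The only genuine obstacle is checking that the weighted Schauder spaces, cutoffs and inversion estimates used in Theorem~\ref{teo7} can be realized inside $\mathfrak G$-invariant subspaces without loss of constants. Since $\mathfrak G$ acts by isometries and every construction is geometrically canonical, the orbit projection---averaging over a finite orbit, or restricting to a fundamental domain for an infinite orbit---commutes with every operator that appears, so no genuinely new analysis is required beyond that of Theorem~\ref{teo7}. The substantive new ingredient is the $\mathfrak G$-restricted non-degeneracy of Definition~\ref{de:2.1}, which supplies exactly the invertibility of $J_\Gamma$ that ordinary non-degeneracy fails to provide whenever the ambient symmetry generates a nontrivial kernel, as happens for instance for the round sphere and for the quotient that makes the Delaunay surface compact.
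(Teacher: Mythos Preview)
Your proposal is correct and matches the paper's approach: the paper does not give a separate argument for Theorem~\ref{teo8} but simply carries out the construction of Theorem~\ref{teo7} (itself a specialization of the scheme behind Theorem~\ref{teo4}) inside the $\mathfrak G$-invariant category, using that every step is equivariant and that the reduced Jacobi equation is solvable precisely because Definition~\ref{de:2.1} gives invertibility of $J_\Gamma$ on $\mathfrak G$-invariant functions. The only cosmetic difference is that the paper phrases the final step as a contraction mapping (following the template of \S\ref{s7}) rather than the implicit function theorem, but these are equivalent here.
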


For example, in the case of the unit ball in the Euclidean $(m+1)$-dimensional space it is enough to consider the group generated by the symmetry through the origin to apply Theorem~\ref{teo8}.  More interesting example is the Denaulay surface given by (\ref{Delaunay1})-(\ref{Delaunay2}) in Section 1.
 From the definition, $\mathcal D_\tau$ is periodic and, if $t_\tau$ denotes the fundamental period of the Delaunay surface of parameter $\tau\in (0, \frac{1}2)$, we can also understand the Delaunay surfaces as constant mean curvature surfaces in $M_\tau : = {\R }^2 \times ({\R } / t_\tau {\bf Z})$ which is endowed with the Euclidean metric $g_{eucl}$. With the  parameterization (\ref{Delaunay1})-(\ref{Delaunay2}), the Jacobi operator about a Delaunay surface reads
\[
J_\tau = \frac{1}{\varphi^2} \, \left( \partial_s^2 + \partial_\theta^2  + \left(\varphi^2 +\frac{\tau^2}{\varphi^2} \right) \right) ,
\]
it has non trivial kernel because of the invariance under the action of translations and in fact, it can be seen from  \cite{Maz-Pac} that the functions $(s, \theta) \mapsto \frac{\dot \varphi}{\varphi}$, $(s, \theta) \mapsto \left( \varphi + \frac{\tau}{\varphi}\right)  \cos \theta $ and $(s, \theta) \mapsto \left( \varphi + \frac{\tau}{\varphi}\right)  \sin \theta $ span the kernel of $J_\tau$. However, if we consider the group of isometries of $(M_\tau, g_{eucl})$ generated by the symmetry with respect to the vertical axis and also with respect to the $x_3=0$ plane, no element in the kernel of this operator is invariant with respect to the action of this group. In particular, Theorem~\ref{teo8} applies to $\mathcal D_\tau$ in $M_\tau$ and, going back to the universal cover ${\R }^{3}$, this leads to the~:

\begin{corollary}
Given $\tau \in (0, \frac{1}{2}]$, there exists for all $\varepsilon >0$ close enough to $0$, a cylindrically bounded domain $\Omega_{\tau, \varepsilon}$ which is periodic along the $x_3$-axis, of period $t_\tau$ and in which one can find positive solutions of $\equ{1}$-$\equ{2}$ . Moreover, the boundary of $\Omega_{\tau, \varepsilon}$ is a normal graph over $\mathcal D_\tau$ for some $\mathcal C^{2, \alpha}$ function whose norm is bounded by a constant times $\varepsilon^2$.
\label{teo6}
\end{corollary}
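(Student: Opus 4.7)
The plan is to deduce Corollary~\ref{teo6} as a direct application of Theorem~\ref{teo8} to the Delaunay surface $\mathcal D_\tau$, regarded as a compact embedded constant mean curvature hypersurface in the quotient manifold $M_\tau = \R^2 \times (\R/t_\tau \mathbb Z)$ equipped with the flat Euclidean metric $g_{eucl}$. After passing to this quotient, $\mathcal D_\tau$ bounds a smooth compact domain $\Omega_0 \subset M_\tau$, so the geometric framework required by Theorem~\ref{teo8} is in place; all the analytic work is already contained there, and the task reduces to producing a suitable discrete group of symmetries.

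The core step is to exhibit a discrete group $\mathfrak G \subset \text{Isom}(M_\tau, g_{eucl})$ that leaves $\Omega_0$ invariant and with respect to which $\mathcal D_\tau$ is non degenerate in the sense of Definition~\ref{de:2.1}. I would take $\mathfrak G$ to be generated by two elements: the half-turn $R$ about the $x_3$-axis, which in the parameterization (\ref{Delaunay1})--(\ref{Delaunay2}) acts as $\theta \mapsto \theta + \pi$, and the reflection $\sigma$ across the slice $\{x_3 = 0\}$; choosing the solution $\varphi$ of (\ref{Delaunay2}) to be even in $s$ (so that the antiderivative $\psi$ is odd), $\sigma$ corresponds on $\mathcal D_\tau$ to $s \mapsto -s$. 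Both $R$ and $\sigma$ clearly preserve $\Omega_0$.

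Next I would verify $\mathfrak G$-non-degeneracy directly on the three generators of $\ker J_\tau$ recalled just before the corollary: the functions $(s,\theta) \mapsto (\varphi + \tau/\varphi)\cos\theta$ and $(s,\theta) \mapsto (\varphi + \tau/\varphi)\sin\theta$ both change sign under $R$, while $(s,\theta) \mapsto \dot\varphi/\varphi$ is odd in $s$ (since $\varphi$ is even and $\dot\varphi$ is odd) and therefore changes sign under $\sigma$. Consequently an $R$-invariant kernel element cannot involve the two $\theta$-dependent terms, while a $\sigma$-invariant kernel element cannot involve the $s$-odd term, so no nontrivial linear combination of the three generators is simultaneously $R$- and $\sigma$-invariant, and $\mathcal D_\tau$ is $\mathfrak G$-non degenerate.

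Finally, invoking Theorem~\ref{teo8} produces, for every sufficiently small $\varepsilon > 0$, a $\mathfrak G$-invariant domain $\Omega_\varepsilon \subset M_\tau$ whose boundary is a normal graph over $\mathcal D_\tau$ governed by a function $h_\varepsilon$ with $\|h_\varepsilon\|_{\mathcal C^{2,\alpha}} \leq C\,\varepsilon^2$, together with a positive bounded solution $u_\varepsilon$ of \equ{1}--\equ{2} on $\Omega_\varepsilon$. Lifting this $t_\tau$-periodic configuration to the universal cover $\R^3$ yields the cylindrically bounded, $t_\tau$-periodic domain $\Omega_{\tau,\varepsilon}$ asserted in the corollary, with the $\varepsilon^2$-bound on $h_\varepsilon$ inherited from Theorem~\ref{teo8}. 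I expect no substantive obstacle at this final stage since all the genuine analysis is packaged in Theorem~\ref{teo8}; the only real content is the symmetry bookkeeping performed above, the most delicate point of which is the choice of $\varphi$ to be even in $s$ so that the reflection across $\{x_3=0\}$ acts as a genuine isometry of the parameterized surface.
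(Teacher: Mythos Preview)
Your proposal is correct and follows essentially the same route as the paper: pass to the quotient $M_\tau=\R^2\times(\R/t_\tau\mathbb Z)$, take the discrete group generated by the half-turn about the $x_3$-axis and the reflection across $\{x_3=0\}$, check that none of the three Jacobi fields $\dot\varphi/\varphi$, $(\varphi+\tau/\varphi)\cos\theta$, $(\varphi+\tau/\varphi)\sin\theta$ survives both symmetries, invoke Theorem~\ref{teo8}, and lift to $\R^3$. Your explicit parity check on the kernel elements is in fact more detailed than the paper's one-line assertion that ``no element in the kernel of this operator is invariant with respect to the action of this group.''
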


\bigskip
The proofs of Theorems \ref{teo4}-\ref{teo8} can be set up into a similar scheme, of which the case of the minimal graph Theorem \ref{teo4} is the most complicated, since the surface is noncompact.  So in the rest of the paper we concentrate mainly on the proofs of Theorem \ref{teo4}. The proofs of other theorems will be outlined only.

The organization of this paper is as follows. Section \ref{s4}-\ref{s8} contain the proofs of Theorem \ref{teo4}: in Section \ref{s4} we design a scheme to improve the error up to order ${\mathcal O} (\ve^4)$. The gluing procedure is presented in Section \ref{s5}. In Section \ref{s6} we study an important linear problem which is a Dirichlet to Neumann map.  We then solve the nonlinear projected  problem and  the reduced problem involving the Jacobi operator in Section \ref{s7} which finishes the proof of Theorem \ref{teo4}.
In Section \ref{s8} and Section \ref{s9} we explain the modifications needed to prove Theorem \ref{teo5} and Theorem \ref{teo7} respectively. We delay the solvability of Jacobi operator of the BDG graph in the appendix.

\setcounter{equation}{0}
\section{A first approximation to the nontrivial epigraph} \label{s4}

In what what follows we will denote, for $F$ and $F_0$ as above,
$$
\Gamma = \{ (x',F(x'))\mid \ x' \in \R^8\,\}, \quad  \Gamma_0 = \{ (x',F_0(x'))\mid\ x' \in \R^8\,\}. \quad
$$
By $\Gamma_\ve$ we will  denote the dilated surfaces $\Gamma_\ve= \ve^{-1}\Gamma$. Also,  we shall use the notation:
\be
\rr(x)\,:=\, \sqrt{1 + |x'|^2} , \quad \rr_\ve(x)\, :=\, \rr(\ve x) ,\quad x= (x',x_9) \in \R^8\times\R = \R^9.
\label{re}\ee

We shall refer sometimes to notation and concepts already introduced in \cite{dkwdg}.

\subsection{Local coordinates and the Laplacian near $\Gamma_\ve$}

\medskip
Let us consider the metric $g_{ij}$ of $\Gamma$ around $p$.
Then
$$ g_{ij}(\py) \ := \ \left < \pp_i Y_p,\pp_j Y_p\right> = \delta_{ij} + \theta(\py). $$
We will assume in what follows that the metric satisfies the following uniform estimates: There exists a positive number $C$ such that for all $p\in \Gamma$ we have the estimate
 \begin{align} |\theta(\py) | + |D_\py\theta(\py) | +  |D_\py^4\theta(\py) | \,\le\,  C , \quad   |\py| < 1.
\label{thetap1}\end{align}

\medskip
\subsection{The Laplace Beltrami operator}
The Laplace-Beltrami operator of $\Gamma$ is expressed in these local coordinates as
$$
\Delta_{\Gamma} = \frac 1{\sqrt{\det g( \py) }}\, \pp_{i}\left ( \sqrt{\det g(\py) } \, g^{ij}(\py)\, \pp_j\,\right)
$$
Let us set
$$ a_{ij}^0 (\py) :=  g^{ij}(\py), \quad b_j^0(\py) := \frac 1{\sqrt{\det g( \py) }}\, \pp_{i}\left ( \sqrt{\det g(\py) } \, g^{ij}(\py)\, \right ).
$$
So that
\be
 \Delta_{\Gamma} \,=\,  a^0_{ij}(\py)\,\pp_{ij} +   b^0_i(\py)\,\pp_i, \quad |\py|< \theta R,
\label{L-3}\ee
where
 \begin{align} |a^0_{ij}(\py)-\delta_{ij} | \,\le\,  c\frac {|\py|^2}{R^2} ,&\quad |D_\py a^0_{ij}(\py) | \,\le\,  c\frac {|\py|}{R^2} ,\nonumber \\
  |b_{j}^0(\py) | \,\le\,  c\frac {|\py|}{R^2} ,&\quad\  |D_\py b_{j}^0(\py) | \,\le\,  \frac {c}{R^2}
  \foral   |\py|< \theta R,\ m\ge 2.
\label{L-2}\end{align}

\subsection{The Laplacian near $\Gamma$}

For a certain $\delta>0$ the map
\be
x= X(z,y): = y + z\nu(y), \quad y\in \Gamma, \quad  |z| < \delta r(y)
\label{eer}\ee
defines diffeomorphism onto an expanding tubular neighborhood of $\Gamma$.
Let us consider the manifold
$$ \Gamma^z:= \{ y+ z\nu(y) \ /\ y\in \Gamma\} $$
 The Euclidean Laplacian in $\R^9$ near $\Gamma$ can be expressed in these coordinates
by the well-known formula
\be
\Delta_x  = \pp^2_z + \Delta_{\Gamma^z }  - H_{\Gamma^z}(y)\pp_z
\label{Lapx1}\ee
where $H_{\Gamma^z}(y)$ denotes mean curvature of $\Gamma^z$ at the point  $y + z\nu(y)$
and the operator $\Delta_{\Gamma^z }$ is understood to act on functions of the variable $y$.

\medskip
Using the local coordinates $Y_p(\py)$, \equ{eer} becomes
\be
x= X(z,\py): = Y_p(\py)  + z\nu( \py ),\quad |\py| < \theta R
\label{loccord}\ee
and then the metric tensor $g_{ij}^z$ on $\Gamma^z$ is given by
$$
g_{ij}^z(\py) = g_{ij}(\py) \,+\, z\, \big [\left < \pp_iY_p(\py),\pp_j \nu( \py )   \right > + \left  < \pp_jY_p(\py),\pp_i \nu( \py )   \right >\,\big ] +  z^2\left  < \pp_i\nu (\py),\pp_j \nu( \py )   \right > .
$$
Using that
$$
\nu(\py) =    \frac 1{\sqrt{ 1 + |D_\py G_p(\py)}|^2|}\left [\,-\sum_{j=1}^8 \pp_j G_p(\py) \Pi_j  + \nu(p)\,\right]
$$
for the computation of derivatives we get the expansion
$$
g_{ij}^z(\py) = g_{ij}(\py) + z \theta_1(\py) + z^2\theta_2(\py)
$$
where
$$
|\theta_1(\py)| \, \le \, \frac c R  ,\quad |D_\py \theta_1(\py)|   \, \le \,  \frac c {R^2} ,
$$
\be
|\theta_2(\py)|  \, \le \,  \frac c {R^2}  ,\quad |D_\py \theta_2(\py)|  \, \le \,  \frac c {R^3} .
\ee
Therefore if we let
$$ a_{ij}(\py, z) :=  g^{z ij}(\py), \quad b_j^0(\py) := \frac 1{\sqrt{\det g^z( \py) }}\, \pp_{i}\left ( \sqrt{\det g^z(\py) } \, g^{z ij}(\py)\, \right ).
$$
we get
\be
 \Delta_{\Gamma_z} \,=\,  a_{ij}(\py,z)\,\pp_{ij} +   b_i(\py,z)\,\pp_i,
\label{L1-1}\ee
with $a_{ij}^0$ is given in \equ{L-3},  where
\be
a_{ij}(\py,z) = a_{ij}^0(\py) + za^1_{ij}(\py,z), \quad b_j(\py,z) = b_j^0(\py) + zb^1_{j}(\py,z)
\label{L1-2}\ee
and
$$
|a_{ij}^1|  = O( R^{-1}), \quad  |D_\py a_{ij}^1| = O( R^{-2})\, ,
$$
\be
|b_j^1|  = O( R^{-2}),\quad | D_\py b_j^1| = O( R^{-3}).
\label{L2}\ee

On the other hand, is is well-known that if $k_1,\ldots, k_8$ denote the principal curvatures of  $\Gamma$, then
$$
H_{\Gamma^z} (y) \ =\ \sum_{i=1}^8 \frac {k_i(y)}{1- zk_i(y)}
$$
Since $\Gamma$ is a minimal surface we have that  $\sum_{i=1}^8  k_i = 0 $, therefore
\be
H_{\Gamma^z} (y) \ =\  z|A_{\Gamma} |^2  +  z^2\sum_{i=1}^8 k_i^3  +   z^3\sum_{i=1}^8 k_i^4  + z^4\theta(y,z)
\label{L3}\ee
where
\be
|A_{\Gamma} |^2 =  \sum_{i=1}^8  k_i^2 : = O(r^{-2}),
\label{L4}\ee
\be
|\theta(y,z)| = O( r(y)^{-5} ),\quad  |D\theta(y,z) | = O( r(y)^{-6} ) .
\label{L5}\ee

\subsection{Coordinates near $\Gamma_\ve$}

The previous expressions generalize by scaling to $\Gamma_\ve$ in particular
the coordinates $Y_p$ induce naturally local coordinates in $\Gamma_\ve$.
If $p_\ve  = \ve^{-1}p$, $p\in \Gamma$, we have that the map
\be
\py \in B(0,\theta R/\ve)\subset \R^8 \ \longmapsto \ Y_{p_\ve}(\py) :=  \ve^{-1} Y_p(\ve \py ) \, \in\,\Gamma_\ve.
\label{E0}\ee
defines a local parametrization. The metric on $\Gamma_\ve$ in these coordinates is simply computed as $g_{ij}(\ve\py)$. This yields the expansion

\be
\Delta_{\Gamma_\ve} =  \Delta_\py + (a_{ij}^0(\ve y)-\delta_{ij})\, \pp^2_{ij} + \ve b_i^0(\ve\py) \pp_i, \quad |\py| < \ve^{-1}\theta R.
\label{E1}\ee
We denote in what follows
$$ \rr(y):= \rr (y',y_9) := \sqrt{ 1 + |y'|^2}, \quad y\in \Gamma $$
and
$$ \rr_\ve (y):= \rr (\ve y) , \quad y\in \Gamma_\ve .$$

For some $\delta>0$,  the  following map defines coordinates for a expanding neighborhood of
$\Gamma_\ve$:
\be
x= X(y,z) := y + z\nu(\A y) ,\quad y\in \Gamma_\ve, \ |z| < \delta \ve^{-1}\, \rr( \ve y)
\ee
is computed as
\be
\Delta =  \pp^2_z + \Delta_{\Gamma_\ve^{\ve z}}    - \ve H_{\Gamma^{\ve z}}(\ve y) \, \pp_z
\label{E2}\ee
where now
\be
\Delta_{\Gamma_\ve^{\ve z} }   = \Delta_{\Gamma_\ve} + \ve z a^1_{ij}(\ve\py, \ve z) \pp^2_{ij} +
 \ve^2 z b^1_{j}(\ve \py, \ve z) \pp_{j}.
\label{E3}\ee
and
\be
\ve H_{\Gamma^{\ve z}}(\ve y) = \ve^2 z |A_\Gamma(\ve y)|^2 + \ve^3 z^2\sum_{i=1}^8 k_i(\ve y)^3 \ +
\nonumber\ee
\be
\ve^4  z^3\sum_{i=1}^8 k_i^4(\ve y)   +  \ve^5 z^4 \theta(y,z)
\label{E4}\ee

\subsection{The shifted coordinates}

We consider now a bounded smooth function $h(y)$ defined on $\Gamma$ and the coordinates near $\Gamma_\ve$,
\be
x= X^h(y,t) := y + (t +  h(\ve  y))\nu(\ve y) ,\quad y\in \Gamma_\ve, \ |t| < \delta \ve^{-1}\, \rr( \ve y)
\label{E12}\ee
We compute the Laplacian in these coordinates.  We obtain now

\begin{align}
\Delta_x \, & = \, \pp_t^2 + a_{ij}(\ve \py ,\ve z)\, \pp_{ij} \,+\, \ve b_j(\ve \py ,\ve z)\, \pp_j
\  \nonumber\\
& +\ \ve^2 a_{ij}(\ve \py ,\ve z)\, \pp_i h (\ve \py)\, \pp_j h(\ve \py) \,\pp_t^2 - 2 \ve a_{ij}(\ve \py ,\ve z)\, \pp_i h (\ve \py)  \,\pp_{jt}  \nonumber\\
& -\  \big \{\, \ve^2 \,[ a_{ij}(\ve \py ,\ve z)\,\pp_{ij}h(\ve \py)  \,+\, b_j(\ve \py ,\ve z)\,\pp_jh(\ve \py)\,]\, +\,  \ve \,H_{\Gamma^{\ve z}}(\ve \py)\,\big \} \, \pp_t
\label{euclidean2}\end{align}
where $ z= \ve( t+ h(\ve \py))$.

Since
\be
\Delta_{\Gamma_\ve} = a_{ij}(\ve \py,0) \pp_{ij} + \ve b_i(\A \py,0) \pp_i ,
\label{la}\ee
we can also decompose

\be
\Delta_x = \partial_{tt} + \Delta_{\Gamma_\A}  + B
\label{BB1}\ee
 where the small operator $B$, acting on functions of $(y,t)$ is given in local coordinates by
\begin{align}
B \, & = \,  \ve z a^1_{ij}(\ve \py ,\ve z)\, \pp_{ij} \,+\, \ve^2 z b_j^1 (\ve \py ,\ve z)\, \pp_j
\  \nonumber\\
& +\ \ve^2 a_{ij}(\ve \py ,\ve z)\, \pp_i h (\ve \py)\, \pp_j h(\ve \py) \,\pp_t^2 - 2 \ve a_{ij}(\ve \py ,\ve z)\, \pp_i h (\ve \py)  \,\pp_{jt}  \nonumber\\
& -\  \big \{\, \ve^2 \,[ a_{ij}(\ve \py ,\ve z)\,\pp_{ij}h(\ve \py)  \,+\, b_j(\ve \py ,\ve z)\,\pp_jh(\ve \py)\,]\, +\,  \ve \,H_{\Gamma^{\ve z}}(\ve \py)\,\big \} \, \pp_t
\label{BB2}\end{align}


\subsection{ The perturbed epigraph}
We fix  a positive number $M$ and assume for the moment that $h$ is a smooth function such that

\be
\| D_\Gamma ^2 h\|_{L^\infty(\Gamma)}\ +\ \| D_\Gamma  h\|_{L^\infty(\Gamma)} + \|h\|_{L^\infty(\Gamma)} \ \le \ M
\label{ass0}\ee
uniformly in small $\ve$
and set
$$ \Gamma^h_\ve = \{ y +  h(\ve  y)\nu(\ve y) \ /\  y\in \Gamma_\ve \}. $$
$\Gamma^h_\ve$ is an embedded manifold provided that $\ve$ is sufficiently small, that separates $\R^N$ into two components. We call
$\Omega_\ve^h $ the upper component. Under suitable smallness of $h$,  the implicit function theorem yields that this set
is the epigraph of an entire smooth function $F_\ve^h(x')$,
\be \Omega_\ve^h := \{ (x',x_9) \ /\ x_9> F_\ve^h(x')\} \label{epi1}\ee
whose boundary is of course $\Gamma_\ve^h$.

\subsection{The problem and a first approximation}

We want to solve the problem
$$ S[u]:= \Delta u  + f(u) = 0 \inn \Omega_\ve^h $$
$$u = 0, \quad \pp_\nu u = constant  \onn  \Gamma_\ve^h $$
for a small function $h$ and prove later on that $\Omega_\ve^h $ has the form \equ{epi1}
We observe that in the coordinates
$$
x= y + (t+h(\ve y))\, \nu(\ve y) , \quad y\in \Gamma_\ve,\quad  |t| < \delta\ve^{-1} \rr_\ve(y).
$$
we have that $x\in \Omega_\ve^h$ if and only if $t>0$. The problem for  then becomes

\begin{align}
S[u]\, =\, \Delta_x u \  +\ f(u) \ = \ 0\inn \Omega_{\ve}^h,
\nonumber\\
u(y,0)\, =\,\ 0\ \  \quad \foral y\in \Gamma_\ve,
\nonumber\\
\pp_t u(y,0)\, =\, constant  \foral y\in \Gamma_\ve.
\label{uuuu}\end{align}
We have the existence of a unique solution $w(t)$ to the problem
$$
w''\ + \ f(w) = 0 \inn (0,\infty),
$$
$$
w(0)= 0 , \quad  w(+\infty) = 1.
$$

As a first approximation, close to $\Gamma_\ve^h$ we then take
$$
u_0(x) := w(t).
$$
Using formula \equ{euclidean2}, we find that the error of approximation is then given by
\begin{align}
S[u_0] \, & = \,
  \ve^2 a_{ij}(\ve \py ,\ve z)\, \pp_i h (\ve \py)\, \pp_j h(\ve \py) \,w''(t)
   \nonumber\\
& -\  \big \{\, \ve^2 \,[ a_{ij}(\ve \py ,\ve z)\,\pp_{ij}h(\ve \py)  \,+\, b_j(\ve \py ,\ve z)\,\pp_jh(\ve \py)\,]\, +\,  \ve \,H_{\Gamma^{\ve z}}(\ve \py)\,\big \} \, w'(t) .
\label{error0}\end{align}
where $ z= ( t+ h(\ve \py))$.
Recalling that
$$ a_{ij}(\py,z) = a_{ij}^0(\py) + z a_{ij}^1(\py, z), \quad  b_j(\py,z) =  b_j^0(\py) + z  b_j^1(\py, z),\quad
\Delta_\Gamma = a_{ij}^0 \pp_{ij} + b_j^0\pp_j,
$$
and using the expansion \equ{E4} for the mean curvature, we then write
\begin{align}
S[u_0] \,  = \, -\ve^2 [\Delta_\Gamma h  + |A_\Gamma|^2\,h\,] \, w'\ +   \  \ve^2
a_{ij}\, \pp_i h \, \pp_j h \,w''
   \nonumber\\
-\ [  \ve^2 t |A_\Gamma|^2 w'+ \ve^3 \sum_{i=1}^8 k_i^3 (t+h)^2 w' +  \ve^4 \sum_{i=1}^8 k_i^4  (t+h)^3  w']
\nonumber\\
 -\  \big \{\, \ve^3 (t+ h)\,[   a_{ij}^1\,\pp_{ij}h  \,+\,  b_j^1\,\pp_jh \,]\,w' \, +\,
   \ve^5 \,(t+h)^4 \theta  \,\big \} \, w'
\label{error01}\end{align}
where all coefficients are evaluated at $(\ve \py , \ve(t+ h(\ve\py ))$ or $\ve \py$.

\medskip
What we will do  is to improve this first approximation by choosing $h$ in such a way that
at main order
the relation
$$
\int_\R S[u_0]\, (y, t)\, w'(t) \, dt = 0 \foral y\in \Gamma_\ve
$$
is satisfied.
Under this condition the addition to $u_0$ of a suitable, explicitly computed small term,
reduces the error. We will actually carry out this procedure in successive steps that we describe next.
Let us take the function $h(y)$ to have the following form
\be
h(y) = h_0 + \ve h_1(y) + \ve^2 h_2(y) + \ve^3 \hh (y)
\label{formah}\ee
where the functions $h_0$, $h_1$, $h_2$ will be explicitly chosen

\subsection{First improvement of approximation}
We will next add to $u_0$ a convenient function $\phi_1(y,t)$ of size $O(\ve^2)$ that does not change the
boundary conditions and eliminates the quadratic term in $\ve$  in the new error $S[u_0 +\phi_1]$.

To this end we consider the linear one dimensional problem
\be
p'' + f'(w(t))p = q(t), \quad t\in (0,\infty),\quad p(0) = p'(0) =0
\label{eqpsi}\ee

The solution to this equation is given by
\be
p(t) \ =\ w'(t) \int_0^t \frac {d\tau}{{w'(\tau)}^2}\int_0^\tau w'(s)\,q(s)\, ds \ .
\label{formula p}\ee
If $q$ is a bounded function, $p$ will be bounded if and only if
$$
\int_0^\infty q(t)\,w'(t)\, dt = 0 . $$

Let $c_0$ be the number such that
$$
\int_0^\infty  (t + c_0) w'(t)^2\, dt \ =\ 0,
$$
and let $p_0(t)$ be the solution  of
\be
p_0'' + f'(w(t))p_0  \ = \ (t+c_0)w'(t), \quad t\in (0,\infty),\quad p_0(0) = p_0'(0) =0,
\ee
given by formula $\equ{formula p}$.
We observe that $$ p_0(t) \sim t^3 e^{-t}\quad \hbox{as } t\to +\infty\ .$$


Let
$$
\phi_0(y,t) :=  \ve^2 |A_\Gamma (\ve y )|^2 \,p_0(t).
$$
We observe that
\be
S[u +\phi] = \Delta \phi + f'(u)\phi +  S[u]+ N(u,\phi)
\label{sincrem}\ee
where
$$
N(u,\phi) = f(u + \phi) - f(u) -f'(u)\phi .
$$

Next we estimate the new error of approximation, $S(u_0 + \phi_0)$.
To do this and for later computations, it is
useful  the following lemma, that follows directly  from  formula \equ{euclidean2}.

\begin{lemma}\label{lemin0}
Let $\psi (y)$, $p(t)$ be smooth functions  defined respectively on $\Gamma$  and on $(0,\infty)$. Let us set
$$ \phi(x) \ =\  \psi(\ve y) \, p(t),\quad x= y + (t+ h(\ve y) )\, \nu(\ve y).  $$
Then
\begin{align}
\Delta_x\phi  \, & = \, \psi\, p'' \, +\, \ve^2 \big[ a_{ij}\, \pp_{ij}\psi   \,+\, \ve \,b_j \pp_j\psi \,\big]\, p'
\  \nonumber\\
& +\ \ve^2\psi\, a_{ij}\, \pp_i h \, \pp_j h  \,p'' - 2 \ve^2 a_{ij}\, \pp_i h   \,\pp_{j}\psi\,p'   \nonumber\\
& -\  \big \{\, \ve^2 \,[ a_{ij}\,\pp_{ij}h \,+\, b_j\,\pp_jh\,]\, +\,  \ve \,H_{\Gamma^{\ve(t+h)}}\,\big \} \, \psi \, p'
\label{euclidean3}\end{align}
where the coefficients are evaluated at $\ve \py$ or $(\ve\py, \ve( t+ h(\ve \py))$, and we recall
$$
\ve \,H_{\Gamma^{\ve(t+h)}} = \ve^2(t+ h)|A_\Gamma|^2 + \ve^3(t+ h)^2\sum_{i=1}^8 k_i^3 +  \ve^4(t+ h)^3\sum_{i=1}^8 k_i^4  + \ve^5 (t+ h)^3\theta,
$$
$$
\theta = O( \rr_\ve^{-5}).
$$
\end{lemma}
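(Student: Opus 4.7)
The statement is obtained by a routine application of the general formula \equ{euclidean2} for $\Delta_x$ in the shifted tubular coordinates, specialized to the separable ansatz $\phi = \psi(\ve y)\, p(t)$. My plan is simply to substitute $\phi$ into each of the six pieces of \equ{euclidean2} and invoke the chain rule in $y$ and $t$; no new ideas are needed.

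The required derivatives are immediate. Since $\psi$ is evaluated at $\ve y$, each $y$-differentiation generates an extra factor of $\ve$, so
\[
\pp_j \phi = \ve\, (\pp_j\psi)(\ve y)\, p(t),\quad
\pp_{ij}\phi = \ve^2\, (\pp_{ij}\psi)(\ve y)\, p(t),\quad
\pp_{jt}\phi = \ve\, (\pp_j\psi)(\ve y)\, p'(t),
\]
while $\pp_t\phi = \psi p'$ and $\pp_t^2 \phi = \psi p''$. Substituting into \equ{euclidean2} piece by piece: the leading $\pp_t^2$ gives $\psi p''$; the part $a_{ij}\pp_{ij} + \ve b_j \pp_j$, applied to the product, produces $\ve^2[a_{ij}\pp_{ij}\psi + b_j \pp_j \psi]\,p$ (each $y$-derivative of $\psi$ bringing down its own $\ve$ through the chain rule); the quadratic correction $\ve^2 a_{ij}\pp_i h \pp_j h\,\pp_t^2$ gives $\ve^2 \psi\, a_{ij}\pp_i h \pp_j h\, p''$; the mixed term $-2\ve a_{ij}\pp_i h\,\pp_{jt}$ gives $-2\ve^2 a_{ij}\pp_i h \pp_j\psi\, p'$; and the final bracket in \equ{euclidean2}, being a first-order operator in $t$, multiplies $\psi p'$ by its bracketed coefficient. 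Collecting the six contributions reproduces the identity \equ{euclidean3} in the statement.

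The expansion of $\ve H_{\Gamma^{\ve(t+h)}}$ recorded at the end of the lemma is simply the mean-curvature expansion \equ{L3} derived in the previous subsection, specialized at the normal coordinate $z = \ve(t+h(\ve y))$; the size $|A_\Gamma|^2 = O(\rr_\ve^{-2})$ and the remainder bound $|\theta(y,z)| = O(\rr_\ve^{-5})$ come from \equ{L4}--\equ{L5}.

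There is no substantive obstacle here: the lemma is a bookkeeping identity whose value lies in providing a standardized, reusable formula rather than in the derivation itself. In subsequent sections every corrector will be built as a finite sum of separated products $\psi_k(\ve y)\,p_k(t)$, and this lemma will allow a systematic computation of $\Delta_x \phi_k$, and hence of the iterated error $S[u_0+\sum_k \phi_k]$, while keeping precise track of the orders in $\ve$ and of the dependence on the geometry of $\Gamma$ and on the perturbation $h$.
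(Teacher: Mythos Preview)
Your proposal is correct and matches the paper's own approach exactly: the paper simply asserts that the lemma ``follows directly from formula \equ{euclidean2}'', and you carry out that direct substitution term by term. One small point: your computation of the second block yields $\ve^2[a_{ij}\pp_{ij}\psi + b_j\pp_j\psi]\,p$ (with $p$, not $p'$, and no extra $\ve$ on $b_j$), which differs from the displayed \equ{euclidean3} but agrees with how the formula is actually used immediately afterwards in the paper (see the $\ve^4[\,a_{ij}\pp_{ij}|A_\Gamma|^2 + b_j\pp_j|A_\Gamma|^2\,]\,p$ term); this is a typo in the statement, not a flaw in your argument.
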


Using formula \equ{euclidean3} we then get
\begin{align}
\Delta_x\phi_0 + f'(u_0)\phi_0  \, & =      \ve^2 \,|A_\Gamma|^2(t+ c_0)\, w'  + \, \ve^4 \big[ a_{ij}\, \pp_{ij} |A_\Gamma|^2   \,+\, \,b_j \pp_j |A_\Gamma|^2  \,\big]\, p
\  \nonumber\\
& +\ \ve^4 |A_\Gamma|^2 \, a_{ij}\, \pp_i h \, \pp_j h  \,p'' - 2 \ve^3 a_{ij}\, \pp_i h   \,\pp_j |A_\Gamma|^2 \,p'   \nonumber\\
& -\  \big \{\, \ve^4 \,[ a_{ij}\,\pp_{ij}h \,+\, b_j\,\pp_jh\,]\, +\,  \ve^3 \,H_{\Gamma^{\ve(t+h)}}\,\big \} \, |A_\Gamma|^2 \, p'
\end{align}

From formula \equ{sincrem} we have
\be
S[u_0 +\phi_0]  = \Delta \phi_0 + f'(u_0)\phi_0 +  S[u_0] +  \frac 12 f''(u_0)\phi_0^2 + \frac { \phi_0^3 }6 \int_0^1 f'''(u_0+ s\phi_0)\, ds \, .
\label{sin}\ee
Using \equ{error01} and \equ{euclidean3} we then get

\begin{align}
S[u_0 +\phi_0] \, & =
  \, -\,\ve^2\, [\Delta_\Gamma  h  + |A_\Gamma|^2\,h\,] \, w'\ + \ve^2 \,|A_\Gamma|^2(t+ c_0)\, w' -\ve^2 |A_\Gamma|^2t w'
   \nonumber\\
&  \,+   \,  \ve^2
a_{ij}\, \pp_i h \, \pp_j h \,w'' -\ [   \ve^3 \sum_{i=1}^8 k_i^3 (t+h)^2 w' +  \ve^4 \sum_{i=1}^8 k_i^4  (t+h)^3  w']
\nonumber\\
&+ \, \ve^2 \big[ a_{ij}\, \pp_{ij} |A_\Gamma|^2   \,+\, \,b_j \pp_j |A_\Gamma|^2  \,\big]\, p
\nonumber\\
& -\  \big \{\, \ve^3 (t+ h)\,[   a_{ij}^1\,\pp_{ij}h  \,+\,  b_j^1\,\pp_jh \,]\,w' \, +\,
  \ve^5 \,(t+h)^4 \theta  \,\big \} \, w'
\  \nonumber\\
& +\ \ve^4 |A_\Gamma|^2 \, a_{ij}\, \pp_i h \, \pp_j h  \,p'' - 2 \ve^3 a_{ij}\, \pp_i h   \,\pp_j |A_\Gamma|^2 \,p'   \nonumber\\
& -\  \ve^4 \,\big \{\,[ a_{ij}\,\pp_{ij}h \,+\, b_j\,\pp_jh\,]\,  + \, |A_\Gamma|^2(t+ h)
\,+\,   \ve \sum_{i=1}^8 k_i^3 (t+ h)^2 + \cdots \,\big \} \, |A_\Gamma|^2 \, p' \  \nonumber\\
& + \,   \frac {\ve^4}2 f''(w) |A_\Gamma|^4 p^2  +  \,\frac {\ve^6  }6 p^3\, |A_\Gamma|^6  \int_0^1  \,f'''(w+ s\phi_0) \, ds\,.
\label{euclidean5}\end{align}

Next we proceed to make a choice at main order of the parameter function $h$
by writing
$$
h= h_0 + \ve h_1 + \ve^2 h_2 +  \hh
$$
We choose $h_0 \equiv c_0$ and replace in the above expression. We get

\begin{align}
S[u_0 +\phi_0] \, & =
  \, -\,\ve^2\, [\Delta_\Gamma  \hh  + |A_\Gamma|^2\,\hh\,] \, w'\
  \nonumber\\
&  \, - \,\ve^3\, [\Delta_\Gamma h_1  + |A_\Gamma|^2\, h_1\,]\, w' -   \ve^3 \sum_{i=1}^8 k_i^3 (t+h_0)^2 w'
\nonumber\\
&  \,- \ve^4\, [\Delta_\Gamma h_2  + |A_\Gamma|^2\, h_2\,]\, w' - \ve^4\sum_{i=1}^8 k_i^4  (t+h_0)^3  w'
\nonumber\\
&  \,   + \, \ve^4 \Delta_\Gamma\,|A_\Gamma|^2\, p_0
   + \,   \frac {\ve^4}2 f''(w) |A_\Gamma|^4 p_0^2   - \ve^4 \, |A_\Gamma|^4(t+ h_0) \, p'
   \nonumber\\
&  \, -\,  2\ve^4 \sum_{i=1}^8 k_i^3h_1\,(t+h_0)  w'
\nonumber\\&\,
\, + \, {\mathcal R}_1( h),
\label{pl}\end{align}
where
\begin{align}
\, {\mathcal R}_1( h)\, = &\,   \,\  \ve^2
a_{ij}\, \pp_i h \, \pp_j h\,w'' \nonumber\\
&  \,  -\, \ve^3 \sum_{i=1}^8 k_i^3 [\,(t+h)^2-(t+h_0)^2 -2(t+h_0)\ve h_1 \,]  w'
\nonumber\\
&
- \, \ve^4 \sum_{i=1}^8 k_i^4  [(t+h)^3-(t+h_0)^3]  w'
\nonumber\\
&+ \, \ve^5(t+h) \big[ a_{ij}^1\, \pp_{ij} |A_\Gamma|^2   \,+\, \,b_j^1 \pp_j |A_\Gamma|^2  \,\big]\, p_0
\nonumber\\
& -\,   \ve^3 (t+ h)\,[   a_{ij}^1\,\pp_{ij}h  \,+\,  b_j^1\,\pp_jh \,]\,w' \, +\,
  \ve^5 \,(t+h)^4 \theta \, w'
\  \nonumber\\
& +\ \ve^4 |A_\Gamma|^2 \, a_{ij}\, \pp_i h \, \pp_j h  \,p_0'' - 2 \ve^3 a_{ij}\, \pp_i h   \,\pp_j |A_\Gamma|^2 \,p_0'   \nonumber\\
& -\  \ve^4 \,\big \{\,[ a_{ij}\,\pp_{ij}h \,+\, b_j\,\pp_jh\,]\,  + \, |A_\Gamma|^2(h-h_0)
\,+\,   \ve \sum_{i=1}^8 k_i^3 (t+ h)^2 + \cdots \,\big \} \, |A_\Gamma|^2 \, p' \  \nonumber\\
& \,+  \,\frac {\ve^6  }6 p_0^3\, |A_\Gamma|^6  \int_0^1  \,f'''(w+ s\phi_0) \, ds\,.
\label{euclidean7}\end{align}

\medskip
\subsection{Second improvement of approximation}
Similarly to the introduction of $\phi_0$ and $h_0$, a convenient choice of the functions
$h_1$  and $h_2$ will allow us to eliminate the largest terms in the error above.
To this end, we need to achieve at main order the orthogonality
$$
\int_0^\infty  S[ u_0+ \phi_0]\, w'\, dt\, = 0.
$$
Thus we require first that at main order $h_1$ and $h_2$ are such that
$$
\int_0^\infty  \big(\, [\Delta_\Gamma h_1  + |A_\Gamma|^2\, h_1\,]\,w'  +  \sum_{i=1}^8 k_i^3  (t+h_0)^2 w'\,\big)\,w'\, dt \,=\, 0 ,
$$
and
$$
\int_0^\infty  \big(\,  [\Delta_\Gamma h_2  + |A_\Gamma|^2\, h_2\,]\, w'  +  \sum_{i=1}^8 k_i^4  (t+h_0)^3  w'\qquad\qquad
 $$
 $$
\qquad \,+\   \Delta_\Gamma\,|A_\Gamma|^2\, p_0
- \frac {1}2 f''(w) |A_\Gamma|^4 p_0^2   +  |A_\Gamma|^4(t+ h_0) \, p'\,\big)\,w'\, dt \,=\, 0 .
$$

According to Proposition \ref{prophc}  we can find functions $\hh_1$, $\hh_2$, $\hh_3$
such that
$$
 \Delta_\Gamma  \hh_1  + |A_\Gamma|^2\,\hh_1 = \sum_{i=1}^8 k_i^3 ,\quad
$$
$$
 \Delta_\Gamma  \hh_2  + |A_\Gamma|^2\,\hh_2 = \sum_{i=1}^8 k_i^4  ,\quad
$$
$$
 \Delta_\Gamma  \hh_3  + |A_\Gamma|^2\,\hh_3 =  |A_\Gamma|^4   ,\quad
$$
Then we let

$$
h_1\, := \,  c_1 \hh_1, \quad h_2\, := \,  c_2 \hh_2 +  c_3|A_\Gamma|^2 + (c_4- c_3)\hh_3 ,
$$
 where
 $$
c_1\ :=\  -\frac{\int_0^\infty  (t+h_0)^2 {w'}^2\, dt}{\  \int_0^\infty   {w'}^2\, dt }\ ,\quad
 c_2 = -\frac{ \int_0^\infty   (t+h_0)^3 \, {w'}^2 \, dt }{  \int_0^\infty   {w'}^2\, dt  }\, ,\quad
 $$
 $$
 c_3 = \frac{ \int_0^\infty   p_0\, {w'} \, dt} { \int_0^\infty   {w'}^2\, dt }\, ,\quad
  c_4 = \frac{\int_0^\infty   \big [ \frac {1}2 f''(w)p_0^2 - (t+ h_0)p_0' \, \big]\,w' \, dt} { \int_0^\infty   {w'}^2\, dt }\, .
 $$

 Then expression
 \equ{pl} becomes

 \begin{align}
S(u_0 +\phi_0) \,= &
  \, -\,\ve^2\, [\Delta_\Gamma  \hh  + |A_\Gamma|^2\,\hh\,] \, w'\
  \nonumber\\
&  \,  + \, \ve^3 \, \psi_1(\ve y)\, q_1(t)
\,
  +  \ve^4\, \sum_{\ell =2}^5 \psi_\ell (\ve y)\, q_\ell (t)
\nonumber\\& \, +\,  {\mathcal R}_1( h),
\label{plr}\end{align}
where
 \begin{align}
\psi_1(\ve y)\, q_1(t)\, :=&\,  -\sum_{i=1}^8 k_i^3 (\ve y)\,[(t+h_0)^2 w'(t) + c_1w'(t)\,]\,,
\nonumber\\
\psi_2(\ve y)\, q_2(t)\, :=&\,  \ \quad \Delta_\Gamma\,|A_\Gamma(\ve y)|^2\,[\, p_0(t) - c_3 w'(t)\,]\,,
\nonumber\\
\psi_3(\ve y)\, q_3(t)\, :=&\ -  \sum_{i=1}^8 k_i^4(\ve y) [(t+h_0)^3 w'(t) + c_2w'(t)\,]\,,
\nonumber\\
\psi_4(\ve y)\, q_4(t)\, :=&\,  \,\ \quad  |A_\Gamma(\ve y)|^4\, [\,  \frac {1}2 f''(w(t))\,  p_0^2(t)   -  (t+ h_0) \, p_0'(t) - c_4 w'(t)\,],
\nonumber\\
\psi_5(\ve y)\, q_5(t)\, := &\,   -\,  2\sum_{i=1}^8 k_i^3(\ve y)h_1(\ve y)\,(t+h_0)  w'(t)\, .
\label{plr1}\end{align}
The constants $c_\ell$ have been chosen so that
$$
\int_0^\infty q_\ell (t)\, w'(t)\, dt \, =\, 0, \quad \ell =1,\ldots,5.
$$
Hence the solution $p_\ell(t)$ to the problem
\be
p_\ell''(t)\, + \,f'(w(t))\,p_\ell(t) \, = \,q_\ell (t), \quad t\in (0,\infty),\quad p_\ell(0) = p_\ell'(0) =0
\label{eqpsi2}\ee
is bounded. In fact, for all $\ell$ we have
$$ p_\ell (t) = O( t^8e^{-t})\quad\hbox{as } t\to +\infty. $$
Let us set
$$
\phi_1(y,t)\, :=\, \ve^3 \psi_1(\ve y) \,p_1(t) \, +\, \ve^4 \sum_{\ell =2}^5  \psi_\ell(\ve y) \,p_\ell(t)  $$
and consider as a new approximation the function
$$
u_0 + \phi_0 + \phi_1
$$
Then, according to formula \equ{sincrem}, we have that
$$
S[u_0 + \phi_0 + \phi_1] \ =\ S[u_0 + \phi_0] \ + \ \Delta \phi_1 + f'(u_0)\phi_1  \qquad
$$
$$
\qquad \qquad \qquad \qquad \qquad \  + \ [f'(u_0 + \phi_0)-f'(u_0)\,]\,\phi_1  + N(u_0+\phi_0, \phi_1) .
$$
Hence,
 \begin{align}
S[u_0 +\phi_0+ \phi_1]\,=
  \, -\,\ve^2\, [\Delta_\Gamma  \hh  + |A_\Gamma|^2\,\hh\,] \, w'\
  \,  + \,  {\mathcal R}_2( h),
\label{plr3}\end{align}
where
\begin{align}
{\mathcal R}_2( h) \,= & \,
 \ve^3 (\Delta_x -\pp_t^2)\,[ \psi_1 p_1 ]\, + \, \ve^4 \sum_{\ell =2}^5(\Delta_x -\pp_t^2)[ \psi_\ell p_\ell ]
\,
\nonumber\\&\,
\,  +\,  {\mathcal R}_1( h)   \nonumber\\
&  \,    + \,  [f'(u_0 + \phi_0)-f'(u_0)\,]\,\phi_1 \, +\, N(u_0+\phi_0, \phi_1).
\label{plr4}\end{align}

Now, we can estimate with the aid of Lemma \ref{lemin0} the quantities $(\Delta_x -\pp_t^2)[ \psi_\ell p_\ell ]$, for instance when $\hh=0$. We see that in all these functions the action of the operator is roughly that of adding two powers of $\ve$ in smallness and two powers of $\rr_\ve$ in decay.
Thus we have that
 $$ (\Delta_x -\pp_t^2)[ \psi_\ell p_\ell ] = O( \ve^2\rr_\ve^{-4-\mu} e^{-\gamma t}), $$
sizes that do not change with the introduction of $\hh$ that decays as $O(\rr^{-1})$. The size of
${\mathcal R}_2( h)$ is thus globally estimated as   $O( \ve^3\rr_\ve^{-4-\mu} e^{-\gamma t})$.
We will make precise these statements in terms of norms that we introduce  next.

\subsection{Norms}
We introduce here several norms that will be used in the rest of the paper.
Let $\Lambda$ be an open set of $\R^N$ or of an embedded submanifold. For a function $g$ defined on $\Lambda$ we denote, as usual,
$$
\|g\|_{L^2(\Lambda)}^2 := \int_\Lambda |g|^2, \quad
\|g\|_{H^m(\Lambda )} := \int_\Lambda |D^m g|^2 + \int_\Lambda |g|^2,\quad m\ge 1.
$$
We consider also the following local-uniform norms.
\be
\|g \|_{L^2_{l.u.}(\Lambda )} := \sup_{x\in \Lambda} \|g\|_{L^2( B(x,1)\cap \Lambda)} ,\quad \|g \|_{H^m_{l.u.}(\Lambda )}:= \sup_{x\in \Lambda} \|g\|_{L^2( B(x,1)\cap \Lambda)}
\label{lu}\ee

 For a number $0<\sigma < 1$ we denote, as customary,
\be
[\, g \,]_{\sigma , \Lambda}  :=   \sup \big \{ \frac {|g (y_1) - g(y_2 ) |}{ |y_1-y_2|^\sigma }\ /\ y_1,y_2\in \Lambda , \ y_1\ne y_2 \big \}.
\label{semi}\ee
We let
\be
\|g\|_{C^{0,\sigma}(\Lambda)} :=  \| g\|_{L^\infty (\Lambda )} + [\, g \,]_{\sigma , \Lambda}
\label{holder0}\ee
and for $k\ge 1$,
\be
\| g\|_{C^{k,\sigma}_0(\Lambda ) }:=   \| g\|_{C^{0,\sigma}(\Lambda ) } + \| D^k g\|_{C^{0,\sigma}(\Lambda ) }  .   \label{holder1}\ee

\medskip
We consider now a submanifold $\Gamma$ in $\R^{m+1}$ and its dilation $\Gamma_\ve$.
We denote
\be
\rr (y',y_{m+1}) := \sqrt{ 1 + |y'|^2} , \quad y\in \Gamma,
\label{rr}\ee
and
\be
\rr_\ve (y) := \rr (\ve y) , \quad y\in \Gamma_\ve.
\ee
We consider local-uniform weighted H\"older norms involving powers of $\rr_\ve$.
Let $g$ be a function defined on $\Gamma_\ve$. For a number $\nu \ge 0$ we define

\be
\|g \|_{C^{0,\sigma}_\nu (\Gamma_\ve)} \, :=\,   \|\rr_\ve^\nu\,g\|_{L^\infty(\Gamma_\ve)} + \, [ \rr_\ve^\nu\,g \,]_{\sigma, \Gamma_\ve },
\label{norm0} \ee
and for $m\ge 1$,
\be
\|g \|_{C^{m,\sigma}_\nu (\Gamma_\ve)} \, :=\,   \|D^m_{\Gamma_\ve}  g \|_{C^{0,\sigma}_\nu (\Gamma_\ve)}
\, +\, \|g \|_{C^{0,\sigma}_\nu (\Gamma_\ve)}.
\label{norm1} \ee

We will use the same notation to refer to the corresponding norm in $\R^9$ rather than on $\Gamma_\ve$.

\medskip
Let us consider now the case of a function $g(y,t)$ defined on the space $\Gamma_\ve \times (0,\infty)$.
We consider a norm similar to that above, but that measures also exponential decay in the $t$-direction.
For numbers $\nu, \gamma\ge 0$ we denote

\be \|  g\|_{C_{\nu,\gamma}^{0,\sigma}(\Gamma_\ve\times (0,\infty))}\, :=\,
\|\rr_\ve^\nu \, e^{\gamma t} \,g\|_{L^\infty(\Gamma_\ve\times (0,\infty))}\ +
   \, [\, \rr_\ve^\nu \, e^{\gamma t}\,g \,]_{\sigma, \Gamma_\ve\times (0,\infty)}\ .
\label{norm2} \ee

which we observe, is also equivalent to the norm
 \be
\|\rr_\ve^\nu \, e^{\gamma t} \,g\|_{L^\infty(\Gamma_\ve\times (0,\infty))}\ +
   \ \sup_{(y,t)\in \Gamma_\ve\times (0,\infty)} e^{\gamma t}\rr_\ve^\nu(y)\, [\,g \,]_{\sigma, \{B( (x,t),1)\cap \Gamma_\ve\times (0,\infty)\}}\ .
\label{norm22} \ee
We define, correspondingly,

 \be \|  g\|_{C_{\nu,\gamma}^{m ,\sigma}(\Gamma_\ve\times (0,\infty))}\, :=\, \|  g\|_{C_{\nu,\gamma}^{0,\sigma}(\Gamma_\ve\times (0,\infty))}\ +\
\| D^m_{\Gamma_\ve} g\|_{C_{\nu,\gamma}^{0,\sigma}(\Gamma_\ve\times (0,\infty))}
\label{norm3} \ee

We shall denote, also, by simplicity
$$ \| p \|_{C_{0}^{0,\sigma}(\Gamma_\ve)} =: \| p \|_{C^{0,\sigma}(\Gamma_\ve)},\quad
\| g\|_{C_{0,0}^{0,\sigma}(\Gamma_\ve\times (0,\infty))} =: \| g \|_{C^{0,\sigma}(\Gamma_\ve)}
$$

\medskip
The weighted norms introduced above are appropriate for functions that share the same decay properties
as its derivatives. It is important for our purposed to consider a different set of norms
that are suitably adapted to a function $\tg$ defined on a subset $\Lambda$ of $\Gamma$ such that
when differentiated it gains decay in successive negative powers of $\rr(y)$
Let us assume that
\be
\|\,\rr^\nu\, \tg \|_{L^\infty(\Lambda)} <+\infty .
\label{fin}\ee
Roughly speaking, we expect that when differentiated $m$ times, also
$$
\|\,\rr^{\nu+m}\, D^m \tg \|_{L^\infty(\Lambda)} <+\infty .
$$

In this context, since the H\"older seminorm \equ{semi} corresponds roughly to differentiate $\sigma$ times, then it is natural to require that besides \equ{fin}, the following quantity be finite:

$$
[ \tg ]_{\sigma,\nu,\Gamma}\ := \ \sup_{y\in \Gamma} \rr(y)^{\nu + \sigma}  [\, \tg \,]_{\sigma , \{B(y,1)\cap \Gamma\}}
$$
We define
\be
\|\tg\|_{\sigma,\nu,\Lambda} := \| {\rr }^\nu \tg\|_{L^\infty (\Lambda )}  + [ \tg ]_{\sigma,\nu,\Gamma},
\label{norm4}\ee
which is actually equivalent to the norm
$$
\| {\rr }^\nu \tg\|_{L^\infty (\Lambda )}  + [\rr^{\nu + \sigma} \tg ]_{\sigma,\Gamma} .
$$

Let us observe that if $\tg$ is of class $C^1(\Lambda)$ then we
have
$$
\|\tg\|_{\sigma,\nu,\Lambda}\ \le \ C[\, \| {\rr }^\nu \tg\|_{L^\infty (\Lambda )} + \| {\rr }^{\nu+1} D_\Gamma \tg\|_{L^\infty (\Lambda )}\, ].
$$

We define correspondingly

\be
\|\tg\|_{k,\sigma,\nu,\Lambda} := \|\,D_\Gamma^k\tg\|_{\sigma,\nu+k,\Lambda} + \|\tg\|_{k,\sigma,\nu,\Lambda}\ .
\label{norm5}\ee

\bigskip
\subsection{Connection between norms}

A simple but very important fact is the connection existing between the norms
$\| \ \|_{C^{0,\sigma}_{\nu}(\Gamma_\ve)}$ in \equ{norm0} and $\|\ \|_{\sigma,\nu, \Gamma}
$ in \equ{norm4} as described in the following result.

\medskip
Let $p$ be a function defined on $\Gamma$, so that
$$ q(y) := p(\ve y) $$
is defined on $\Gamma_\ve$.

\begin{lemma}\label{lema8}
Let $p$ and $q$ be functions related as above, $\nu>1$. Then there exists a $C>0$ such that the following
inequalities hold.
\be
\| q \|_{C^{0,\sigma}_{\nu}(\Gamma_\ve)} \ \le \ C \|p\|_{\sigma,\nu, \Gamma}
\label{lado1}\ee
and
\be
\|p\|_{\sigma,\nu-\sigma, \Gamma}   \ \le \ C\ve^{-\sigma} \|  q \|_{C^{0,\sigma}_\nu(\Gamma_\ve)} .
\label{lado2}\ee

\end{lemma}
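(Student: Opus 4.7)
The plan is to prove both inequalities by a direct unwinding of the definitions, exploiting the pointwise identity $\rr_\ve(y) = \rr(\ve y)$ under the scaling $z = \ve y$, and then splitting the analysis into near/far regimes corresponding to whether the rescaled distance is below or above $1$. The $L^\infty$ parts of both inequalities are immediate: $\rr_\ve^\nu(y)\,|q(y)| = \rr^\nu(\ve y)\,|p(\ve y)|$, so $\|\rr_\ve^\nu q\|_\infty = \|\rr^\nu p\|_\infty$, which handles both directions (and in fact gives no $\ve^{-\sigma}$ loss for the $L^\infty$ part of (2), since $\rr^{\nu-\sigma}\le \rr^\nu$). All the work is in the Hölder seminorms.

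For inequality \equ{lado1}, I would start from the global seminorm on $\Gamma_\ve$ and, for arbitrary $y_1,y_2\in\Gamma_\ve$, set $z_i=\ve y_i$ to obtain
\[
\frac{|\rr_\ve^\nu(y_1)q(y_1)-\rr_\ve^\nu(y_2)q(y_2)|}{|y_1-y_2|^\sigma}
= \ve^\sigma \, \frac{|\rr^\nu(z_1)p(z_1)-\rr^\nu(z_2)p(z_2)|}{|z_1-z_2|^\sigma}.
\]
In the regime $|y_1-y_2|\ge 1$ I would control the right-hand side by $2\|\rr^\nu p\|_\infty$. In the regime $|y_1-y_2|\le 1$, hence $|z_1-z_2|\le\ve\le 1$, I would split $\rr^\nu p(z_1)-\rr^\nu p(z_2)=\rr^\nu(z_1)[p(z_1)-p(z_2)]+[\rr^\nu(z_1)-\rr^\nu(z_2)]p(z_2)$ and use $|\nabla\rr^\nu|\le C\rr^{\nu-1}$ together with the comparability of $\rr$ on a unit-diameter ball to bound the first piece by $\rr^\nu(z_1)\cdot\rr^{-\nu-\sigma}(z_1)[p]_{\sigma,\nu,\Gamma}|z_1-z_2|^\sigma$ and the second by $C\rr^{-1}(z_1)|z_1-z_2|\cdot\|\rr^\nu p\|_\infty\le C|z_1-z_2|^\sigma\|\rr^\nu p\|_\infty$. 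In both regimes, the extra $\ve^\sigma$ factor is absorbed by $\ve^\sigma\le 1$, and (1) follows.

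For inequality \equ{lado2}, I would fix $z_1\in\Gamma$ and take $z_2\in B(z_1,1)\cap\Gamma$, setting $y_i=\ve^{-1}z_i$. The goal is
\[
\rr^\nu(z_1)\,\frac{|p(z_1)-p(z_2)|}{|z_1-z_2|^\sigma}\;\le\; C\ve^{-\sigma}\|q\|_{C^{0,\sigma}_\nu(\Gamma_\ve)},
\]
which after multiplying by $\rr^{-\sigma}(z_1)$ is the semi-norm part of $\|p\|_{\sigma,\nu-\sigma,\Gamma}$. In the regime $|z_1-z_2|\le\ve$, i.e.\ $|y_1-y_2|\le 1$, I would write $q=\rr_\ve^{-\nu}\cdot(\rr_\ve^\nu q)$ and apply the product rule, using that on the unit ball $B(y_1,1)\subset\Gamma_\ve$ the factor $\rr_\ve$ is only weakly varying: $|\rr_\ve^{-\nu}(y)-\rr_\ve^{-\nu}(y')|\le C\ve\,\rr_\ve^{-\nu-1}(y_1)|y-y'|$, which gives $[q]_{\sigma,B(y_1,1)}\le C\rr_\ve^{-\nu}(y_1)\|q\|_{C^{0,\sigma}_\nu}$ (with $\ve\rr_\ve^{-1}\le 1$). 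Then $|q(y_1)-q(y_2)|\le C\rr_\ve^{-\nu}(y_1)\|q\|_{C^{0,\sigma}_\nu}|y_1-y_2|^\sigma$, and $|y_1-y_2|^\sigma=\ve^{-\sigma}|z_1-z_2|^\sigma$ produces the desired $\ve^{-\sigma}$ factor. In the regime $\ve\le|z_1-z_2|\le 1$, I would use instead the cruder estimate $|q(y_1)-q(y_2)|\le 2\rr_\ve^{-\nu}(y_1)\|q\|_{C^{0,\sigma}_\nu}$ (using comparability of $\rr_\ve$ on $B(y_1,\ve^{-1})$ corresponding to $B(z_1,1)$) and divide by $|z_1-z_2|^\sigma\ge\ve^\sigma$, again producing the $\ve^{-\sigma}$ loss.

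The only mildly delicate point is the near-regime analysis for (2), where one must distribute the weight $\rr_\ve^\nu$ carefully across the product $q=\rr_\ve^{-\nu}(\rr_\ve^\nu q)$ to obtain the right $\rr_\ve^{-\nu}(y_1)$ pointwise factor before rescaling; the $\ve^{-\sigma}$ loss is intrinsic and reflects the fact that the Hölder seminorm of $p$ on $\Gamma$ is insensitive to oscillations of $q$ on the $\ve$-scale that are fully visible to the seminorm of $q$ on $\Gamma_\ve$.
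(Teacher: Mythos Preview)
Your proposal is correct and follows essentially the same approach as the paper: both use the pointwise identity $\rr_\ve^\nu(y)\,q(y)=\rr^\nu(\ve y)\,p(\ve y)$ for the $L^\infty$ parts and a near/far regime split for the H\"older seminorms. The only cosmetic differences are that in \equ{lado1} you split at $|y_1-y_2|=1$ while the paper splits at $|\ve y_1-\ve y_2|=1$ (both work), and in the near regime for \equ{lado2} you spell out the product rule $q=\rr_\ve^{-\nu}(\rr_\ve^\nu q)$ to pass from $\rr_\ve^\nu(y_0)\,[q]_{\sigma,B(y_0,1)}$ to $\|q\|_{C^{0,\sigma}_\nu}$, a step the paper leaves implicit; your parenthetical remark about ``multiplying by $\rr^{-\sigma}$'' is unnecessary since the weight in $[p]_{\sigma,\nu-\sigma,\Gamma}$ is already $\rr^{(\nu-\sigma)+\sigma}=\rr^\nu$.
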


\proof
On the one hand, we have that
$$
\| \rr_\ve^\nu \, q\|_{L^\infty(\Gamma_\ve)} \ = \ \| \rr^\nu \, p\|_{L^\infty(\Gamma)}
$$
On the other hand, for $y_1, y_2\in \Gamma_\ve$ we have that for $\ttt y_l := \ve y_l$,
$$
\frac { (\rr_\ve^\nu q )(y_1) - (\rr_\ve^\nu q )(y_2)}{ | y_1 - y_2|^\sigma }\ =\
\ve^\sigma \frac { (\rr^\nu p)(\ttt y_1) - (\rr^\nu q )(\ttt y_2)}{ | \ttt y_1 - \ttt y_2|^\sigma }
$$

Assuming that $|\ttt y_1- \ttt y_2|< 1$ we get
$$
\frac { (\rr^\nu p)(\ttt y_1) - (\rr^\nu p )(\ttt y_2)}{ | \ttt y_1 - \ttt y_2|^\sigma } =
 \rr^\nu (\ttt y_1) \frac { p(\ttt y_1) -  p (\ttt y_2)}{ | \ttt y_1 - \ttt y_2|^\sigma } +
 p (\ttt y_2)\frac{ \rr^\nu (\ttt y_1)- \rr^\nu (\ttt y_2)}{ | \ttt y_1 - \ttt y_2|^\sigma }
$$
Hence
$$
\frac { |(\rr_\ve^\nu q )(y_1) - (\rr_\ve^\nu q )(y_2)|}{ | y_1 - y_2|^\sigma }\ \le \
C\ve^\sigma [\, \|\rr^\nu  p \|_{L^\infty ( B(\ttt y_1, 1))}  +  \rr^\nu(\ttt y_1) [ p]_{\sigma, B(\ttt y_1, 1)} ]\, \le C\ve^{\sigma} \|p\|_{\sigma,\nu, \Gamma} .
$$
If  $|\ttt y_1- \ttt y_2|\ge 1$ we get that
$$
\frac { |(\rr_\ve^\nu q )(y_1) - (\rr_\ve^\nu q )(y_2)|}{ | y_1 - y_2|^\sigma }\ \le \
C\ve^\sigma \|\rr^\nu  p \|_{L^\infty ( \Gamma )}.
$$
As a conclusion, we get the validity of inequality \equ{lado1}.

\medskip
Now, let us consider an inequality in the opposite direction. Let $\ttt y_0\in \Gamma$ and consider
$\ttt y_1, \ttt y_2\in B(\ttt y_0,1) \cap \Gamma$, and set correspondingly $y_l=\ve^{-1}\ttt y_l$.
Let us assume first that $| \ttt y_1 - \ttt y_2| \le \ve$. We have that
$$
\rr^{\nu} (\ttt y_0) \frac { |p(\ttt y_1) -  p (\ttt y_2)|}{ | \ttt y_1 - \ttt y_2|^\sigma } \ \le \
C \rr_\ve ^{\nu} ( y_0) \ve^{-\sigma} \frac { |q( y_1) -  q(y_2)|}{ | y_1 - y_2|^\sigma } \, \le\,
C \rr_\ve ^{\nu} ( y_0) \ve^{-\sigma} [ \, q\, ]_{\sigma, B(y_0,1)}.
$$

On the other hand if $|\ttt y_1 -\ttt y_2| > \ve $ we have
$$
\rr^{\nu} (\ttt y_0) \frac { |p(\ttt y_1) -  p (\ttt y_2)|}{ | \ttt y_1 - \ttt y_2|^\sigma } \ \le \
C\ve^{-\sigma} \| \rr^\nu_\ve\, q\|_{L^\infty(\Gamma_\ve)}.
$$
Combining these two inequalities yields
$$
[\,p\, ]_{\sigma, \nu-\sigma, \Gamma} \ \le \ C\ve^{-\sigma} \|  q\|_{C^{0,\sigma}_\nu} .
$$
hence, we have obtained the inequality \equ{lado2} and the proof is concluded.
\qed

\bigskip
\begin{remark}\label{rema}
A typical term to which we want to measure its size in $\Gamma_\ve \times (0,\infty)$
has the form
$$
g(y,t)\, = \,    a(y,t)\,  p(\ve y)\, \zeta(t)
$$
where  $\zeta$ is such that  $\zeta(t ) = O(e^{-\gamma t})$ as $t\to +\infty$, as well as its derivatives.
Arguing as in the proof of  Lemma \ref{lema8} we find the estimate:
$$
\| g\|_{C^{0,\sigma}_{\nu,\gamma} (\Gamma_\ve \times (0,\infty))}\ \le \ C\, \|a \|_{C^{0,\sigma}(\Gamma_\ve \times (0,\infty))} \, \|p\|_{\sigma,\nu, \Gamma }.
$$
\end{remark}

\subsection{Conclusion of the construction of the first approximation and computation of error size}\label{1stapprox}
We consider then  $h$ of the form
\be
h(y) \ = \ h_0 + \ve h_1(y) + \ve^2 h_2(y) + \hh (y),\quad y\in \Gamma.
\label{formh1}\ee
where on the $\ve$-dependent parameter function $\hh$,
we assume in what follows that for some fixed $\mu>0$,
\be
\|\hh\|_{2,2+\mu,\sigma, \Gamma}:= \| D^2_\Gamma \hh \|_{4+\mu,\sigma, \Gamma } + \| D_\Gamma \hh \|_{3+\mu,\sigma, \Gamma } + \| \hh \|_{2+\mu,\sigma, \Gamma }\ \le \  \ve .
\label{assh}\ee

We observe that from Corollary \ref{coro}, we have that the functions $h_1$ and $h_2$ satisfy
\be
\|h_1\|_{2, 1,\sigma, \Gamma}\ < \ +\infty ,\quad  \|h_2\|_{2, 2-\tau ,\sigma, \Gamma}\ < \ +\infty ,
\label{formh2}\ee
for any small $\tau>0$.
Since $h_0$ is a constant, we point out that we have in particular

\be
|\nn_\Gamma  h (y)|  \le  C\ve \rr(y)^{-2}   .
\label{formh3}\ee

\medskip
The approximation already built, $u_0 + \phi_0 + \phi_1$ is sufficient for our purposes, except that
it is only defined near $\Gamma^h_\ve$.
Since we have that
$$ w(t) =  1 - O( e^{-\gamma t}) \quad\hbox{as } t\to +\infty
$$
we consider a simple interpolation with the function $1$.
We let $\eta(s)$ be a smooth function with $\eta(s) =1$ is $s<1$ and $=0$ if $s>2$.
For a sufficiently small $\delta>0$ we let
$$
\eta_m (x) = \eta (s), \quad s = \ve t -  {m \delta},\quad x =  y+  (t+ h(\ve y) ) \, \nu(\ve y)
$$
understood this function as identically zero at any point outside its support.

\be
u_1(x) =     \eta_{10}( x)( u_0 + \phi_0 + \phi_1)  + (1- \eta_{10}( x ))\, (+1) ,\quad  s = \ve t -  {10 \delta}\, \rr_\ve (y)
\label{primo}\ee
where the function is understood to be identically equal to $+1$, all over the space, outside the support
of $\eta_{10}$.

\medskip
\subsection{Error size and Lipschitz property}
We shall investigate the size of the error in terms of the H\"older type norms introduced
in the previous section, as well as its Lipschitz dependence on $\hh$. The main part of the error of approximation is of course in the region close to $\Gamma_\ve$. In reality, we can consider the error as a function defined in the entire space $(y,t)\in \Gamma_\ve \times (0,\infty)$,  by setting
\be E(y,s)\ :=\   \eta_3 S[u_1] \ =\ \eta_3\, S[u_0 + \phi_0 + \phi_1]\ ,
\label{error7} \ee
this function understood as zero outside its support.

\medskip
Let us consider the expansion \equ{pl} of the first approximation error and the operator ${\mathcal R}_1( h),$ there appearing, defined in \equ{euclidean7} as
\begin{align}
\, {\mathcal R}_1( h)\, = &\,   \,\  \ve^2
a_{ij}\, \pp_i h \, \pp_j h\,w'' \nonumber\\
&  \,  -\, \ve^3 \sum_{i=1}^8 k_i^3 [\,(t+h)^2-(t+h_0)^2 -2(t+h_0)\ve h_1 \,]  w'
\nonumber\\
&
- \, \ve^4 \sum_{i=1}^8 k_i^4  [(t+h)^3-(t+h_0)^3]  w'
\nonumber\\
&+ \, \ve^5(t+h) \big[ a_{ij}^1\, \pp_{ij} |A_\Gamma|^2   \,+\, \,b_j^1 \pp_j |A_\Gamma|^2  \,\big]\, p_0
\nonumber\\
& -\,   \ve^3 (t+ h)\,[   a_{ij}^1\,\pp_{ij}h  \,+\,  b_j^1\,\pp_jh \,]\,w' \, +\,
  \ve^5 \,(t+h)^4 \theta \, w'
\  \nonumber\\
& +\ \ve^4 |A_\Gamma|^2 \, a_{ij}\, \pp_i h \, \pp_j h  \,p_0'' - 2 \ve^3 a_{ij}\, \pp_i h   \,\pp_j |A_\Gamma|^2 \,p_0'   \nonumber\\
& -\  \ve^4 \,\big \{\,[ a_{ij}\,\pp_{ij}h \,+\, b_j\,\pp_jh\,]\,  + \, |A_\Gamma|^2(h-h_0)
\,+\,   \ve \sum_{i=1}^8 k_i^3 (t+ h)^2 + \cdots \,\big \} \, |A_\Gamma|^2 \, p' \  \nonumber\\
& \,+  \,\frac {\ve^6  }6 p_0^3\, |A_\Gamma|^6  \int_0^1  \,f'''(w+ s\phi_0) \, ds\,.
\label{euclidean8}\end{align}

Let us consider for instance the term,

$$
{\mathcal R}_{11} ( h) =  \ve^2\, a_{ij} ( \ve y , \ve( t+ h)) \, \pp_i h \, \pp_j h\,w''
$$
where we recall
$$ h = h_0 + \ve h_1 + \ve^2 h_2 + \hh . $$
with $\hh$ satisfying \equ{assh}.
Then we have
$$
|\eta_3 {\mathcal R}_{11} ( h)| \, \le \, C \,\ve^2\,  |D_\Gamma  h (\ve y) |^2  \, e^{-\gamma t} \le
 C \, \ve^4\, \rr_\ve^{-4-\mu } \, e^{-\gamma t} ,
$$
so that
$$
\| e^{\gamma t} \rr_\ve^{4 + \mu} \eta_3{\mathcal R}_1( h) \|_{L^\infty( \Gamma_\ve \times (0,\infty))} \ \le \ C \, \ve^4\, .
$$
Using Remark \ref{rema}, we find moreover that
$$
\| \eta_3{\mathcal R}_{11} ( h)\|_{C^{0,\sigma}_{4+\mu,\gamma} (\Gamma_\ve \times (0,\infty))}\ \le \ C\,\ve^4.
$$
Similar estimates are obtained for the remaining terms in  ${\mathcal R}_1( h).$
We then find
$$
\| \eta_3{\mathcal R}_{1} ( h)\|_{C^{0,\sigma}_{4+\mu,\gamma} (\Gamma_\ve \times (0,\infty))}\ \le \ C\,\ve^4.
$$
We want to investigate  next the Lipschitz character of the operator   ${\mathcal R_1}( h).$
Let us consider again our model operator ${\mathcal R_{11}}( h).$
We have that

$$
\ve^{-2}[\eta_3{\mathcal R}_{11} ( h_1)- {\mathcal R}_{11} ( h_2)]\, = \,  [\, \eta_3a_{ij}( \ve y, \ve (  t + h_1))\,-\,
\eta_3a_{ij}( \ve y, \ve (  t + h_2))\, ]\, \pp h^1_i \pp h^1_j \,w''
$$

$$
\qquad\qquad\qquad \, +\,\eta_3a_{ij}( \ve y, \ve (  t + h_2))\, [\, \pp h^1_i \pp h^1_j - \pp h^2_i \pp h^2_j \, ]\,w'',
$$
and hence from Remark \ref{rema}
$$
\ve^{-2}\| \eta_3{\mathcal R}_{11} ( h_1)- \eta_3{\mathcal R}_{11} ( h_2)\|_{C^{0,\sigma}_{4+\mu,\gamma} (\Gamma_\ve \times (0,\infty))} \, \le  \,
$$

$$
C \| \eta_3a_{ij}( \ve y, \ve (  t + h_1))\,-\,
\eta_3a_{ij}( \ve y, \ve (  t + h_2)) \,\|_{C^{0,\sigma} (\Gamma_\ve \times (0,\infty))} \, \,
\| \pp h^1_i \pp h^1_j  \,\|_{\sigma, 4 +\mu, \Gamma}
$$

$$
\ + \
C \| \eta_3a_{ij}( \ve y, \ve (  t + h_2))\,\|_{C^{0,\sigma} (\Gamma_\ve \times (0,\infty))} \, \,
\|  \pp h^1_i \pp h^1_j - \pp h^2_i \pp h^2_j   \,\|_{\sigma, 4 +\mu, \Gamma}.
$$
Thus
$$
\| \eta_3{\mathcal R}_{11} ( h_1)- \eta_3{\mathcal R}_{11} ( h_2)\|_{C^{0,\sigma}_{4+\mu,\gamma} (\Gamma_\ve \times (0,\infty))} \, \le  \, C\ve^{3}\| D_\Gamma h_1 -D_\Gamma h_1\|_{\sigma, 2 +\mu, \Gamma}\ +\ \, C\ve^{5}\| h_1 - h_1\|_{\sigma,0,\Gamma}.
$$
Similar estimates are obtained for the remaining terms. In all we have,
\be
\| \eta_3{\mathcal R}_{1} ( h_1)- \eta_3{\mathcal R}_{1} ( h_2)\|_{C^{0,\sigma}_{4+\mu,\gamma} (\Gamma_\ve \times (0,\infty))} \, \le  \, C\ve^{3}\|  h_1 - h_1\|_{2,\sigma, 2 +\mu, \Gamma}\,
\label{lip1}\ee
uniformly on $h_1$, $h_2$ satisfying \equ{assh}.

Now, let us consider expression \equ{plr3}
for the error at $u_0+ \phi_0 +\phi_1$.
 \begin{align}
S[u_0 +\phi_0+ \phi_1]\,=
  \, -\,\ve^2\, [\Delta_\Gamma  \hh  + |A_\Gamma|^2\,\hh\,] \, w'\
  \,  + \,  {\mathcal R}_2( h),
\label{plr31}\end{align}
where
\begin{align}
{\mathcal R}_2( h) \,= & \,
 \ve^3 (\Delta_x -\pp_t^2)\,[ \psi_1 p_1 ]\, + \, \ve^4 \sum_{\ell =2}^5(\Delta_x -\pp_t^2)[ \psi_\ell p_\ell ]
\,
\nonumber\\&\,
\,  +\,  {\mathcal R}_1( h)   \nonumber\\
&  \,    + \,  [f'(u_0 + \phi_0)-f'(u_0)\,]\,\phi_1 \, +\, N(u_0+\phi_0, \phi_1).
\label{plr44}\end{align}

The terms $(\Delta_x -\pp_t^2)[ \psi_\ell p_\ell ]$ are of the same nature as those in the
operator ${\mathcal R}_1( h)$, which was computed from $(\Delta_x -\pp_t^2)[ |A_\Gamma|^2 p_0 ]$.
We get in fact extra smallness and decay.  Therefore we get

$$
\| \eta_3 {\mathcal R}_{2} ( h)\|_{C^{0,\sigma}_{4+\mu,\gamma} (\Gamma_\ve \times (0,\infty))}\ \le \ C\,\ve^4.
$$
and
\be
\| \eta_3{\mathcal R}_{2} ( h_1)- \eta_3{\mathcal R}_{2} ( h_2)\|_{C^{0,\sigma}_{4+\mu,\gamma} (\Gamma_\ve \times (0,\infty))} \, \le  \, C\ve^{3}\|  h_1 - h_1\|_{2,\sigma, 2 +\mu, \Gamma}\,
\label{lip2}\ee
uniformly on $h$, $h_1$, $h_2$ satisfying \equ{assh}.

\medskip
As a conclusion of the above considerations it follows that we can write
the error \equ{error7} as
\be
E\  =\  [\,\Delta_\Gamma \hh + |A_\Gamma|^2 \hh\,]\, w'    \ + \ {\mathcal R}_3(\hh)
\label{error8}
\ee
where the operator ${\mathcal R}_3(\hh)$ satisfies
\be
\|{\mathcal R}_{3} ( h)\|_{C^{0,\sigma}_{4+\mu,\gamma} (\Gamma_\ve \times (0,\infty))}\ \le \ C\,\ve^4.
\label{r30}\ee
and
\be
\| {\mathcal R}_{3} ( h_1)- {\mathcal R}_{3} ( h_2)\|_{C^{0,\sigma}_{4+\mu,\gamma} (\Gamma_\ve \times (0,\infty))} \, \le  \, C\ve^{3}\|  h_1 - h_1\|_{2,\sigma, 2 +\mu, \Gamma}\,
\label{r31}\ee

\setcounter{equation}{0}
\section{The gluing reduction}\label{s5}
We want to solve the problem
$$S(u) := \Delta u  + f(u) = 0 \inn \Omega_\ve^h $$
$$u = 0, \quad \pp_\nu u = constant  \onn  \Gamma_\ve^h $$
where we have our first approximation $u_1$, built in \S \ref{1stapprox}.  We write
$ u = u_1 + \ttt \phi $. We recall that
$$\pp_\nu u_0 = w'(0)= \, constant \onn \Gamma_\ve^h. $$
 Thus the problem gets rewritten as
$$
\Delta \ttt \phi  + f'(u_1)\ttt \phi + N(\ttt \phi) + E = 0 \inn \Omega_\ve^h
$$
\be \ttt \phi = 0, \quad \pp_\nu \ttt \phi = 0  \onn  \Gamma_\ve^h \label{pbb1}\ee
$$  S[u_1],\quad   N(\ttt \phi)= f(u_1 +\ttt \phi) -f(u_1) -f'(u_1)\ttt \phi. $$

We recall that we set in \S \ref{1stapprox},
$$
\eta_{m}(y,t) = \eta( m^{-1} (\ve t -  \delta  \rr_\ve (y) ) )\ .
$$
We look for a solution of Problem \equ{pbb1} with the form
$$
\ttt\phi = \eta_2\phi + \psi,
$$
with $\phi(y,t)$ is defined in entire $\Gamma_\ve \times (0,\infty)$.

We obtain a solution to Problem  \equ{pbb1} if we solve
the following system.

\begin{align}
 \pp^2_t\phi + \Delta_{\Gamma_\ve } \phi  + f'(w(t))\phi \  = \quad\qquad\qquad \quad
 \nonumber \\
  -\eta_{3}[S[u_1] + B\phi+ N(\eta_1\phi +\psi) + (f'(u_0 )+1) \psi]\quad  \hbox{in } \Gamma_\ve\times (0,\infty) ,\nonumber \\
 \phi(y,0)   = 0\foral  y \in \Gamma_\ve,  \nonumber\\
 \pp_t \phi(y,0) = -  \pp_t \psi(y,0)   \foral   y \in \Gamma_\ve , \label{gl1}
 \end{align}
 \begin{align}
\Delta \psi  - \psi  \,  =\, -  [  2\nn \phi \cdot\nn \eta_{1} +  \phi \Delta\eta_{1} \, ]\qquad \qquad  \nonumber\\
 - (1-\eta_1)\,[\, (f'(u_1) +1)\, \psi + S[u_1] + N(\eta_1 \phi +\psi)\,] \,  \quad \hbox{in } \Omega_\ve  \,
\nonumber \\
 \psi  = 0 \onn  \Gamma_\ve .
 \label{gl2} \end{align}


We have the following result.

\begin{lemma}
Let $\phi$ be given with $\|\phi\|_{C^{2,\sigma}_{\nu,\gamma}(\Gamma_\ve\times (0,\infty))} <1$ and $\nu \ge 2$.
 Then equation \equ{gl2} can be solved
as $\psi = \Psi(\phi)$ where for some $a>0$,
$$
\|\Psi(\phi_1) -\Psi(\phi_2)\|_{C^{2,\sigma}_{\nu}(\Omega_\ve)} \le e^{-\frac a\ve} \|\phi_1 -\phi_2\|_{C^{2,\sigma}_{\nu,\gamma}(\Gamma_\ve\times (0,\infty)},
$$
$$
\|\Psi(0)\|_{C^{2,\sigma}_{\nu}(\Omega_\ve)} \ \le \ e^{-\frac a\ve}.
$$
\end{lemma}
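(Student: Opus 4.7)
The plan is to solve \equ{gl2} by a fixed-point argument in a small weighted H\"older ball, exploiting that every term on the right hand side picks up a factor $e^{-a/\ve}$. First I would move the linear-in-$\psi$ zeroth order coefficient to the left, setting
\[
V(x) := 1 - (1-\eta_1)\,(f'(u_1) + 1),
\]
so that \equ{gl2} takes the form $\mathcal L_\ve \psi := \Delta\psi - V\psi = G[\phi,\psi]$ in $\Omega_\ve$ with zero Dirichlet data on $\Gamma_\ve^h$, where
\[
G[\phi,\psi] = -\bigl[\,2\nabla\phi\cdot\nabla\eta_1 + \phi\,\Delta\eta_1\,\bigr] - (1-\eta_1)\bigl[\,S[u_1] + N(\eta_1\phi + \psi)\,\bigr].
\]
On the support of $\eta_1$, $V \equiv 1$; on the support of $1-\eta_1$, $u_1$ is exponentially close to $1$, hence $V$ is close to $-f'(1) > 0$ by \equ{f}. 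Thus $V \ge c_0 > 0$ uniformly in $\ve$, and $\mathcal L_\ve$ is uniformly coercive.

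The next step is to construct a bounded right inverse $T$ for $\mathcal L_\ve$ acting between $C^{0,\sigma}_\nu(\Omega_\ve)$ and $C^{2,\sigma}_\nu(\Omega_\ve)$ from \equ{norm0}--\equ{norm1}. The barrier $\bar\psi := K\,\rr_\ve^{-\nu}$ satisfies $\mathcal L_\ve \bar\psi \le -\tfrac{c_0}{2}\, K\,\rr_\ve^{-\nu}$ for $\ve$ small (the Laplacian of $\rr_\ve^{-\nu}$ is $O(\ve^2)$ against the coercive term $-V$), so a weighted maximum principle combined with local Schauder estimates yields
\[
\|T g\|_{C^{2,\sigma}_\nu(\Omega_\ve)} \le C\,\|g\|_{C^{0,\sigma}_\nu(\Omega_\ve)}
\]
uniformly in $\ve$. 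The commutator $2\nabla\phi\cdot\nabla\eta_1 + \phi\,\Delta\eta_1$ is supported where $1 \le \ve t - \delta\,\rr_\ve \le 2$, and there $t \ge \ve^{-1}$, so the factor $e^{-\gamma t}$ built into $\|\phi\|_{C^{2,\sigma}_{\nu,\gamma}}$ dominates the polynomial weight and, multiplied against $|\nabla\eta_1| + |\Delta\eta_1| = O(\ve)$, contributes at most $e^{-\gamma/\ve}\,\|\phi\|_{C^{2,\sigma}_{\nu,\gamma}}$ to $\|G\|_{C^{0,\sigma}_\nu}$. The piece $(1-\eta_1)S[u_1]$ vanishes where $u_1 \equiv 1$ (since $f(1)=0$) and on the intermediate annulus is controlled by \equ{r30} together with $e^{-\gamma t} \le e^{-\gamma/\ve}$. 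Finally $N$ is quadratic, so $|N(\eta_1\phi + \psi)| \lesssim |\eta_1\phi|^2 + |\psi|^2$ with analogous Lipschitz control, and on the support of $1-\eta_1$ the factor $\eta_1\phi$ is again exponentially small.

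With these ingredients the map $\psi \mapsto T(G[\phi, \psi])$ is a contraction on the ball $\{\|\psi\|_{C^{2,\sigma}_\nu(\Omega_\ve)} \le e^{-a/(2\ve)}\}$ whenever $\|\phi\|_{C^{2,\sigma}_{\nu,\gamma}} < 1$, and its unique fixed point defines $\Psi(\phi)$. Setting $\phi = 0$ eliminates the commutator and the $\eta_1\phi$ term, leaving only $(1-\eta_1)S[u_1]$, giving $\|\Psi(0)\|_{C^{2,\sigma}_\nu} \le e^{-a/\ve}$. Subtracting the fixed-point identities for $\Psi(\phi_1)$ and $\Psi(\phi_2)$ yields the Lipschitz bound, since the commutator depends linearly on $\phi$ with coefficient bounded by $e^{-\gamma/\ve}$ and the Lipschitz constant of $N(\eta_1\phi+\psi)$ in $\phi$ is $O(|\eta_1\phi|+|\psi|) = O(e^{-a/\ve})$ on the support of $1-\eta_1$. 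The main obstacle is the first step: verifying $V \ge c_0$ uniformly and establishing the weighted maximum principle for $\mathcal L_\ve$ on the expanding unbounded epigraph $\Omega_\ve$ independently of $\ve$; once in place, the rest is a routine Banach fixed-point argument.
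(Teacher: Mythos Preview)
Your argument is correct and follows the same overall strategy as the paper: build a bounded right inverse for a uniformly coercive linear operator on $\Omega_\ve$ via a barrier (maximum principle) plus local Schauder estimates in the weighted space $C^{2,\sigma}_\nu$, then run Banach fixed point, using that every source term on the right is supported where $t\gtrsim \ve^{-1}$ and hence picks up a factor $e^{-\gamma/\ve}$ from the $e^{-\gamma t}$ weight on $\phi$ or from $w(t)-1$.

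The one difference is in the splitting of the linear part. The paper takes the simpler operator $\Delta-1$ and leaves the zeroth-order piece $(1-\eta_1)(f'(u_1)+1)\psi$ on the right side of the fixed-point map; you instead absorb it into the potential, working with $\Delta-V$ where $V=1$ on the support of $\eta_1$ and $V\approx -f'(1)>0$ elsewhere. Your choice is cleaner: it makes the uniform coercivity $V\ge c_0>0$ explicit for every $f$ satisfying \equ{f}, and it guarantees that the only $\psi$-dependence remaining on the right is through the quadratic term $N$, so the contraction constant is automatically $O(e^{-a/\ve})$. With the paper's splitting one must also argue that the linear remainder $(1-\eta_1)(f'(u_1)+1)\psi$ contracts, which is not immediate for arbitrary $f'(1)<0$; your absorption of this term sidesteps the issue.
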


\proof

Let  us consider first the linear problem
\begin{align}
\Delta \psi - \psi = g \inn \Omega_\ve,\quad \psi=0 \quad\hbox{on }\Gamma_\ve^h .
\end{align}
By a standard barrier argument, this problem has a unique bounded solution $\psi= T(g)$ if $g$ is bounded. In fact
$$\|\psi\|_{L^\infty(\Omega_\ve)}\le \|g\|_{L^\infty(\Omega_\ve)}$$
Let us further assume that
$
\|g\|_{C^{0,\sigma}(\Omega_\ve)}< +\infty $. Applying local boundary and interior Schauder estimates
we also get
\be
\|\psi\|_{C^{2,\sigma} (\Omega_\ve)}  \le C\|g\| _{C^{0,\sigma}(\Omega_\ve)}
\label{cr}\ee
for a constant $C$ uniform in all small $\ve$. Finally,
by writing $\psi = r_\ve^{-\nu}\ttt\psi $, examining the equation for $\ttt\psi$ and using estimate
\equ{cr} we obtain that
\be
\|T(g) \|_{C_\nu^{2,\sigma} (\Omega_\ve)}\ \le\ C\|g\|_{C_\nu^{0,\sigma} (\Omega_\ve)}.
\label{cotapsi}\ee
Now, we can solve Problem \equ{gl2}
as the fixed point problem
$$
\psi =  - T(\,  [  2\nn \phi \cdot\nn \eta_{1} +  \phi \Delta\eta_{1} \, ] +
  (1-\eta_1)\,[\, (f'(u_0) +1)\, \psi + E + N(\eta_1 \phi +\psi)\,]\, ).
$$
It is readily checked that the right hand side of this equation defines a contraction mapping
 in a region of the form $
\|\psi\|_{C^{2,\sigma}_{\nu}(\Omega_\ve)} \ \le \ e^{-\frac a\ve}.
$
Banach fixed point then gives a solution of this problem with the desired properties. \qed

\bigskip
Through this lemma we have reduced our original equation to solving the nonlinear, nonlocal problem for a $\phi$ with $\|\phi\|_{C^{2,\sigma}_{\nu,\gamma}(\Gamma_\ve\times (0,\infty))} <1,$

\begin{align}
 \pp^2_t\phi + \Delta_{\Gamma_\ve } \phi  + f'(w(t))\phi \  = - E - \NN (\phi)
  \quad  \hbox{in } \Gamma_\ve\times (0,\infty) ,\nonumber \\
 \phi(y,0)   = 0\foral  y \in \Gamma_\ve,  \nonumber\\
 \pp_t \phi(y,0) = -  \pp_t \Psi(\phi)(y,0)   \foral   y \in \Gamma_\ve , \label{gl11}
 \end{align}
where
\be
\NN(\phi) := \eta_{3}[\, [f'(u_1) -f'(u_0)]\, \phi +  B\phi+ N(\eta_1\phi +\Psi(\phi)) + (f'(u_1 )+1) \Psi(\phi)],
\label{NN}\ee
and as in \equ{error7},
$$
E\ :=\ \eta_3 \, S[u_1].
$$
Let us recall that we decomposed in \equ{error8}

$$
E \ =\  - (\Delta_\Gamma \hh + |A_\Gamma|^2 \hh )\, w'(t)
  + {\mathcal R}_3(\hh),
$$
where $ {\mathcal R}_3(\hh)$ is a small operator, satisfying  \equ{r30}, \equ{r31}.



\subsection{The projected problem}\label{s61}
Rather than solving Problem \ref{gl11} directly, we consider a  projected version of it, namely
the problem of finding $\phi$ and $\A$ such that
\begin{align}
 \pp^2_t\phi + \Delta_{\Gamma_\ve } \phi  + f'(w(t))\phi \,  =\,  \A(y)\, w'(t)\,
  - {\mathcal R}_3(\hh)  - \NN (\phi)\quad  \hbox{in } \Gamma_\ve\times (0,\infty) ,\nonumber \\
 \phi(y,0)   = 0\foral  y \in \Gamma_\ve,  \nonumber\\
 \pp_t \phi(y,0) = -  \pp_t \Psi(\phi)(y,0)   \foral   y \in \Gamma_\ve . \label{g113}
 \end{align}iformly on $h$, $h_1$, $h_2$ satisfying \equ{assh}.

\setcounter{equation}{0}
\section{Linearized problem in a half-space}\label{s6}

In order to solve Problem \equ{g113} we shall develop a uniform invertibility theory for the associated linear problem, so that
we later proceed just by contraction mapping principle to solve the nonlinear equation. This is also the procedure to prove the
results of Theorems \ref{teo5} and \ref{teo6} so that we consider a more general surface $\Gamma$ in Euclidean space $\R^{m+1}$, $m\ge 1$.
We consider in this section the (linear) problem in $\R^{m+1}$
of finding, for  given functions $g(y,t)$, $\beta(y)$, a solution $(\alpha, \phi)$ to the problem
\begin{align}
\Delta \phi \, + \, f'(w(t))\phi  \, = &\, \A(y)\,w'(t) + g(y,t) \quad \hbox{in } \R^{m+1}_+ ,
\nonumber\\
\phi(y,0)\, =\,&\ 0\ \  \ \foral y\in \R^m,
\nonumber\\
\pp_t \phi(y,0)\, =\, &\B(y) \foral y\in \R^m.
\label{linear2}\end{align}
Here  $\R^{m+1}_+ := \R^m\times (0,\infty)$.
We want to solve  Problem \equ{linear2}  using H\"older norms.
The main result of this section is the following

\begin{proposition}\label{lema5}
Given $\beta $ and $g$ such that
$$
\| \beta\|_{C^{1,\sigma}(\R^m)}\,+\, \| g\|_{C^{0,\sigma}(\R^{m+1}_+)}\,< \, +\infty
$$
there exists a   solution  $$(\phi,\A)\in  C^{2,\sigma}(\R^{m+1}_+)\times C^{0,\sigma}(\R^m) $$ of Problem $\equ{linear2}$ that  defines a linear operator of the pair
$(\beta,g)$,  satisfying  the estimate
\be
 \|\phi\|_{ C^{2,\sigma}(\R_+^{m+1})} + \|\A\|_{C^{0,\sigma}(\R^m)} \, \le\, C\,[\,\| \beta\|_{C^{1,\sigma}(\R^m)}\,+\, \| g\|_{C^{0,\sigma}(\R^{m+1}_+)}\,]\, .
\label{estt5}\ee
\end{proposition}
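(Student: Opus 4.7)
The plan is to establish $\equ{estt5}$ as an a priori estimate via a blow-up/rigidity argument, and then to construct the solution by approximation on bounded domains. First, I would reduce to the case $\beta \equiv 0$ by subtracting the explicit profile $\phi_\beta(y,t) := \beta(y)\chi(t)$, where $\chi$ is a smooth compactly supported function on $[0,\infty)$ with $\chi(0)=0$ and $\chi'(0)=1$; the modified unknown $\tilde\phi := \phi - \phi_\beta$ satisfies an equation of the same form with $\beta$ replaced by $0$ and $g$ replaced by $\tilde g := g - \Delta\phi_\beta - f'(w)\phi_\beta$, whose $C^{0,\sigma}$-norm is controlled by $\|g\|_{C^{0,\sigma}} + \|\beta\|_{C^{1,\sigma}}$. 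For the reduced problem, evaluating the PDE at $t = 0$ using $\phi(y, 0) = 0$ (hence $\Delta_y\phi(y,0) = 0$) gives the pointwise identity $\alpha(y) = (\partial_t^2\phi(y,0) - g(y,0))/w'(0)$, and standard boundary Schauder theory for the Dirichlet problem $\Delta\phi + f'(w)\phi = \alpha w' + g$, $\phi|_{t=0} = 0$, allows me to upgrade $L^\infty$ control of $(\phi, \alpha)$ to the full $C^{2,\sigma}\times C^{0,\sigma}$ estimate. It therefore suffices to establish $\|\phi\|_{L^\infty} + \|\alpha\|_{L^\infty} \leq C(\|g\|_{C^{0,\sigma}} + \|\beta\|_{C^{1,\sigma}})$ by a contradiction/blow-up argument: after translating in $y$ around near-maximum points and using Schauder compactness, I extract a nontrivial bounded pair $(\phi_\infty, \alpha_\infty)$ solving the homogeneous overdetermined problem.

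\textbf{Liouville rigidity.} The main obstacle is to show that any bounded $(\phi_\infty, \alpha_\infty)$ solving
\[
\Delta\phi_\infty + f'(w(t))\phi_\infty \,=\, \alpha_\infty(y)\,w'(t), \qquad \phi_\infty(y,0) \,=\, \partial_t\phi_\infty(y,0) \,=\, 0,
\]
must vanish identically. Working mode by mode in $y$ (via Fourier transform of $\phi_\infty$ as a tempered distribution), for each frequency $\xi \in \R^m$ the one-dimensional operator $L_\xi := \partial_t^2 + f'(w(t)) - |\xi|^2$ admits a unique bounded solution $\Phi_\xi$ of $L_\xi\Phi_\xi = w'$ with $\Phi_\xi(0) = 0$, constructed from the Green's function of $L_\xi$ on $(0,\infty)$ with Dirichlet data at $t=0$ and decay at $t = \infty$ (the latter using $f'(1)<0$). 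Any bounded Fourier-mode solution of the homogeneous problem must then take the form $\hat\phi(\xi,\cdot) = \hat\alpha(\xi)\,\Phi_\xi$, and the zero-Neumann condition reads $\hat\alpha(\xi)\,\Phi_\xi'(0) = 0$. Integrating $L_\xi\Phi_\xi = w'$ against $w'$ and using $L_0 w' = 0$ together with the Cauchy conditions yields
\[
\Phi_\xi'(0) \,=\, -\frac{1}{w'(0)}\Bigl(\,\|w'\|_{L^2(0,\infty)}^2 \,+\, |\xi|^2\int_0^\infty \Phi_\xi(t)\,w'(t)\,dt\,\Bigr),
\]
and a sign/energy analysis of $\Phi_\xi$ (exploiting the stability $f'(1) < 0$) shows $\Phi_\xi'(0) \neq 0$ for every $\xi$, hence $\hat\alpha \equiv 0$ and $\phi_\infty \equiv 0$. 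This spectral step, verifying that $\Phi_\xi'(0)$ never vanishes, is the delicate core of the argument.

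\textbf{Existence.} Given the a priori estimate, existence follows from a standard approximation scheme on half-balls $B_R^+ := B_R \cap \R^{m+1}_+$ with auxiliary zero Dirichlet data on the curved portion of $\partial B_R^+$. On each such bounded domain the coupling of the linear equation with the scalar unknown $\alpha(y)$ (for $|y| < R$) is a Fredholm problem of index zero in which the one-dimensional compatibility obstruction associated to the Neumann overdetermination is absorbed by $\alpha$; invoking the uniform a priori bound from the previous step, the solutions $(\phi_R, \alpha_R)$ converge in $C^{2,\sigma}_{\mathrm{loc}}$ to the desired $(\phi, \alpha)$ on $\R^{m+1}_+$ satisfying $\equ{estt5}$. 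Linearity with respect to $(\beta, g)$ is automatic from the construction.
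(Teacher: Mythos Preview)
Your strategy is genuinely different from the paper's, and it is in principle viable, but it contains a real gap at the step you yourself flag as ``delicate,'' plus a second gap you have not flagged.

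\medskip
\textbf{The Liouville step for bounded solutions.} You propose to take the Fourier transform of the bounded limit $\phi_\infty$ in the $y$ variable and argue ``mode by mode.'' But $\hat\phi_\infty(\xi,t)$ is only a tempered distribution in $\xi$, so the statement ``for each $\xi$ the ODE $L_\xi \hat\phi = \hat\alpha\,w'$ holds and its bounded solution is $\hat\alpha(\xi)\Phi_\xi$'' has no direct meaning: you cannot evaluate a distribution at a point. The ODE analysis you outline is exactly what works when $\phi_\infty\in L^2$ (this is how the paper proves uniqueness in its Lemma~\ref{lema1}), but the passage from bounded to $L^2$ is precisely what is missing. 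One way to rescue your argument is to mimic the paper's weight trick from Lemma~\ref{lema4}: write $\phi_\infty = (1+\delta^2|y-p|^2)^{\nu/2}\,\tilde\phi_\infty$; for $\nu$ large and $\delta$ small the perturbed equation still enjoys the $L^2$-uniqueness, and this forces $\tilde\phi_\infty\in L^2$, giving the Liouville result. Without some such device the rigidity claim is unproven.

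\medskip
\textbf{The non-vanishing of $\Phi_\xi'(0)$.} Your integral identity is correct, but since $\Phi_\xi<0$ (it equals $-p_\xi$ in the paper's notation) the two terms in the bracket have opposite signs and the formula does not by itself yield $\Phi_\xi'(0)\neq 0$; your ``sign/energy analysis'' is not supplied. The paper's argument is much simpler: $p_\xi=-\Phi_\xi$ solves $p_\xi''+f'(w)p_\xi-|\xi|^2p_\xi=-w'<0$ with $p_\xi(0)=0$; coercivity of $-L_\xi$ on $H^1_0(0,\infty)$ gives the maximum principle, so $p_\xi>0$; then $p_\xi''(0)=-w'(0)<0$ together with $p_\xi(0)=0$ and $p_\xi>0$ force $p_\xi'(0)>0$.

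\medskip
\textbf{Comparison with the paper.} The paper avoids both difficulties by building the solution directly: Fourier transform gives an $H^2\times L^2$ theory with easy uniqueness (Lemmas~\ref{lema1}--\ref{lema2}); a weighted-norm argument upgrades this to local-uniform $H^2$ control (Lemma~\ref{lema4}); and local Schauder estimates, after absorbing $\alpha$ into an auxiliary Poisson problem $\Delta_y\gamma=\alpha$, yield the H\"older bound. No blow-up or Liouville theorem is needed. Your approach is the standard a~priori/compactness template and would be attractive if the rigidity were cheap --- but here the constructive route is shorter because the $\xi$-by-$\xi$ ODE is completely explicit.
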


We will first solve this problem in an $L^2$ setting by means of Fourier transform and then use classical elliptic regularity theory to solve it in H\"older spaces.

\medskip
 We consider first the special case $g=0$, namely the problem of finding, for a given function $\beta(y)$ defined in
$\R^m$, functions $\alpha(y)$ and $\phi(y,t)$  that solve the problem

\begin{align}
\Delta \phi \, + \, f'(w(t))\phi  \, = &\, \A(y)\,w'(t) \quad \hbox{in } \R^{m+1}_+ ,
\nonumber\\
\phi(y,0)\, =\,&\ 0\ \  \ \foral y\in \R^m,
\nonumber\\
\pp_t \phi(y,0)\, =\, &\B(y) \foral y\in \R^m.
\label{linear}\end{align}

We will solve first this problem in $L^2$  by means of Fourier transform.
We have
the following
result.

\begin{lemma}\label{lema1}
Given $\beta\in H^1(\R^m)$, there exists a unique  solution  $$(\phi,\A)\in H^2(\R^{m+1}_+)\times L^2(\R^m) $$ of Problem $\equ{linear}$ that defines a linear operator of $\beta$. Besides, we have the estimate
\be
 \|\phi\|_{H^2(\R_+^{m+1})} + \|\A\|_{L^2(\R^m)} \, \le\, C\,\| \beta\|_{H^1(\R^m)}\, .
\label{estt}\ee

\end{lemma}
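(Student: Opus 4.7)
The plan is to reduce the problem to a family of ODEs in $t$ by taking the Fourier transform in the $y$-variable and then analyze each ODE explicitly. Writing $\hat\phi(\xi,t)$, $\hat\alpha(\xi)$, $\hat\beta(\xi)$ for the Fourier transforms, the system $\equ{linear}$ becomes, for each $\xi \in \R^m$,
\begin{equation*}
L_\xi p \,:=\, p'' + \bigl(f'(w(t)) - |\xi|^2\bigr) p \,=\, \hat\alpha(\xi)\, w'(t), \quad t>0,\qquad p(0)=0,\quad p'(0)=\hat\beta(\xi).
\end{equation*}
So the task reduces to solving this one-dimensional problem uniformly in $\xi$, showing that $\hat\alpha(\xi)$ is uniquely determined and that $\|\hat\alpha\|_{L^2(\R^m)} \lesssim \|\hat\beta\|_{H^1(\R^m)}$, with a corresponding $H^2$-bound on $\hat\phi$.

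The first step is to prove that for every $\xi$, the operator $L_\xi$ on $(0,\infty)$ with Dirichlet condition at $0$ and decay at infinity is invertible. The key observation is that $w'>0$ is the ground state of $L_0$ on the whole line $\R$; by Perron--Frobenius, $0$ is the bottom of the spectrum of $-L_0$ on $\R$. Restricting to $(0,\infty)$ with a Dirichlet condition strictly raises this bottom, so the lowest eigenvalue $\lambda_0$ of $-L_0$ on $(0,\infty)$ with Dirichlet BC is strictly positive. Consequently $-L_\xi = -L_0 + |\xi|^2$ has lowest eigenvalue $\lambda_0 + |\xi|^2 > 0$ for all $\xi$, and admits a positive Green's function $G_\xi(t,s)$ vanishing at $t=0$.

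Next, using variation of parameters (or $G_\xi$ directly), the unique solution of $L_\xi p = \hat\alpha w'$ with $p(0)=0$ and $p\in L^2(0,\infty)$ is
\begin{equation*}
p(\xi,t) \,=\, -\hat\alpha(\xi) \int_0^\infty G_\xi(t,s)\, w'(s)\, ds,
\end{equation*}
and the matching condition $p'(\xi,0)=\hat\beta(\xi)$ gives
\begin{equation*}
\hat\alpha(\xi)\, \Lambda(\xi) \,=\, \hat\beta(\xi), \qquad \Lambda(\xi)\,:=\,-\int_0^\infty \partial_t G_\xi(0,s)\, w'(s)\, ds.
\end{equation*}
Because $G_\xi \ge 0$ vanishes at the boundary, $\partial_t G_\xi(0,s) > 0$, hence $\Lambda(\xi) < 0$ for all $\xi$, and in particular $\hat\alpha(\xi) = \hat\beta(\xi)/\Lambda(\xi)$ is unambiguously defined.

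The quantitative estimates follow from a two-regime analysis of $\Lambda(\xi)$. For bounded $\xi$, $\Lambda(\xi)$ depends continuously on $\xi$ and, using the explicit computation for $\xi=0$ via variation of parameters with the pair $\{w', v_2\}$ (with $v_2$ the growing solution of $L_0 v = 0$), one finds $\Lambda(0) = -\|w'\|_{L^2(0,\infty)}^2/w'(0) < 0$. For large $|\xi|$, the Green's function is comparable to that of $-\partial_t^2 + |\xi|^2$, yielding $\Lambda(\xi) \sim -w'(0)/|\xi|$. Combining these gives the uniform lower bound $|\Lambda(\xi)|\ge c/(1+|\xi|)$, so that
\begin{equation*}
|\hat\alpha(\xi)| \,\le\, C\,(1+|\xi|)\,|\hat\beta(\xi)|,\qquad \|\alpha\|_{L^2(\R^m)}\,\le\, C\,\|\beta\|_{H^1(\R^m)}.
\end{equation*}
The corresponding $H^2$-bound on $\phi$ follows by reading $\partial_t^2 \hat\phi = |\xi|^2 \hat\phi - f'(w)\hat\phi + \hat\alpha w'$, multiplying by $\hat\phi$, and integrating by parts in $t$ (using the positivity of $-L_\xi$) to get $\|(1+|\xi|^2)\hat\phi\|_{L^2(\R^{m+1}_+)} + \|\partial_t\hat\phi\|_{L^2(\R^{m+1}_+)} \lesssim |\hat\beta|+\|\hat\alpha w'\|$, and then recovering $\|\partial_t^2 \hat\phi\|_{L^2}$ from the equation. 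Uniqueness is automatic: if $\beta \equiv 0$, then $\hat\alpha \equiv 0$ by invertibility of $L_\xi$, so $\phi \equiv 0$.

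The main technical obstacle I anticipate is the uniform control of $\Lambda(\xi)$ across the transition between the small-$|\xi|$ and large-$|\xi|$ regimes. The small-$|\xi|$ analysis is delicate because $w'$ is a bounded solution of $L_0 v = 0$ on the full line and fails to satisfy the Dirichlet condition only because $w'(0) \neq 0$; the identity $\Lambda(0) = -\|w'\|_{L^2}^2/w'(0)$ captures how this near-resonance is resolved by the boundary. The large-$|\xi|$ decay $\Lambda(\xi)\sim -w'(0)/|\xi|$ is precisely what forces the $H^1$ (rather than $L^2$) hypothesis on $\beta$ and reflects the Dirichlet-to-Neumann character of the operator $\beta \mapsto \alpha$ alluded to in the introduction.
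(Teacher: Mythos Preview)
Your approach is essentially the same as the paper's: both take the Fourier transform in $y$, reduce to the ODE $p''-|\xi|^2 p+f'(w)p=-w'$ with $p(0)=0$, identify the Neumann datum $p_\xi'(0)$ (your $-\Lambda(\xi)$) as the key quantity, compute $p_0'(0)=\|w'\|_{L^2}^2/w'(0)$ explicitly, and obtain the large-$|\xi|$ asymptotic $p_\xi'(0)\sim w'(0)/|\xi|$ by comparison with $q''-|\xi|^2 q=-w'$, yielding $|\hat\alpha|\le C(1+|\xi|)|\hat\beta|$. One small quibble: the phrase ``$w'$ is the ground state of $L_0$ on the whole line $\R$'' is not quite right, since $w$ is only defined on $(0,\infty)$ for general $f$; the coercivity $\int_0^\infty(|q'|^2-f'(w)q^2)\ge a\int_0^\infty q^2$ on $H_0^1(0,\infty)$ (which the paper simply states as a fact) follows instead from $w'>0$ on $(0,\infty)$ with $w'(0)\ne 0$ via Picone's identity, combined with the essential-spectrum bound coming from $f'(1)<0$.
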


\proof

Let us assume for the moment that $b$ is a smooth, rapidly decaying function. The we can write Problem \equ{linear} in terms of Fourier transforms for its unknowns, $\h \phi(\xi,t)$, $\h \A (\xi)$ as
\begin{align}
 \pp_t^2\h \phi -|\xi|^2\h \phi\, + \, f'(w(t))\h \phi   \, = &\,  \h \A(\xi)\,w'(t) \quad \hbox{in } \R^m\times (0,\infty),
\nonumber\\
\h \phi(\xi,0)\, =\,&\ 0\ \  \ \foral \xi \in \R^m,
\nonumber\\
\pp_t \h \phi(\xi ,0) \,=\, & \h\B(\xi) \foral \xi \in \R^m.
\label{linearf}\end{align}

Let us consider first the ODE problem
\begin{equation}
\label{eq:p0}
  p_0'' (t)  + \, f'(w(t))\,p_0(t)  \, = \,  \,-w'(t) \quad \hbox{in }  (0,\infty),\ \
p_0(0) \, =\,\ 0\, .
\end{equation}
This equation has a unique bounded solution, given by
$$
p_0(t) \ =\ w'(t) \int_0^t \frac {d\tau}{{w'(\tau)}^2}\int_\tau^\infty w'(s)^2\, ds \ .
$$
In particular, $p_0'(0) =  w'(0)^{-1}\int_0^\infty w'(t)^2\, dt > 0$.
Similarly, for $\xi\ne 0$ the equation
\begin{align}
  p_\xi'' (t) -|\xi|^2p_\xi + \, f'(w(t))\,p_\xi(t)  \, = &\,  \,-w'(t) \quad \hbox{in }  (0,\infty),
\nonumber\\
p_\xi(0) \, =\,&\ 0\, .
\nonumber
\end{align}
has a unique bounded solution, which by maximum principle it is positive. Since $p_\xi''(0) = -w'(0)<0$,
we must have
$
p_\xi'(0) >0.
$
This last value define a smooth function of $\xi$. On the other hand for large values of $\xi$ we have that $p_\xi'(0) \approx q_\xi'(0)$, where $q_\xi$ solves

\begin{align}
  q_\xi'' (t) -|\xi|^2q_\xi(t)    \, = &\,  \,-w'(t) \quad \hbox{in }  (0,\infty),
\nonumber\\
q_\xi(0) \, =\,&\ 0\, .
\nonumber
\end{align}
Thus we have, for large values of $|\xi|$,
$$
p_\xi'(0) \approx q_\xi'(0) =   \int_0^\infty w'(t)\, e^{-|\xi| t} \, dt \approx  \frac {w'(0) }{ |\xi|}   $$
The solution of problem \equ{linearf} is then given by
$$
\hat \A (\xi) =  - \frac 1{p_\xi'(0) }\hat\B(\xi) ,\quad \hat\phi(\xi,t) = \hat \A (\xi)q_\xi(t).
$$
Observe in particular that
$$
\int_{\R^m} |\hat \A (\xi)|^2 \, d\xi \, \le \, c\int_{\R^m} (1+|\xi|^2)| \hat\B(\xi)|^2\, d\xi
$$
Now, from the fact that for some $a>0$  and any function $q\in H_0^1(0,\infty)$ we have that
\be
a\int_0^\infty  |q|^2\, dt  \le \int_0^\infty (\, |q'|^2 -f'(w)q^2)\, dt
\label{ee}\ee
we deduce that for some $C>0$, and any $\beta$,
$$
\int_0^\infty \int_{\R^m} [\, |\pp_t\h\phi|^2 +  (1+ |\xi|^2)\, |\hat\phi |^2\,d\xi\, dt\ \le\ C\,\int_{\R^m} (1+|\xi|^2)\,| \hat\B(\xi)|^2\, d\xi\ .
$$
Taking inverse Fourier transform of $\hat\phi (\xi, t)$ and $\h\A(\xi)$ we obtain a solution
$(\phi,\A)$ to the original problem \equ{linear}.
The above inequalities translate into
\be
\int_{\R^m} |\A|^2 \,dy +  \int_{\R^{m+1}_+ } [\,|\nn \phi|^2  + |\phi|^2\,]\, dy\,dt \ \le \ C \int_{\R^m} [\,|\nn \beta|^2  + |\beta|^2\, ]\,dy
\label{esti}\ee
By density, and the standard $L^2$ regularity theory, we get, given $\beta \in  H^1(\R^m)$ the  existence of a solution $(\phi,\A)\in H^2(\R^{m+1}_+)\times L^2(\R^m)$. This solution satisfies the desired estimate.

Finally, for uniqueness we need to prove that if $\beta=0$ in Problem \equ{linear}, then $\alpha$ and $\phi$ vanish identically. Taking Fourier transform we arrive to the family of ODE in $H^2(\R)$,

\begin{align}
 \pp_t^2\h \phi -|\xi|^2\h \phi\, + \, f'(w(t))\h \phi   \, = &\,  \h \A(\xi)\,w'(t) \quad \hbox{in } \R^m\times (0,\infty),
\nonumber\\
\h \phi(\xi,0)\, =\,&\ 0\ \  \ \foral \xi \in \R^m,
\nonumber\\
\pp_t \h \phi(\xi ,0) \,=\, & 0 \foral \xi \in \R^m.
\end{align}
For $\xi\ne 0$ we see that setting
$$
\phi =  p  - \frac {\h\A(\xi)} {|\xi|^2} w'(t),
$$
then as a function of $t$, $p$ satisfies
\begin{align}
  p'' -|\xi|^2 p \, + \, f'(w(t))p  \, = &\, 0 \quad \hbox{in }(0,\infty),
p'(0) \,=\, & 0 \foral \xi \in \R^m.
\end{align}
The function $p$ and its equation can be evenly extended to all $\R$. Then

$$
\int_\R  (|p'|^2 - f'(w)|p|^2)\, dt + \int_\R  |\xi|^2|p|^2  = 0
 $$
 Since the first integral above is always non-negative, we get that $p\equiv 0$. In particular this tells us that
  $$0= \phi(\xi,0)  = - \frac {\h\A(\xi)} {|\xi|^2} w'(0),  $$
and hence $\h\A(\xi) =0 $ almost everywhere in $\xi$. We conclude that $\A \equiv 0$.
The fact that $\hat \phi \equiv 0$ comes directly testing its  equation against $\phi$ itself. The proof is concluded.
\qed

\bigskip
 In order to solve Problem \equ{linear2}  and for later purposes, we also consider the  problem
\begin{align}
\Delta \phi \, + \, f'(w(t))\phi  \, = &\, \ g(y,t) \quad \hbox{in } \R^{m+1}_+ ,
\nonumber\\
\phi(y,0)\, =\,&\ 0\ \  \ \foral y\in \R^m,
\label{linear3}\end{align}

For the latter problem we have the following result.

\begin{lemma}\label{lema22}
Assume that $g\in L^2( \R^{m+1}_+)$. Then Problem \equ{linear3} has a unique solution $\phi\in H^2(\R_+^{m+1})$.
This solution satisfies in addition
\be
 \|\phi\|_{H^2(\R_+^{m+1})} \, \le\, C\,\| g\|_{L^2(\R^{m+1}_+)}\,
\label{estt2}\ee
and if $\phi \in H^1(\R^{m+1}_+)$,
\be
  \| \partial_t\phi (y,0) \|_{H^1(\R^m)}\,\le\, C\,\| g\|_{H^1(\R^{m+1}_+)}.
\label{estt3}\ee

\end{lemma}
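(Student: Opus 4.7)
The plan is to follow the Fourier-analytic strategy of Lemma~\ref{lema1}, now exploited to yield the regularity gain demanded by the $H^2$ estimate \equ{estt2}. Taking the partial Fourier transform in the tangential variable $y$, Problem \equ{linear3} reduces to the parametrized family of ODEs
\begin{equation*}
p''(t) - |\xi|^2 p(t) + f'(w(t))\,p(t) \,=\, \hat g(\xi, t), \qquad p(0) = 0,
\end{equation*}
for the unknown $p(t) := \hat\phi(\xi, t)$, one for each $\xi \in \R^m$. For each fixed $\xi$, multiplying by $\bar p$, integrating by parts over $(0, \infty)$ using $p(0) = 0$, and invoking the coercivity inequality \equ{ee} slicewise yields
\begin{equation*}
(a + |\xi|^2)\,\|p\|_{L^2_t}^2 \,\le\, \int_0^\infty \bigl(|p'|^2 + |\xi|^2 |p|^2 - f'(w)|p|^2\bigr)\,dt \,\le\, \|\hat g(\xi,\cdot)\|_{L^2_t}\,\|p\|_{L^2_t},
\end{equation*}
so that $\|p\|_{L^2_t} \le (a + |\xi|^2)^{-1} \|\hat g(\xi,\cdot)\|_{L^2_t}$.

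From the same energy identity one then extracts $\|p'\|_{L^2_t}^2 \le C(a + |\xi|^2)^{-1}\|\hat g(\xi,\cdot)\|_{L^2_t}^2$, and reading $p''$ directly off the ODE gives $\|p''\|_{L^2_t} \le C\|\hat g(\xi,\cdot)\|_{L^2_t}$, with constants uniform in $\xi$. Existence of $p$ for each $\xi$ follows from Lax--Milgram applied to the coercive bilinear form naturally associated to the ODE on $\{p \in H^1(0, \infty) : p(0) = 0\}$, and uniqueness on this space is immediate from the slicewise energy estimate. Inverse Fourier transforming $\{p(\xi, \cdot)\}_\xi$ produces the sought solution $\phi$ of \equ{linear3}; multiplying the $\xi$-uniform ODE bounds by the appropriate powers of $|\xi|$, summing, and invoking Plancherel translates them into
\begin{equation*}
\|\phi\|_{L^2}^2 + \|\nabla\phi\|_{L^2}^2 + \|\nabla^2\phi\|_{L^2}^2 \,\le\, C\,\|g\|_{L^2(\R^{m+1}_+)}^2,
\end{equation*}
which is precisely \equ{estt2}. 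Global uniqueness is then clear: any $H^1$ solution corresponding to $g = 0$ satisfies $(a+|\xi|^2)\|\hat\phi(\xi,\cdot)\|_{L^2_t}^2 \le 0$ for almost every $\xi$, forcing $\phi \equiv 0$.

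For the trace bound \equ{estt3}, I will differentiate the equation in each tangential direction: whenever $g \in H^1(\R^{m+1}_+)$, the function $\partial_{y_j}\phi$ solves the same problem with source $\partial_{y_j} g$ and zero Dirichlet data at $t = 0$, so the bound just proved yields $\|\partial_{y_j}\phi\|_{H^2(\R^{m+1}_+)} \le C\|g\|_{H^1(\R^{m+1}_+)}$. Consequently both $\partial_t \phi$ and $\partial_t \partial_{y_j}\phi$ lie in $H^1(\R^{m+1}_+)$, and applying the standard trace embedding $H^1(\R^{m+1}_+) \hookrightarrow H^{1/2}(\R^m) \hookrightarrow L^2(\R^m)$ to each delivers $\partial_t \phi(\cdot, 0) \in H^1(\R^m)$ with the stated bound. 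The main technical point throughout is the uniformity in $\xi$ of the ODE estimates at large frequencies, where the $|\xi|^2$-term must be balanced against $f'(w)$ inside the energy identity; the positive spectral gap provided by \equ{ee} --- a consequence of the fact that the zero mode $w'$ of the full-line linearization does not vanish at $t = 0$, so that Dirichlet data exclude it from the half-line kernel --- is precisely what supplies this balance.
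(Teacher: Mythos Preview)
Your argument is correct. For the $H^2$ estimate \equ{estt2} you follow the same Fourier-in-$y$ strategy as the paper, but you extract the second-order bounds (on $p''$, $|\xi|^2 p$, $|\xi|\,p'$) directly from the ODE, whereas the paper records only the $H^1$-level inequality $\int_0^\infty[|\partial_t\hat\phi|^2 + (1+|\xi|^2)|\hat\phi|^2]\,dt \le C\int_0^\infty|\hat g|^2\,dt$ and then appeals to abstract $L^2$ elliptic regularity for the upgrade to $H^2$. Your route is slightly more hands-on but entirely equivalent.

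For the trace estimate \equ{estt3} the approaches genuinely differ. The paper tests the Fourier-transformed ODE against $e^{-|\xi|t}$, which cancels the $|\xi|^2$ term through integration by parts and yields an explicit formula for $\hat\beta(\xi)=\partial_t\hat\phi(\xi,0)$; Cauchy--Schwarz against the exponential then produces $(1+|\xi|^2)|\hat\beta(\xi)|^2 \le C(1+|\xi|)\int_0^\infty|\hat g(\xi,t)|^2\,dt$, which in fact gives the sharper bound $\|\beta\|_{H^1}\le C\|g\|_{H^{1/2}}$ (noted but not used in the paper). Your approach---differentiate tangentially, reapply \equ{estt2} to $\partial_{y_j}\phi$ with source $\partial_{y_j}g$, then invoke the trace theorem---is more standard and arguably cleaner, and it delivers exactly the stated $H^1\!\to\!H^1$ bound. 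What the paper's computation buys is that extra half-derivative gain; what yours buys is that it avoids any pointwise representation of $\hat\beta$ and works purely at the level of energy estimates.
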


\proof
As in the previous arguments, we consider the version of \equ{linear3} after Fourier transform,

\begin{align}
 \pp_t^2\h \phi -|\xi|^2\h \phi\, + \, f'(w(t))\h \phi   \, = &\,  \h g(\xi,t) \quad \hbox{in } \R^m\times (0,\infty),
\nonumber\\
\h \phi(\xi,0)\, =\,&\ 0\ \  \ \foral \xi \in \R^m.
\nonumber\end{align}
Using inequality \equ{ee}, we see that for each $\xi$ this problem can be solved uniquely  in such a way that
\be
\int_0^\infty [\, |\pp_t\h\phi|^2 +  (1+ |\xi|^2)\, |\hat\phi |^2 \,] \, dt\ \le\ C\,\int_0^\infty | \hat g|^2\,dt\,  .
\label{ii}\ee
so that
$$
\int_{\R^{m+1}_+} [\, |\pp_t\h\phi|^2 +  (1+ |\xi|^2)\, |\hat\phi |^2 \,] \,d\xi\, dt\ \le\ C\,\int_{\R^{m+1}_+} | \hat g|^2\, d\xi\,dt\,  .
$$
By taking Fourier transform back, and then using $L^2$ regularity we get a solution $\phi$
satisfying \equ{estt2}.

Now, testing equation \equ{linear3} against $e^{-|\xi|t}$ and setting $\hat\beta(\xi):= \pp_t\h\phi(\xi,0)$ we see that
$$
\h\beta(\xi) = - \int_0^\infty f'(w(t)) e^{-|\xi|t}\, \h\phi(\xi,t)\, dt +  \int_0^\infty f'(w(t)) e^{-|\xi|t}\, \h g(\xi,t)\, dt.
$$
Hence
$$
|\h\beta(\xi)|   \le  -  C \left( \int_0^\infty  e^{- 2|\xi|t}\,dt \right)^{\frac 12}
\left( \int_0^\infty [\, | \h\phi(\xi,t) |^2  +  \h g(\xi,t) |^2 \,dt \right)^{\frac 12}
$$
Using inequality \equ{ii} we then get that

$$
 (1+ |\xi|^2)|\h\beta(\xi)|^2 \ \le \  C \int_0^\infty  (1+ |\xi|)| \h g(\xi,t) |^2 \,dt.
$$
From here, estimate \equ{estt3} immediately follows. Observe that the control is in reality
stronger. In terms of fractional Sobolev spaces we have
$$
\|\beta\|_{H^1(\R^m)}\ \le\ C\|g\|_{H^{\frac 12} (\R^{m+1}_+)}
$$
but we will not need this.
The proof is concluded. \qed

\bigskip
Using Lemmas \ref{lema1}, \ref{lema22} and simple superposition we conclude the following result.

\begin{lemma}
\label{lema2}
Given $\beta\in H^1(\R^m)$, $g\in H^1(\R^{m+1}_+),$ there exists a  unique solution  $$(\phi,\A)\in H^2(\R^{m+1}_+)\times L^2(\R^m) $$ of Problem $\equ{linear2}$. This solution defines a linear operator of the pair
$(\beta,g)$, that satisfies  the estimate
\be
 \|\phi\|_{H^2(\R_+^{m+1})} + \|\A\|_{L^2(\R^m)} \, \le\, C\,[\,\| \beta\|_{H^1(\R^m)}\,+\, \| g\|_{H^1(\R^{m+1}_+)}\,]\, .
\label{estt222}\ee

\end{lemma}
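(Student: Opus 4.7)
\textbf{Proof plan for Lemma \ref{lema2}.} The plan is to reduce Problem \equ{linear2} to the two problems already handled by Lemma \ref{lema1} and Lemma \ref{lema22} by a simple superposition, using the trace estimate \equ{estt3} as the bridge between them. First, given $g \in H^1(\R^{m+1}_+)$, I apply Lemma \ref{lema22} to obtain a unique $\phi_1 \in H^2(\R^{m+1}_+)$ solving
\[
\Delta \phi_1 + f'(w(t))\phi_1 = g(y,t) \quad\hbox{in } \R^{m+1}_+, \qquad \phi_1(y,0)=0,
\]
together with the bound $\|\phi_1\|_{H^2} \le C\|g\|_{L^2}$. The crucial observation is that by estimate \equ{estt3}, the Neumann trace $\beta_1(y):=\partial_t\phi_1(y,0)$ lies in $H^1(\R^m)$ with
\[
\|\beta_1\|_{H^1(\R^m)} \,\le\, C\,\|g\|_{H^1(\R^{m+1}_+)}.
\]

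Next, I apply Lemma \ref{lema1} to the boundary datum $\beta - \beta_1 \in H^1(\R^m)$: this yields a pair $(\phi_2,\A)\in H^2(\R^{m+1}_+)\times L^2(\R^m)$ such that
\[
\Delta \phi_2 + f'(w(t))\phi_2 = \A(y)\,w'(t) \ \hbox{in } \R^{m+1}_+, \quad \phi_2(y,0)=0,\quad \partial_t\phi_2(y,0) = \beta(y)-\beta_1(y),
\]
together with the estimate $\|\phi_2\|_{H^2}+\|\A\|_{L^2} \le C\,\|\beta-\beta_1\|_{H^1}$. Setting $\phi := \phi_1+\phi_2$, linearity immediately gives that $(\phi,\A)$ solves Problem \equ{linear2}. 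Combining the two bounds and using the trace estimate on $\beta_1$ yields
\[
\|\phi\|_{H^2(\R^{m+1}_+)} + \|\A\|_{L^2(\R^m)} \,\le\, C\,\bigl[\|\beta\|_{H^1(\R^m)} + \|g\|_{H^1(\R^{m+1}_+)}\bigr],
\]
as required. The linearity of the construction in $(\beta,g)$ is transparent since both Lemma \ref{lema1} and Lemma \ref{lema22} produce linear operators.

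For uniqueness, suppose $(\phi,\A)$ solves the homogeneous problem, i.e.\ with $\beta\equiv 0$ and $g\equiv 0$. Then in particular $\phi\in H^2(\R^{m+1}_+)$ solves \equ{linear} with $\beta=0$, so the uniqueness statement of Lemma \ref{lema1} forces $\phi\equiv 0$ and $\A\equiv 0$. The main (and only substantive) obstacle here is ensuring that the intermediate trace $\beta_1$ lies in the space $H^1(\R^m)$ where Lemma \ref{lema1} can be applied; this is precisely the content of estimate \equ{estt3}, and it is why that estimate was singled out in Lemma \ref{lema22}. Everything else is a routine superposition argument.
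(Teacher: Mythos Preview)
Your proof is correct and is precisely the ``simple superposition'' the paper invokes: solve the inhomogeneous Dirichlet problem via Lemma~\ref{lema22}, use the trace estimate \equ{estt3} to control the resulting Neumann datum, then correct the Neumann condition via Lemma~\ref{lema1}. You have spelled out exactly what the paper leaves implicit, including the role of \equ{estt3} as the bridge and the reduction of uniqueness to that of Lemma~\ref{lema1}.
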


\bigskip
We are interested solving Problem \equ{linear} for functions $\beta$ that are only locally in $H^1(\R^m)$ however in a uniform way, in the sense of the local uniform norms introduced in \equ{lu}.

We would like to solve Problem \equ{linear} for a $\beta$ with $\|\B \|_{H^1_{l.u.}(\R^m )}< +\infty$,
obtaining a linear operator with an estimate similar to \equ{estt} in its ``local uniform version''.
We have the following result.

\begin{lemma}\label{lema4}
Given $\beta\in H^1(\R^m)$, there exists a  solution  $$(\phi,\A)\in H_{loc}^2(\R^{m+1}_+)\times L_{loc}^2(\R^m) $$ of Problem $\equ{linear}$ that defines a linear operator of $\beta$. Besides, we have the estimate
\be
 \|\phi\|_{H^2_{l.u.}(\R_+^{m+1})} + \|\A\|_{L^2_{l.u.}(\R^m)} \, \le\, C\,\| \beta\|_{H_{l.u.}^1(\R^m)}\, .
\label{est6}\ee
\end{lemma}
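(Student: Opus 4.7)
\medskip
\noindent\textbf{Proof proposal.}
The plan is to reduce to the global $H^1$ estimate of Lemma~\ref{lema1} by a partition of unity in the tangential variable $y \in \R^m$, combined with an exponential off-diagonal decay estimate whose source is the spectral gap \equ{ee} of the one-dimensional operator $\pp_t^2 + f'(w(t))$ with Dirichlet condition at $t=0$.

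First, I choose a smooth partition of unity $\{\chi_k\}_{k\in \Z^m}$ on $\R^m$ subordinate to the unit-scale covering $\{B(k,2)\}_{k\in \Z^m}$, with uniform $C^2$ bounds on $\chi_k$ independent of $k$. Setting $\beta_k := \chi_k\beta$, the hypothesis on $\beta$ in the local-uniform $H^1$ norm gives, for some $C>0$,
$$
\| \beta_k \|_{H^1(\R^m)} \ \le \ C\, \| \beta \|_{H^1_{l.u.}(\R^m)} \foral k\in \Z^m.
$$
Lemma~\ref{lema1} then yields a pair $(\phi_k, \alpha_k) \in H^2(\R^{m+1}_+) \times L^2(\R^m)$ solving \equ{linear} with Neumann datum $\beta_k$, together with the global bound
$$
\|\phi_k\|_{H^2(\R^{m+1}_+)} + \|\alpha_k\|_{L^2(\R^m)} \ \le \ C\, \| \beta \|_{H^1_{l.u.}(\R^m)}.
$$

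The main step (and the chief obstacle) is an exponential off-diagonal decay: there exists $\tau>0$, independent of $\beta$, such that for every $y_0 \in \R^m$,
$$
\| \phi_k \|_{H^2(B(y_0,1) \times (0,\infty))} + \| \alpha_k \|_{L^2(B(y_0,1))} \ \le \ C\, e^{-\tau |y_0-k|}\, \| \beta \|_{H^1_{l.u.}(\R^m)}.
$$
To prove this I will run a weighted energy estimate with the weight $e^{2\tau \langle y-k\rangle}$, testing the equation against $e^{2\tau \langle y-k\rangle}\phi_k$ and exploiting the coercivity inequality \equ{ee} in the $t$-variable (which controls $\int ( |\pp_t\phi_k|^2 - f'(w)\phi_k^2)\,dt$ from below by $a\int |\phi_k|^2\, dt$). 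The extra terms produced by differentiating the weight are of order $\tau$ times the gradient and are absorbed by the gap for $\tau$ small enough; the only source term comes from $\alpha_k(y) w'(t)$ and from the Neumann trace $\beta_k$, both of which are supported in $B(k,2)$, and so contribute factors of $e^{O(\tau)}$ localized near $y=k$. The same weighted argument, applied to the Fourier representation of $\alpha_k$ of Lemma~\ref{lema1}, gives the decay of $\alpha_k$; alternatively one derives it a posteriori from the equation and a trace inequality.

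Finally, the series $\phi := \sum_k \phi_k$ and $\alpha := \sum_k \alpha_k$ converge in $H^2_{loc}(\R^{m+1}_+)\times L^2_{loc}(\R^m)$ because, on any unit cube $B(y_0,1) \times (0,\infty)$,
$$
\sum_{k\in \Z^m} \left(\|\phi_k\|_{H^2(B(y_0,1)\times(0,\infty))} + \|\alpha_k\|_{L^2(B(y_0,1))}\right) \ \le \ C\,\| \beta \|_{H^1_{l.u.}(\R^m)}\, \sum_{k\in \Z^m} e^{-\tau |y_0-k|},
$$
and the last geometric sum is bounded by a constant independent of $y_0$. This supremum over $y_0$ gives the local-uniform estimate \equ{est6}. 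By linearity of \equ{linear} and the $H^2_{loc}$ convergence, the limit $(\phi,\alpha)$ solves \equ{linear} with boundary datum $\sum_k \beta_k = \beta$, which completes the proof.
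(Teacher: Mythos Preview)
Your approach is genuinely different from the paper's and contains a real gap. The paper does \emph{not} decompose $\beta$ at all. Instead, assuming first $\beta\in H^1(\R^m)$, it takes the global solution $(\phi,\alpha)$ from Lemma~\ref{lema1}, fixes an arbitrary point $p\in\R^m$, and conjugates by the polynomial weight $\rho(y)=\sqrt{1+\delta^2|y-p|^2}$: writing $\phi=\rho^{\nu}\ttt\phi$, $\alpha=\rho^{\nu}\ttt\alpha$, $\beta=\rho^{\nu}\ttt\beta$, the equation for $\ttt\phi$ is the original one plus a perturbation $B_\delta=O(\delta)\nabla_y+O(\delta^2)$. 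For $\delta$ small this is absorbed using the uniqueness and a~priori estimate of Lemma~\ref{lema2}, yielding $\|\ttt\phi\|_{H^2}+\|\ttt\alpha\|_{L^2}\le C\|\ttt\beta\|_{H^1}$. Since $\rho^{-\nu}\in L^2(\R^m)$ for $\nu$ large, the right side is controlled by $\|\beta\|_{H^1_{l.u.}}$, while near $p$ one has $\rho\approx 1$, so the left side dominates $\|\phi\|_{H^2(B(p,1))}+\|\alpha\|_{L^2(B(p,1))}$. Taking the supremum over $p$ gives \equ{est6}, and density removes the extra hypothesis $\beta\in H^1$. No series, no off-diagonal decay.

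The gap in your argument is the sentence asserting that $\alpha_k$ is supported in $B(k,2)$. It is not: from Lemma~\ref{lema1}, $\hat\alpha_k(\xi)=-\hat\beta_k(\xi)/p_\xi'(0)$, a genuinely nonlocal pseudodifferential operator of order~$1$ applied to $\beta_k$, so $\alpha_k$ spreads over all of $\R^m$. Consequently, when you test against $e^{2\tau\langle y-k\rangle}\phi_k$, the source term $\int \alpha_k\,w'\,e^{2\tau\langle y-k\rangle}\phi_k$ cannot be bounded without already knowing $e^{\tau\langle y-k\rangle}\alpha_k\in L^2$, which is precisely part of what you are trying to prove. Your fallback suggestions (analyticity of $\xi\mapsto 1/p_\xi'(0)$ in a strip, or an a~posteriori trace argument) are plausible but neither is carried out; the first requires showing the ODE for $p_\xi$ depends analytically on $|\xi|^2$ in a neighbourhood of $[0,\infty)$, and the second is circular as written. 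The cleanest repair is in fact to imitate the paper's conjugation trick with the weight $e^{\tau\langle y-k\rangle}$ (truncated at height $R$ and then letting $R\to\infty$), which treats $\phi_k$ and $\alpha_k$ simultaneously---at which point you have essentially reproduced the paper's argument with an exponential weight, and the partition of unity and summation become superfluous.
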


\proof
For the moment, let us further assume that $\beta \in H^1(\R^m)$ and  consider
the solution $(\A,\phi)$ to Problem \equ{linear} predicted by Lemma \ref{lema1}. We will
prove that the a priori estimate \equ{estt} holds.

Let $p\in \R^m$ and  for small values $\delta$ consider the function
$$
\rho(y):=  \sqrt{1 + \delta^2 |y-p|^2}
$$

Let us write $\phi $ in the form

$$
\phi = \rho^\nu \ttt \phi.
$$
 Then Problem \equ{linear} becomes in terms of $\ttt \phi$

\begin{align}
\Delta \ttt \phi \, + \, f'(w(t))\ttt \phi  \, = &\,\ttt \A(y)\,w'(t) + B_\delta\ttt\phi \quad \hbox{in } \R^{m+1}_+ ,
\nonumber\\
\ttt\phi(y,0)\, =\,&\ 0\ \  \ \foral y\in \R^m,
\nonumber\\
\pp_t \ttt\phi(y,0)\, =\, & \ttt\B(y) \foral y\in \R^m.
\label{linear4}\end{align}
where,   $\ttt \beta =   \rho^{-\nu}\beta$,  $\ttt \A =   \rho^{-\nu}\A$
Here $B_\delta$ is a small linear operator of the form
$$
B_\delta \ttt \phi = O(\delta) \cdot \nn_y \ttt \phi  + O(\delta^2) \ttt \phi,
$$

We observe that for all small $\delta$,
$$
\| B_\delta \ttt \phi  \|_{H^1(\R^{m+1}_+)} \le C\delta \| \ttt \phi  \|_{H^2(\R^{m+1}_+)}
$$
where $C$ is independent of the point $p$ defining $\rho$.
By uniqueness of the $H^2$ solution in  Lemma \ref{lema2}, and
using the estimate \equ{estt222} we get, after fixing $\delta$ sufficiently small,

$$
 \| \ttt \alpha   \|_{L^2(\R^m)} + \| \ttt \phi  \|_{H^1(\R^{m+1}_+)} \le C\| \ttt \beta  \|_{H^1(\R^m)}.
$$

Now, if $\nu$ was chosen large, then
$$
\| \ttt \beta  \|_{H^1(\R^m)} \le C \| \beta  \|_{H_{l.u.}^1(\R^m)}
$$
while
$$
 \|\alpha   \|_{L^2( B(p ,1)} + \| \phi  \|_{H^2(B(p,1)}\ \le \  C[\, \| \ttt \alpha   \|_{L^2(\R^m)}+\| \ttt \phi  \|_{H^2(\R^{m+1}_+)}]
$$
where the  constants $C$ are uniform on the location of the origin $p$. Then  we get
\be
\| \alpha   \|_{L^2_{l.u.}(\R^m)} + \| \phi  \|_{H^2_{l.u.}(\R^{m+1}_+)} \le   C \| \beta  \|_{H_{l.u.}^1(\R^m)},
\label{eee} \ee
as desired.

Using this estimate on the solution of Lemma \ref{lema1}, we extend it by density to a solution of
Problem \equ{linear} satisfying \equ{eee} whenever
$\| \beta  \|_{H_{l.u.}^1(\R^m)} <+\infty$. The proof is concluded. \qed

Our next task is to estimate the solution thus found using local uniform H\"older norms.

\subsection{Proof of Proposition \ref{lema5}}

We argue first that it suffices to establish the above statement
replacing $ \| g\|_{C^{0,\sigma}(\R^{m+1}_+)}$ by the stronger norm $ \| g\|_{C^{1,\sigma}_0(\R^{m+1}_+)}$, namely finding a solution such that
\be
 \|\phi\|_{ C^{2,\sigma}(\R_+^{m+1})} + \|\A\|_{C^{0,\sigma}(\R^m)} \, \le\, C\,[\,\| \beta\|_{C^{1,\sigma}(\R^m)}\,+\, \| g\|_{C^{1,\sigma}(\R^{m+1}_+)}\,]\, .
\ee
To see this we let $(\alpha, \phi)$ solve Problem \equ{linear2} and write
$$
\phi = \ttt \phi  + \bar\phi
$$
where $\bar\phi$ is the unique bounded solution to the problem
\begin{align}
\Delta \bar \phi \, - \, \bar \phi  \, = &\, g \quad \hbox{in } \R^{m+1}_+ ,
\nonumber\\
\bar \phi(y,0)\, =\,&\ 0\ \  \ \foral y\in \R^m,
\nonumber
\end{align}
It is standard that $\bar\phi$ satisfies the estimate
$$
\|\bar\phi\|_{C^{2,\sigma}(\R^{m+1}_+)}\ \le \  \|g\|_{C^{0,\sigma}(\R^{m+1}_+)}.
$$
Problem \equ{linear2} can be written in the equivalent form

\begin{align}
\Delta \ttt \phi \, + \, f'(w(t))\ttt \phi  \, = &\,  \A(y)\,w'(t) +\ttt g \quad \hbox{in } \R^{m+1}_+ ,
\nonumber\\
\ttt \phi(y,0)\, =\,&\ 0\ \  \ \foral y\in \R^m,
\nonumber\\
\pp_t \phi(y,0)\, =\, &\ttt \B(y) \foral y\in \R^m.
\nonumber\end{align}
where
$$
\ttt g = -(1+f'(w)) \bar \phi , \quad \ttt\beta = \beta - \pp_t \bar \phi(y,0)
$$
so that
$$
\|\ttt g \|_{C^{1,\sigma}_0(\R^{m+1}_+)} +  \|\ttt \beta \|_{C^{1,\sigma}_0(\R^m)}\ \le \  C\,[\, \|g\|_{C^{0,\sigma}(\R^{m+1}_+)} +  \|\ttt \beta \|_{C^{1,\sigma}_0(\R^m)}\, ],
$$
and the claim follows.

The proof consists of proving the a priori estimate \equ{estt5} for the solution built in
Lemma \ref{lema4}.

Let us fix a point $p\in \R^m$.
We consider the unique solution of
the equation
$$ \Delta_y \gamma = \A \quad \hbox{in } B(p, 3) $$
$$ \gamma = 0 \quad \hbox{on } \pp B(p, 3). $$
Let us write, for $|y|< 3$
$$
\phi (y,t) =  w'(t)\, \gamma (y) + \psi(y,t)
$$
so that $\psi$ satisfies

 \begin{align}
\Delta \psi \, + \, f'(w(t))\psi  \, = &\, g \quad \hbox{in } B(p, 3)\times (0,\infty),
\nonumber\\
\pp_t \psi(y,0)\, =\, &\B(y) \foral y\in B(p, 3).
\label{linear7}\end{align}

Standard boundary regularity estimates for the Laplacian yield the estimate

\be
\|\psi\|_{ C^{2,\sigma}_0 ( B(p,1)\times (0,1)) } \ \le \ C\, [\, \|\beta\|_{ C^{1,\sigma} ( B(p,2))} + \|g\|_{ C^{0,\sigma}( B(p,2)\times (0,2))}  + \|\psi\|_{ H^2( B(p,2)\times (0,2))}\, ]\, .
\label{ppp}\ee
On the other hand, using Lemma \ref{lema4},
\begin{align}
 \|\gamma \|_{H^2( B(p,2))} \ &\  \le \
  C\,\| \A \|_{L^2(B(p,3)} \nonumber \\
  \ &\ \le \
  C\,[ \| \beta \|_{H_{l.u.}^1(\R^m)} +\| g \|_{H_{l.u.}^1(\R^{m+1}_+)}
  \nonumber \\
  \ &\ \le \   C\,[\, \|\beta\|_{ C^{1,\sigma} ( \R^m)} + \| g \|_{C^{1,\sigma}_0(\R^{m+1}_+)} \,],
\end{align}
while
\begin{align}
 \|\phi\|_{ H^2( B(p,2)\times (0,2))} \ &\  \le \
   C\,\| \phi \|_{H_{l.u.}^2(\R^{m+1}_+)} \nonumber \\
  \ &\ \le \
  C\,[\,\| \beta \|_{H_{l.u.}^1(\R^m)} + \| g \|_{H_{l.u.}^1(\R^{m+1}_+)}\,]
  \nonumber \\
  \ &\ \le \  C\,[ \| \beta \|_{C^{1,\sigma}_0(\R^m)} + \| g \|_{C^{1,\sigma}_0(\R^{m+1}_+)}\,]\,.
\end{align}
Since
$$
\|\psi\|_{ H^2( B(p,2)\times (0,2))}\, \le \, C\, [\, \|\gamma \|_{H^2( B(p,2))} + \|\phi\|_{ H^2( B(p,2)\times (0,2))}\, ]\,  $$
it follows that
$$
\|\psi\|_{ H^2( B(p,2)\times (0,2))}\, \le \, C\,[\,  \| \beta \|_{C^{1,\sigma}_0(\R^m)} + \| g \|_{C^{1,\sigma}_0(\R^{m+1}_+)}\,]\,,
$$
and hence from \equ{ppp}, we conclude
\be
\|\psi\|_{ C^{2,\sigma}_0 ( B(p,1)\times (0,1)) } \ \le \ C\, [\,  \| \beta \|_{C^{1,\sigma}_0(\R^m)} + \| g \|_{C^{1,\sigma}_0(\R^{m+1}_+)}\,]\, .
\label{p21}\ee
Since $\psi(y,0) = - w'(0)\gamma (y)$ for $y\in B(p,3)$ we find then that
$$
\|\A\|_{ C^{0,\sigma}( B(p,1) )} =  \|\Delta_y\gamma \|_{ C^{0,\sigma}( B(p,1) )} \, \le \,  C\,[\,  \| \beta \|_{C^{1,\sigma}_0(\R^m)} + \| g \|_{C^{1,\sigma}_0(\R^{m+1}_+)}\,]\,. $$
The constants $C$ accumulated above are all independent of the point $p\in \R^m$ chosen, therefore
\be
\|\A\|_{ C^{0,\sigma}( \R^m )}  \, \le \,  C\,  [\,  \| \beta \|_{C^{1,\sigma}_0(\R^m)} + \| g \|_{C^{1,\sigma}_0(\R^{m+1}_+)}\,]\,. \label{p2}\ee
Besides, the definition of $\psi$  yields
\be
\|\phi\|_{ C^{2,\sigma}_0 ( \R^m\times (0,1)) } \ \le \ C\,  [\,  \| \beta \|_{C^{1,\sigma}_0(\R^m)} + \| g \|_{C^{1,\sigma}_0(\R^{m+1}_+)}\,]\, .
\label{p3}\ee
Finally, estimate \equ{p2} and interior elliptic estimates for the equation satisfied by $\phi$
yield that for any $\tau>0$
\begin{align}
\|\phi\|_{ C^{2,\sigma}_0 ( B(p,1) \times (\tau + 1  , \tau +2 )) }  \ &\  \le \ C\,[\,
 \|\phi\|_{ H^2 ( B(p,3) \times (\tau   , \tau + 3 )) }\  \, +   \nonumber \\
 \ &\  \qquad\quad  \| g \|_{C^{0,\sigma}(B(p,3) \times (\tau   , \tau + 3 )) )} + \|\A\|_{ C^{0,\sigma}( B(p,3) )} \,]
 \nonumber \\
  \ &\ \le \  C\,[\, \|\phi\|_{H^2_{l.u.}(\R^m)} + \| g \|_{C^{1,\sigma}_0(\R^{m+1}_+)} + \|\beta\|_{ C^{1,\sigma} ( \R^m)}\,]\,
  \nonumber \\
  \ &\ \le \  C\, [\, \|\beta\|_{ C^{1,\sigma} ( \R^m)} + \| g \|_{C^{1,\sigma}_0(\R^{m+1}_+)} \,]\,
 \label{p4}\end{align}
where, again, $C$ is uniform on $p$ and $\tau$. Combining \equ{p3} and \equ{p4} we obtain
$$
 \|\phi \|_{ C^{2,\sigma}_0 ( \R^{m+1}_+)}\ \le \ C\,  [\, \|\beta\|_{ C^{1,\sigma} ( \R^m)} + \| g \|_{C^{1,\sigma}_0(\R^{m+1}_+)} \,]
 $$
and estimate \equ{estt5} has been established. The proof is concluded. \qed

\bigskip
We have the validity of a similar result for Problem \equ{linear3}

\begin{lemma}\label{lema6}
Given $g$ such that
$$
 \| g\|_{C^{0,\sigma}(\R^{m+1}_+)}\,< \, +\infty
$$
there exists a   solution  $\phi\in  C^{2,\sigma}(\R^{m+1}_+) $ of Problem $\equ{linear3}$ that  defines a linear operator of $g$,  satisfying  the estimate
\be
 \|\phi\|_{ C^{2,\sigma}(\R_+^{m+1})}  \, \le\, C\, \| g\|_{C^{0,\sigma}(\R^{m+1}_+)}\, .
\label{es5}\ee
\end{lemma}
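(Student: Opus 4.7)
My plan is to follow the template used for Proposition~\ref{lema5}, exploiting the fact that the $g$-only problem is strictly simpler: there is no Neumann trace to match and no $\alpha(y)\,w'(t)$ resonance term, so the orthogonality obstruction that forced the introduction of $\alpha$ in Proposition~\ref{lema5} is absent here. The argument proceeds in three stages, moving from $L^{2}$/$H^{2}$ theory (already provided by Lemma~\ref{lema22}) to a local-uniform $H^{2}$ bound, and then to $C^{2,\sigma}$ via Schauder estimates.

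First, I would reduce to $g$ of stronger regularity. Let $\bar\phi$ be the unique bounded solution of $\Delta\bar\phi - \bar\phi = g$ in $\R^{m+1}_+$ with $\bar\phi(y,0)=0$; standard theory for the Laplacian gives $\|\bar\phi\|_{C^{2,\sigma}(\R^{m+1}_+)} \le C\|g\|_{C^{0,\sigma}(\R^{m+1}_+)}$. Writing $\phi = \bar\phi + \tilde\phi$, the function $\tilde\phi$ solves Problem~\eqref{linear3} with right-hand side $\tilde g = -(1+f'(w))\bar\phi$, which satisfies the stronger estimate $\|\tilde g\|_{C^{1,\sigma}_{0}(\R^{m+1}_+)} \le C\|g\|_{C^{0,\sigma}(\R^{m+1}_+)}$. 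Hence it suffices to solve Problem~\eqref{linear3} and control $\|\phi\|_{C^{2,\sigma}}$ by $\|g\|_{C^{1,\sigma}_{0}(\R^{m+1}_+)}$.

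Second, I would obtain a local-uniform $H^{2}$ estimate by adapting the weight argument of Lemma~\ref{lema4}. Fix $p\in\R^m$, choose $\rho(y) = \sqrt{1+\delta^{2}|y-p|^{2}}$ with $\delta>0$ small, and conjugate: writing $\phi = \rho^{\nu}\tilde\phi$ converts the equation into
\[
\Delta\tilde\phi + f'(w(t))\tilde\phi + B_\delta\tilde\phi = \rho^{-\nu} g, \qquad \tilde\phi(y,0)=0,
\]
where $B_\delta$ is a first-order perturbation of size $O(\delta)$ satisfying $\|B_\delta\tilde\phi\|_{H^{1}(\R^{m+1}_+)} \le C\delta\|\tilde\phi\|_{H^{2}(\R^{m+1}_+)}$ uniformly in $p$. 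For $\nu$ large enough, $\rho^{-\nu}g \in H^{1}(\R^{m+1}_+)$, so by Lemma~\ref{lema22} and an absorption argument (fixing $\delta$ small, independent of $p$) one gets $\|\tilde\phi\|_{H^{2}(\R^{m+1}_+)} \le C\|\rho^{-\nu} g\|_{H^{1}(\R^{m+1}_+)} \le C\|g\|_{C^{1,\sigma}_{0}(\R^{m+1}_+)}$, which translates into the desired local-uniform bound
\[
\|\phi\|_{H^{2}_{l.u.}(\R^{m+1}_+)} \le C\,\|g\|_{C^{1,\sigma}_{0}(\R^{m+1}_+)}.
\]

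Third, I would upgrade the regularity by Schauder theory. On each unit box $B(p,1)\times (\tau,\tau+1)$, interior or (for $\tau=0$) boundary Schauder estimates applied to $\Delta\phi + f'(w)\phi = g$ with homogeneous Dirichlet data at $t=0$ yield
\[
\|\phi\|_{C^{2,\sigma}(B(p,1)\times(\tau,\tau+1))} \le C\bigl(\|g\|_{C^{0,\sigma}(B(p,2)\times(\tau',\tau''))} + \|\phi\|_{H^{2}(B(p,2)\times(\tau',\tau''))}\bigr),
\]
with constants uniform in $p$ and $\tau$. Combining these local bounds with the previous step and undoing the reduction of the first step yields $\|\phi\|_{C^{2,\sigma}(\R^{m+1}_+)} \le C\|g\|_{C^{0,\sigma}(\R^{m+1}_+)}$. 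The map $g\mapsto\phi$ is linear by construction since each of the three ingredients (the auxiliary $\bar\phi$, the $H^2$ solution from Lemma~\ref{lema22}, and the Schauder bootstrap) produces a linear operator.

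The main technical step is the local-uniform $H^{2}$ bound, but it is a direct transcription of Lemma~\ref{lema4} and presents no genuinely new difficulty. The essential simplification over Proposition~\ref{lema5} is that the decomposition $\phi = w'(t)\gamma(y) + \psi$ used there to absorb the $\alpha(y)w'(t)$ source is unnecessary, so no Poisson problem for $\gamma$ and no control of $\alpha$ on $\R^m$ enters the argument.
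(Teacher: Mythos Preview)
Your proof is correct and follows essentially the same route as the paper: obtain a local-uniform $H^{2}$ bound via the conjugation-by-weight argument of Lemma~\ref{lema4} combined with Lemma~\ref{lema22}, then upgrade to $C^{2,\sigma}$ by local interior and boundary Schauder estimates. The only difference is that your first reduction step (replacing $g$ by $\tilde g = -(1+f'(w))\bar\phi$ to gain $C^{1,\sigma}$ control) is not actually needed here: since Lemma~\ref{lema22} bounds $\|\phi\|_{H^{2}}$ by $\|g\|_{L^{2}}$ alone (unlike Lemma~\ref{lema2}, which requires $g\in H^{1}$ because of the Neumann trace), the weight argument already yields $\|\phi\|_{H^{2}_{l.u.}} \le C\|g\|_{L^{2}_{l.u.}} \le C\|g\|_{C^{0,\sigma}}$ directly, and Schauder estimates close the loop without the auxiliary $\bar\phi$. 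This is precisely why the paper remarks that the present lemma is ``actually easier'' than Proposition~\ref{lema5}.
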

\proof
The proof goes along the same lines as that of Lemma \ref{lema5}, being actually easier.
A solution satisfying
the estimate
\be
 \|\phi\|_{H^2_{l.u.}(\R_+^{m+1})}  \, \le\, C\,\| g\|_{L_{l.u.}^2(\R^{m+1}_+)}\, .
\label{est66}\ee
is found with the argument in Lemma \ref{lema4}, using the result of Lemma \ref{lema22}. Estimate
\equ{es5} for this solution then follows right away from local boundary and interior elliptic estimates. \qed

\setcounter{equation}{0}
\section{Linearized problem in the half-cylinder} 

Let $\Gamma$ be a smooth, complete, embedded manifold in $\R^{m+1}$ that separates the space into two components.
 For each point $p\in \Gamma$ we assume that we can find a parametrization
$$
Y_p : B(0,1)\subset \R^m \mapsto \Gamma\subset \R^{m+1}
$$
onto a neighborhood $\UU_p$ of $p$ in $\Gamma$, so that if we write
$$ g_{ij}(\py) \ := \ \left < \pp_i Y_p,\pp_j Y_p\right> = \delta_{ij} + \theta_p(\py) $$
we may assume that
$\theta_p$ is smooth with $\theta_p(0) = 0$ and with second order derivatives  bounded in $B(0,1)$, uniformly in $p$.

\medskip
In this section we want to extend the linear theory developed so far to the same problem
considered in the region $\Gamma_\ve \times (0,\infty)$. Thus we consider the problem
 of finding, for  given functions $g(y,t)$, $\beta(y)$, a solution $(\alpha, \phi)$ to the problem
\begin{align}
\pp^2_t\phi + \Delta_{\Gamma_\ve} \phi \, + \, f'(w(t))\phi  \, = &\, \A(y)\,w'(t) + g(y,t) \foral  (y,t)\in \Gamma_\ve \times (0,\infty),
\nonumber\\
\phi(y,0)\, =\,&\ 0\ \  \ \foral y\in \Gamma_\ve ,
\nonumber\\
\pp_t \phi(y,0)\, =\, &\B(y) \foral y\in  \Gamma_\ve .
\label{linear22}\end{align}

We also consider the problem

\begin{align}
\pp^2_t\phi + \Delta_{\Gamma_\ve} \phi \, + \, f'(w(t))\phi  \, = &\,  g(y,t) \foral  (y,t)\in \Gamma_\ve \times (0,\infty),
\nonumber\\
\phi(y,0)\, =\,&\ 0\ \ \quad \ \foral y\in \Gamma_\ve .
\label{linear333}\end{align}

For an open subset $\Lambda$ of a manifold embedded in $\R^N$,   we call $C^{k,\sigma}(\Lambda)$ the Banach space of functions $h\in C^{k,\sigma}_0(\Lambda)$ for which
$$\|h\|_{C^{k,\sigma}(\Lambda)} < + \infty. $$

We prove first the following result.

\begin{lemma} \label{prop1}
For all sufficiently small $\ve$ the following statement holds:
given $$(\beta, g) \in  C ^{1,\sigma}(\Gamma_\ve )\times C ^{0,\sigma}(\Gamma_\ve\times (0,\infty)) $$ such that
$$
\| \beta\|_{C^{1,\sigma}(\Gamma_\ve )}\,+\, \| g\|_{C^{0,\sigma} (\Gamma_\ve\times (0,\infty))}\,< \, +\infty
$$
there exists a   solution  $(\phi,\A) = \TT (\beta, g)$ of Problem $\equ{linear22}$, linear in its argument,
that satisfies the estimate
\be
 \|\phi\|_{ C^{2,\sigma} (\Gamma_\ve\times (0,\infty))} + \|\A\|_{C^{0,\sigma}(\Gamma_\ve )} \, \le\, C\,[\,  \| \beta\|_{C^{1,\sigma}(\Gamma_\ve )}\,+\, \| g\|_{C^{0,\sigma}} (\Gamma_\ve\times (0,\infty)) \,]\,
\ee
where $C$ is independent of $\ve$.
\end{lemma}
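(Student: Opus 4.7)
\medskip

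\noindent\textbf{Proof plan for Lemma \ref{prop1}.}

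The strategy is to reduce the problem on the curved half-cylinder $\Gamma_\ve\times(0,\infty)$ to the flat half-space problem treated in Proposition~\ref{lema5}, exploiting the fact that, after the dilation by $\ve^{-1}$, the intrinsic geometry of $\Gamma_\ve$ is uniformly close to Euclidean on unit balls. Concretely, in the local parametrization $Y_{p_\ve}(\py) := \ve^{-1}Y_p(\ve\py)$ defined on $|\py|<\ve^{-1}\theta$, the pulled-back metric satisfies $g_{ij}(\ve\py)=\delta_{ij}+\theta_p(\ve\py)$, and the uniform bounds on $\theta_p$ give
\[
\|g_{ij}-\delta_{ij}\|_{C^{0,\sigma}(B(0,3))}=O(\ve^2),\qquad \|\pp_k g_{ij}\|_{C^{0,\sigma}(B(0,3))}=O(\ve),
\]
uniformly in $p$. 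Consequently one can write, on each coordinate patch, $\Delta_{\Gamma_\ve}=\Delta_{\py}+Q_\ve$, where $Q_\ve$ is a second order operator with coefficients of size $O(\ve)$ in $C^{0,\sigma}(B(0,3))$.

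The first step is an \emph{a priori} Schauder estimate. Choose a locally finite partition of unity $\{\chi_j\}$ subordinate to coordinate balls $B(p_{j,\ve},2)$, with uniform $C^2$ bounds and bounded overlap number. Pulling back to $\R^m$ and extending by zero, $(\chi_j\phi, \chi_j\alpha)$ satisfies a flat half-space problem of the form \equ{linear2} with data
\[
\tilde g_j = \chi_j g - Q_\ve(\chi_j\phi) + [\Delta_{\Gamma_\ve},\chi_j]\phi,\qquad \tilde\beta_j = \chi_j\beta.
\]
Applying Proposition~\ref{lema5} gives
\[
\|\chi_j\phi\|_{C^{2,\sigma}(\R^{m+1}_+)}+\|\chi_j\alpha\|_{C^{0,\sigma}(\R^m)} \le C\bigl[\|\tilde\beta_j\|_{C^{1,\sigma}}+\|\tilde g_j\|_{C^{0,\sigma}}\bigr].
\]
The $Q_\ve$ contribution is bounded by $C\ve\|\chi_j\phi\|_{C^{2,\sigma}}$, hence absorbed into the left-hand side for $\ve$ small; the commutator contribution is controlled by $\|\phi\|_{C^{1,\sigma}}$ on an enlarged patch. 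Summing in $j$ and using the bounded-overlap property of the cover yields
\[
\|\phi\|_{C^{2,\sigma}(\Gamma_\ve\times(0,\infty))}+\|\alpha\|_{C^{0,\sigma}(\Gamma_\ve)} \le C\bigl[\|\beta\|_{C^{1,\sigma}}+\|g\|_{C^{0,\sigma}}+\|\phi\|_{L^\infty(\Gamma_\ve\times(0,\infty))}\bigr],
\]
after an interpolation inequality is used to absorb intermediate norms of $\phi$. The remaining $L^\infty$ term must be removed by a separate global argument; here one combines (i) the coercive energy inequality \equ{ee} applied in the $t$-variable fiberwise (which gives control of the part of $\phi$ orthogonal to $w'$), (ii) a barrier construction using the exponential decay $f'(w(t))\to f'(1)<0$ to bound $\phi$ by the data as $t\to\infty$, and (iii) the Dirichlet condition $\phi(y,0)=0$ to extract $\|\phi\|_{L^\infty}\le C(\|\beta\|_{C^{1,\sigma}}+\|g\|_{C^{0,\sigma}})$. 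This last step mirrors the passage from the $H^2$ result of Lemma~\ref{lema2} to the local uniform version of Lemma~\ref{lema4} in the flat case.

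For existence, one exhausts $\Gamma_\ve$ by increasing bounded pieces $\Gamma_{\ve,R}$ (say preimages under $Y_p$ of balls of radius $R$, smoothed at the boundary) and considers the corresponding problem on $\Gamma_{\ve,R}\times(0,\infty)$ with additional Dirichlet data on $\partial\Gamma_{\ve,R}\times(0,\infty)$. On each bounded piece the Fredholm theory on compact manifolds with boundary produces a solution (with $\alpha$ determined by the orthogonality that makes the $w'$-mode consistent, a finite-dimensional matter). The uniform a priori estimate just obtained allows passage to the limit $R\to\infty$ by a standard diagonal compactness argument in $C^{2,\sigma}_{loc}$, delivering the desired solution operator $\TT(\beta,g)$ with the claimed bound.

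The main obstacle is the third ingredient above: producing a uniform $L^\infty$ bound on $\phi$ in terms of $(\beta,g)$ alone, since neither Fourier analysis nor the direct energy method used on $\R^m$ transfers verbatim to the noncompact curved manifold $\Gamma_\ve$. Overcoming this requires exploiting both the coercivity of the transverse operator $\pp_t^2+f'(w(t))$ on the orthogonal complement of $w'$ and the built-in smallness of the curvature contributions on unit scales.
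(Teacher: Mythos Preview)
Your localization via a partition of unity and appeal to Proposition~\ref{lema5} is the right starting point, but the argument has a real gap precisely where you flag the ``main obstacle'': removing the residual $\|\phi\|_{L^\infty}$ from the a priori estimate. None of the three devices you list actually closes it. Fiberwise coercivity \equ{ee} in $t$ controls $\int_0^\infty \phi^2\,dt$ pointwise in $y$, but gives nothing in the $y$-direction, where the operator is just $\Delta_{\Gamma_\ve}$ with no zeroth-order term; there is no barrier in $y$. The analogy with the passage from Lemma~\ref{lema2} to Lemma~\ref{lema4} breaks down because that step relies on the \emph{uniqueness} of the $H^2$-solution (proved via Fourier transform on $\R^m$) to identify the weighted and unweighted solutions; on $\Gamma_\ve$ you have no such tool. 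A blow-up/Liouville alternative would require showing that the only bounded pair $(\phi,\alpha)$ solving the homogeneous flat problem with both $\phi(y,0)=0$ and $\partial_t\phi(y,0)=0$ is trivial, which is not established anywhere and is not obvious without Fourier analysis. Your existence scheme on exhausting domains also presupposes this missing uniform bound.

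The paper sidesteps the issue entirely by taking a \emph{constructive} rather than a priori/compactness route. It writes the solution as $\phi=\sum_k\eta_{k1}\phi_k+\psi$, where the local pieces $\phi_k$ are produced by Proposition~\ref{lema5} on $\R^{m+1}_+$, and $\psi$ solves an auxiliary equation on $\Gamma_\ve\times(0,\infty)$ that is genuinely coercive: the potential $-V(y)$ with $V=\sum_k\eta_{k1}\ge 1$ is inserted by design, so $\psi$ is obtained directly by barriers and Schauder, with $\|\psi\|_{C^{2,\sigma}}\le C\|g\|_{C^{0,\sigma}}+C\ve\|\Phi\|_{\ell^\infty(C^{2,\sigma})}$. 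Substituting $\psi=\Psi(\Phi,g)$ back, the system for $\Phi=(\phi_k)$ becomes a fixed point problem $\Phi=\mathfrak A(\Phi)+\mathfrak g$ in $\ell^\infty(C^{2,\sigma}(\R^{m+1}_+))$, where $\|\mathfrak A(\Phi)\|\le C\delta\|\Phi\|$ thanks to the $\delta$-smallness of $\Delta_{\Gamma_\ve}-\Delta_\py$ on balls of radius $\delta\ve^{-1}$. This contraction gives existence and the estimate simultaneously, with no separate $L^\infty$ step.
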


\proof
We shall construct a solution by gluing solutions built through Lemma \ref{lema5} to problems of the form
\equ{linear2}, associated to local Euclidean
parametrizations of $\Gamma_\ve$.

The local coordinates of the surface $\Gamma$ induce corresponding ones for the neighborhood $\ve^{-1}\UU_p$ of the point $p_\ve= \ve^{-1}p$ in $\Gamma_\ve$, by means of the map
$$
\py\in B(0, \ve^{-1})\,   \longmapsto \, \ve^{-1} Y_p(\ve \py) \in \ve^{-1}\UU_p .
$$
The Laplace-Beltrami operator is represented in these coordinates by
$$
\Delta_{\Gamma_\ve} = \frac 1{\sqrt{\det g(\ve \py) }}\, \pp_{i}\left ( \sqrt{\det g(\ve \py) } \, g^{ij}(\ve\py)\, \pp_j\,\right)
$$
where the $g^{ij}$ represent the coefficients of the inverse matrix of $g_{ij}. $
Then we can expand
\be
 \Delta_{\Gamma_\ve} =  \Delta_y + B_{p_\ve},  \quad B_{p_\ve} \,:=\,  b_{ij}(\ve \py)\,\pp_{ij} +  \ve b_i(\ve \py)\,\pp_i, \quad |\py|< \ve^{-1},
 \label{lapbel}\ee
where  the coefficients $b_{ij}$, $b_i$ have  derivatives bounded,  uniformly in $p$, and  $b_{ij}(0)=0$.
Observe in particular that $|b_{ij}(\ve \py)| \le C\ve|\py|$ with $C$ uniform in $p$, so that for $\delta>0$ small but fixed we have
$$
|b_{ij}| \le C\delta ,\quad  |D_{\py}b_{ij}| + |D_{\py}b_{j}| \le C\ve ,\quad \py\in B(0,\delta\ve^{-1}),
$$
so that the coefficients are uniformly small with $\delta$ as $\ve\to 0$, in other words
$\Delta_{\Gamma_\ve}$ differs from Euclidean Laplacian by an operator $\delta$-small, uniformly in $p$.

We fix a small number $\delta$ and choose a sequence of points $p_j$ such that
$\Gamma$ is covered by the union of the open sets \be \UU_k:= Y_{p_k} (B(0, \delta /2 ))\label{uj}\ee
 and so that each $\UU_j$ does not intersect more than a finite, uniform number of $\UU_\ell$ with $\ell\ne j$.
Let us consider a smooth cut-off function $\eta$, with $\eta(s) =1 $ for $s<1$, $=0$ for $s>2$.
We define on $\Gamma_\ve$ the smooth cut-off functions,
$$
\eta_{km} (y) : =  \eta( \ve \, |\py| / m\delta ) , \quad y=  \ve^{-1}Y_{p_k}(\ve \py), $$
extended as zero to all $\Gamma_{\ve}$ outside their supports.

We look for a solution to Problem \equ{linear22}  with the following form.

\be
\phi = \sum_{k=1}^\infty \eta_{k1}\phi_k   + \psi, \quad \A = \sum_{j=1}^\infty \eta_{k1}\A_k
\label{form}\ee
where $\phi_k$ is defined, for instance,  on $\UU_k \times (0,\infty)$ with
$$
\UU_k := \ve^{-1} Y_{p_k} ( B(0, 2\delta)),
$$
and the function $\eta_{k1}\phi_k $ is extended by zero outside its support.
Using Einstein's summation convention, equation \equ{linear22} can be written as
\begin{align}
 \eta_{k1} [\, (\pp^2_t + \Delta_{\Gamma_\ve}) \phi_k + f'(w)\phi_k ]  +  2\nn_{\Gamma_\ve}\phi_k \cdot\nn_{\Gamma_\ve}\eta_{k1} \ + \sum_{k=1}^\infty \eta_{k1}\A_kw'
 \nonumber \\
 \phi_k \Delta_{\Gamma_\ve}\eta_{k1}  -h +  (\pp^2_t + \Delta_{\Gamma_\ve})  \psi  +   f'(w) \psi\  =0.
 \nonumber\\
 \eta_{k1} \phi_k(y,0) + \psi(y,0)\, = 0,  \nonumber\\
 \eta_{k1} \pp_t \phi_k(y,0) + \pp_t \psi(y,0)\, =\, \B(y)\nonumber\\ \foral y\in  \Gamma_\ve .
 \end{align}
We separate further the above equation as
\begin{align}
 \eta_{k1} [\,(\pp^2_t + \Delta_{\Gamma_\ve}) \phi_k + f'(w)\phi_k \,]\,  + \,    (\sum_{k=1}^\infty \eta_{k1}) (f'(w)+1) \psi \, +\, \sum_{k=1}^\infty \eta_{k1}\A_k \ +
 \nonumber \\
(\pp^2_t + \Delta_{\Gamma_\ve}) \psi  - (\sum_{k=1}^\infty \eta_{k1})\psi  +  2\nn_{\Gamma_\ve}\phi_k \cdot\nn_{\Gamma_\ve}\eta_{k1} +  \phi_k \Delta_{\Gamma_\ve}\eta_{k1}  -h \, .
\end{align}

Since the sets $\UU_k$ cover $\Gamma$ and for each $j$, there is at most a uniformly bounded number of $\ell \ne j$ is such that $\UU_k\cap \UU_\ell \ne \emptyset,$ it follows that for some constant $C$ uniform in $\ve$.
\be
1\le  V := \sum_{k=1}^\infty \eta_{k1} \le C  .
\label{V}\ee

Now, defining
\be
\beta_k  \ :=\    \frac { \eta_{k2}\beta} { \sum_{\ell =1}^\infty  \eta_{\ell 1} } ,
\label{betak}\ee
and using that $\eta_{k1}\eta_{k2} = \eta_{k1}$ we get that
$$
\beta = \sum_{k=1}^\infty \eta_{k1} \beta_k, $$
Then Equation \equ{pl} will hold if  we have the following infinite system of equations satisfied.

\begin{align}
 \pp^2_t\phi_k + \Delta_{\Gamma_\ve} \phi_k + f'(w)\phi_k   =  \A_k w' -  (f'(w)+1) \psi\quad\hbox{in } \UU_k \times (0,\infty)
 \nonumber \\
 \phi_k(y,0)  = 0,  \nonumber\\
 \pp_t \phi_k(y,0) = - \pp_t \psi(y,0) + \B_k (y)  \nonumber\\ \foral y\in  \UU_k ,
\quad  k=1,2,\ldots ,   \label{g1} \\
\pp^2_t\psi  + \Delta_{\Gamma_\ve} \psi  - V(y)\, \psi\,  =\,
- \sum_{k=1}^\infty [\,  2\nn_{\Gamma_\ve}\phi_k \cdot\nn_{\Gamma_\ve}\eta_{k1} +  \phi_k \Delta_{\Gamma_\ve}\eta_{k1} \, ] + g \,  \nonumber\\ \hbox{in } \UU_k \times (0,\infty) \,
\nonumber \\
 \psi(y,0)  = 0 \foral y\in  \Gamma_\ve . \nonumber\\
 \label{g2} \end{align}

Using local coordinates $y = \ve^{-1}Y_{p_k} (\ve \py)$ in $\UU_k$, we get equation
\equ{g1} expressed as

\begin{align}
 \pp^2_t\phi_k + \Delta_{\py} \phi_k +  B_{p_k}\phi_k + f'(w)\phi_k  & =  \A_k w' -            (f'(w)+1) \psi,\nonumber\\  & \foral (\py,t)\in B(0, 2\delta\ve^{-1}) \times (0,\infty),\nonumber \\
 \phi_k(\py,0)  & = 0,  \quad  \py\in B(0, 2\delta\ve^{-1}),  \nonumber\\
 \pp_t \phi_k(\py,0) &= - \pp_t \psi(\py,0) + \B_k (\py), \quad  \py\in B(0, 2\delta\ve^{-1}) . \label{g121} \end{align}
where $B_{p_k}$ is the small operator in \equ{lapbel}, and by slight abuse of notation we denote
in the same way $h(\py)$ and $h(y)$, a function $h$ defined on $ \ve^{-1}\ttt \UU_k$ evaluated at the point $y= \ve^{-1}Y_k(\ve \py)$. Equation \equ{g121} can be extended to all of $\R^9_+$, with all its coefficients  well-defined. Now, $\eta_{k3}(\py) = \eta( \ve |\py|/ 3\delta)$, and we will have a solution of  \equ{g1}-\equ{g2} if we solve the system

\begin{align}
 \pp^2_t\phi_k + \Delta_{\py} \phi_k +  \eta_{k3} B_{p_k}\phi_k + f'(w)\phi_k  & =  \A_k w' -           \eta_{k3}(f'(w)+1) \psi,\quad  \hbox{in } \R^9_+ ,\nonumber \\
 \phi_k(\py,0)  & = 0\foral  \py\in \R^m,  \nonumber\\
 \pp_t \phi_k(\py,0) &= -  \eta_{k3}\pp_t \psi(\py,0) + \B_k (\py)  \foral  \py\in \R^m , \label{gg1}
 \end{align}
 \begin{align}
\pp^2_t\psi  + \Delta_{\Gamma_\ve} \psi  - V(y)\, \psi\,  =\,
- \sum_{j=1}^\infty [\,  2\nn_{\Gamma_\ve}\phi_k \cdot\nn_{\Gamma_\ve}\eta_{k1} +  \phi_k \Delta_{\Gamma_\ve}\eta_{k1} \, ] + g \,  \nonumber\\ \hbox{in } \Gamma_\ve \times (0,\infty) \,
\nonumber \\
 \psi(y,0)  = 0 \foral y\in  \Gamma_\ve . \nonumber\\
 \label{gg2} \end{align}

We will solve first equation \equ{gg2} for given $\phi_k$'s and $g$.

Let us consider  the Banach space
$$\ell^\infty ( C^{m,\sigma}(\Lambda) ) $$
 of  bounded sequences
in $ C^{m,\sigma}(\Lambda)$, endowed with the norm
$$
\|\hh \|_{\ell^\infty ( C^{m,\sigma}(\Lambda) )} := \sup_{k\ge 1} \| h_k\|_{C^{m,\sigma}(\Lambda)}  .
$$

Let
$$
\Phi = (\phi_k)_{k\ge 1} \in  \ell^\infty ( C^{2,\sigma}(\R^9_+) ), \quad g\in  C^{0,\sigma}(\Gamma_\ve\times (0,\infty)) $$
be given.
We write equation \equ{gg2} as

 \begin{align}
\pp^2_t\psi  + \Delta_{\Gamma_\ve} \psi  - V(y)\, \psi\,  =\, \ttt g
\,\quad \hbox{in } \Gamma_\ve \times (0,\infty) \,
\nonumber \\
 \psi(y,0)  = 0 \foral y\in  \Gamma_\ve . \nonumber\\
 \label{gg3} \end{align}
We see that $\ttt g$ defines a linear operator of the pair $(\phi,g)$ and
$$
\|\ttt g \|_{C^{2,\sigma}(\Gamma_\ve \times (0,\infty) )} \ \le\ C\,[\, \| g \|_{C^{0,\sigma}(\Gamma_\ve \times (0,\infty) )} +
\ve \|\phi  \|_{\ell^\infty ( C^{2,\sigma}( \R^9_+) )}
\,]
$$

where the constant $C$ is uniform in all small $\ve$.  Here we have used
 the fact that
$$|\nn _{\Gamma_\ve}\eta_{k1} | + |\Delta _{\Gamma_\ve}\eta_{k1} | \le C\ve .$$
Now, since $1\le V\le C$, the use of barriers yields the existence of a unique bounded solution $\psi$ to
\equ{gg3}, which satisfies
$$
\|\psi\|_{L^\infty(  \Gamma_\ve \times (0,\infty) )} \ \le \ C\|\ttt g\|_{L^\infty(  \Gamma_\ve \times (0,\infty) )}.
$$
Then the use of local interior and boundary Schauder estimates, invoking the representation
of the equation in local coordinates and the uniform H\"older character of the coefficients, yields
that $\psi = \Psi(\Phi,g)$ satisfies
  \be
\|\Psi(\Phi,g)\|_{ C^{2,\sigma}(  \Gamma_\ve \times (0,\infty) )} \ \le \  C\,[\, \| g \|_{C^{0,\sigma}(\Gamma_\ve \times (0,\infty) )} +
\ve \|\Phi  \|_{\ell^\infty ( C^{2,\sigma}( \R^9_+) )}
\,]\ .
\label{s9n}\ee

Next we consider in addition $\beta \in  C ^{1,\sigma}(  \Gamma_\ve )$ be given
and define $\beta_k$ as in \equ{betak}.
Then $ \mathfrak b:= (\beta_k)_{k\ge 1}$ satisfies
$$
\| \mathfrak b \|_{\ell^\infty ( C^{1,\sigma}( \R^m ) )}\ \le C\ \|\beta\|_{ C^{1,\sigma}(  \Gamma_\ve ) }.
$$

Let $$T: \ell^\infty ( C^{1,\sigma}( \R^m ) )\times \ell^\infty ( C^{0,\sigma}( \R^9_+ ) ) \ \to\  \ell^\infty ( C^{2,\sigma}( \R^9_+ ) )$$
$$
(\beta, g) \ \longmapsto \  T(\beta, g) := \phi
$$
be the linear operator built in Lemma \ref{lema5} as a solution of Problem \equ{linear2}, so that
\be
\|T(\beta, g) \|_{C^{2,\sigma}(\R^9_+)} \ \le\ [\, \| g \|_{C^{0,\sigma}(\R^9_+)} + \| \beta \|_{C^{1,\sigma}(\R^m)}\, ]\, .
\label{oo}\ee
Then the equation \equ{gg1} after substituting $\psi$ by $\Psi(\phi, g)$ becomes
\begin{align}
 \pp^2_t\phi_k + \Delta_{\py} \phi_k  + f'(w)\phi_k  & =  \A_k w'  + \ttt g_k ,\quad  \hbox{in } \R^9_+ ,\nonumber \\
 \phi_k(\py,0)  & = 0\foral  \py\in \R^m,  \nonumber\\
 \pp_t \phi_k(\py,0) &= \ttt \beta_k(\py)  \foral  \py\in \R^m ,
 \end{align}
where
$$
 \ttt g_k := - \eta_{k3} B_{p_k}\phi_k- \eta_{k3} (f'(w)+1) \Psi(\phi, g), \quad
\ttt \beta_k (\py) := - \eta_{k3} \pp_t \Psi(\phi, g)\,(\py,0) \, +\,  \B_k (\py)
$$
so that we find a solution if we solve the linear fixed point problem

\be
\Phi \ = \ {\mathfrak A} (\Phi) + {\mathfrak g} ,\quad \Phi \in
  \ell^\infty ( C^{2,\sigma}( \R^9_+ ))
\label{fp}\ee
where

$$ {\mathfrak A} (\Phi)_k\ :=\ T(- \chi \pp_t \Psi(\Phi, 0)\,(\cdot,0)  \, , \,
 - \eta_{k3} B_{p_k}\phi_k- \eta_{k3} (f'(w)+1) \Psi(\Phi, 0) \, ), \quad
$$
$$
{\mathfrak g}_k : =  T(- \eta_{k3} \pp_t \Psi(0, g)\,(\cdot,0) \, +\,   \B_k  \, , \,
- \eta_{k3} (f'(w)+1) \Psi(0, g) \, )
$$
From estimates \equ{s9n}, \equ{oo} and the $\delta$-smallness of the operator $B_{p_k}$
we readily get that for all small $\ve$
$$
\|{\mathfrak A} (\Phi)\|_{\ell^\infty (C^{2,\sigma}( \R^9_+ ))} \le C\delta  \|\Phi\|_{\ell^\infty (C^{2,\sigma}( \R^9_+ ))},
$$
and also
$$
\|{\mathfrak g} \|_{\ell^\infty (C^{2,\sigma}( \R^9_+ ))}\le C\ [\, \| g \|_{C^{0,\sigma}(\Gamma_\ve \times (0,\infty))} + \| \beta \|_{C^{1,\sigma}(\Gamma_\ve)}\, ]\, .
$$
It follows that if $\delta$ is fixed sufficiently small, then for all small $\ve$, Problem \equ{fp}
has a unique solution $\Phi = \Phi (\beta, g)$ which is linear in its argument and satisfies the estimate
\be
\|\Phi (\beta, g) \|_{\ell^\infty (C^{2,\sigma}( \R^9_+ ))}\le C\ [\, \| g \|_{C^{0,\sigma}(\Gamma_\ve \times (0,\infty))} + \| \beta \|_{C^{1,\sigma}(\Gamma_\ve)}\, ]\,
\ee
It is straightforward to check that the solution $(\phi,\A)$ thus obtained by formula \equ{form}, satisfies the
desired properties. The proof is concluded. \qed

\bigskip
We consider next the case in which  $g$ and $\beta$
have a uniformly weighted H\"older control.

Let us consider a positive function $\rho(y,t)$ defined on $\Gamma_\ve\times (0,\infty)$ which we
assume of class $C^{2,\sigma}$.
Let us write
$$
\phi = \rho \ttt \phi
$$
and consider Problem \equ{linear22} written in terms of $\ttt \phi$. We have

\begin{align}
\pp^2_t\ttt \phi + \Delta_{\Gamma_\ve} \ttt \phi \, +  B\ttt\phi + \, f'(w) \ttt\phi  \, = &\, \ttt\A(y)\,w'(t) +\ttt g(y,t) \foral  (y,t)\in \Gamma_\ve \times (0,\infty),
\nonumber\\
\ttt\phi(y,0)\, =\,&\ 0\ \  \ \foral y\in \Gamma_\ve ,
\nonumber\\
\pp_t \ttt\phi(y,0)  \, =\, &  \ttt\B(y) \foral y\in  \Gamma_\ve .
\label{linear222}\end{align}
where
$$
B\ttt\phi =    \rho^{-1}[  \pp^2_t\rho  + \Delta_{\Gamma_\ve}\rho  ]\,\ttt \phi +  2\rho^{-1}\,[ \nn_{\Gamma_\ve}\rho\cdot\nn_{\Gamma_\ve}\ttt\phi + \pp_t\rho\,\pp_t\ttt\phi ] \ ,
$$
$$
\ttt \A = \rho^{-1}\A, \quad \ttt g = \rho^{-1} g, \quad \ttt\beta = \rho^{-1}\beta\ .
$$
If we have that
\be
\| B\ttt\phi\|_{C^{0,\sigma}(\Gamma_\ve \times (0,\infty) } \le c \|\ttt\phi\|_{C^{2,\sigma} ( \Gamma_\ve \times (0,\infty) )}
\label{small}\ee
for a $c$ sufficiently small, then there is a small linear perturbation of the operator built in
Lemma \ref{prop1} which solves problem \equ{linear22} whenever
$$
\| \ttt\beta\|_{C^{1,\sigma}(\Gamma_\ve ) }  + \|\ttt g \|_{C^{0,\sigma}(\Gamma_\ve \times (0,\infty) }< +\infty .
$$
and a  corresponding estimate is obtained for $\ttt\phi = \rho^{-1}\phi$
Condition \equ{small} will be achieved provided that
 \be \|\rho^{-1} D^2\rho\|_{C^{0,\sigma}( \Gamma_\ve \times (0,\infty) )} + \|\rho^{-1} D\rho\|_{C^{0,\sigma}( \Gamma_\ve \times (0,\infty) )} \ \le \ c,
\label{sma}\ee
with $c$ sufficiently small.
Under this condition we have obtained a solution $\phi$ to Problem \equ{linear22} such that
\begin{align}
\|\rho^{-1} D^2\phi\|_{C^{0,\sigma}( \Gamma_\ve \times (0,\infty) )} +
\|\rho^{-1} D \phi\|_{C^{0,\sigma}( \Gamma_\ve \times (0,\infty) )} + \|\rho^{-1}\phi\|_{C^{0,\sigma}( \Gamma_\ve \times (0,\infty) )} \nonumber\\
\,\le \,  C\, \|\ttt\phi\|_{C^{2,\sigma}  ( \Gamma_\ve \times (0,\infty) )} \, \le \,\nonumber\\
  C\,[\,\|\rho^{-1} D \beta\|_{C^{0,\sigma}( \Gamma_\ve )}+ \|\rho^{-1} \beta\|_{C^{0,\sigma}( \Gamma_\ve )} + \|\rho^{-1}g \|_{C^{0,\sigma}( \Gamma_\ve \times (0,\infty) )}\,]\, .
\end{align}

For our purposes, the following weight plays a very important role.
For $y\in \Gamma$ we denote
$\rr(y',y_9) = \sqrt{ 1+ |y'|^2}$ and we write
$$
\rr_\ve (y) := \rr(\ve y) ,\quad y\in \Gamma_\ve .
$$
For positive numbers $\nu, \gamma$ let us consider

$$
\rho(y,t)\ = \  \rr_\ve ( y )^{-\nu} e^{-\gamma  t},\quad  (y,t)\in \Gamma_\ve \times (0,\infty).
$$
Then we observe that if $\gamma$ is fixed sufficiently small, then for any fixed $\nu\ge 0$ and all small $\ve$ we have the validity
of condition \equ{sma}.

Let us consider the weighted H\"older norms defined in \equ{norm1}-\equ{norm2}

Then the following result has been obtained.

\begin{proposition} \label{prop2}
If $\gamma\ge 0$ is fixed sufficiently small and $\nu\ge 0$ is arbitrary,
then for all sufficiently small $\ve$ the following statement holds:
given $(\beta, g) $ such that
$$
\|\beta \|_{C_\nu^{1,\sigma}(\Gamma_\ve  )} +
\|g\|_{C_{\nu,\gamma}^{0,\sigma}(\Gamma_\ve\times (0,\infty))} \ <\ +\infty
$$
there exists a   solution  $\phi= \TT (\beta, g) $
of Problem $\equ{linear22}$, linear in its argument,
that satisfies the estimate
\begin{align}
\|\phi \|_{C_{\nu,\gamma}^{2,\sigma}(\Gamma_\ve\times (0,\infty))}
\ \le\ C\,[\, \|\beta \|_{C_\nu^{1,\sigma}(\Gamma_\ve  )} +
\|g\|_{C_{\nu,\gamma}^{0,\sigma}(\Gamma_\ve\times (0,\infty))}\, ]\,
\label{esti5}\end{align}
where $C$ is independent of $\ve$.
\end{proposition}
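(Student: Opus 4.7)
The strategy is to prove the weighted estimate by conjugating equation \equ{linear22} with the weight $\rho(y,t) := \rr_\ve(y)^{-\nu} e^{-\gamma t}$, reducing to the unweighted setting of Lemma \ref{prop1}. Writing $\phi = \rho\tilde\phi$, so that control of $\phi$ in $C^{2,\sigma}_{\nu,\gamma}(\Gamma_\ve\times(0,\infty))$ becomes control of $\tilde\phi$ in the unweighted space $C^{2,\sigma}(\Gamma_\ve\times(0,\infty))$, a direct computation as in \equ{linear222} shows that $\tilde\phi$ satisfies
\[
\pp_t^2\tilde\phi + \Delta_{\Gamma_\ve}\tilde\phi + f'(w(t))\tilde\phi + B\tilde\phi = \tilde\A(y)\, w'(t) + \tilde g(y,t),
\]
where the remainder $B$ is built from the logarithmic derivatives of $\rho$. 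For this choice of $\rho$ one has $\rho^{-1}\pp_t\rho = -\gamma$ and $\rho^{-1}\pp_t^2\rho = \gamma^2$, while the identity $\rr_\ve(y) = \rr(\ve y)$ together with the uniform boundedness of $\rr$, $\nabla_\Gamma\rr$, $D_\Gamma^2\rr$ on $\Gamma$ gives $\rho^{-1}\nabla_{\Gamma_\ve}\rho = O(\ve\nu)$ and $\rho^{-1}\Delta_{\Gamma_\ve}\rho = O(\gamma^2 + (\ve\nu)^2)$. Hence the smallness condition \equ{sma} holds with constant $O(\gamma + \ve\nu)$, which is arbitrarily small upon first fixing $\gamma$ small and then taking $\ve$ small in terms of $\nu$ and $\gamma$.

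The next step is to cast the right-hand side in the form required by Lemma \ref{prop1}. Setting $\tilde\A(y) := \rr_\ve^\nu(y)\A(y)$ and $\tilde\beta := \rr_\ve^\nu\beta$, and absorbing the temporal weight into the forcing via
\[
\rho^{-1}\A(y)w'(t) = \tilde\A(y)\,w'(t) + \tilde\A(y)(e^{\gamma t}-1)w'(t),
\]
the conjugated problem reads $(\pp_t^2 + \Delta_{\Gamma_\ve} + f'(w))\tilde\phi = \tilde\A w' + \tilde g - B\tilde\phi$ with $\tilde\phi(\cdot,0) = 0$, $\pp_t\tilde\phi(\cdot,0) = \tilde\beta$, and $\tilde g := \rr_\ve^\nu e^{\gamma t}g + \tilde\A(e^{\gamma t}-1)w'$. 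Choosing $\gamma$ strictly less than the exponential decay rate of $w'$ (controlled by $-f'(1)>0$), the factor $(e^{\gamma t}-1)w'(t)$ is bounded, exponentially decaying, with $C^{0,\sigma}$ norm $O(\gamma)$; meanwhile the definitions give $\|\tilde\beta\|_{C^{1,\sigma}(\Gamma_\ve)} \leq C\|\beta\|_{C^{1,\sigma}_\nu(\Gamma_\ve)}$ and $\|\rr_\ve^\nu e^{\gamma t}g\|_{C^{0,\sigma}} \leq C\|g\|_{C^{0,\sigma}_{\nu,\gamma}}$. Applying the operator $\TT$ of Lemma \ref{prop1} recasts the problem as the fixed-point equation
\[
(\tilde\phi,\tilde\A) = \TT\bigl(\tilde\beta,\, \rr_\ve^\nu e^{\gamma t}g\bigr) + K(\tilde\phi,\tilde\A),
\]
where $K(\tilde\phi,\tilde\A) := \TT\bigl(0,\, \tilde\A(e^{\gamma t}-1)w' - B\tilde\phi\bigr)$ is a bounded linear operator on $C^{2,\sigma}(\Gamma_\ve\times(0,\infty)) \times C^{0,\sigma}(\Gamma_\ve)$ of norm $O(\gamma + \ve\nu)$. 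Therefore $I-K$ is invertible by Neumann series for $\gamma$ and $\ve$ small, which produces the unique solution, linear in $(\beta,g)$; undoing the weight through $\phi = \rho\tilde\phi$ yields the estimate \equ{esti5}.

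The main technical obstacle is that the structural form $\A(y)w'(t)$ of the right-hand side of \equ{linear22} is not preserved by conjugation with a time-dependent weight: the factor $e^{\gamma t}$ distorts $w'(t)$ into $e^{\gamma t}w'(t)$, producing a coupling back to the unknown multiplier $\tilde\A$. This is resolved by treating the defect $\tilde\A(e^{\gamma t}-1)w'$ as an additional perturbation absorbed into the fixed-point argument; what makes this legitimate is that $\gamma$ can be chosen fixed and small, independent of $\ve$, so that the operator $K$ has small norm uniformly as $\ve \to 0$. The slow variation of $\rr_\ve$, namely its dependence on $\ve y$ rather than $y$, is the complementary mechanism that keeps the spatial contribution to $B$ small uniformly in $\ve$.
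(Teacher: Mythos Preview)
Your argument is correct and follows the same approach as the paper: conjugate by the weight $\rho(y,t)=\rr_\ve(y)^{-\nu}e^{-\gamma t}$, verify that the commutator operator $B$ satisfies the smallness condition \equ{sma} because $\rho^{-1}D\rho = O(\gamma+\ve\nu)$, and then invoke Lemma~\ref{prop1} via a Neumann-series perturbation. The paper carries this out somewhat telegraphically, writing $\ttt\A=\rho^{-1}\A$ and $\ttt\A(y)$ as though the $t$-dependence of $\rho$ did not interfere with the product structure $\A(y)w'(t)$; you have made explicit the one point the paper glosses over, namely the splitting $\rho^{-1}\A w' = \ttt\A w' + \ttt\A(e^{\gamma t}-1)w'$ and the absorption of the defect term (with $C^{0,\sigma}$ norm $O(\gamma)$) into the contraction, which is exactly what is needed to close the argument.
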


\setcounter{equation}{0}
\section{Solving the projected problems} \label{s7}
In terms of the operator $\TT(\beta, g)$ defined in Proposition \ref{prop2}, we will have a solution to \equ{g113}
if we solve the fixed point problem

\be
\phi \ =\  - \TT (\, \pp_t \Psi(\phi)(y,0)\, ,\,  {\mathcal R}_3(\hh)+ \NN (\phi)\, ), \quad \phi\in C^{2,\sigma}_{\nu, \gamma} (\Gamma_\ve \times (0,\infty)).
\label{fpp1}\ee

Taking into account that for $\nu = 4+\mu$ we have

$$  \|\pp_t \Psi(0)(y,0)\|_{
C^{1,\sigma}_{\nu, \gamma} (\Gamma_\ve )}+ \| {\mathcal R}_3(\hh) + \NN (0)\|_{
C^{0,\sigma}_{\nu, \gamma} (\Gamma \times (0,\infty))}   \ \le \ C\ve^4   $$
and the fact, straightforward to check, that the operator on the right hand side of equation \equ{fpp1},
defines a contraction mapping on the  set of functions
$\phi$ with
$$\| \phi \|_{C^{2,\sigma}_{\nu, \gamma} (\Gamma \times (0,\infty))} \le M\ve^3, $$
then fixing $M$ sufficiently large, contraction mapping principle provides a unique solution of
\equ{fpp1} in that region.  Using the Lipschitz property \equ{r31} for ${\mathcal R}_3(\hh)$,
corresponding properties are inherited for the solution.
The following result holds.

\begin{lemma}\label{lemapoto}
For all small $\ve $ sufficiently small the following holds. There exists a solution $\phi = \Phi( \hh)$ of problem \equ{gl1} that defines
an operator on functions  $\hh$ satisfying constraints
\equ{assh}. For such functions and some $\mu>0$ we have
$$
\| \Phi(\hh^1) - \Phi(\hh^2) \|_{C^{2,\sigma}_{4+\mu,\gamma} (\Gamma_\ve\times (0,\infty))} \, \le\, C\,\ve^3 \|\hh_1 - \hh_2 \|_{2,4+\mu, \Gamma}
$$
$$
\| \Phi(\hh) \|_{C^{2,\sigma}_{4+\mu,\gamma} (\Gamma_\ve\times (0,\infty))} \le C\ve^4
$$
and
$$
\| \NN(\Phi(\hh^1)) - \NN(\Phi(\hh^2)) \|_{C^{0,\sigma}_{4+\mu,\gamma} (\Gamma_\ve\times (0,\infty))}\, \le\, C\,\ve^4\, \|\hh^1 - \hh^2 \|_{2,2+\mu, \Gamma}
$$
for all  $\hh$, $\hh^1$, $\hh^2$ satisfying \equ{assh}.
\end{lemma}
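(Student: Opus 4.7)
The plan is to recast problem \equ{g113} as the fixed point equation \equ{fpp1} and apply the contraction mapping principle in a small ball of $C^{2,\sigma}_{4+\mu,\gamma}(\Gamma_\ve \times (0,\infty))$. The main ingredient is the linear operator $\TT$ provided by Proposition~\ref{prop2} with $\nu = 4+\mu$ and $\gamma >0$ small, together with the size and Lipschitz estimates \equ{r30}--\equ{r31} for $\mathcal{R}_3$ and the exponentially small bounds for $\Psi$ given by the gluing lemma.

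First, I would introduce the map
$$
\mathcal{F}(\phi) := -\TT\bigl(\pp_t \Psi(\phi)(\cdot,0),\; \mathcal{R}_3(\hh) + \NN(\phi)\bigr)
$$
on the closed ball $\mathcal{B}_M := \{ \phi : \|\phi\|_{C^{2,\sigma}_{4+\mu,\gamma}} \le M\ve^3 \}$. By the bound \equ{esti5}, showing that $\mathcal F$ sends $\mathcal{B}_M$ into itself reduces to estimating each piece separately: the term $\mathcal{R}_3(\hh)$ is of size $O(\ve^4)$ by \equ{r30}; the boundary datum $\pp_t \Psi(\phi)(\cdot,0)$ is exponentially small in $\ve$ thanks to the gluing lemma; and the nonlinear term $\NN(\phi)$, as defined in \equ{NN}, satisfies $\|\NN(\phi)\|_{C^{0,\sigma}_{4+\mu,\gamma}} \le C\ve \,\|\phi\|_{C^{2,\sigma}_{4+\mu,\gamma}}$ on $\mathcal{B}_M$. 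Indeed, $[f'(u_1)-f'(u_0)]\phi$ contributes $O(\ve^2 \|\phi\|)$ since $u_1-u_0=\phi_0+\phi_1=O(\ve^2)$; the correction $B\phi$, read off \equ{BB2}, has coefficients carrying an overall factor of $\ve$, using \equ{L2}, \equ{L3}--\equ{L5} and \equ{formh2}--\equ{formh3}; the truly nonlinear $N(\eta_1\phi + \Psi(\phi))$ is at worst quadratic in $\phi$; and $(f'(u_1)+1)\Psi(\phi)$ is exponentially small. Summing yields $\|\mathcal{F}(\phi)\|_{C^{2,\sigma}_{4+\mu,\gamma}} \le C(\ve^4 + \ve \cdot M\ve^3) \le M\ve^3$ for $M$ large and $\ve$ small.

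The same decomposition yields the Lipschitz property of $\mathcal F$ on $\mathcal{B}_M$ with Lipschitz constant $O(\ve)$, so Banach's theorem produces a unique fixed point $\phi = \Phi(\hh) \in \mathcal{B}_M$. The sharper size bound $\|\Phi(\hh)\|_{C^{2,\sigma}_{4+\mu,\gamma}} \le C\ve^4$ follows at once from $\Phi(\hh) = \mathcal F(\Phi(\hh))$ and the refined estimate $\|\mathcal F(\phi)\| \le C\ve^4 + C\ve\|\phi\|$. The Lipschitz dependence on $\hh$ is obtained by the standard argument: writing $\phi_j = \Phi(\hh^j)= \mathcal F_j(\phi_j)$, where $\mathcal F_j$ uses $\hh^j$ only through $\mathcal{R}_3$, and adding and subtracting $\mathcal F_2(\phi_1)$, one obtains
$$
\|\phi_1-\phi_2\|_{C^{2,\sigma}_{4+\mu,\gamma}} \le \tfrac12 \|\phi_1-\phi_2\|_{C^{2,\sigma}_{4+\mu,\gamma}} + C\ve^3 \|\hh^1-\hh^2\|_{2,\sigma,2+\mu,\Gamma}
$$
by \equ{r31}, which absorbs to the claimed bound. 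The Lipschitz estimate on $\NN(\Phi(\hh))$ then follows by inserting this bound into the Lipschitz-in-$\phi$ analysis of $\NN$ carried out in the previous step, picking up the extra factor $\ve$ coming from the Lipschitz constant of $\NN$.

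The main obstacle is the careful book-keeping of the polynomial weights $\rr_\ve^{-(4+\mu)}$ through each contribution to $\NN$ and through $B\phi$: one must verify that every term genuinely lives in $C^{0,\sigma}_{4+\mu,\gamma}$ rather than in some weaker weighted space. This is the reason for choosing the weight $4+\mu$ (and not higher) in the first place, since $\mathcal R_3$ decays exactly at this rate, and for the quadratic contributions one exploits that $\phi \in \mathcal{B}_M$ itself decays as $\rr_\ve^{-(4+\mu)}$, so products decay even faster and are harmless; the linear terms inherit their decay from the explicit powers of $\rr_\ve^{-1}$ gained by each derivative of $h$ recorded in \equ{formh2}--\equ{formh3}.
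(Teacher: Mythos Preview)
Your proposal is correct and follows essentially the same approach as the paper: recast \equ{g113} as the fixed point equation \equ{fpp1} via the linear operator $\TT$ from Proposition~\ref{prop2}, apply Banach's theorem on a ball of radius $M\ve^3$ in $C^{2,\sigma}_{4+\mu,\gamma}$, and then read off the Lipschitz dependence on $\hh$ from \equ{r31}. Your account is in fact more detailed than the paper's (which leaves the verification of the contraction property and the weight bookkeeping as ``straightforward to check''); the only slight oversimplification is the claim that $\mathcal F_j$ depends on $\hh^j$ \emph{only} through $\mathcal R_3$---in reality $\NN$ and $\Psi$ also carry $\hh$-dependence via $B$, $u_0$, $u_1$ and the domain---but those contributions are of strictly higher order in $\ve$, so the stated Lipschitz bound is unaffected.
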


\subsection{The reduced problem: proof of Theorem \ref{teo4}}
We are ready to solve the full problem. Let us consider the solution $\Phi(\hh)$ to \equ{gl1}
predicted by Lemma \ref{lemapoto}, and call $\A[\hh]$ the corresponding $\A$. Then
\begin{align}
 \pp^2_t \Phi(\hh) + \Delta_{\Gamma_\ve } \Phi(\hh)  + f'(w(t))\Phi(\hh) \,  =\,  \A[\hh]\, w'(t)\,
  - {\mathcal R}_3(\hh)  - \NN (\Phi(\hh))\quad  \hbox{in } \Gamma_\ve\times (0,\infty) ,\nonumber \\
 \phi(y,0)   = 0\foral  y \in \Gamma_\ve,  \nonumber\\
 \pp_t \Phi(\hh)(y,0) = -  \pp_t \Psi(\Phi(\hh))(y,0)   \foral   y \in \Gamma_\ve . \label{gpoto}
 \end{align}

We can express the parameter function $\A[\hh]$ for the solution predicted of Problem \equ{g113}, as an operator in $\hh$, by integration against $w'$, for $y\in \Gamma_\ve$,
\be
 \A [\hh] (y)\, \int_0^\infty {w'}^2\, dt \ =\  \int_0^\infty [ \Delta_{\Gamma_\ve} \Phi(\hh) \, +\, {\mathcal R}_3(\hh)  + \NN (\Phi(\hh))]\, w'\, .  \label{alpha}\ee
Then we will have solved our original problem if we find a solution $\hh$ within constraints \equ{assh}
of the equation
\be
\Delta_{\Gamma} \hh + |A_\Gamma(y)|^2\hh =  \mathcal B  [\hh]\, (y)  \inn \Gamma .
\label{finj}\ee
where
$$\mathcal B  [h]\, (y) \ := \ \ve^{-2} \A[\hh]\, (\ve^{-1} y), \quad y\in \Gamma.
$$
and $\A$ is the function in \equ{alpha}. We see then that
$$
\|\A [\hh]\|_{C^{0,\sigma}_{4+\mu} (\Gamma_\ve)} \ \le\
C\,
\ve^4\ ,
$$
$$
\|\A [\hh_1] -\A[\hh_2]\|_{C^{0,\sigma}_{4+\mu} (\Gamma_\ve)} \ \le\
C\,
\ve^3\|\hh_1 -\hh_2\|_{C^{2,\sigma}_{4+\mu} (\Gamma_\ve)}\ .
$$
Using Lemma \ref{lema8} we then get
\begin{align}
\|\mathcal B  [\hh ]\|_{\sigma,4 +\mu -\sigma, \Gamma}   \ & \le \ C\ve^{-\sigma -2 }
\| \A[\hh]  \|_{C^{0,\sigma}_{4+\mu} (\Gamma_\ve)} \  \nonumber\\ &\le \   C\, \ve^{ 2-\sigma },
\label{B1}\\
\|\mathcal B  [\hh^1]-\mathcal B  [\hh^2] \|_{\sigma,4+\mu -\sigma, \Gamma}   \, &\le \, C\ve^{-\sigma -2 }
\| \A[\hh^1]- \A[\hh^2]  \|_{C^{0,\sigma}_{4+\mu}(\Gamma_\ve)}
\nonumber\\
\, &\le \,  C\,
\ve^{1-\sigma}\|\hh_1 -\hh_2\|_{C^{2,\sigma}_{4+\mu} (\Gamma_\ve)}\, .
\label{B2}\end{align}

\subsection{The proof of Theorem \ref{teo4}}
At this point, we make use of a  linear result.
for the problem
\be
\JJ_\Gamma[\hh]: = \Delta_{\Gamma} \hh + |A_\Gamma|^2\hh = {\tt g}  \inn \Gamma .
\label{ejq}\ee

\begin{lemma}\label{prop0}
Let $\mu'>0$. Then there exists a  positive constant $C>0$ such that
if ${\tt g}$ satisfies
$$ \|{\tt g}\|_{\sigma,4+\mu',\Gamma} \,  <\, +\infty$$
 then there is a solution  $\hh = \TT(\tg)$ of equation \equ{ejq}
that defines a linear operator of $\tg$  that satisfies

$$\|\hh\|_{2,\sigma,4+\mu',\Gamma}:=  \|D^2_\Gamma \hh \|_{\sigma,4+\mu',\Gamma} +
 \|\hh\|_{\sigma,2+\mu',\Gamma}\, \le \, C\,  \|{\tt g}\|_{\sigma,4+\mu',\Gamma}. $$
\end{lemma}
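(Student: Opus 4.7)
The plan is to prove the lemma by the classical three-step scheme for invertibility in weighted spaces on a non-compact manifold: establish a weighted a priori estimate through explicit barriers, produce solutions on an exhaustion of $\Gamma$ by bounded subdomains, and pass to the limit using the a priori bound. Two geometric features of the Bombieri--De Giorgi--Giusti graph are decisive. First, since $\Gamma$ is an entire minimal graph $x_9 = F(x')$, invariance under vertical translations produces a strictly positive Jacobi field
\[
\phi_+(y) := \nu_9(y) = \frac{1}{\sqrt{1+|\nabla F(x')|^2}}, \qquad y = (x',F(x')) \in \Gamma,
\]
which satisfies $\JJ_\Gamma \phi_+ = 0$. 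The substitution $\hh = \phi_+ v$ conjugates $\JJ_\Gamma$ to a divergence-form operator without zero-order term, and this gives a comparison principle on every bounded subdomain of $\Gamma$. Second, $\Gamma$ is asymptotic at infinity to the Simons cone, so $|A_\Gamma|^2 \le C \rr^{-2}$ and the induced metric is a small perturbation of the cone metric for $\rr$ large.

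Using these ingredients, I would construct a global supersolution $W$ on $\Gamma$ satisfying $W \sim \rr^{-(2+\mu')}$ and
\[
\JJ_\Gamma W \le -c\, \rr^{-(4+\mu')} \quad \text{outside a compact set of } \Gamma ,
\]
by gluing (via a cutoff) a constant times $\phi_+$ on a compact piece with $\rr^{-(2+\mu')}$ on the asymptotic end. The crucial computation at infinity is
\[
\Delta_\Gamma \rr^{-(2+\mu')} \approx -(2+\mu')(6-(2+\mu'))\, \rr^{-(4+\mu')} ,
\]
to which the contribution $|A_\Gamma|^2 \rr^{-(2+\mu')} = O(\rr^{-(4+\mu')})$ has a definite sign after correction, provided that $\mu'$ is smaller than the first positive indicial root of the Jacobi operator on the link of the Simons cone. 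Combining the positive Jacobi field $\phi_+$ with this barrier yields, through the comparison principle, the $L^\infty$ a priori bound
\[
\| \rr^{2+\mu'}\hh\|_{L^\infty(\Gamma)} \le C\,\| \rr^{4+\mu'}\tg \|_{L^\infty(\Gamma)}
\]
for any solution $\hh$ of $\JJ_\Gamma \hh = \tg$ that decays at infinity.

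The weighted $C^{2,\sigma}$ bound
\[
\|D^2_\Gamma \hh\|_{\sigma,4+\mu',\Gamma} + \|\hh\|_{\sigma,2+\mu',\Gamma} \le C\|\tg\|_{\sigma,4+\mu',\Gamma}
\]
then follows by a standard rescaling argument: at a point $y_0\in \Gamma$ with $\rr(y_0) = R \gg 1$, dilating by $R$ flattens a neighborhood of $y_0$ into an almost Euclidean piece of $\R^8$, on which the rescaled operator $R^2\JJ_\Gamma$ has uniformly bounded $C^{0,\sigma}$ coefficients; interior Schauder estimates applied to the rescaled equation, followed by scaling back, yield the prescribed decay of the gradient and Hessian. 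Existence is then obtained by exhaustion: for each large $R$, solve the Dirichlet problem $\JJ_\Gamma \hh_R = \tg$ in $\Gamma \cap B_R$ with $\hh_R = 0$ on $\partial B_R \cap \Gamma$, which is solvable by the Fredholm alternative since the stability embodied in $\phi_+ > 0$ excludes a nontrivial kernel on $H^1_0(\Gamma\cap B_R)$; the uniform a priori estimate lets us extract a limit as $R\to\infty$, giving a solution $\hh = \TT(\tg)$ depending linearly on $\tg$.

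The main obstacle is the construction of the barrier $W$, because it requires sharp information on both the decay of $|A_\Gamma|^2$ and the intrinsic geometry of $\Gamma$ at infinity, and because the admissible range of the weight $\mu'$ is dictated by the indicial roots of $\JJ_\Gamma$ on the asymptotic Simons cone. Carrying out this spectral computation on the link of the cone, and then transferring the barrier from the cone back to the actual BDG graph through the known asymptotic expansion of $F$, is the nontrivial content that the authors defer to the appendix.
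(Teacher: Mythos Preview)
Your overall scheme --- positive Jacobi field giving a comparison principle, a barrier giving the weighted $L^\infty$ bound, rescaling for the Schauder estimate, and exhaustion for existence --- is exactly the strategy behind Proposition~\ref{prop11} and Corollary~\ref{coro}, from which the paper deduces the lemma. In particular your rescaling argument for passing from $\|\rr^{\nu-2}\hh\|_{L^\infty}$ to $\|\hh\|_{2,\sigma,\nu,\Gamma}$ is essentially identical to the paper's weighted Schauder lemma.

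The gap is in the barrier step. The BDG graph $\Gamma$ is \emph{not} asymptotic to the Simons cone; its asymptotic model is the graph $\Gamma_0=\{x_9=F_0(x')\}$ with $F_0=r^3g(\theta)$, and since $|\nabla F_0|\sim r^2\to\infty$, the induced metric on $\Gamma$ is \emph{not} a perturbation of the Euclidean or a standard cone metric at infinity. Your displayed computation $\Delta_\Gamma\rr^{-(2+\mu')}\approx -(2+\mu')(4-\mu')\,\rr^{-(4+\mu')}$ presumes exactly that, and it is not valid here; a purely radial function of $\rr$ is not, in general, a super- or subsolution for $\JJ_\Gamma$. Consequently the ``indicial roots on the link of the Simons cone'' picture is the wrong one.

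What the paper (following \cite{dkwdg}) actually does is work on the explicit model $\Gamma_0$. Via the relation $\JJ_{\Gamma_0}[h]=H'(F_0)[\phi]$ with $\phi=\sqrt{1+|\nabla F_0|^2}\,h$, the problem becomes the separable operator $L_0$ of \eqref{L0}, for which one computes
\[
r^7\sin^3(2\theta)\,L_0\big(r^\beta q(\theta)\big)=9\,r^{3+\beta}\,g^{\frac{\beta+4}{3}}\Big[\tilde w_0\,g^{2/3}\big(g^{-\beta/3}q\big)'\Big]'.
\]
This ODE in $\theta$ is what replaces your indicial analysis; it produces explicit barriers of the form $r^\beta q(\theta)$ (not pure powers of $r$) and fixes the admissible range $4<\nu<5$ in Proposition~\ref{prop11}. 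The barrier is then transferred from $\Gamma_0$ to $\Gamma$ via Lemma~\ref{comparisonjacobi}. So the architecture of your proof is right, but the model space and the barrier have to be $\Gamma_0$ and $r^\beta q(\theta)$, not the Simons cone and $\rr^{-(2+\mu')}$.
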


This result follows from Proposition \ref{prop11} and Corollary \ref{coro} in the next section. We will use it here to conclude the result.
We find a solution to Equation \equ{finj} if we solve the fixed point problem
\be
\hh \ =\ \TT ( \BB[\hh] ),
\label{fixedpoint}\ee
where we choose $\mu' =\frac \mu 2$, ($0<\mu <1$), and $\sigma \le \frac \mu 2$.
From the lemma, and estimates \equ{B1}, \equ{B2} we find that the operator on the right hand side of
\equ{fixedpoint} is a contraction mapping of the region
where
$$
\|\hh\|_{2,\sigma,4+\mu',\Gamma}\ \le\ C \ve^{ 2-\sigma }< \ve^{\frac 32} .
$$
hence there is a solution of \equ{fixedpoint}  and hence of \equ{finj} in this region.
Obviously constraint \equ{assh} is satisfied by this solution for all small $\ve$.

We claim that  $\Gamma_\ve^h$ is the graph of an entire function. This is equivalent  to showing that
$\Gamma^{\ve h} =  \{ y + \ve h(y) \ /\ y\in \Gamma\}$
is a graph, provided that $\ve$ is sufficiently small.
The map
$$
(y,z)  \in \Gamma \times (-\delta, \delta) \mapsto   y +     z \nu(y)
$$
defines a diffeomorphism onto a tubular neighborhood of $\Gamma$, for sufficiently small $\delta$. Moreover, since curvatures of $\Gamma$ are actually decaying at infinity, we have moreover that its inverse $x\mapsto (y(x), z(x))$ has uniformly bounded
derivatives. Now, we have that
 $\Gamma^{\ve h}$ is described by the equation
$
V(x):= z(x) - \ve h(y(x)) = 0.
$
We observe then that
$$
\pp_{x_9} V(x) =  \pp_{x_9} z - \ve \nn_\Gamma h (y) \cdot \pp_{x_9} y =   \nu_9 +  O( \ve \rr^{-2} ).
$$
thanks to estimate \equ{formh3}.
Since $$ \nu_9 = \frac 1{ \sqrt{ 1+ |\nn F|^2}} \ge  \frac c{\rr^2} $$
for some $c>0$ it follows that
$\pp_{x_9} V(x) >0$ at every point of $\Gamma^{\ve h}$, and hence this manifold can be described locally about each of its points as a graph of a function of $(x_1,\ldots, x_8)$, hence this is also globally the case. This function is clearly entire.
The proof of the Theorem is thus concluded. \qed

\bigskip
In the next section we shall conclude the proofs of Theorems \ref{teo5}-\ref{teo6} by solving the reduced problem
\equ{finj} in the situations there considered. In order not to lose the main thread of the presentation of the results, we postpone
the rather delicate analysis to  Section \ref{s10}.

\setcounter{equation}{0}
\section{Proof of Theorem \ref{teo5} }\label{s8}

The proof follows the same scheme as that for the epigraph case, so that we only exhibit the differences.
The step of the improvement of the approximation is actually identical. The coefficients of the metric, as well as the curvatures, decay faster
in $r$ than those of the minimal graph. In fact the Jacobi operator is at main order Laplacian along each of its leaves.
The slight difference is that now we will need to find sufficiently far away a solution of
$$J_\Gamma[h] = p\inn \Gamma $$ where $p= O( r^{-2-\sigma}) $
as $r\to \infty$. We do not solve this problem directly but rather
its projected version
$$J_\Gamma[h] = p - \sum_{j=1}^4 c_i\frac {z_i}{1+ r^3} \inn \Gamma $$,
$$ \int_{\Gamma}  \frac {z_i}{1+ r^3}h = 0 \foral i=1,\ldots, 4 $$
where $z_i$'s are the Jacobi fields associated to rigid motions.
At this point we refer to the theory developed in \cite{jdgdkw} that allow to solve this problem for bounded $h$ in which in addition one has fast decay of first and second derivatives. We can adapt the theory to the use of weights with Holder norms like in this paper in a straighforward way.
The final problem that is to solve is actually a projected version of \equ{finj},
\be
\Delta_{\Gamma} \hh + |A_\Gamma(y)|^2\hh =  \mathcal B  [\hh]\, (y)  - \sum_{j=1}^4 c_i\frac {z_i}{1+ r^3}  \inn \Gamma .
\label{finj1}\ee
 $$ \int_{\Gamma}  \frac {z_i}{1+ r^3}\hh = 0 \foral i=1,\ldots, 4 $$

We conclude, as in \cite{jdgdkw}, the existence of a solution $u$ and coefficients $c_i$ such that \be
\Delta u + f(u) = \sum_{i=1}^4 c_i \frac{z_i}{1+r^3} w^{'} (t) ,\ u>0  \inn \Omega_\ve, \quad u\in L^\infty (\Omega_\ve),
\label{1s10}\ee
\be
u=0, \quad \frac{\pp u}{\pp \nu} = constant  \onn \pp\Omega_\ve
\label{2s10}\ee

 An interesting, but important fact is that for Serrin's overdetermined problem Pohozaev's identity also holds and the boundary term vanishes. A slight variation of the  argument given in \cite{jdgdkw} pp. 99-102, that exploits the invariance under rotation and translations of the problem, yields that  $c_i =0$ for all $i$, and the construction is concluded. In the case of the catenoid we can further restrict ourselves to the space
of axially symmetric functions to conclude the existence of an axially symmetric solution. \qed

\setcounter{equation}{0}
\section{ Proofs of Theorems \ref{teo7} and \ref{teo8}} \label{s9}

In this section, we sketch the proofs of Theorems \ref{teo7} and \ref{teo8} and Corollary \ref{teo6} which   follow from the general scheme of proof of Theorem \ref{teo4}. The notable difference here is that the first error is ${\mathcal O} (\ve)$ only but thanks to the CMC condition the first approximate can be made to depend on the signed distance to the surface only.  As before we need to improve the error up to order ${\mathcal O} (\ve^4)$. The reduced problem--the Jacobi operator-can be solved easily in this case, thanks to the compactness and non-degeneracy condition. So we shall concentrate only on the part of improving the errors.

\subsection{Fermi coordinates and the expression of the Laplace-Beltrami operator}

In this section, we assume that $\Gamma$ is an oriented smooth hypersurface embedded in $M$. We first define the Fermi coordinates about $\Gamma$ and then, we provide some asymptotic  expansion of the Laplace-Beltrami operator in Fermi coordinates about $\Gamma$.

We denote by ${\bf n}$ a unit normal vector field on $\Gamma$ and we define
\begin{equation}
Z ({\tt x}, z) : = \text{Exp}_{\tt x}  (z \, {\bf n} ({\tt x})) ,
\label{eer1}
\end{equation}
where ${\tt x} \in \Gamma$,  $z \in {\R }$ and $\text{Exp}$ is the exponential map. The implicit function theorem implies that $Z$ is a local diffeomorphism  from a neighborhood of a point $({\tt x}, 0) \in \Gamma \times {\bf  R}$ onto a neighborhood of ${\tt x} \in M$.

\begin{remark}
In the special case where $(M, g)$ is the Euclidean space, we simply have
\[
Z ({\tt x}, z) : =  {\tt x} + z \, {\bf n} ({\tt x}).
\]
\end{remark}

Given $z \in {\R }$, we define $\Gamma_z$ by
\[
\Gamma_z := \{ Z ({\bf x} ,z)  \in M \, : \,  {\bf x} \in \Gamma \} .
\]
Observe that for $z$ small enough (depending on the point $y \in \Gamma$ where one is working), $\Gamma_z$ restricted to a neighborhood of $y$ is a smooth hypersurface which will be referred to as the
{\em hypersurface parallel to $\Gamma$ at height $z$}.  The induced metric on $\Gamma_z$ will be denoted by $g_z$.

The following result is a consequence of Gauss' Lemma. It gives the expression  of the metric  $g$ on the domain of  $M$ which is parameterized by $Z$.
\begin{lemma}
We have
\[
	Z^* \, g =  g_z + dz^2,
\]
where $g_z$ is considered as a family of metrics on $T\Gamma$, smoothly depending on $z$, which belongs to a neighborhood of $0\in \mathbb R$.
\end{lemma}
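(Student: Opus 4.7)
The plan is to invoke the classical Gauss lemma for the normal exponential map. The map $Z(\mathbf{x}, z) = \mathrm{Exp}_{\mathbf{x}}(z\,\mathbf{n}(\mathbf{x}))$ is designed precisely so that the curves $z \mapsto Z(\mathbf{x}, z)$ are geodesics parametrized by arc length, starting orthogonally to $\Gamma$. Decomposing $Z^* g$ in block form with respect to the splitting $(\mathbf{x}, z) \in \Gamma \times \mathbb{R}$, I need to establish three things: that $g(\partial_z Z, \partial_z Z) = 1$, that $g(\partial_i Z, \partial_z Z) = 0$ for every coordinate direction $\partial_i$ tangent to $\Gamma$, and that the restriction of $Z^* g$ to $T\Gamma \times T\Gamma$ defines a smooth family of (positive-definite) metrics $g_z$ depending on the parameter $z$.

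The first point is immediate, since $z \mapsto Z(\mathbf{x}, z)$ is a unit-speed geodesic by construction of the exponential map. The second point, the content of Gauss' lemma, is the one requiring argument. I would set $V := \partial_z Z$ and $W_i := \partial_i Z$, observe that since $\partial_z$ and $\partial_i$ are coordinate vector fields on $\Gamma \times \mathbb{R}$ their images satisfy $[V, W_i] = 0$, hence by the torsion-free property of the Levi-Civita connection $\nabla_V W_i = \nabla_{W_i} V$. Using that $V$ is the velocity of a geodesic, $\nabla_V V = 0$, I compute
\[
\partial_z \, g(V, W_i) \;=\; g(\nabla_V V, W_i) + g(V, \nabla_V W_i) \;=\; g(V, \nabla_{W_i} V) \;=\; \tfrac{1}{2} \, \partial_i \, g(V, V) \;=\; 0,
\]
since $g(V, V) \equiv 1$ along each geodesic fiber. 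At $z = 0$ we have $V = \mathbf{n}(\mathbf{x})$ and $W_i \in T_{\mathbf{x}} \Gamma$, so $g(V, W_i) \big|_{z = 0} = 0$ by the defining property of the unit normal. Integrating in $z$ yields $g(V, W_i) \equiv 0$ in the neighborhood where $Z$ is a diffeomorphism.

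Finally, the smoothness in $z$ of the restriction $g_z := Z^* g \big|_{T\Gamma \times T\Gamma}$ follows from the smoothness of $\mathrm{Exp}$ and of the vector field $\mathbf{n}$ along $\Gamma$, while positive-definiteness on a neighborhood of $z = 0$ follows from the same implicit function theorem argument already used to deduce that $Z$ is a local diffeomorphism (at $z=0$ the restriction is the induced metric on $\Gamma$, and positive-definiteness is an open condition). No serious obstacle is expected: the statement is a standard computation in Riemannian geometry, and the only subtlety is the use of $[V, W_i] = 0$, which is automatic because $V$ and $W_i$ are the pushforwards of coordinate vector fields under the chart $Z$.
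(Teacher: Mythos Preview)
Your proof is correct and follows essentially the same route as the paper's: both invoke the Gauss lemma by differentiating $g(V,W_i)$ along the normal geodesics, using $\nabla_V V=0$ and the fact that pushforwards of coordinate fields commute to reduce the derivative to $\tfrac12\,\partial_i g(V,V)=0$, then appeal to the initial orthogonality at $z=0$. The only addition in your write-up is the remark on positive-definiteness of $g_z$, which the paper leaves implicit.
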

\begin{proof}
It is easier to work in local coordinates. Given $y \in \Gamma$, we fix local coordinates $x : = (x_1, \ldots, x_n)$ in a neighborhood of $0 \in {\mathbb R}^n$ to parameterize a neighborhood of $y$ in $\Gamma$ by $\Phi$, with $\Phi (0) =y$.  We consider the mapping
\[
\tilde F  \, ( x, z )=  {\rm Exp}_{\Phi (x) } ( z  \, N (\Phi(x))) ,
\]
which is a local diffeomorphism from a neighborhood of $0 \in {\mathbb R}^{n+1}$ into a neighborhood of $y$ in $M$. The corresponding coordinate vector fields are denoted by
\[
X_{0} : = \tilde F_* (\partial_{z}) \qquad {\rm and} \qquad  X_j : = \tilde F_* (\partial_{x_j}) ,
\]
for $j = 1, \ldots, n$. The curve $x_0 \longmapsto \tilde F(x_0, x)$ being a geodesic we have  $g(X_0, X_0) \equiv 1$. This also implies that  $\nabla^g_{X_0} X_0 \equiv 0$  and hence  we get
\[
\partial_{z} g(X_0, X_j) = g ( \nabla^g_{X_0} \, X_0, X_j ) + g ( \nabla^g_{X_0} \, X_j, X_0 ) = g ( \nabla^g_{X_0} \, X_j, X_0 ) .
\]
The vector fields $X_0$ and $X_j$ being coordinate vector fields we have $\nabla^g_{X_0} X_j = \nabla^g_{X_j} X_0$ and we conclude that
\[
2 \, \partial_{z} g(X_0, X_j) =  2 \, g ( \nabla^g_{X_j} \, X_0, X_0 ) = \partial_{x_j} g(X_0, X_0) = 0 .
\]
Therefore, $g(X_0, X_j)$ does not depend on $z$ and since on $\Gamma$ this quantity is $0$ for $j=1, \ldots, n$, we conclude that the metric $g$ can be written as
\[
g = g_{z}  + dz^2 ,
\]
where $g_{z}$ is a family of metrics on $\Gamma$ smoothly depending on $z$ (this is nothing but Gauss' Lemma).
\end{proof}

The next result expresses, for $z$ small, the expansion of $g_z$ in terms of geometric objects defined on $\Gamma$.  In particular, in terms of   $\mathring  g$  the induced metric on $\Gamma$,  $\mathring  h$  the second fundamental form on $\Gamma$, which is defined by
\[
	\mathring  h (t_1, t_2)  : =  - \mathring g ( \nabla^g_{t_1} N  , t_2 \, ),
\]
and in terms of the square of the second fundamental form which is the tensor defined by
\[
	\mathring   h  \otimes  \mathring  h  (t_1, t_2) : =  \mathring g (  \nabla^g_{t_1}  N ,
	\nabla^g_{t_2} N ),
\]
for all $t_1, t_2 \in T\Gamma$. Observe that, in local coordinates, we have
\[
	( \mathring h \otimes \mathring h)_{ij} =  \sum_{a,b} \mathring h_{ia} \, \mathring g^{ab}  \, \mathring h_{b j}.
\]

With these notations at hand, we have the~:
\begin{lemma}
The induced metric $g_z$ on $\Gamma_z$ can be expanded in powers of $z$ as
\[
	g_z  = \mathring g - 2 \, z \, \mathring h + z^2 \, \left( \mathring h \otimes \mathring h  +  g( R_g (\, \cdot \, , N), \, \cdot \, , N ) \right) + \mathcal O (z^3),
\]
where $R_g$ denotes the Riemannian tensor on $(M,g)$.
\label{le:2.1n}
\end{lemma}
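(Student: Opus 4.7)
The plan is to derive the expansion by computing the Taylor series in $z$ of the matrix coefficients $g(X_i, X_j)$, where $X_j = \tilde F_*(\partial_{x_j})$ are the coordinate vector fields on $\Gamma_z$ inherited from local coordinates on $\Gamma$, and $X_0 = \tilde F_*(\partial_z)$. By the previous lemma, $g(X_0, X_0)\equiv 1$ and $g(X_0, X_j)\equiv 0$, so it suffices to expand the purely tangential block. I will work to second order in $z$ using Taylor's formula and identify the coefficients geometrically.

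The zeroth order term is immediate: at $z=0$ one has $X_0 = N$ and $g(X_i,X_j)|_{z=0} = \mathring g_{ij}$. For the first $z$-derivative, I will use that coordinate vector fields commute, so $\nabla^g_{X_0} X_i = \nabla^g_{X_i} X_0$, and write
\[
\partial_z g(X_i, X_j) = g(\nabla^g_{X_i} X_0, X_j) + g(X_i, \nabla^g_{X_j} X_0).
\]
Evaluating at $z=0$ and invoking the definition $\mathring h(t_1, t_2) = -g(\nabla^g_{t_1} N, t_2)$ gives $\partial_z g(X_i,X_j)|_{z=0} = -2\mathring h_{ij}$, which accounts for the linear term $-2z\,\mathring h$.

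The second derivative is the main computational step. Differentiating once more and again using $\nabla^g_{X_0} X_k = \nabla^g_{X_k} X_0$ produces two distinct kinds of terms: a cross term
\[
2\, g(\nabla^g_{X_i} X_0, \nabla^g_{X_j} X_0),
\]
which at $z=0$ equals $2(\mathring h \otimes \mathring h)_{ij}$ by the definition of the squared second fundamental form, and a symmetric pair
\[
g(\nabla^g_{X_0}\nabla^g_{X_i} X_0, X_j) + g(X_i, \nabla^g_{X_0}\nabla^g_{X_j} X_0).
\]
Here I will exploit the geodesic equation $\nabla^g_{X_0} X_0 = 0$ and $[X_0, X_i]=0$ to rewrite $\nabla^g_{X_0}\nabla^g_{X_i} X_0 = R_g(X_0, X_i) X_0$, and use the pair symmetry $g(R_g(A,B)C,D) = g(R_g(C,D)A,B)$ to see that both curvature terms coincide at $z=0$, yielding $2\,g(R_g(N, X_i) N, X_j) = 2\,g(R_g(X_i, N)X_j, N)$ after the Riemann tensor symmetries. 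Dividing by $2$ as required by Taylor's formula produces exactly the claimed coefficient $\mathring h \otimes \mathring h + g(R_g(\cdot, N)\cdot, N)$.

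The remainder estimate $\mathcal O(z^3)$ follows from smoothness of the exponential map. The main subtlety — and essentially the only one — is bookkeeping: ensuring the correct normalizations and Riemann tensor sign/symmetry conventions so that the curvature contribution appears in the form stated. Once that is fixed, the computation is a standard Jacobi-field calculation along the normal geodesic $z \mapsto Z({\tt x}, z)$.
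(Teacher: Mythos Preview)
Your proposal is correct and follows essentially the same approach as the paper: compute the Taylor expansion of $g(X_i,X_j)$ in $z$ using $\nabla^g_{X_0}X_j=\nabla^g_{X_j}X_0$, identify the first derivative with $-2\mathring h$, and for the second derivative use $\nabla^g_{X_0}X_0=0$ together with the curvature identity $\nabla^g_{X_0}\nabla^g_{X_j}X_0=R_g(X_0,X_j)X_0$ to obtain the $\mathring h\otimes\mathring h$ and curvature terms. The only difference is cosmetic: you spell out the Riemann tensor symmetry needed to match the two curvature contributions, whereas the paper writes the combined $2g(R_g(X_0,X_i)X_0,X_j)$ directly.
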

\begin{proof}
We keep the notations introduced in the previous proof. By definition of $\mathring g$, we have
\[
g_{z} =\mathring g + {\mathcal O} (z) .
\]
We now derive the next term the expansion of $g_{z}$ in powers of $z$. To this aim, we compute
\[
\partial_{z} \, g  (X_i, X_j) =  g ( \nabla^g_{X_i} \, X_0, X_j ) +  g ( \nabla^g_{X_j} \, X_0, X_i ) \, ,
\]
for all $i,j = 1, \ldots, n$.   Since $X_0 =N$ on $\Gamma$, we get
\begin{equation}
\label{eqN1}
\partial_z \, \bar g_{z} \, _{|_{z=0}} = - 2 \, \mathring h ,
\end{equation}
by definition of the second fundamental form.  This already implies that
\[
g_{z} = \mathring g  - 2 \,  \mathring h \, z + {\mathcal O} (z^2) \, .
\]

Using the fact that the $X_0$ and $X_j$ are coordinate vector fields, we can compute
\begin{equation}
\partial_{z}^2 \, g (X_i, X_j) =  g( \nabla^g_{X_0} \, \nabla^g_{X_i}\, X_0, X_j ) +
g( \nabla^g_{X_0} \, \nabla^g_{X_j}\, X_0, X_i ) +  2 \, g( \nabla^g_{X_i} \, X_0 ,
\nabla^g_{X_j} X_0 ).
\label{eq:1-1}
\end{equation}
By definition of the curvature tensor, we can write
\[
\nabla^g_{X_0} \, \nabla^g_{X_j} = R_g (X_0 , X_j) + \nabla^g_{X_j} \, \nabla^g_{X_0}
+ \nabla^g_{[X_0, X_j]} \, ,
\]
which, using the fact that $X_0$ and $X_j$ are coordinate vector fields, simplifies into
\[
\nabla^g_{X_0} \, \nabla^g_{X_j} = R_g (X_0 , X_j) + \nabla^g_{X_j} \, \nabla^g_{X_0} \, .
\]
Since $\nabla^g_{X_0} \, X_0 \equiv 0$, we get
\[
\nabla^g_{X_0} \, \nabla^g_{X_j} X_0 = R_g (X_0 , X_j) \, X_0 \, .
\]
Inserting this into (\ref{eq:1-1}) yields
\[
\partial_{z}^2 \, g (X_i, X_j)  =  2 \, g ( R_g (X_0, X_i) \, X_0, X_j ) + 2 \, g( \nabla^g_{X_i} \, X_0,  \nabla^g_{X_j} X_0 ) \, .
\]
Evaluation at $x_0=0$ gives
\[
\partial_{z}^2 \,  g_{z} \,_{|_{z=0}} = 2 \, g ( R(N, Ê\cdot ) \, N, \cdot )  + 2 \,g( \nabla^g_{\cdot} \, N ,  \nabla^g_{\cdot} N ) .
\]
The formula then follows at once from Taylor's expansion.\end{proof}

Similarly,  the mean curvature $H_z$ of $\Gamma_z$ can be expressed in term of $\mathring g$ and $\mathring h$. We have the~:
\begin{lemma}
The following expansion holds
\[
	H_{z} =  {\rm Tr}_{\mathring g} \mathring h  + z\,  \left( {\rm Tr}_{\mathring g} \mathring h \otimes  \mathring h  + {\rm Ric}_g (N,N) \right)  + \mathcal O (z^2) ,
\]
for $z$ close to $0$.
\label{le:2.2}
\end{lemma}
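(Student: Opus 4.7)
The plan is to deduce the expansion of $H_z$ from the expansion of the induced metric $g_z$ supplied by Lemma \ref{le:2.1n} through a purely algebraic manipulation, so the main task is bookkeeping of indices and signs.

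The starting point is the standard identity in Fermi coordinates: since $\partial_z$ is the velocity field of the normal geodesics, $\nabla^g_{\partial_z}\partial_z=0$ on the whole tubular neighbourhood, and the second fundamental form of $\Gamma_z$ is $h_z=-\tfrac{1}{2}\partial_z g_z$. Consequently
\[
H_z=\mathrm{Tr}_{g_z}h_z=-\tfrac{1}{2}\mathrm{Tr}_{g_z}(\partial_z g_z),\qquad\text{equivalently}\qquad H_z=-\partial_z\log\sqrt{\det g_z}.
\]
Differentiating the formula of Lemma \ref{le:2.1n} gives
\[
h_z=\mathring h-z\bigl(\mathring h\otimes\mathring h+g(R_g(\cdot,N),\cdot,N)\bigr)+\mathcal O(z^2),
\]
while the Neumann series applied to $g_z=\mathring g-2z\mathring h+\mathcal O(z^2)$ yields
\[
g_z^{-1}=\mathring g^{-1}+2z\,\mathring g^{-1}\mathring h\mathring g^{-1}+\mathcal O(z^2).
\]

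Multiplying the two expansions and taking the $\mathring g$-trace produces
\[
H_z=\mathrm{Tr}_{\mathring g}\mathring h+z\bigl[2\,\mathrm{Tr}_{\mathring g}(\mathring h\otimes\mathring h)-\mathrm{Tr}_{\mathring g}(\mathring h\otimes\mathring h)-\mathrm{Tr}_{\mathring g}\,R_g(\cdot,N,\cdot,N)\bigr]+\mathcal O(z^2),
\]
in which the two $\mathring h\otimes\mathring h$ contributions collapse into $+\mathrm{Tr}_{\mathring g}(\mathring h\otimes\mathring h)$, already matching the claim up to the curvature term. It then remains to identify $-\mathrm{Tr}_{\mathring g}R_g(\cdot,N,\cdot,N)$ with $\mathrm{Ric}_g(N,N)$. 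Completing an orthonormal frame $e_1,\dots,e_n$ of $T\Gamma$ to a full orthonormal frame $(e_1,\dots,e_n,N)$ of $TM$, the antisymmetry $R(X,Y,Z,W)=-R(X,Y,W,Z)$ together with $R(X,Y,N,N)=0$ yields
\[
\sum_{i=1}^n R(e_i,N,e_i,N)=-\sum_{i=1}^n R(e_i,N,N,e_i)=-\mathrm{Ric}_g(N,N),
\]
and the claimed expansion follows.

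The only real point of care is with sign conventions (the second fundamental form is defined here by $\mathring h(t_1,t_2)=-\mathring g(\nabla^g_{t_1}N,t_2)$, the mean curvature is its $\mathring g$-trace without any extra factor, and the Ricci tensor is the trace in the outer slots of $R$), which determine whether $\mathrm{Ric}_g(N,N)$ ultimately appears with the correct sign in the final formula. No substantial obstacle is expected beyond this bookkeeping; in particular one could alternatively bypass the inversion of $g_z$ altogether by expanding $\log\det g_z$ via $\log\det(I+A)=\mathrm{Tr}\,A-\tfrac12\mathrm{Tr}\,A^2+\cdots$ and then using $H_z=-\partial_z\log\sqrt{\det g_z}$, which leads to the same conclusion by a parallel computation.
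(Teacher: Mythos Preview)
Your proof is correct and follows essentially the same approach as the paper: both rely on the identity $H_z=-\partial_z\log\sqrt{\det g_z}$ and the expansion of $g_z$ from Lemma~\ref{le:2.1n}. The paper simply invokes the formula $\det(I+A)=1+\mathrm{Tr}\,A+\tfrac12((\mathrm{Tr}\,A)^2-\mathrm{Tr}\,A^2)+\mathcal O(\|A\|^3)$ without further detail, whereas you carry out the equivalent computation explicitly by expanding $g_z^{-1}$ via the Neumann series and tracing against $h_z=-\tfrac12\partial_z g_z$; you even note the determinant route as an alternative at the end.
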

\begin{proof}
The mean curvature appears in the first variation of the volume form of parallel
hypersurfaces, namely
\[
	H_z = - \frac{1}{\sqrt{{\rm det} \,  g_z}} \, \frac{d}{dz} \sqrt{\mbox{det} \, g_z}.
\]
The result then follows at once from the expansion of the metric $g_z$ in powers of $z$ together with the well known formula
\[
{\rm det} (I + A) = 1 + \mbox{Tr} A +\frac 1 2 \,  \left(  (  \mbox{Tr} A)^2 -  \mbox{Tr} A^{2}  \right) + \mathcal O (\|A\|^3),
\]
where $A \in M_n (\mathbb R)$.
\end{proof}

Recall that, in local coordinates, the Laplace Beltrami operator is given by
\[
	\Delta_g = \frac{1}{\sqrt{ |g| }} \, \partial_{x_i} \left( g^{ij} \, \sqrt{ | g |} \, \partial_{x_j}
	\, \right) .
\]
 Therefore, in a fixed  tubular neighborhood of $\Gamma$, the Euclidean Laplacian in $
\mathbb R^{n+1}$ can be expressed in Fermi coordinates by the (well-known) formula
\begin{equation}
	\Delta_{g_{e}}  = \partial^2_z  - H_z \, \partial_z +  \Delta_{g_z} .
\label{Lapx}
\end{equation}

In the  case where the ambient manifold is the Euclidean space, we get
\begin{lemma}
The induced metric $g_z$ on $\Gamma_z$ is given by
\[
g_z  = g_0 - 2 \, z\, k_0 + z^2 \, k_0 \otimes k_0  \, .
\]
in other words
\[
g_z  : =  g_0 ((I-zA) \, \cdot \, , (I-zA) \,\cdot \, )
\]
where $A$ is the shape operator defined by
\begin{equation}
\label{shapeoperator}
A= ({\mathring h}_{ij}).
\end{equation}
\label{le:2.1}
\end{lemma}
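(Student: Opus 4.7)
The plan is to proceed by a direct computation, exploiting the fact that in Euclidean ambient space the parameterization $Z(x,z) = x + z\,\mathbf{n}(x)$ from \equ{eer1} is exactly affine in $z$, so no remainder terms arise. Fixing a point $x \in \Gamma$ and tangent vectors $t_1, t_2 \in T_x \Gamma$, I would differentiate $Z$ with respect to $x$ in the direction $t_i$ to obtain the pushforward onto $\Gamma_z$:
\[
dZ_{(x,z)}(t_i) \,=\, t_i + z\, d\mathbf{n}_x (t_i),
\]
with no higher order terms in $z$ whatsoever.

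The next step is to identify $d\mathbf{n}_x(t)$ with $-A(t)$, where $A$ is the shape operator. Since $|\mathbf{n}|^2\equiv 1$, differentiation shows $d\mathbf{n}_x(t) \in T_x\Gamma$, so the ambient Euclidean differential of $\mathbf{n}$ coincides with $\nabla^g_t \mathbf{n}$. The defining identity $\mathring h(t_1, t_2) = -\mathring g(\nabla^g_{t_1}\mathbf{n}, t_2)$ then gives $d\mathbf{n}(t) = -A(t)$ with $A = (\mathring h_{ij})$. Consequently $dZ_{(x,z)}(t_i) = (I - zA)(t_i)$, and since $g_z$ is the pullback by $Z(\cdot,z)$ of the Euclidean metric,
\[
g_z(t_1, t_2) \,=\, g_0\bigl((I-zA)t_1,\, (I-zA)t_2\bigr),
\]
which is precisely the second formulation in the lemma.

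Expanding the bilinear form produces
\[
g_z \,=\, g_0 - 2z\, g_0(A\,\cdot\, , \,\cdot\,) + z^2\, g_0(A\,\cdot\,, A\,\cdot\,).
\]
The remaining identifications are purely a matter of names: by self-adjointness of the shape operator one has $g_0(At_1, t_2) = \mathring h(t_1, t_2) = k_0(t_1, t_2)$, while $g_0(At_1, At_2) = \mathring g(\nabla_{t_1}\mathbf{n}, \nabla_{t_2}\mathbf{n}) = (\mathring h \otimes \mathring h)(t_1, t_2) = (k_0\otimes k_0)(t_1,t_2)$ by the definition of the square of the second fundamental form recalled just before Lemma~\ref{le:2.1n}. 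Substituting yields the first formula of the lemma exactly.

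There is really no obstacle here: the statement is the specialization of Lemma~\ref{le:2.1n} to the Euclidean ambient space, in which $R_g \equiv 0$ annihilates the Riemann curvature contribution and the affine character of $Z$ in $z$ makes the $\mathcal O(z^3)$ remainder vanish identically. The only point requiring care is keeping the sign convention straight so that $A$ and $\mathring h$ are related as in the paper; alternatively, one may simply invoke Lemma~\ref{le:2.1n} and observe that both $R_g$ and $\partial_z^3 g_z$ vanish in the Euclidean setting.
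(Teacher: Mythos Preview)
Your proof is correct and follows essentially the same approach as the paper: both compute the pushforward of tangent vectors under the affine map $Z(x,z)=x+z\,\mathbf{n}(x)$, identify $d\mathbf{n}$ with $-A$ via the definition of the second fundamental form, and read off the metric as a quadratic polynomial in $z$. The only difference is cosmetic---the paper works in local coordinates $\partial_{x_i}X$ while you use coordinate-free notation $t_i$---and your additional remark that this is the Euclidean specialization of Lemma~\ref{le:2.1n} (with $R_g\equiv 0$ and vanishing cubic remainder) is a nice way to frame it.
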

\begin{proof}
We just need to compute
\[
\begin{array}{rllll}
\partial_{x_i} X_z \cdot \partial_{x_j} X_z = \partial_{x_i} X \cdot  \partial_{x_j} X
+  z \, \left( \partial_{x_i} X \cdot \partial_{x_j} \tilde N   + \partial_{x_i} \tilde N \cdot
\partial_{x_j} X \right) + z^2 \, \partial_{x_i} \tilde N \cdot \partial_{x_j} \tilde N  ,
\end{array}
\]
where $\tilde N : = X^* N$. We can use (\ref{eqN1}) to write
\[
\partial_{x_i} \tilde N \cdot \partial_{x_j} \tilde N =  A \, \partial_{x_i} X  \cdot A \, \partial_{x_j} X
\]
And, using the definition of the first and second fundamental forms on $\Gamma$, we conclude that
\[
\begin{array}{rllll}
\partial_{x_i} X_z \cdot \partial_{x_j} X_z  =  \partial_{x_i} X \cdot  \partial_{x_j} X  -
2 \, z \,  (A \, \partial_{x_i} X ) \cdot \partial_{x_j} X  +   z^2 \,  (A \, \partial_{x_i} X) \cdot  (A \, \partial_{x_j} X)  \, .
\end{array}
\]
This completes the proof of the result. \end{proof}

Similarly,  the mean curvature $H_z$ of $\Gamma_z$ can be expressed in term of $z$ and $A$,  the shape operator about $\Gamma$ which has been defined in (\ref{shapeoperator}). We have the~:
\begin{lemma}
The following expansion holds
\[
H_{z} =  \sum_{k=0}^\infty  {\rm Tr} \, (A^{k+1})  \, z^k \, .
\]
\label{le:2.21}
\end{lemma}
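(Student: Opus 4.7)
\medskip

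\textbf{Proof plan.} The strategy is to exploit the identity for the metric established in Lemma~\ref{le:2.1}, namely $g_z = g_0((I-zA)\cdot,(I-zA)\cdot)$, and feed it into the variational formula for the mean curvature used already in Lemma~\ref{le:2.2},
\[
H_z = -\frac{1}{\sqrt{\det g_z}}\,\frac{d}{dz}\sqrt{\det g_z}.
\]
In a local coordinate frame on $\Gamma$, let $G_0$ be the matrix of $g_0$; then the matrix of $g_z$ is $G_z = (I-zA)^{T} G_0 (I-zA)$, and hence
\[
\det G_z = \det(I-zA)^{2}\,\det G_0,
\]
so that for $z$ small enough that $I-zA$ is invertible one has $\sqrt{\det G_z} = \det(I-zA)\sqrt{\det G_0}$.

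\medskip

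Differentiating and using Jacobi's formula $\frac{d}{dz}\det(I-zA)=-\det(I-zA)\,\operatorname{Tr}((I-zA)^{-1}A)$, one immediately obtains the closed form
\[
H_z = \operatorname{Tr}\bigl((I-zA)^{-1}A\bigr).
\]
Expanding $(I-zA)^{-1}=\sum_{k=0}^{\infty} z^{k}A^{k}$ as a Neumann series (convergent for $|z|<\|A\|^{-1}$) and taking the trace term by term yields
\[
H_z = \sum_{k=0}^{\infty} z^{k}\,\operatorname{Tr}(A^{k+1}),
\]
which is precisely the desired expansion.

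\medskip

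There is essentially no obstacle here; the whole content is bookkeeping built on Lemma~\ref{le:2.1}. The only minor point worth keeping in mind is the domain of validity: the series converges for $|z|$ smaller than the reciprocal of the largest absolute value of a principal curvature, which is exactly the range in which the parallel hypersurface $\Gamma_z$ is smooth and embedded near the point under consideration. Equivalently, one could diagonalize $A$ in a principal frame with eigenvalues $k_1,\ldots,k_n$ and use the classical identity $H_z=\sum_i k_i/(1-zk_i)$ (already recorded in formula \equ{L3} of Section~\ref{s4} in the minimal case), then expand each term as a geometric series; this gives the same result and makes the radius of convergence transparent.
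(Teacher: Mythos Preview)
Your proof is correct and follows essentially the same route as the paper: both start from the variational identity $H_z=-(\det g_z)^{-1/2}\frac{d}{dz}(\det g_z)^{1/2}$, invoke Lemma~\ref{le:2.1} to reduce to $-\det(I-zA)^{-1}\frac{d}{dz}\det(I-zA)=\operatorname{Tr}\bigl(A(I-zA)^{-1}\bigr)$, and then expand as a Neumann series. You supply a bit more detail (Jacobi's formula, the radius of convergence, and the alternative principal-curvature computation), but the argument is the same.
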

\begin{proof}
The mean curvature appears in the first variation of the volume form of parallel hypersurfaces, namely
\[
H_z = - \frac{1}{\sqrt{\mbox{det} \,  g_z}} \, \frac{d}{dz} \sqrt{\mbox{det} \, g_z}
\]
Hence we find that
\[
H_z =  - \frac{1}{\mbox{det} \,  (I-z\, A)} \, \frac{d}{dz} \mbox{det} (I-z\, A) =  {\rm Tr} \, ( A \, (I - z\, A)^{-1})
\]
and the result follows.
\end{proof}

\subsection{Construction of an approximate solution}

Given any (sufficiently small) smooth function $h$ defined on $\Gamma$, we define $\Gamma_h$ to be the normal graph over $\Gamma$ for the function $h$. Namely
\[
\Gamma_h := \{ y + h(y) \, N(y) \in \mathbb R^{n+1}  \, : \, y \in \Gamma\} \, .
\]
We also define the epigraph
\[
\Omega_h : =  \{ y + t \, N(y)  \in \mathbb R^{n+1} \, : \, y \in \Gamma ,  \quad t \geq h(y) \} \, .
\]

We would like to solve the equation
\[
\Delta u + f (u) =0 \, ,
\]
in $\Omega_h$, with $u=0$ and $\partial_\nu u = {\rm constant}$ on $\partial \Omega_h$. In this section, we explain how to build a function $h$ and a function $u$ which solve this overdetermined problem to high order of accuracy. The construction makes use of an iteration scheme which can be used to determine all the orders successively.

We keep the notations of the previous section and, in a tubular neighborhood of $\Gamma$ we write
\[
u(z, y) = v \left( \frac{z-h(y)}{\ve}, y \right) \, ,
\]
where $h$ is a (sufficiently small) smooth function defined on $\Gamma$. It will be convenient to denote by $t$ the variable
\[
t : = \frac{z-h(y)}{\ve} \, .
\]
Using the expression of the Laplacian in Fermi coordinates which has been derived in (\ref{Lapx}), we find with little work that the equation we would like to solve can be rewritten as
\begin{equation}
\begin{array}{rllll}
\Big[ ( 1 +  \|Êd h\|^2_{g_\zeta}  ) \, \partial_t^2 v + \ve^2 \Delta_{g_\zeta} v  -  \ve \, \left( H_\zeta  + \Delta_{g_\zeta} h    \right) \, \partial_t v & \\[3mm]
- \ve \, ( d h , d \, \partial_t v)_{g_\zeta} \Big]_{|\zeta = \ve \, t + h}  &+  f(v) =0
 \end{array}
\label{eq:rfe}
\end{equation}
for  $t >0$ close to $0$ and $y \in \Gamma$. Some comments are due about the notations. In this equation and below  all computations of the quantities between the  square brackets $[ \quad ]$ are performed using the metric $g_\zeta$ defined in Lemma~\ref{le:2.1} and considering that $\zeta$ is a parameter, and once this is done, we set $\zeta = t+h(y)$.

\medskip

The fact that we ask that $u$ has $0$ boundary data translates into
\[
v(0, y) = 0
\]
on $\Gamma$. Finally, the Neumann data of $u$ reads
\[
\mathfrak N (v, h) : = \left(  1+ \| d h\|_{g_\zeta}^2\right)^{1/2}_{\zeta = h(y)}  \, \partial_t v
\]
where this time the expression between the square brackets is evaluated at $t=0$.

\medskip

We set
\[
v(t,y)  = w(t) + \phi (t, y )
\]
where $w$ is the solution of (\ref{eqnforw}). In this case, the equation (\ref{eq:rfe}) becomes
\[
\mathfrak M (v,h) =0 \, ,
\]
where we have defined
\begin{equation}
\begin{array}{rllll}
\mathfrak M (v,h)  & : = & \displaystyle \Big [ \left( \partial_t^2 + \ve^2 \Delta_{g_\zeta}  + f'(w) \right) \,  \phi - \ve \, ( \Delta_{g_\zeta} h + H_\zeta ) \,  ( w'  +  \partial_t \phi ) \\[3mm]
& -  &  \| d h\|^2_{g_\zeta} \, (w'' + \partial^2_t \phi)  - \ve \, ( d h , d \, \partial_t \phi)_{g_\zeta}  \Big]_{| \zeta = \ve t + h} \\[3mm]
& + & \left( f(w+\phi) - f(w) - f'(w) \, \phi\right)
\end{array}
\label{eq:nrfe}
\end{equation}

We now perform some formal computation to determine the solution $\phi$ and $h$ so that $\mathfrak N (w+\phi, h)$ is constant.  We assume that $\phi$ and $h$ can be expanded in power of $\ve$ as
\[
\phi  =\ve \phi_0+   \ve^2 \, \phi_1 +  \ve^3 \, \phi_2 + \ve^4 \, \phi_3 +  \ldots
\]
and
\[
h =  \ve \, h_0 + \ve^2 \, h_1  + \ve^3 \, h_2 +  \ldots
\]
where all functions $\phi_j$ depend on $t$ and $y$ while the functions $h_j$ only depend on $y$. We naturally assume that $\phi_j$ nor $h_j$ depend on $\ve$ and, since this will turn out to be the case and since this simplifies the computations, we also assume that $h_0$ is constant and $ \phi_0= \phi_0 (t)$.

\begin{lemma}
The following expansion holds
\begin{equation}
\begin{array}{rllll}
 \mathfrak M (w+ \phi , h)  & = & \ve \,  \Big (  ( \partial_t^2 + f'(w)) \phi_0 -  {\rm Tr} (A) \, w'   \Big)\\[3mm]
& +  & \ve^2 \,  \Big (  ( \partial_t^2 + f'(w)) \phi_1 -  {\rm Tr} (A^2) \, (t+h_0) \, w'   -{\rm Tr } (A) \partial_t \phi_0 +\frac{1}{2} f^{''} (w) \phi_0^2 \Big)\\[3mm]
& + & \ve^3 \, \Big( ( \partial_t^2 + f'(w)) \phi_2 -  ({\rm Tr} (A^3) \, (t+h_0)^2 +  J_\Gamma \, h_1 ) \, w'  \\[3mm]
&  -&    {\rm Tr } (A^2) (t+h_0) \partial_t \phi_0  +\frac{1}{2} f^{''} (w) \phi_0 \phi_1 + \frac{1}{6} f^{'''}(w) \phi_0^3  \Big)\\[3mm]
& + & \ve^4 \, \Big( ( \partial_t^2 + f'(w)) \phi_3 - ( {\rm Tr} (A^4) \, (t+h_0)^3  +  J_\Gamma \, h_2 ) \, w'   \\[3mm]
&  &  \quad - {\rm Tr} (A^2) \, (t+h_0)\, \partial_t \phi_1  + \Delta_{g_0} \, \phi_1 + \frac{1}{2} \, f''(w)  \phi_1^2   - \| dh_1\|_{g_0}^2 \, w''   \\[3mm]
&  &  \quad - \left( 2 \, {\rm Tr} (A^3) \, h_1 +  [Ê\partial_\zeta \Delta_{g_\zeta} h_1]_{|\zeta =0} \right) \, (t+h_0) \, w'  \\[3mm]
 & \ & - {\rm Tr} (A^2) h_1 \partial_t \phi_0 - {\rm Tr} (A^3) (t+h_0)^2 \partial_t \phi_0 \\[3mm]
&\  &  +f^{(2)}(w)\phi_0 \phi_2 + \frac{ f^{(3)} (w)}{3} \phi_0 \phi_1 + \frac{f^{(4)}(w)}{4!} \phi_0^4 \Big) \\[3mm]
& + & \mathcal O (\ve^5)
\end{array}
\label{eq:expM}
\end{equation}
where
\[
J_\Gamma : =  (\Delta_{g_0} + {\rm Tr} (A^2))
\]
is the Jacobi operator about $\Gamma$ and $f^{(j)} $ denotes the $j-$th order derivative of $ f$.
\label{le:3.11}
\end{lemma}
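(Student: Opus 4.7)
The plan is to obtain the expansion by Taylor-expanding every piece of $\mathfrak M(w+\phi,h)$ in powers of $\varepsilon$, using the structural formulas
\[
H_\zeta \,=\, \sum_{k\ge 0} \mathrm{Tr}(A^{k+1})\,\zeta^k, \qquad g_\zeta \,=\, g_0((I-\zeta A)\,\cdot\,,(I-\zeta A)\,\cdot\,)
\]
established in Lemmas~\ref{le:2.1} and \ref{le:2.21}, and the key fact that after substituting the ansatz into $[\,\cdot\,]_{|\zeta=\varepsilon t+h}$ one has $\zeta = \varepsilon(t+h_0)+\varepsilon^2 h_1+\varepsilon^3 h_2+\mathcal O(\varepsilon^4)$, so the dependence on $\varepsilon$ enters both explicitly and through~$h$.

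First I would exploit the normalizations $h_0$ constant and $\phi_0=\phi_0(t)$: they imply $dh_0\equiv 0$, $\Delta_{g_\zeta}h_0\equiv 0$, $d\partial_t\phi_0\equiv 0$, so that $\|dh\|^2_{g_\zeta}=\varepsilon^4\|dh_1\|_{g_0}^2+\mathcal O(\varepsilon^5)$ and $\varepsilon(dh,d\partial_t\phi)_{g_\zeta}=\mathcal O(\varepsilon^5)$; these terms only enter at order $\varepsilon^4$ and $\varepsilon^5$ respectively, which is why the formula has the clean shape it does. Next I would expand the mean curvature as
\[
H_\zeta = \mathrm{Tr}(A)+\varepsilon(t+h_0)\mathrm{Tr}(A^2)+\varepsilon^2\big[h_1\mathrm{Tr}(A^2)+(t+h_0)^2\mathrm{Tr}(A^3)\big]+\varepsilon^3\big[\cdots\big]+\mathcal O(\varepsilon^4),
\]
and similarly expand $\Delta_{g_\zeta}h = \varepsilon^2\Delta_{g_0}h_1+\varepsilon^3\big(\Delta_{g_0}h_2+(t+h_0)[\partial_\zeta\Delta_{g_\zeta}]_{\zeta=0}h_1\big)+\mathcal O(\varepsilon^4)$, using that $\Delta_{g_\zeta}$ acts trivially on the constant $h_0$. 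Multiplying the bracket $\Delta_{g_\zeta}h+H_\zeta$ by $\varepsilon(w'+\partial_t\phi)$ and expanding the product then produces exactly the terms $-\mathrm{Tr}(A)w'$, $-\mathrm{Tr}(A^2)(t+h_0)w'$, $-(\mathrm{Tr}(A^3)(t+h_0)^2+J_\Gamma h_1)w'$ and $-(\mathrm{Tr}(A^4)(t+h_0)^3+J_\Gamma h_2+\cdots)w'$ at orders $\varepsilon,\ldots,\varepsilon^4$, together with the corresponding $\partial_t\phi_j$-contributions.

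The contribution of the leading operator $\partial_t^2+\varepsilon^2\Delta_{g_\zeta}+f'(w)$ applied to $\phi=\sum_j\varepsilon^{j+1}\phi_j$ is routine: at each order $\varepsilon^{j+1}$ the $\partial_t^2+f'(w)$ part contributes $(\partial_t^2+f'(w))\phi_j$, while the $\varepsilon^2\Delta_{g_\zeta}$ part only contributes starting at $\varepsilon^4$, producing $\Delta_{g_0}\phi_1$. For the purely nonlinear piece I would Taylor-expand
\[
f(w+\phi)-f(w)-f'(w)\phi = \tfrac12 f''(w)\phi^2+\tfrac16 f'''(w)\phi^3+\tfrac1{24}f^{(4)}(w)\phi^4+\cdots,
\]
substitute the series for $\phi$, and use multinomial bookkeeping to read off, at orders $\varepsilon^2,\varepsilon^3,\varepsilon^4$, the terms $\tfrac12 f''(w)\phi_0^2$, $f''(w)\phi_0\phi_1+\tfrac16 f'''(w)\phi_0^3$, and $f''(w)\phi_0\phi_2+\tfrac12 f''(w)\phi_1^2+\tfrac12 f'''(w)\phi_0^2\phi_1+\tfrac1{24}f^{(4)}(w)\phi_0^4$ respectively. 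Collecting everything and using the definition $J_\Gamma=\Delta_{g_0}+\mathrm{Tr}(A^2)$ to combine $\Delta_{g_0}h_j+\mathrm{Tr}(A^2)h_j$ gives the stated expansion.

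The main obstacle is purely combinatorial: keeping track of the three sources of $\varepsilon$-dependence (the explicit prefactors, the evaluation $\zeta=\varepsilon t+h(\varepsilon,y)$, and the series for $\phi,h$) without double counting. A clean way to organize this is to first fix the formal variable $\zeta$, expand everything in powers of $\zeta$ using Lemmas~\ref{le:2.1}--\ref{le:2.21}, and only at the very end substitute $\zeta=\varepsilon(t+h_0)+\varepsilon^2 h_1+\cdots$ and re-expand. With this bookkeeping, all contributions through order $\varepsilon^4$ are finite in number, each is elementary, and their sum is precisely \eqref{eq:expM}; the remainder is $\mathcal O(\varepsilon^5)$ in the uniform norms dictated by the hypotheses on $\phi_j$ and $h_j$.
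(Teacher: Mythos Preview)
Your approach is essentially the same as the paper's: both proofs expand each constituent of $\mathfrak M(w+\phi,h)$ separately (the term $\ve^2\Delta_{g_\zeta}\phi$, the term $\Delta_{g_\zeta}h$, the mean curvature $H_\zeta$ via Lemma~\ref{le:2.21}, the gradient terms $\|dh\|_{g_\zeta}^2$ and $(dh,d\partial_t\phi)_{g_\zeta}$, and the Taylor expansion of $f(w+\phi)-f(w)-f'(w)\phi$), exploit the simplifications $dh_0\equiv 0$ and $d\phi_0\equiv 0$ to push several terms to order $\ve^4$ or higher, and then collect. Your suggestion to first expand in the formal variable $\zeta$ and substitute $\zeta=\ve(t+h_0)+\ve^2 h_1+\cdots$ only at the end is a clean way to organize the same computation the paper carries out directly.
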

\begin{proof} Under the above assumptions, the following expansion is easy to derive
\[
\left[ \ve^2 \Delta_{g_\zeta}  \phi \right]_{| \zeta = \ve t + h}=  \ve^4 \, \Delta_{g_0}  \phi_1 + \mathcal O (\ve^5)
\]
since $\phi_0=\phi_0 (t)$ which does not depend on $y$.
Using the fact that $g_\zeta$ depends smoothly on $\zeta$ together with the facts that $h_0$ is constant and $\phi_0 =\phi_0 (t)$, we get
\[
\left[ \Delta_{g_\zeta} h  \right]_{| \zeta = \ve t + h} = \ve \, \Delta_{g_0} (h_1 + \ve \, h_2 + \ve^2 \, h_3) + \ve^3 \, \left[ \partial_\zeta \Delta_{g_\zeta} h_1  \right]_{| \zeta = 0} \, (t + h_0) + \mathcal O (\ve^4)\, ,
\]
Using the result of Lemma~\ref{le:2.2}, we obtain the expansion
\[
\begin{array}{rllll}
\left[ H_\zeta \right]_{| \zeta = \ve t + h} & = & {\rm Tr} (A) + \ve \, {\rm Tr} (A^2) \, (t +h_0 + \ve \, h_1+ \ve^2 \, h_2 )  \\[3mm]
& + & \ve^2 \, {\rm Tr} (A^3) (t +h_0 ) (t+h_0 + 2 \, \ve \, h_1) \\[3mm]
& + & \ve^3 \, {\rm Tr} (A^4) (t +h_0)^3 + \mathcal O (\ve^4)
\end{array}
\]
Next, we have
\[
\left[ \| d h\|^2_{g_\zeta}   \right]_{|\zeta = \ve t + h} =  \ve^{4} \,  \| d h_1\|^2_{g_0} + \mathcal O (\ve^5) \,
\]
since $h_0$ is assumed to be constant. Similarly, we get
\[
\left[ ( d h , d \, \partial_t \phi)_{g_\zeta}  \right]_{|\zeta = \ve t + h} = \mathcal O (\ve^4)
\]
since $h_0$ is constant and $\phi_0= \phi_0 (t)$. Finally, Taylor's expansion yields
\[
\begin{array}{rllll}
f(w+\phi) - f(w) - f'(w) \, \phi & = &   \frac{1}{2}  f''(w)  \phi^2 + \frac{1}{3!} f^{(3)} (w) \phi^3 + \frac{1}{4!} f^{(4)} (w) \phi^4+ \mathcal O (\ve^5)
\end{array}
\]
To derive the expansion, it is enough to insert these expression in (\ref{eq:nrfe}) and rearrange the result in powers of $\ve$. \end{proof}

Using similar arguments, we also get the expansion of the normal derivative of $v+ \phi$ in powers of $\ve$.
\begin{lemma}
The following expansion holds
\[
\mathfrak N (w+\phi, h)   = w'(0) + \ve  \phi_0^{'} (0)+ \left( \ve^2 \, \partial_t \phi_1+ \ve^3 \,  \partial_t \phi_2+ \ve^4 \,  \partial_t \phi_3\right)_{| t=0} +  \| dh_1\|_{g_0}^2 + \mathcal O (\ve^5)
\]
\label{le:3.22}
\end{lemma}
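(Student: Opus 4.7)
The strategy is a direct Taylor expansion of the defining formula
\[
\mathfrak N(v,h) \;=\; \bigl(1+\|dh\|_{g_\zeta}^2\bigr)^{1/2}_{\zeta=h(y)}\;\partial_t v\bigr|_{t=0},
\]
into which I substitute the ans\"atze $v=w+\phi$ with $\phi=\ve\phi_0+\ve^2\phi_1+\ve^3\phi_2+\ve^4\phi_3+\dots$ and $h=\ve h_0+\ve^2 h_1+\ve^3 h_2+\dots$, remembering that $h_0$ is a constant and $\phi_0=\phi_0(t)$ does not depend on $y$. Because $h_0$ is constant, $dh=\ve^2 dh_1+\ve^3 dh_2+\mathcal O(\ve^4)$ on $\Gamma$, which is the mechanism that makes the metric-dependent factor a small perturbation of $1$.

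First I would expand the normal derivative of $v$ at $t=0$ term by term:
\[
\partial_t v(0,y) \;=\; w'(0)\;+\;\ve\,\phi_0'(0)\;+\;\ve^2\partial_t\phi_1(0,y)\;+\;\ve^3\partial_t\phi_2(0,y)\;+\;\ve^4\partial_t\phi_3(0,y)\;+\;\mathcal O(\ve^5).
\]
Next, to evaluate $\|dh\|_{g_\zeta}^2$ at $\zeta=h(y)=\ve h_0+\ve^2 h_1+\dots$, I would use the smooth dependence $g_\zeta=\mathring g-2\zeta\mathring h+\mathcal O(\zeta^2)$ from Lemma~\ref{le:2.1n}, so that the inner product $g_\zeta(dh,dh)$ agrees with $g_0(dh,dh)$ up to $\mathcal O(\ve\cdot\|dh\|^2)$. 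Since $dh=\ve^2 dh_1+\mathcal O(\ve^3)$, this yields
\[
\|dh\|_{g_\zeta}^2\bigr|_{\zeta=h(y)} \;=\; \ve^4\,\|dh_1\|_{g_0}^2\;+\;\mathcal O(\ve^5).
\]
Then the scalar Taylor expansion $(1+x)^{1/2}=1+\tfrac12 x+\mathcal O(x^2)$ gives
\[
\bigl(1+\|dh\|_{g_\zeta}^2\bigr)^{1/2}_{\zeta=h(y)}\;=\;1\;+\;\tfrac12\,\ve^4\,\|dh_1\|_{g_0}^2\;+\;\mathcal O(\ve^5).
\]

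Finally I would multiply the two expansions, keeping all terms up to order $\ve^4$. The leading factor $1$ reproduces the expansion of $\partial_t v(0,y)$ verbatim, while the $\tfrac12\ve^4\|dh_1\|_{g_0}^2$ correction multiplies only the leading order $w'(0)$ of $\partial_t v$ without producing further terms below $\ve^5$. Collecting yields exactly the claimed expansion (with the $\|dh_1\|_{g_0}^2$ contribution entering at order $\ve^4$, scaled by $\tfrac12 w'(0)$, which is the intended meaning of the last displayed term). Since every step is a power-series manipulation, the only point requiring care is bookkeeping the fact that $h_0$ is constant and $\phi_0$ is $y$-independent, because this is what kills every would-be intermediate contribution below $\ve^4$ in the metric factor; I do not anticipate any real analytic obstacle.
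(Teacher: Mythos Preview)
Your proposal is correct and is exactly the approach the paper intends: the paper gives no detailed proof of this lemma, writing only ``Using similar arguments, we also get the expansion of the normal derivative,'' i.e.\ the same direct power-series bookkeeping carried out for Lemma~\ref{le:3.11}. You also correctly flag that the displayed $\|dh_1\|_{g_0}^2$ term should carry the coefficient $\tfrac12\ve^4 w'(0)$; this is a typographical slip in the statement, not an error in your argument.
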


In order to construct the approximate solution the idea is first to find the functions $\phi_0, \phi_1, \ldots, \phi_3$ so that $\mathfrak M (w+\phi, h) = \mathcal O(\ve^5)$. Thanks to Lemma~\ref{le:3.11}, we obtain the following system of equations
\begin{eqnarray}
\left\{ \begin{array}{llll}
( \partial_t^2 + f'(w)) \phi_0& =&   {\rm Tr} (A) \,  w'  \\[3mm]
( \partial_t^2 + f'(w)) \phi_1& =&   {\rm Tr} (A^2) \, (t+h_0) \, w'  \\[3mm]
& +&{\rm Tr} (A) \partial_t \phi_0 +\frac{1}{2} f^{''} (w) \phi_0^2 \\[3mm]
( \partial_t^2 + f'(w)) \phi_2 & =&  ({\rm Tr} (A^3) \, (t+h_0)^2 -  J_\Gamma \, h_1 ) \, w' \\[3mm]
& +&   {\rm Tr } (A^2) (t+h_0) \partial_t \phi_0  +\frac{1}{2} f^{''} (w) \phi_0 \phi_1 + \frac{1}{6} f^{'''}(w) \phi_0^3  \\[3mm]
 ( \partial_t^2 + f'(w)) \phi_3 &  = & ( {\rm Tr} (A^4) \, (t+h_0)^3  -  J_\Gamma \, h_2 ) \, w' \\[3mm]
&  + &  {\rm Tr} (A^2) \, (t+h_0)\, \partial_t \phi_1  -  \Delta_{g_0} \, \phi_1   \\[3mm]
&  - & \frac{1}{2} \, f''(w)  \phi_1^2 +  \| dh_1\|_{g_0}^2 \, w''  \\[3mm]
&  + &  (2 \, {\rm Tr} (A^3)  h_1  +  [Ê\partial_\zeta \Delta_{g_\zeta} h_1]_{|\zeta =0} ) (t+h_0) \, w'  \\[3mm]
 & \ & + {\rm Tr} (A^2) h_1 \partial_t \phi_0 + {\rm Tr} (A^3) (t+h_0)^2 \partial_t \phi_0 \\[3mm]
&- &  f^{(2)}(w)\phi_0 \phi_2 - \frac{ f^{(3)} (w)}{3} \phi_0 \phi_1 + \frac{f^{(4)}(w)}{4!} \phi_0^4  \\[3mm]
\end{array}
\right.
\label{eq:sis3}
\end{eqnarray}
We consider this system of equation as a system of ordinary differential equations (of the variable $t>0$) which depends smoothly on parameters (namely $y \in \Gamma$) through the functions ${\rm Tr} (A^k)$, the metric $g_0$ on $\Gamma$ or the functions $h_0, \ldots, h_2$.

The next step relies on the solvability of a second order ordinary differential equation. We shall solve $\phi_0$ and $\phi_1, \phi_2, \phi_3$ differently. First we solve $\phi_0$.
Observe that a  crucial fact is that since $M$ is a CMC surface
\begin{equation}
{\rm Tr } (A)\equiv Constant
\end{equation}
so that  $\phi_0$ can be chosen to be  a function of $t$ only. In fact we can choose
\begin{equation}
\label{phi0}
\phi_0 (t)= -{\rm Tr } (A) p_0 (t)
\end{equation}
where $p_0 (t)$ is the unique bounded solution to (\ref{eq:p0}).
Observe that $ \phi_0 (0)=0$ and $\phi_0$ decays exponentially in $t$.

Next we solve $\phi_1, \phi_2$ and $\phi_3$ successively and at the meantime we also determine $h_0, h_1, h_2$. This is similar to the procedure done in Section \ref{s4}.

As observed in Section \ref{s4} for a bounded function $q(t)\in L^\infty (0, \infty)$ a necessary and sufficient condition to obtain a bounded solution $p(t)$ to
\begin{equation}
p^{''}+ f^{'} (w) p = q (t), \ \  p(0)=p^{'} (0)=0
\end{equation}
is the following
\begin{equation}
\label{le:3.1}
\int_0^\infty q(t) w^{'} (t) dt=0
\end{equation}
In fact the solution $p(t)$ is given by (\ref{formula p}).

\medskip

We now explain how the constant $h_0$ and the functions $h_1$ and $h_2$ are chosen.  The idea is to use (\ref{le:3.1}) in order to solve the system (\ref{eq:sis3}) for any given constant $h_0$ and any given set of functions $h_1, \ldots, h_3$ and then we determine $h_0, \ldots, h_3$ so that
\begin{equation}
\partial_t \phi_1 \, _{| t=0} =  \partial_t \phi_2 \, _{| t=0} =  \partial_t \phi_3 \, _{| t=0} +  \ve^4 \, \| dh_1\|_{g_0}^2 =0
\label{eq:ccqfr}
\end{equation}
on $\Gamma$.

To begin with, observe that, thanks to when $t=0$, we have
\[
w'(0) \, \partial_t \phi_1 \, _{| t=0} = \left( \int_0^\infty (t+h_0)\, w'(t) \, dt \right) \,  {\rm Tr} (A^2) \]
\[ + \int_0^{\infty} ({\rm Tr} (A) \partial_t \phi_0 +\frac{1}{2} f^{''} (w) \phi_0^2) w^{'}(t) dt  \,
\]
and hence, the first equation in (\ref{eq:ccqfr}) amounts to ask that the constant $h_0 \in \mathbb R$ is chosen so that
\begin{equation}
\label{eq:h0}
\int_0^\infty (t+h_0)\, {w'}^2 \, dt  =- \int_0^{\infty} ({\rm Tr} (A) \partial_t \phi_0 +\frac{1}{2} f^{''} (w) \phi_0^2) w^{'}(t) dt  \,
\end{equation}
which can be solved uniquely for $h_0$.

Next we choose $h_1$ so that  $\partial_t \phi_2|_{t=0}=0$. By  (\ref{le:3.1}) this amounts to choosing $h_1$ such that
 \begin{eqnarray*}
J_\Gamma (h_1) \int_0^{\infty} (w^{'} (t))^2 dt&=& {\rm Tr} (A^3) \,\int_0^{\infty}  (t+h_0)^2 (w^{'}(t))^2 dt \\[3mm]
& +& \int_0^{\infty} \Big( {\rm Tr } (A^2) (t+h_0) \partial_t \phi_0  +\frac{1}{2} f^{''} (w) \phi_0 \phi_1 + \frac{1}{6} f^{'''}(w) \phi_0^3\Big) w^{'} (t) dt
\end{eqnarray*}
which has a unique solution $h_1$, thanks to the nondegeneracy assumption on $M$.

A similar argument as above can be used to solve $\phi_2$ so that $\partial_t \phi_2|_{t=0}=0$ and hence a unique $h_2$ can be found.

 If we succeed in achieving these choices to determine $h_1$ and $h_2$, then according to Lemma~\ref{le:3.11} and Lemma~\ref{le:3.22}, this will ensure that
\[
\mathfrak M (w+ \bar \phi , \bar h)   =  \mathcal O (\ve^5)
\]
in a neighborhood of $\Gamma$ in $\Gamma \times [0, \infty)$  and
$$ (w+ \bar \phi)|_{t=0}=0, $$
\[
\mathfrak N (w+ \bar \phi, \bar h)   = w'(0) + \ve \phi_0^{'} (0)+ \mathcal O (\ve^5)
\]
on $\Gamma$ for
\[
\bar \phi : =\ve \phi_0+  \ve^2 \, \phi_1 +  \ve^3 \, \phi_2 + \ve^4 \, \phi_3  \qquad \mbox{and} \qquad \bar h : =   \ve \, h_0 + \ve^2 \, h_1  + \ve^3 \, h_2 \,
\]

We could use this iteration scheme to solve the equations $\mathfrak M (v,h) =0$ and $\mathfrak N(v, h) = {\rm constant}$ to any order but it turns out that the above accuracy will be sufficient for our purpose.

Proceeding as in the proof of Theorem \ref{teo4}, we look for true solutions of the form
\[
\bar \phi : =\ve \phi_0+  \ve^2 \, \phi_1 +  \ve^3 \, \phi_2 + \ve^4 \, \phi_3  +\phi \qquad \mbox{and} \qquad \bar h : =   \ve \, h_0 + \ve^2 \, h_1  + \ve^3 \, h_2 \, +h
\]
where we use $\| \cdot \|_{C^{2, \sigma}_{0, \gamma} (\Gamma_\ve \times (0, +\infty))}$ to measure $\phi$ and $\| \cdot \|_{C^{2, \sigma} (\Gamma)}$ to measure the function $h$. Since $\Gamma$ is compact and non-degenerate,  the rest of the proof goes exactly as those of Theorem \ref{teo4}. We omit the details.

\bigskip

\setcounter{equation}{0}
\section{Appendix: The BDG graph and its Jacobi operator}
\label{s10}

In this appendix we let $\Gamma$ a fixed Bombieri-De Giorgi-Giusti minimal graph \cite{BDG}, as in the statement of Theorem \ref{teo4}.
We begin by some preliminary facts in \cite{BDG} and \cite{dkwdg}.

\subsection{{The Bombieri-De Giorgi-Giusti minimal graph}}\label{bdg0}

Let us consider the minimal surface equation in entire space $\R^8$,
 \be H[F]\ :=\ \nn \cdot \left ( \frac {\nn F } { \sqrt{ 1+
|\nn F|^2 }} \right ) = 0 \inn \R^8. \label{mse}\ee
The quantity $H[F]$
corresponds to mean curvature of the hypersurface in $\R^9$,
$$ \Gamma := \{ (x',F(x'))\mid\ x'\in \R^8\}. $$
The Bombieri-De Giorgi-Giusti minimal graph \cite{BDG} is a non-trivial,  entire smooth solution of equation \equ{mse} that enjoys some simple symmetries which we describe next.
Let us write $x'\in \R^8$ as $x'=(\uu,\vv)\in \R^4\times \R^4$ and  consider the set
\be
T\ :=\  \{   (\uu,\vv)\in \R^8 \mid\ |\vv| > |\uu| \ \}.
\label{T}\ee
The solution found in \cite{BDG} is radially symmetric in both variables, namely $F= F(u,v)$. In addition, $F$ is positive in $T$ and it vanishes along $\pp T$. Moreover, it satisfies
\be
F(|\uu|, |\vv|) = -F(|\vv|, |\uu|)\foral \uu,\, \vv \ .
\label{simF}\ee

It is useful to introduce polar coordinates $(|\uu|, |\vv|) = (r\cos\theta, r\sin\theta)$. In  \cite{dkwdg}
it was found that $F$ is well approximated for large $r$ by a function that separates variables,
$F_0(x')= r^3 g(\theta), $ where
$g(\theta)$ solves the two-point
boundary value problem
\be
  \frac { 21  g \,\sin^32\theta }  { \sqrt{  9g^2  + {g'}^2} } \ + \  \left (   \frac {  g' \sin^32\theta }  { \sqrt{  9g^2  + {g'}^2} }   \right )' = 0 \inn \left (\frac \pi 4, \frac \pi 2\right ),\quad g\left(\frac \pi 4 \right ) =0= g'\left(\frac{\pi}{ 2} \right ).
\label{eqg}\ee

Problem $\equ{eqg}$ has a unique solution $g\in C^2([\frac \pi 4, \frac \pi 2])$ such that $g$ and $g'$ are positive in $(\frac \pi 4, \frac \pi 2)$ and such that  $g'(\frac \pi 4)=1$.

\begin{lemma}\label{exist fmc} \cite{dkwdg}
There exists an entire solution $F=F(|\uu|,|\vv|)$ to  equation $\equ{mse}$ which satisfies $\equ{simF}$ and such that
\begin{equation}
F_0\leq F\leq  F_0 + \frac{\tt C}{r^\sigma} \inn T, \quad r>R_0,
\end{equation}
where $0< \sigma<1 $, ${\tt C}\geq 1$, and  $R_0$, are  positive constants.
\end{lemma}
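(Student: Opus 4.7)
The plan is to construct $F$ as the limit of $O(4)\times O(4)$-invariant solutions of Dirichlet problems on expanding balls $B_R\subset \R^8$, using barriers built from $F_0$ to trap these solutions between $F_0$ and $F_0+{\tt C}\,r^{-\sigma}$ inside $T\cap\{r>R_0\}$.

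First I would verify the barrier inequalities. Writing the operator $H$ in polar coordinates $(r,\theta)$ and inserting the separable ansatz $F_0=r^3 g(\theta)$ causes the principal part of $H[F_0]$ to cancel by virtue of $\equ{eqg}$, and leaves a residual of size $O(r^{-\kappa})$ for some $\kappa>0$ whose sign in $T$ is positive; this identifies $F_0$ as a strict subsolution, $H[F_0]\ge 0$. For the supersolution I set $F_0^+:=F_0+{\tt C}\,r^{-\sigma}$ and expand $H$ around $F_0$: the contribution of the perturbation yields, at leading order, a second order elliptic term in the metric induced on the graph of $F_0$, of negative sign and of size ${\tt C}\,r^{-\sigma-2}$; choosing $\sigma>0$ small enough that this term dominates the residual $H[F_0]$, and ${\tt C}$ large enough to absorb all lower order nonlinear corrections, gives $H[F_0^+]\le 0$ strictly in $T\cap\{r>R_0\}$. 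Because $g(\pi/4)=0$, $F_0$ vanishes on $\partial T$, so extending $F_0$ by odd reflection across $\partial T$ in accordance with $\equ{simF}$ produces a globally defined barrier pair that is consistent with the symmetry we want to impose.

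Next, for each large $R>R_0$, I would solve $H[F_R]=0$ in $B_R$ with boundary data equal to $F_0$ on $\partial B_R\cap \bar T$ and to $-F_0(|\vv|,|\uu|)$ on $\partial B_R\setminus T$. Restricting to the class of $O(4)\times O(4)$-invariant functions that are antisymmetric under $(|\uu|,|\vv|)\mapsto (|\vv|,|\uu|)$ reduces this to a quasilinear problem in two scalar variables, for which Dirichlet solvability follows from classical existence theory for the minimal surface equation. The comparison principle on $B_R\cap T\cap\{r>R_0\}$, applied against the barrier pair, yields
\begin{equation*}
F_0\le F_R\le F_0+{\tt C}\,r^{-\sigma}\quad\hbox{in } B_R\cap T\cap\{r>R_0\}.
\end{equation*}
Combining this uniform $L^\infty$ control with De Giorgi--Nash--Moser regularity and interior gradient estimates for minimal graphs gives uniform $C^{2,\alpha}_{loc}$ bounds. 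A diagonal subsequence then converges to an entire smooth solution $F$ of $\equ{mse}$ which inherits both the symmetry $\equ{simF}$ and the two-sided bound, completing the proof.

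The principal difficulty is the rigorous verification that $F_0^+$ is genuinely a supersolution. Because $F_0$ grows like $r^3$, the linearization of $H$ at $F_0$ is degenerate under naive scaling, and one must exploit the precise algebraic identity $\equ{eqg}$ to expose both the sign and the exact power of $r$ in the residual $H[F_0]$; matching $\sigma$ to this power and tuning ${\tt C}$ so as to win against every remainder term is the delicate computational core of the argument. Once this supersolution is in hand, the passage to the limit is a soft compactness argument.
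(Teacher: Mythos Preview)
The paper does not supply a proof of this lemma; it is quoted verbatim from \cite{dkwdg}. Your outline is essentially the strategy carried out there: one checks that $F_0=r^3g(\theta)$ is a subsolution of $H$ in $T$ (the ODE \equ{eqg} kills the leading part and the $r^{-4}$-remainder coming from $\ttt L_1$ has the right sign), builds a supersolution of the form $F_0+{\tt C}\,r^{-\sigma}$ by expanding $H$ around $F_0$ and balancing the residual against the linearized contribution, solves symmetric Dirichlet problems on $B_R$ with data $F_0$, and passes to the limit via interior estimates. Two points deserve care when you fill in the details. First, the comparison on $B_R\cap T\cap\{r>R_0\}$ has an \emph{inner} boundary piece $\{r=R_0\}$ on which you must already know $F_R\le F_0+{\tt C}\,R_0^{-\sigma}$; in \cite{dkwdg} this is arranged by first obtaining a cruder global upper barrier and then taking ${\tt C}$ large. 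Second, the supersolution actually used in \cite{dkwdg} is not quite $F_0+{\tt C}\,r^{-\sigma}$ but involves an angular profile, since the linearization of $H$ at $F_0$ is the degenerate operator $\ttt L$ of \equ{HF1}--\equ{HF3} and one must produce an explicit positive solution of an associated ODE in $\theta$ to get a barrier with the correct sign on all of $T$. Your identification of this step as ``the delicate computational core'' is accurate.
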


In what what follows we will denote, for $F$ and $F_0$ as above,
$$
\Gamma = \{ (x',F(x'))\mid \ x' \in \R^8\,\}, \quad  \Gamma_0 = \{ (x',F_0(x'))\mid\ x' \in \R^8\,\}. \quad
$$
By $\Gamma_\ve$ we will  denote the dilated surfaces $\Gamma_\ve= \ve^{-1}\Gamma$. Also,  we shall use the notation:
\be
\rr(x)\,:=\, \sqrt{1 + |x'|^2} , \quad \rr_\ve(x)\, :=\, \rr(\ve x) ,\quad x= (x',x_9) \in \R^8\times\R = \R^9.
\ee


\subsection{Local coordinates}
In \cite{dkwdg}
it is found a convenient family of local parametrizations of the surface $\Gamma$ which we describe next.
Given $p\in \Gamma$ with $p = (p',p_9)$, $R= r(p) = |p'|>>1$, we let $\nu(p)$ be its normal vector, and
$\Pi_1,\ldots, \Pi_8$ an orthonormal basis of its tangent space. Using the fact that
the curvatures of $\Gamma$ at $p$ are bounded  as $O( R^{-1})$ one finds that there exists a $\theta>0$ independent of $p$ and a smooth function $G_p(\py)$ defined on $\R^8$
with $ G(0) = G_p'(0) =0$, such that $\Gamma$ can be locally parametrized
around $p$ by the map
\be
\py \in B(0,\theta R)\subset \R^8 \ \longmapsto \ Y_p(\py) :=  p+ \sum_{j=1}^8 \py_j \Pi_j + G_p(\py ) \nu(p)\,\in\,\Gamma.
\label{cords}\ee
 Besides, for each $m\ge 2$ the following estimate holds:
$$ \| D_\py ^m G_p\|_{L^\infty (B(0,\theta R))}  \ \le \ \frac {c_m} { R^{m-1}}  $$
where $c_m$ is independent of $p$.

\medskip
Let us consider the metric $g_{ij}$ of $\Gamma$ around $p$ expressed in these coordinates.
Then
$$ g_{ij}(\py) \ := \ \left < \pp_i Y_p,\pp_j Y_p\right> = \delta_{ij} + \theta(\py) $$
where
 \begin{align} |\theta(\py) | \,\le\,  c\frac {|\py|^2}{R^2} ,&\nonumber \\  |D_\py\theta(\py) |\, \le \,c\frac {|\py|}{R^2} ,&\nonumber \\  |D_\py^m\theta(\py) | \,\le\,  \frac {c_m} { R^{m}} &\foral   |\py|< \theta R,\ m\ge 2.
\label{thetap}\end{align}

\medskip
\subsection{The Laplace Beltrami operator}
The Laplace-Beltrami operator of $\Gamma$ is expressed in these local coordinates as
$$
\Delta_{\Gamma} = \frac 1{\sqrt{\det g( \py) }}\, \pp_{i}\left ( \sqrt{\det g(\py) } \, g^{ij}(\py)\, \pp_j\,\right)
$$
Let us set
$$ a_{ij}^0 (\py) :=  g^{ij}(\py), \quad b_j^0(\py) := \frac 1{\sqrt{\det g( \py) }}\, \pp_{i}\left ( \sqrt{\det g(\py) } \, g^{ij}(\py)\, \right ).
$$
So that
\be
 \Delta_{\Gamma} \,=\,  a^0_{ij}(\py)\,\pp_{ij} +   b^0_i(\py)\,\pp_i, \quad |\py|< \theta R,
\ee
where
 \begin{align} |a^0_{ij}(\py)-\delta_{ij} | \,\le\,  c\frac {|\py|^2}{R^2} ,&\quad |D_\py a^0_{ij}(\py) | \,\le\,  c\frac {|\py|}{R^2} ,\nonumber \\
  |b_{j}^0(\py) | \,\le\,  c\frac {|\py|}{R^2} ,&\quad\  |D_\py b_{j}^0(\py) | \,\le\,  \frac {c}{R^2}
  \foral   |\py|< \theta R,\ m\ge 2.
\label{L-21}\end{align}

\bigskip
\subsection{Solvability for the Jacobi operator of the BDG graph }

We  consider the linear problem
\be
\JJ_\Gamma[h]: = \Delta_{\Gamma} h + |A_\Gamma(y)|^2h = {\tt g} (y) \inn \Gamma .
\label{ej}\ee

In \cite{dkwdg} the following result was established.
\red{
\begin{proposition}\label{prop11}

 Let $4<\nu <5$. There exists a  positive constant $C>0$ such that
if ${\tt g}$ satisfies
$$\|\rr^\nu\, g\|_{L^{\infty}(\Gamma)}\,  <\, +\infty$$
 then
there is a unique solution of equation $\equ{ej}$
 such that $\|\rr^{\nu-2}\, h\|_{L^{\infty}(\Gamma)}
<+\infty$. This solution satisfies
$$
\|\rr^{\nu-2}\, h\|_{L^{\infty}(\Gamma)} \ \le\ C\,
\| \rr^\nu \,g\|_{L^{\infty}(\Gamma)}\,  .
$$
\end{proposition}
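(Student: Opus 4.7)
The strategy is to construct a positive super-solution of $\JJ_\Gamma$ with the prescribed decay rate $\rr^{-(\nu-2)}$, use it to derive the a priori bound, and then obtain existence through an exhaustion of $\Gamma$ by the compact subdomains $\Gamma\cap\{\rr<R\}$.

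First I would analyse $\JJ_\Gamma$ asymptotically. The decay $|A_\Gamma|^2 = O(\rr^{-2})$ together with the asymptotic formula $F = F_0 + O(r^{-\sigma})$ of Lemma~\ref{exist fmc} allows one to write, in polar coordinates $(r,\theta)$ adapted to the $SO(4)\times SO(4)$-invariance of $\Gamma$,
\[
\JJ_\Gamma \;=\; \partial_r^2 + \frac{a_0(\theta)}{r}\partial_r + \frac{1}{r^2}\,L_\theta + O(\rr^{-2-\sigma}),
\]
where $L_\theta$ is a tangential operator incorporating both the angular part of $\Delta_\Gamma$ and the $|A_\Gamma|^2$ term. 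Separating variables shows that on a radial profile $h = \rr^{-\alpha}\phi(\theta)$ the leading term satisfies an indicial relation whose roots correspond to bounded Jacobi fields of the asymptotic model; the fields generated by vertical translation and by the scaling within the BDG family account for two such indicial roots, and the hypothesis $4<\nu<5$ is precisely the statement that $\alpha=\nu-2$ lies strictly in the open interval between them.

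Next I would construct the super-solution. By the symmetry of $\Gamma$, the problem reduces to a two dimensional linear elliptic equation on the sector $T$ of \equ{T}. I would take $\bar h_0 = A\,\rr^{-(\nu-2)}\phi(\theta)$ where $\phi$ is a positive solution of the angular indicial eigenvalue problem associated with the previous step, and correct by a lower order term of order $\rr^{-(\nu-2)-\sigma}$ to absorb the perturbation $\JJ_\Gamma - \JJ_{\Gamma_0} = O(\rr^{-2-\sigma})$. Adding a sufficiently large positive constant on $\{\rr\leq R_0\}$ produces a positive function $\bar h$ on all of $\Gamma$ satisfying
\[
\JJ_\Gamma[\bar h]\ \leq\ -c\,\rr^{-\nu}\inn \{\rr\geq R_0\},\qquad \bar h\ \geq\ c_1\,\rr^{-(\nu-2)} \inn \Gamma,
\]
for suitable constants $c,c_1,A,R_0>0$.

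Once the super-solution is available the remaining steps are standard. Setting $M=c^{-1}\|\rr^\nu {\tt g}\|_{L^\infty(\Gamma)}$, comparison on $\Gamma\cap\{\rr<R\}$ applied to $\pm h - M\bar h$ and letting $R\to\infty$ yields the a priori bound $|h|\leq M\bar h$. For existence, I would solve the Dirichlet problems $\JJ_\Gamma[h_R]={\tt g}$ in $\Gamma\cap\{\rr<R\}$ with $h_R=0$ on $\Gamma\cap\{\rr=R\}$; the strict super-solution property rules out any bounded kernel on these sublevels for $R$ large, giving solvability and the uniform bound $|h_R|\leq M\bar h$. Interior Schauder estimates then produce a locally convergent subsequence $h_{R_k}\to h$ whose limit satisfies the claimed bound. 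Uniqueness follows from the same comparison: any homogeneous solution $h$ with $|h|\leq C\rr^{-(\nu-2)}$ can be bounded by $\delta\bar h$ outside an arbitrarily large ball for every $\delta>0$, forcing $h\equiv 0$.

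The principal obstacle is the indicial analysis: because the BDG profile $g(\theta)$ solving \equ{eqg} is only defined implicitly and vanishes at $\theta=\pi/4$, the asymptotic angular operator $L_\theta$ is singular at the boundary of its angular domain, and a careful Frobenius-type analysis is required to locate the indicial roots at the endpoints $\nu\in\{4,5\}$ and to exclude further resonances inside the admissible interval. Transferring the resulting super-solution from the asymptotic model $\Gamma_0$ back to the true graph $\Gamma$ then relies essentially on the decay $F-F_0=O(r^{-\sigma})$ of Lemma~\ref{exist fmc} in order to control the perturbative terms uniformly.
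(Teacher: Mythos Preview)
Your barrier-and-exhaustion strategy is exactly what the paper invokes: this proposition is not proved here but is cited from \cite{dkwdg}, and the paper summarises that proof as ``based on the construction of explicit barriers, using the fact that the surfaces $\Gamma$ and $\Gamma_0$ are uniformly close for $r$ large,'' with the maximum principle for $\JJ_\Gamma$ supplied by the positive bounded Jacobi field $(1+|\nabla F|^2)^{-1/2}$.

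Where the approaches differ is in how the barrier is actually produced. You frame the angular problem abstractly via indicial roots and a Frobenius analysis near $\theta=\pi/4$; the construction in \cite{dkwdg}, whose machinery the present paper reproduces in formulae \equ{rel}--\equ{htheta}, is instead completely explicit. One passes from $\JJ_{\Gamma_0}$ to the vertical linearisation $H'(F_0)=\ttt L_0+\ttt L_1$, whose principal part $L_0$ admits separated solutions $r^\beta q(\theta)$; the key observation is that $q=g^{\beta/3}$ lies in the kernel of the angular operator, which puts it in divergence form and yields the closed integral formula \equ{htheta} for $q$. The integrability constraints in that formula (powers of $g$ near $\theta=\pi/4$) are what force the window $4<\nu<5$, rather than an abstract location of indicial exponents. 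This explicit route sidesteps the ``principal obstacle'' you identify: there is no need for a delicate Frobenius argument because the exact solution to the model angular equation is written down directly. Your outline is a correct high-level description, but if you intend to fill in the details you will save considerable work by using the explicit representation \equ{htheta} in place of the indicial framework.
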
}

The proof of this result is based on the construction of explicit barriers, using the fact
that the surfaces $\Gamma$ and $\Gamma_0$ are uniformly close for $r$ large. Barriers constitute an appropriate tool to solve Problem \equ{ej} since $\JJ_\Gamma$ satisfies maximum principle, as it follows from the presence of a positive bounded function in its kernel. In fact,
we have that\\ $ \JJ_\Gamma[ (1+ |\nn F|^2)^{-1/2}] =0$.

\medskip
In the current setting we need to consider right hand sides with decay of order at most
$O( r^{-4})$, the prototypes being ${\tt g} =\sum_{i=1}^8 k_i^3$ and ${\tt g} =\sum_{i=1}^8 k_i^4$.
It is not possible in general to obtain a suitable barrier in the setting  of the above
proposition  when $\nu \le 4$.
We have however the validity of Proposition \ref{lemahc}  below which will suffice for our purposes.

\medskip
The closeness of the surfaces allows us to define a canonical correspondence between maps defined
on $\Gamma$ and functions on $\Gamma_0$ as follows.
Let $p\in \Gamma$ with $r(p)\gg 1$ and let $\nu(p)$ be the unit normal to $\Gamma$ at $p$. Let  $\pi(p)\in \Gamma_0$ be a point such that for some $t_p\in \R$ we have:
\begin{align}
\pi(p)=p+t_p\nu(p).
\label{def pip}
\end{align}

As shown in \cite{dkwdg}, the point $\pi(p)$ exists and  is unique when $r(p)\gg 1$,  and the map $p\longmapsto \pi(p)$ is smooth, with uniformly bounded derivatives both for $\pi$ and its inverse. The {\em approximate Jacobi operator} $\JJ_{\Gamma_0}$, corresponding to first variation of mean curvature at $\Gamma_0$, is given by
$$
\JJ_{\Gamma_0} [h] := \Delta_{\Gamma_0} h + |A_{\Gamma_0}(y)|^2h.
$$

For large $r$, $\JJ_\Gamma$ is ``close to''
 $\JJ_{\Gamma_0}$ in the sense of the following result, contained in \cite{dkwdg}.

\begin{lemma}\label{comparisonjacobi}
Assume that $h$ and $h_0$ are smooth functions defined respectively on $\Gamma$ and $\Gamma_0$ for $r$ large, and related through the formula
 $$h_0(\pi(y))= h(y), \quad y\in \Gamma,\quad r(y)> r_0.$$
\blue{There exists a $\sigma>0$ such that}
\be
\JJ_\Gamma [h](y) = [\JJ_{\Gamma_0} [h_0]  + O(r^{-2-\sigma}) D^2_{\Gamma_0}h_0 + O(r^{-3-\sigma}) D_{\Gamma_0}h_0 + O(r^{-4-\sigma})h_0\, ]\, (\pi(y))\ .
\label{comparisonjacobi1}\ee
\end{lemma}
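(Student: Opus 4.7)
\textbf{Plan for proof of Lemma \ref{comparisonjacobi}.} The plan is to realize $\Gamma$ as a small normal graph $z=\phi(y)$ over $\Gamma_0$ in Fermi coordinates about $\Gamma_0$, for $r\ge R_0$, and to compare $\JJ_\Gamma$ with $\JJ_{\Gamma_0}$ in a common parametrization obtained by pulling back the chart on $\Gamma_0$ through $\pi^{-1}$. The first step is to convert the vertical bound $|F-F_0|\le Cr^{-\sigma}$ from Lemma~\ref{exist fmc} into a bound on the \emph{normal} displacement $t_p$. Since the tangent planes to $\Gamma$ are nearly vertical for large $r$ (because $|\nabla F|\sim r^2$), a direct computation using $\nu=(-\nabla F,1)/\sqrt{1+|\nabla F|^2}$ and the relation $\pi(p)=p+t_p\nu(p)\in\Gamma_0$ yields $t_p=O(r^{-2-\sigma})$, hence $|\phi|=O(r^{-2-\sigma})$; interior Schauder estimates for the minimal surface equation on balls of radius $\theta r$, as already carried out in \cite{BDG,dkwdg}, then upgrade this to $|D^k_{\Gamma_0}\phi|=O(r^{-2-\sigma-k})$ for each $k\ge 0$.

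The second step is the Laplacian comparison. I fix $y\in\Gamma$ with $r(y)$ large, set $q=\pi(y)\in\Gamma_0$, and work in the chart $\tilde Y_y=\pi^{-1}\circ Y_q:B(0,\theta r)\to\Gamma$. Because $h=h_0\circ\pi$, the two functions coincide as functions of the common parameter $\py$; consequently $\pp_k h=\pp_k h_0$ in these coordinates, and the difference $\Delta_\Gamma h(y)-\Delta_{\Gamma_0}h_0(q)$ is due entirely to the coefficient differences of the two Laplace-Beltrami operators. Applying Lemma~\ref{le:2.1n} to the normal graph $z=\phi$ (equivalently, differentiating the representation $\tilde Y_y-Y_q=-t\,\nu_\Gamma$) gives $\tilde g_{ij}=g^0_{ij}+O(|\phi|\,|A_{\Gamma_0}|)+O(|\pp\phi|^2)=g^0_{ij}+O(r^{-3-\sigma})$, with first derivatives of the difference of size $O(r^{-4-\sigma})$, thanks to $|A_{\Gamma_0}|=O(r^{-1})$ and the bounds on $\phi$. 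Consequently the principal-part error is $(\tilde g^{ij}-g^{0,ij})\pp_{ij}h=O(r^{-3-\sigma})D^2_{\Gamma_0}h_0$ and the first-order error is $(\tilde b^j-b^{0,j})\pp_j h=O(r^{-4-\sigma})D_{\Gamma_0}h_0$, both inside the envelopes claimed.

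The third step is the comparison of the zeroth-order terms. Combining the parallel-surface expansion of the shape operator (Lemma~\ref{le:2.21}) with the $D^2\phi$ contribution coming from the non-trivial graphing yields $A_\Gamma-A_{\Gamma_0}=O(r^{-4-\sigma})$, and hence $|A_\Gamma|^2-|A_{\Gamma_0}|^2=2\langle A_{\Gamma_0},A_\Gamma-A_{\Gamma_0}\rangle+O((A_\Gamma-A_{\Gamma_0})^2)=O(r^{-5-\sigma})$, well within the required $O(r^{-4-\sigma})$. Adding the three contributions produces the expansion $\equ{comparisonjacobi1}$. The main delicate point is the passage from the pointwise bound $|F-F_0|\le Cr^{-\sigma}$ to the derivative bounds $|D^k\phi|=O(r^{-2-\sigma-k})$; this is achieved by a rescaled interior elliptic argument (the natural scale at distance $r$ being itself $\sim r$) and is precisely the regularity step developed in \cite{BDG,dkwdg}.
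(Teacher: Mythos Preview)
The paper does not actually prove this lemma here; it is quoted as ``contained in \cite{dkwdg}'', so there is no in-text argument to compare yours against. Your outline is precisely the standard route one takes: realize $\Gamma$ as a small normal perturbation of $\Gamma_0$, convert the vertical bound $|F-F_0|=O(r^{-\sigma})$ into a bound $|t_p|=O(r^{-2-\sigma})$ on the normal displacement (the extra $r^{-2}$ coming from $|\nabla F_0|\sim r^2$), upgrade to $C^k$ via rescaled interior estimates on balls of radius $\sim r$, and then read off the differences in the metric coefficients $a_{ij}^0,\ b_j^0$ and in $|A|^2$.

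Two small remarks. First, the map $\pi$ is defined using the normal $\nu_\Gamma$ to $\Gamma$, not the normal to $\Gamma_0$, so the chart $\pi^{-1}\circ Y_q$ is \emph{not} literally the Fermi normal graph over $\Gamma_0$ to which Lemma~\ref{le:2.1n} applies; your parenthetical ``equivalently'' glosses over this. However, the two parametrizations of $\Gamma$ differ by a tangential displacement of size $O(|t_p|\cdot|\nu_\Gamma-\nu_{\Gamma_0}|)=O(r^{-2-\sigma})\cdot O(r^{-3-\sigma})$, which is far below every error bar in $\equ{comparisonjacobi1}$, so either description gives the same conclusion. Second, the coefficient bounds you obtain (namely $O(r^{-3-\sigma})$, $O(r^{-4-\sigma})$, $O(r^{-5-\sigma})$ in front of $D^2_{\Gamma_0}h_0$, $D_{\Gamma_0}h_0$, $h_0$) are in fact one full power of $r^{-1}$ better than those stated in the lemma; this is consistent, since the lemma only asserts upper bounds for \emph{some} $\sigma>0$.
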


We can compute explicitly the operator $\JJ_{\Gamma_0}$ as follows.
Let us consider the first variation of mean curvature measured along vertical perturbations of the graph $\Gamma_0$, namely the linear operator $H'(F_0)$ defined by
$$
H'(F_0)[\phi]\, := \, \frac d{dt} H( F_0+ t\phi)\,|_{t=0}\ =\ \nn \cdot \left ( \frac {\nn \phi } { \sqrt{ 1+ |\nn F_0|^2 }} -
\frac {(\nn F_0 \cdot \nn \phi)} { ( 1+ |\nn F_0|^2 )^{\frac 32}}  \nn F_0 \, \right ).
$$

Then we have the relation
\be
\JJ_{\Gamma_0} [h] = H'(F_0)[ \phi ], \quad\hbox{where}\quad\phi(x') =
\sqrt{1+ |\nn F_0(x')|^2}\, h(x', F(x')). \label{rel}\ee

 For vertical perturbations
$\phi = \phi(r,\theta)$ of $\Gamma_0$, it is straightforward to compute

\be
\label{HF1}
H'(F_0)[\phi] :=
\ttt L := \ttt L_0 + \ttt L_1,
\ee
 with
\be
\label{HF2}
 \ttt L_0(\phi)\ =
  \frac 1{ r^7\sin^3(2\theta) } \left \{ ( 9 g^2 \,\ttt
w  r^3 \phi_\theta )_\theta + ( r^5 {g'}^2\,\ttt w \phi_r )_r -
3(gg'\,\ttt w  r^4\phi_r )_\theta - 3(gg'\,\ttt wr^4\phi_\theta )_r
\right \},
\ee
and
\begin{align}
 \ttt L_1(\phi)\ &=
  \frac 1{ r^7\sin^3(2\theta) } \left \{    ( r^{-1} \,\ttt
w  \phi_\theta )_\theta + ( r\ttt w \phi_r )_r
\right \}, \label{HF3} \\
\ttt w(r,\theta) &:=\frac {\sin^32\theta}
{
(r^{-4}  + 9g^2 +  {g'}^2)^{\frac 32}} .
\end{align}
We can expand
$$
\ttt w(\theta, r) = \ttt w_0(\theta ) + r^{-4}\,w_1(r,\theta),
$$
where
$$
\ttt w_0(\theta) :=   \frac{\blue{\sin^3 (2\theta)}}{ (    9g^2 + {g'}^2
)^{\frac 32}},   \quad w_1(r,\theta ) =   -\frac 32 \frac{\blue{\sin^3 (2\theta)}}{ (    9g^2 + {g'}^2
)^{\frac 52}} + O( r^{-4} \sin^3 (2\theta) ).
$$
We set
\begin{align}
 L_0(\phi)\ = \
  \frac 1{ r^7\sin^3(2\theta) } &\big \{ ( 9 g^2 \,\ttt
w_0  r^3 \phi_\theta )_\theta \ + \ ( r^5 {g'}^2\,\ttt w_0 \phi_r )_r \nonumber\\ &
\  -
3(gg'\,\ttt w_0  r^4\phi_r )_\theta - 3(gg'\,\ttt w_{\blue 0}r^4\phi_\theta )_r
\big \}.
\label{L0}\end{align}

Crucial in the proof of Proposition \ref{prop11}, as in the arguments that follow below is the presence of explicit solutions that separate variables for the operator $L_0$. Let us consider
the equation
 \be
 L_0(r^\beta q(\theta) ) =   \frac { p(\theta) } {  r^{4 -\beta} } ,
\quad \theta\in (\frac \pi 4, \frac \pi 2),
\label{eqL0}\ee

By a direct computation we obtain
$$
\,{ r^7\sin^3(2\theta) }\, L_0( r^\beta q(\theta))\, = \,  r^{3+\beta} \,[\,9(g^2 \,\ttt w_0 \blue{q}')' - 3\beta (gg' q \,\ttt w_0 )' + \,\ttt w_0 (\beta
+ 4) \, ( \beta {g'}^2 q - 3gg'q')\,] .
$$
We see that
$q=g^{\frac \beta 3}$ annihilates the above operator. As a consequence,
 the  operator takes a divergence form in the function $g^{-\frac \beta 3} q$, namely,
$$
\,{ r^7\sin^3(2\theta) }\, L_0( r^\beta q(\theta))\, = \,  9 r^{3+\beta} \,
g^{\frac{\beta +4} 3}\, \left [\,\ttt w_\blue{0} g^{\frac 23}\, (\,g^{-\frac \beta 3} q \,) '\right ] '.
$$
Thus equation \equ{eqL0} becomes
$$
 \left [\,\ttt w_{\blue{0}} g^{\frac 23}\, (\,g^{-\frac \beta 3} q \,) '\right ] ' = \frac 19 p(\theta) g(\theta)^{ -\frac{\beta +4} 3}\,\sin^3 (2\theta ).
$$
Provided that all quantities are well-defined, we get the following explicit formula for a solution $q(\theta)$, $\theta\in (\frac\pi 4, \frac \pi 2). $\be
q(\theta) =   g^{\frac \beta 3} (\theta)\,\big[ A -\frac 19\int_{\frac
\pi 4} ^{\theta}    g^{-\frac  23} (\,9g^2 + {g'}^2 \,)^{\frac 32}\,\frac{ ds}{
\sin^3(2s)} \int_{
s}^{\frac \pi 2} p(\tau) g^{ - \frac{\beta +4} 3}(\tau)
\sin^{3}(2\tau) \, d\tau \,\big] ,\quad
\label{htheta}\ee
where $A$ is an arbitrary constant.

\begin{lemma} \label{lemahc}
$(a)$ Let $p(\theta)$ be a smooth function, even with respect to $\pi/4$, namely
$$ p(\frac \pi 2 - \theta) =  p(\theta ) \foral \theta \in (0, \frac \pi 4). $$
Then there exists a smooth function $h(r,\theta)$ with
the same symmetry,  that satisfies, for some $\mu>0$,
\be
\JJ_{\Gamma_0} [ h] = \frac {p(\theta)}{r^4} + O( r^{-4-\mu}) \quad  \hbox{as } r\to + \infty,
\label{kk}\ee
and
$$
\| \rr^2(\log \rr)\, h\,\|_{L^\infty(\Gamma_0 )} \ < \ +\infty.
$$

$(b)$ Let $p(\theta)$ be a smooth function, odd with respect to $\pi/4$, namely
$$ p(\frac \pi 2 - \theta) =  - p(\theta ) \foral \theta \in (0, \frac \pi 4). $$
Then there exists a smooth function $h(r,\theta)$ with
the same symmetry,  such that for some $\mu>0$,
\be
\JJ_{\Gamma_0} [  h] = \frac {p(\theta)}{r^3} + O( r^{-4-\mu}) \quad  \hbox{as } r\to + \infty,
\label{hjj}\ee
and
$$
\| \rr\, h\,\|_{L^\infty(\Gamma_0 )} \ < \ +\infty,
$$
and, in addition,
\be
|\nn_{\Gamma_0}h |^2 \ = \  O( r^{-4-\mu}) \quad  \hbox{as } r\to + \infty.
\label{kk1}\ee

\end{lemma}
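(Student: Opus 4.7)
The plan is to separate variables in the polar coordinates $(r,\theta)$ on the base $\R^4\times\R^4$, using the graph-form representation $\JJ_{\Gamma_0}[h] = H'(F_0)[\phi]$ from \equ{rel} with $\phi = \sqrt{1+|\nabla F_0|^2}\,h$. This turns the equation into $(\tilde L_0 + \tilde L_1)\phi = q$, with $\tilde L_0$ the principal part in \equ{HF2} and $\tilde L_1$ in \equ{HF3} a lower-order perturbation producing $O(r^{-4})$ relative corrections that yield the final $O(r^{-4-\mu})$ error. The ansatz $\phi = r^\beta Q(\theta)$ makes $\tilde L_0\phi$ of scale $r^{\beta-4}$, and its inversion is given explicitly by \equ{htheta}. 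The scaling dictates $\beta=1$ in part~(b), giving $h$ of leading order $r^{-1}$, and $\beta=0$ in part~(a), giving $h$ of leading order $r^{-2}$.

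For part~(b), the oddness of $p$ about $\pi/4$ forces $p(\pi/4)=0$, so the inner integrand $p(\tau)g^{-5/3}(\tau)\sin^3(2\tau)=O((\tau-\pi/4)^{-2/3})$ of \equ{htheta} with $\beta=1$ is integrable at $\pi/4$, while convergence at $\pi/2$ is preserved by the vanishing of $\sin^3(2\tau)$. Fixing the free constant $A$ to enforce oddness, \equ{htheta} produces a smooth bounded odd $Q(\theta)$; then $\phi=rQ(\theta)$ and $h=\phi/\sqrt{1+|\nabla F_0|^2}$ satisfy $\|\rr h\|_\infty<\infty$. For the gradient estimate $|\nabla_{\Gamma_0}h|^2=O(r^{-4-\mu})$, an explicit computation in the induced metric on $\Gamma_0$ reduces matters to showing that a quadratic expression of the form $(g'R+3gR')^2/(9g^2+g'^2)$, with $R=Q/\sqrt{9g^2+g'^2}$, has extra $r^{-\mu}$ decay; the oddness of $Q$ about $\pi/4$ forces this expression to vanish at the Simons cone $\theta=\pi/4$, and the $O(r^{-\mu})$ gain is then obtained by combining this cancellation with the subleading contributions from $\tilde L_1$.

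For part~(a) the difficulty is that $p(\pi/4)$ may be nonzero, making the inner integrand $p(\tau)g^{-4/3}(\tau)\sin^3(2\tau)\sim(\tau-\pi/4)^{-4/3}$ of \equ{htheta} with $\beta=0$ non-integrable. The remedy is a $\log r$ ansatz: a direct computation from \equ{HF2} yields $\tilde L_0(\log r)=F(\theta)/r^4$ with an explicit even function $F(\theta)=\sin^{-3}(2\theta)[4g'^2\tilde w_0-3(gg'\tilde w_0)']$ satisfying $F(\pi/4)\neq 0$. Choosing $C$ so that $p(\theta)-CF(\theta)\sin^3(2\theta)$ vanishes at $\pi/4$ restores the integrability of the inner integrand, and \equ{htheta} with $\beta=0$ applied to this corrected source produces a bounded even $Q(\theta)$. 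The function $\phi_1:=C\log r+Q(\theta)$ then solves $\tilde L_0\phi_1=p/r^4$ with $\tilde L\phi_1=p/r^4+O(r^{-4-\mu}\log r)$, yielding $h_1=\phi_1/\sqrt{1+|\nabla F_0|^2}$ of size $O(\log r/r^2)$. To pass to the sharper bound $\|\rr^2\log\rr\cdot h\|_\infty<\infty$, one iterates: Proposition~\ref{prop11} applied to the residual error $\tilde L\phi_1-p/r^4$ yields a corrector of size $O(r^{-2-\mu}\log r)$, and the leading $C\log r/r^2$ piece of $h_1$ is absorbed, modulo a multiple of the vertical-translation Jacobi field $z_0=(1+|\nabla F_0|^2)^{-1/2}$ rescaled by $(\log r)^{-1}$, into a remainder with the asserted improved decay.

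The main obstacle is the log-corrected analysis in part~(a): the simultaneous requirements of the $\log r$-term in $\phi$ and the $(\log r)^{-1}$-decay of $h$ can only be reconciled through the careful subtraction of a Jacobi direction, and one must verify that this subtraction does not spoil the $O(r^{-4-\mu})$ error estimate. The interaction of $\tilde L_1$ with the $\log r$-ansatz, together with the precise size of $\JJ_{\Gamma_0}[z_0/\log r]$, is the technical heart of the argument.
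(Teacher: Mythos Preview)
Your overall strategy---separate variables via \equ{rel}, use formula \equ{htheta} for $\tilde L_0$, and introduce the $\log r$ ansatz when $p(\pi/4)\neq 0$---matches the paper. But there is a genuine gap: the functions $Q(\theta)$ that \equ{htheta} produces are \emph{not} smooth at $\theta=\pi/4$. In part~(a), after subtracting the $\log r$ piece so that $p_1(\pi/4)=0$, the resulting $q(\theta)$ behaves like $(\theta-\pi/4)^{1/3}$; in part~(b), with $\beta=1$ and $p$ odd, one gets $q(\theta)\sim(\theta-\pi/4)^{2/3}$. These are bounded but not smooth, so $\phi=r^\beta q(\theta)$ fails the smoothness requirement of the lemma across the Simons cone. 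The paper's remedy is a cut-off $\phi_0=(1-\eta(r^2g(\theta)))\,r^\beta q(\theta)$ supported where $r^2g(\theta)\gtrsim 1$; one then checks that the commutator terms $L_0(\eta)\cdot q$, $\nabla\eta\cdot\nabla q$ etc.\ are all $O(r^{-4-\mu})$, using that on the support of $\nabla\eta$ one has $g(\theta)\sim r^{-2}$. This smoothing step is the main technical content you are missing, and without it neither the smoothness claim nor the error estimate is justified.

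Your final paragraph for part~(a) is also problematic. The construction $\phi_1=C\log r+\phi_0$ with $\phi_0$ bounded yields directly $h=\phi_1/\sqrt{1+|\nabla F_0|^2}=O(\log r/r^2)$, which is the correct size; the bound stated in the lemma should be read as $h=O(r^{-2}\log r)$ (consistent with Corollary~\ref{coro}(2) and \equ{formh2}). Your proposed iteration---subtracting a multiple of $z_0/\log r$---would in fact destroy the estimate: since $z_0$ is a Jacobi field, $\JJ_{\Gamma_0}[z_0/\log r]$ reduces to commutator terms of order $r^{-4}(\log r)^{-2}$, which is \emph{not} $O(r^{-4-\mu})$ for any $\mu>0$. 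So this step is both unnecessary and wrong. For the gradient estimate in part~(b), the paper does not rely on a cancellation at $\theta=\pi/4$ from oddness but rather computes $|\nabla_{\Gamma_0}h|^2$ directly in the explicit diagonal metric, using the cut-off and the asymptotics $q^2\sim g^{4/3}$, $|q'|^2\sim g^{-2/3}$; the gain comes from the structure of the metric coefficients $(1+9g^2r^4)^{-1}$ and $(r^2+r^6g'^2)^{-1}$, not from an algebraic identity.
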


\proof We will prove next part (a).
We consider first the case in which $p(\pi/4)=0$.
We will construct a smooth function $\phi_0(r,\theta)$  such that for all large $r$ we have
\be
\ttt L(\phi_0) =  \,\frac { p(\theta) } {  r^{4} }  + O( r^{-4-\mu})
\label{ll}\ee
for some $\mu>0$.

Using Formula \equ{htheta} with $\beta =0$ and suitable constant $A$, we see
that
$$
 L_0(q(\theta) ) =  \,\frac { p(\theta) } {  r^{4} } ,
\quad \theta\in (\frac \pi 4, \frac \pi 2),
$$
for

$$
q(\theta) =    -\frac 19 \,\int_{\frac \pi 4}^{\theta }  g^{-\frac  23} (\,9g^2 + {g'}^2 \,)^{\frac 32} \, \,\frac{ ds}{
\sin^3(2s)} \, \int_{
s}^{\frac \pi 2} p(\tau) g^{- \frac{4} 3}(\tau)
\sin^{3}(2\tau) \, d\tau \  ,
$$
Let us analyze the asymptotic behavior of $q(\theta)$ near $\theta = \pi/4$.
Setting
$$ x= \theta -\frac \pi 4 $$
 we can expand
$$
g(\theta )=   g_1 x+  O( x^3 ) , \quad g_1 = g'(\pi/4),\quad p(\theta) = p_2 x^2 + O(x^4), \quad p_2=p''(\pi/4).
$$
Hence we have
$$
\int_{
\theta}^{\frac \pi 2} p(\tau) g^{- \frac{4} 3}(\tau)
\sin^{3}(2\tau) \, d\tau \   =   A_0  +  O( x^{\frac 53})
$$
where
$$ A_0 = \int_{
\frac \pi 4 }^{\frac \pi 2} p(\tau) g^{- \frac{4} 3}(\tau)
\sin^{3}(2\tau) \, d\tau \ .
$$
Thus, we have

$$
q(\theta) =  - {g_1}^{-\frac{11} 3}A_0 \int_0^x  s^{-\frac  23}  \,ds  + O(x^2).
$$
 Hence, for  $A_2=  -3{g_1}^{-\frac{11} 3}A_0 $, we get the expansion
\be
q(\theta) \, =\, A_2 (\theta -\pi/4)^{\frac 13} + O(\theta -\pi/4)^2\,  .
\label{expq}\ee
Now, let us consider
\medskip
Let $\eta(s)$ be a smooth cut-off function such that $\eta(s) = 1$ for $s<1$ and $\eta(s) =0$ for $s>2$.
We consider the interpolation
$$
\phi_0(r,\theta)\ :=\  (1-\eta(s )) q(\theta),\quad s:= r^2g(\theta).
$$
Then, using that $p(\theta) \sim g(\theta)^2 = O( r^{-4}) $ on the support of $\eta$, we get
\begin{align}
L_0(\phi_0) \ =\  \frac{ p(\theta) }{ r^4} \, +\,  O( r^{-10})\,   \, + \,
\nonumber \\
L_0(\eta) \,\psi  \, +\,
  \frac {\ttt w_0  }{ r^4\sin^3(2\theta) } \, 3g\psi_\theta\, [3 g \,\eta_\theta - g'\,  r \eta_r]
\, , \nonumber \\ \psi =   - q(\theta)\,.
\label{L03}\end{align}

Now, we compute
$$
\eta_r =  2\eta' rg  = O( r^{-1}) , \quad \eta_\theta = \eta' r^2g'= O(r^2), \quad
$$
$$
\eta_{rr} =  4\eta''r^2 g^2 + 2\eta' g = O( r^{-2}) , \quad \eta_{r\theta} =
2\eta'' r^3gg' + 2\eta'rg' = O( r), \quad
$$
$$
\eta_{\theta\theta}= \eta'' r^4{g'}^2 + \eta' r^2 g'' = O(r^4).
$$
Substituting these expressions in \equ{L0} we then get
$$
L_0(\eta)\ =\ O( r^{-4}) , \quad [3 g \,\eta_\theta - g'\,  r \eta_r]\ =\ O( 1) ,
$$
while on the other hand in the support of the derivatives of $\eta$ we have

$$
\psi = O( g(\theta)^{\frac 13})= O( r^{-\frac 23})
$$
and also
$$
3g\psi_\theta = 3g q' = O( g(\theta)^{\frac 13}) =  O( r^{-\frac 23}).
$$
Thus, globally we get
$$
L_0(\phi_0) =  \frac{ p(\theta) }{ r^4} \,  + O( r^{-4 -\frac 23 })\,.
$$

Now, let us consider the full operator $\ttt L $ evaluated at this $\phi_0$.
On the one hand, it is straightforward to check that
$$
\ttt L_0(\phi_0)  - L_0(\phi_0) = O( r^{-8}) .
$$
Let us estimate now  $\ttt L_1(\phi_0)$ in \equ{HF3}. We have that
$$
 L_1(\phi_0) \ =\    \frac {(1-\eta) } { r^8\sin^3(2\theta) }  (\,\ttt
w_0  q_\theta )_\theta
 $$
 $$
\  +\
  \frac {\psi } { r^7\sin^3(2\theta) } \left \{    ( r^{-1} \,\ttt
w_0  \eta_\theta )_\theta + ( r\ttt w_0 \eta_r )_r
\right \}
$$
$$
\  + \  \frac {  \,\ttt
w_0  \eta_\theta \psi_\theta  } { r^8\sin^3(2\theta) } \   =\ I_1 + I_2 +I_3
$$
We observe that where $1-\eta$ is supported we have at worst
$$
  q_{\theta \theta} =  O( g^{-\frac 53}) = O( r^{\frac {10}3} )
$$
and hence we find
 $$ I_1\  = \  O( r^{-8+ \frac {10}3}) = O( r^{-4-\frac 43}) .$$
We also compute
$$ I_2\ =\  O( r^{-4 -\frac 23 }),\quad I_3 \ =\  O( r^{- 4 -\frac 43 }) $$
 Hence,
 $$
 L_1(\phi_0)\ = \ O( r^{-4 -\frac 23 }).
$$
We also readily see that $ (\ttt L_1 -L_1) \phi_0$ is even smaller than the above bound.
We conclude
\be
\ttt L( \phi_0) = \frac{ p(\theta)} {r^4}\ +\  O( r^{-4-\frac 23})\, .
\label{fin1}\ee
where $\phi_0$ is a symmetric, smooth bounded function.
We recall that we have obtained this under the assumption that $p(0) =0$.
We consider next the case $p(0) \ne 0$.

\medskip
Let us compute
$L_0(\log r)$. We get
\begin{align}
 L_0(\log r)\ = \
  \frac 1{ r^7\sin^3(2\theta) } &\big \{  4 r^3 {g'}^2(\theta)  - 3 r^3 (gg'\,\ttt w_0   )_\theta  \big\} \nonumber\\
\ =\   \frac 1{ r^4\sin^3(2\theta) } & \big \{  3 {g'}^2(\theta)  - 3 gg''\,\ttt w_0   - 3gg'
(\ttt w_0 )_\theta  \big\}.
\nonumber\\
\ =\  \frac 1{ r^4}
\big \{   \frac { {g'}^2}{ (9g^2 + {g'}^2)^{3/2}}  &
-  \frac{ 3 gg''} { (9g^2 + {g'}^2)^{3/2}} \,  -
\frac { 3 g' g }{ \sin^3(2\theta) } (\ttt w_0 )_\theta \}.
\label{L01}\end{align}

Then we observe that we can decompose
\begin{align}
 \, L_0(\log r)\ = \
\frac { g_1^{-1} }{ r^4} \,+ \,\frac { b(\theta) }{ r^4} \, , \quad g_1 = g'(\pi/4)
\label{log}\end{align}
where $b(\theta)$ is symmetric, smooth and with $b(\pi/4) =0$.
In addition, we readily check that
$$
\ttt L_1(\log r) = O( r^{-12}), \quad (\ttt L_0 -L_0)(\log r) = O( r^{-11}),
$$
hence
\be
L(\log r ) \, =\, \frac { g_1^{-1} }{ r^4} \,+ \,\frac { b(\theta) }{ r^4} \,+ \, O( r^{-11}).
\label{fin2}\ee
Hence, if we let
$$
A:=   g_1 p(\pi/4),
$$
then we have that
\be
L( A \log r ) \, =\, \frac { p(\theta) }{ r^4} \,- \,\frac { p_1(\theta) }{ r^4} \,+ \, O( r^{-11}).
\label{fin22}\ee
where
$$
 \quad p_1(\theta) := - A b(\theta) + p(\theta) -p(\pi/4) .
$$
Now, let us consider a bounded approximate solution $\phi_0 (r,\theta)$ as built above   where $p$ is replaced by $p_1$. We see then that
$$ \phi_1 := A \log r + \phi_0 $$
satisfies
\be
L( \phi_1 ) \, =\, \frac { p(\theta) }{ r^4}  \,+ \, O( r^{-4-\frac 23 }).
\label{fin22222}\ee
Observe that then the function
$$h := (1- \eta(r))\,(1+ |\nn F_0|^2)^{-1/2} \phi_1$$ is smooth, symmetric, and satisfies \equ{kk} The proof of part (a) is concluded.

\bigskip
We prove now part (b).
Let us consider Formula \equ{htheta} for $\beta =1$
We have now that
\be
\ttt L( r\,q(\theta) ) =  \,\frac { p(\theta) } {  r^{3} }, \theta \in (\frac \pi 4, \frac \pi 2)
\label{ll111}\ee
for
\be
q(\theta) =     g^{\frac 1 3} (\theta)\,\int_{\frac
\pi 4}^\theta     g^{-\frac  23} (\,9g^2 + {g'}^2 \,)^{\frac 32}\,\frac{ ds}{
\sin^3(2s)} \int_{
s}^{\frac \pi 2}  p(\tau) g^{- \frac{5}{ 3}}(\tau)
\sin^{3}(2 \tau) \, d\tau \, .
\label{hthetaaa}\ee
Since $p(0)= 0$ and $p$ is smooth, we have that
the asymptotic behavior of $q(\theta)$ near $\theta = \pi/4$ is now given by
$$
q(\theta)\ =  A_1 (\theta - \pi/ 4 )^{\frac 23} + O(\theta - \pi/ 4 )^{\frac 53}.
$$
Then we define
$$ \phi_0(r, \theta)  =  (1-\eta(s) )\, r\,q(\theta) , \quad \theta \in (\frac \pi 4, \frac \pi 2),\quad s= r^2g(\theta).
$$
Similar computations as in the proof of Lemma \ref{lemahc}  lead us now to
$$
L_0( \phi_0) = \frac {p(\theta)} {r^3 }  + O( r^{-4- \frac 13}),\quad L_1( \phi_0) =  O( r^{-4- \frac 13}), $$
and consistently to
$$
\ttt L( \phi_0) = \frac {p(\theta)} {r^3 }  + O( r^{-4- \frac 13}). $$
Finally, the function
$$ h = \frac {\phi_0} {\sqrt{ 1+ |\nn F_0|^2 } }$$
extended oddly through $\theta = \frac \pi 4$ satisfies \equ{hjj}.

\medskip
Next we want to  estimate the quantity
$$
|\nabla_{\Gamma_0}h |^2  = g^{ij}\,\partial_i h\, \partial_j h\, ,
$$
where $g^{ij}$ denotes the inverse of the matrix with the coefficients of the metric in a system of local coordinates on $\Gamma_0$.
Let us consider the parametrization in polar coordinates
$$
(\uu_1, r, \uu_2, \theta )\in  S^3\times \R_+ \times S^3\times  (0, \pi) \longmapsto   ( r\cos\theta \uu_1, r\sin\theta \uu_2, F_0( r,\theta) )
$$
where $F_0= r^3g(\theta)$.
Then the matrix $g_{ij}$  takes the form
$$
\left(\begin{array}{cccc}  r^2\cos^2\theta I_3 &0 &0 &0 \\
0& (1+ F_{0r}^2) &0&0\\
0&0&  r^2\sin^2\theta  I_3 &0\\
0&0&0& (r^2+F_{0\theta}^2)
\end{array}
\right)
$$
and its inverse is therefore
$$
\left(\begin{array}{cccc}  r^{-2}\cos^{-2}\theta \, I_3 &0 &0 &0 \\
0& (1+ F_{0r}^2)^{-1}  &0&0\\
0&0&   r^{-2}\sin^{-2}\theta \, I_3 &0\\
0&0&0& (r^2+F_{0\theta}^2)^{-1}
\end{array}
\right)
$$
where $I_3$ is the $3\times 3$ identity matrix. Hence, if we evaluate at a function $h=h(r,\theta)$
we simply get
$$
|\nn_{\Gamma} h |^2\ =\  \frac{1}{1+F_{0r}^2}
|\partial_r h |^2 \ +\ \frac{1}{r^2 +F_{0\theta}^2}
|\partial_\theta h |^2
$$
or
\be
|\nn_{\Gamma} h |^2\ =\  \frac{1}{1+ 9 g^2 r^4 }
|\partial_r h |^2 \ +\ \frac{1}{r^2 +  r^6 {g'}^2}
|\partial_\theta h |^2
\label{n2}\ee
Let us set $ \ttt h = (1-\eta(s))\,q(\theta) r^{-1}$ so that
$$ |\nn_{\Gamma} h |^2 = |\nn_{\Gamma} \ttt h |^2 + O( r^{-8}) .$$
Evaluating formula \equ{n2} at $\ttt h$  for  $ s = r^2 g> 2$,
we get globally that
$$
 \frac{1}{1+ 9 g^2 r^4 }|\partial_r \ttt h |^2  \, \sim \,  \frac 1{r^4} \frac{ q^2(\theta) }{1+ 9 g^2 r^4 } = O( r^{-4-\mu})
$$
globally, since  $q(\theta)^2 \sim g^{\frac 43}$.
On the other hand,
$$
\frac{1}{r^2 +  r^6 {g'}^2}
|\partial_\theta h |^2 =      \frac{1}{r^4 +  r^8 {g'}^2} |q'(\theta ) |^2
$$
We have that $|q'|^2 \sim g^{-\frac 23} \le C r^{\frac 43} $ and hence the above quantity
is $O( r^{ -4 -\sigma})$ at least away from $\theta = \frac \pi 2$.
Near $\frac \pi 2$ we use that $q'(\pi/2)=0$ to get the same smallness there.

\medskip
Thus
$$
|\nn_{\Gamma} h |^2\ =\ O( r^{ -4 -\mu})
$$
for $ s> 2$. Now, in the region $1<s<2$, where the cut-off acts, we take into account that
$$\eta_\theta =  O( r^2) $$
and get that the contribution of this term to the computation of
$$\frac{1}{r^2 +  r^6 {g'}^2}
|\partial_\theta \ttt h |^2$$
is like
$$
\sim r^{-4} q(\theta)^2 = O( r^{-4 -\mu}).
$$
The contribution of the derivative in $r$ yields also a small order term.
Hence, we have in the entire region that
$$
|\nn_{\Gamma} h |^2\ =\ O( r^{ -4 -\mu})
$$
and the validity of \equ{kk1} follows. The proof is concluded. \qed

\bigskip
 \begin{proposition}\label{prophc}
 $(a)$ Problem \equ{ej} has a solution $h$
 with
 $$
 \|\rr^2 (\log \rr)\, h\|_{L^\infty(\Gamma)}\ < +\infty
 $$
 if
 $$
\tg= \sum_{i=1}^8 k_i^4 \quad\hbox{or}\quad \tg=\left [\sum_{i=1}^8 k_i^2 \right ]^2.
 $$

  $(b)$
  If
 $$
\tg= \sum_{i=1}^8 k_i^3 ,
 $$ then Problem \equ{ej} has a solution $h$
 with
 $$
  \|\rr^{2+\mu} \, D_\Gamma h\|_{L^\infty(\Gamma)}\, + \,  \|\rr \, h\|_{L^\infty(\Gamma)}\ < +\infty.
 $$

 \end{proposition}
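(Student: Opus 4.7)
For both parts, the source $\tg$ decays like $\rr^{-4}$ (part a) or $\rr^{-3}$ (part b), outside the invertibility range $\nu\in(4,5)$ of Proposition~\ref{prop11}. The plan is to build an explicit leading-order approximation $h_1$ on the homogeneous model $\Gamma_0$ using Lemma~\ref{lemahc}, transfer it to $\Gamma$ via the projection $\pi$ of Lemma~\ref{comparisonjacobi}, and close up with a small correction $h_2$ produced by Proposition~\ref{prop11} applied to the residual. The parity of $\tg$ under the BDG symmetry $\uu \leftrightarrow \vv$ (which acts as $\theta \mapsto \tfrac{\pi}{2}-\theta$) is the essential matching condition: since a reflection flips the principal curvatures, $\sum k_i^4$ and $|A_\Gamma|^4$ are symmetric about $\pi/4$ while $\sum k_i^3$ is antisymmetric, exactly matching the parity hypotheses of Lemma~\ref{lemahc}(a), respectively (b).

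\textbf{Part (a).} On $\Gamma_0$ either source reads $p(\theta)/r^4 + \mathrm{l.o.t.}$ with $p$ even about $\pi/4$. Apply Lemma~\ref{lemahc}(a) to obtain $h_0$ on $\Gamma_0$ with $\JJ_{\Gamma_0}[h_0] = p(\theta)/r^4 + O(r^{-4-\mu})$ and $\|\rr^2(\log \rr)\, h_0\|_\infty < +\infty$. Set $h_1 := h_0\circ \pi$, smoothly extended by a cutoff over the compact core of $\Gamma$. Invoking Lemma~\ref{comparisonjacobi} together with interior Schauder bounds $D h_0 = O(\rr^{-3}/\log \rr)$ and $D^2 h_0 = O(\rr^{-4}/\log \rr)$, each of its three error terms contributes $O(\rr^{-6-\sigma}/\log \rr)$. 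Combined with the fact that the $C^2$-closeness of $\Gamma$ and $\Gamma_0$ introduces only a correction $O(\rr^{-4-\sigma})$ to $\tg$ (since changes in individual curvatures are $O(\rr^{-1-\sigma})$ and $\tg$ is cubic in them for $|A|^4$, quartic for $\sum k_i^4$), the residual $\tg - \JJ_\Gamma[h_1]$ decays as $O(\rr^{-4-\mu'})$ for some $0<\mu'<1$. Proposition~\ref{prop11} with $\nu = 4+\mu'$ then delivers $h_2$ with $\|\rr^{2+\mu'} h_2\|_\infty < +\infty$, and $h := h_1+h_2$ meets the announced bound $\|\rr^2(\log \rr)\, h\|_\infty<+\infty$.

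\textbf{Part (b) and the main obstacle.} Lemma~\ref{lemahc}(b) yields $h_0$ on $\Gamma_0$ with $\JJ_{\Gamma_0}[h_0] = p(\theta)/r^3 + O(r^{-4-\mu})$, $\|\rr h_0\|_\infty<+\infty$, and crucially the gradient bound $|\nabla_{\Gamma_0} h_0|^2 = O(r^{-4-\mu})$, which lifts to $\|\rr^{2+\mu/2} D_\Gamma h_1\|_\infty < +\infty$ for $h_1 = h_0\circ\pi$, matching the required derivative decay. The hard part is that the $C^0$-closeness $|F-F_0| = O(\rr^{-\sigma})$ with $\sigma\in(0,1)$ only gives $\tg|_\Gamma - (p(\theta)/r^3)\circ \pi = O(\rr^{-3-\sigma})$, so the naive residual $\tg - \JJ_\Gamma[h_1]$ decays merely as $O(\rr^{-3-\sigma})$, below the threshold $\nu>4$ of Proposition~\ref{prop11}. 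To close this gap we iterate Lemma~\ref{lemahc}(b): the asymptotic expansion of $F-F_0$ generates successive antisymmetric angular correctors $p_k(\theta)/r^{3+\alpha_k}$ in $\sum k_i^3|_\Gamma$, and formula~(\ref{htheta}) with $\beta = 1-\alpha_k$ produces explicit solutions $h_{0,k}$ on $\Gamma_0$ solving $\JJ_{\Gamma_0}[h_{0,k}] = p_k/r^{3+\alpha_k} + O(r^{-4-\mu})$, of size $O(\rr^{-1-\alpha_k})$ and with a retained gradient bound of the same type (using the integration argument that gave (\ref{kk1})). After finitely many iterations, until $\alpha_k>1$, the cumulative residual is brought into the regime $O(\rr^{-4-\mu''})$ where Proposition~\ref{prop11} supplies a final correction $h_2$ with $\|\rr^{2+\mu''} h_2\|_\infty<+\infty$ and, by interior Schauder, $\|\rr^{3+\mu''} D_\Gamma h_2\|_\infty<+\infty$. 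The sum of all constructed terms is the desired $h$.
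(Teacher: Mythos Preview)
For Part (a), your argument is correct and coincides with the paper's: construct $h_0$ on $\Gamma_0$ via Lemma~\ref{lemahc}(a), pull back through $\pi$, compare Jacobi operators by Lemma~\ref{comparisonjacobi}, and mop up the $O(\rr^{-4-\mu})$ residual with Proposition~\ref{prop11}.

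For Part (b), the paper's proof is simply ``the argument is similar, taking into account Part (b) of Lemma~\ref{lemahc}.''  The obstacle you identify --- that the residual $\tg-\JJ_\Gamma[h_1]$ is only $O(\rr^{-3-\sigma})$ --- is not actually present, because the curvature closeness between $\Gamma$ and $\Gamma_0$ is much better than what the bare $C^0$ bound $|F-F_0|=O(\rr^{-\sigma})$ would give.  The very coefficients appearing in Lemma~\ref{comparisonjacobi} encode that $|A_\Gamma|^2-|A_{\Gamma_0}|^2\circ\pi=O(\rr^{-4-\sigma})$, hence individual curvature differences are $O(\rr^{-3-\sigma})$ and
\[
\sum_{i=1}^8 k_i^3(y)\;-\;\sum_{i=1}^8 (k_i^0)^3(\pi(y))\;=\;O(\rr^{-5-\sigma}).
\]
This is exactly parallel to the paper's assertion that $\sum k_i^4-\sum(k_i^0)^4\circ\pi=O(\rr^{-6})$ in Part (a).  Combined with the Jacobi-comparison errors, which are all $O(\rr^{-5-\sigma})$ once you use $h_0=O(\rr^{-1})$, $D_{\Gamma_0}h_0=O(\rr^{-2-\mu/2})$ from \equ{kk1}, and $D^2_{\Gamma_0}h_0=O(\rr^{-3})$, the residual already falls in the range $4<\nu<5$ of Proposition~\ref{prop11}, so a single correction suffices.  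Your iteration scheme is therefore unnecessary; moreover, it would require a polyhomogeneous expansion of $F-F_0$ in descending powers of $\rr$, which Lemma~\ref{exist fmc} does not provide, so the proposed iteration is also not on firm ground as stated.
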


 \proof Let us prove Part (a).
 Let $k_i^0$ denote the principal curvatures of $\Gamma_0$. Then we compute directly that the functions
  $$
 \sum_{i=1}^8 |k_i^0|^4 \quad\hbox{and}\quad \left [\sum_{i=1}^8 |k_i^0|^2 \right ]^2
 $$
 are both of the form (for large $r$)
 $$
 \tg (y) = \frac {p(\theta)} {r^4}
 $$
 with $p$ symmetric and smooth.
 In addition, we have that away from the origin,
 $$
 \sum_{i=1}^8 k_i^4( y) \ =\
 \sum_{i=1}^8 |k_i^0(\pi(y))|^4 \ +\ O( \rr(y)^{-6})
 $$
 Let $h_0$ be the approximate solution predicted by Part (a) of Lemma \ref{lemahc} in $\Gamma_0$, so that for instance
 $$
 \Delta_{\Gamma_0} h_0  + |A_{\Gamma_0}|^2 h_0 =   \sum_{i=1}^8 |k_i^0|^4  + O( \rr^{-4-\mu})
$$
where
$$
\| \rr^{2}\log \rr \,  h_0\|_{L^\infty(\Gamma_0)} < +\infty. $$

Let $h_1(y) := h_0(\pi (y))$. Then, according to Lemma \ref{comparisonjacobi} and a direct computation
we find that
$$
\JJ_\Gamma [h_1] (y)  = \JJ_{\Gamma_0} [h_0] (\pi(y) ) + O( \rr(y)^{-4-\mu}) .
$$
Hence
$$
\JJ_\Gamma [h_1] (y)  =   \sum_{i=1}^8 k_i^4( y)  + \zeta (y)
$$
where $\zeta = O( \rr^{-4-\mu})$
By Proposition \ref{prop11} there exists a solution $h_2$ of
$$
\JJ_\Gamma [h_2]   =  - \zeta
$$
with $\| \rr^{2+\mu}\, h_1\|_{L^\infty(\Gamma)} < +\infty$.
The desired result follows by simply setting $ h:= h_1 + h_2$. The proof for the other right hand side is the same. For part (b) the argument is similar, taking into account Part (b)  Lemma \ref{lemahc}. \qed

\medskip
\subsection{Weighted Schauder  estimates}

\medskip
We have the following result, that controls the decay of the first two derivatives of  solutions
of  equation \equ{ej}.

\begin{lemma}
Let $\nu \ge 2$. There exists a constant $C>0$ such that the following holds.
Let $h$ be a solution of equation \equ{ej} such that
$$  \|{\tt g}\|_{\sigma,\nu,\Gamma} + \|\rr^{\nu-2} h\|_{L^\infty(\Gamma)} < +\infty. $$
Then
\be \|D^2_\Gamma h \|_{\sigma,\nu,\Gamma} +
 \|h\|_{\sigma,\nu-2,\Gamma}\, \le \, C\,[\,  \|{\tt g}\|_{\sigma,\nu,\Gamma} + \|\rr^{\nu-2} h\|_{L^\infty(\Gamma)}]. \label{res}\ee

\end{lemma}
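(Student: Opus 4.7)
The strategy is a standard blow-up/rescaling argument reducing the weighted Schauder estimate to a classical uniform Schauder estimate for a nearly Euclidean uniformly elliptic operator. Fix $p\in\Gamma$ with $R:=\rr(p)$ large, and work in the local parametrization $Y_p:B(0,\theta R)\subset\R^8\to\Gamma$ from Section~\ref{s10}. In these coordinates the equation $\JJ_\Gamma[h]=\tg$ reads
\[
a^0_{ij}(\py)\,\pp_{ij}h+b^0_j(\py)\,\pp_j h+|A_\Gamma|^2h=\tg,
\]
with coefficients obeying \equ{L-21}: $|a^0_{ij}-\delta_{ij}|\le c|\py|^2/R^2$, $|b^0_j|\le c|\py|/R^2$, and $|A_\Gamma|^2\le c/R^2$, uniformly in $p$.

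Rescale by $R$: set $\ttt h(\eta):=h(R\eta)$ and $\ttt\tg(\eta):=\tg(R\eta)$ for $\eta\in B(0,\theta)$. The equation becomes
\[
\ttt a_{ij}(\eta)\,\pp_{ij}\ttt h+\ttt b_j(\eta)\,\pp_j\ttt h+\ttt c(\eta)\,\ttt h\,=\,R^2\,\ttt\tg,
\]
where $\ttt a_{ij}(\eta)=a^0_{ij}(R\eta)$, $\ttt b_j(\eta)=R\,b^0_j(R\eta)$, $\ttt c(\eta)=R^2|A_\Gamma|^2(R\eta)$. The key point is that on $B(0,\theta)$ the matrix $\ttt a_{ij}$ is a smooth $O(\theta^2)$ perturbation of the identity, while $\ttt b_j$ and $\ttt c$ are uniformly bounded in $C^{0,\sigma}$, with bounds independent of $p$ and $R$; this is precisely what the decay estimates \equ{L-21} provide, and it is the whole reason the intrinsic chart can be taken of size $\theta R$. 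Classical interior Schauder then yields
\[
\|\ttt h\|_{C^{2,\sigma}(B(0,\theta/2))}\le C\bigl[\|\ttt h\|_{L^\infty(B(0,\theta))}+\|R^2\,\ttt\tg\|_{C^{0,\sigma}(B(0,\theta))}\bigr],
\]
with a constant $C$ independent of $p$ and $R$.

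It remains to translate this back using the weights. Since $\rr(y)\sim R$ on $Y_p(B(0,\theta R))$, one has $\|\ttt h\|_{L^\infty}\le CR^{2-\nu}\|\rr^{\nu-2}h\|_{L^\infty}$, $\|R^2\ttt\tg\|_{L^\infty}\le CR^{2-\nu}\|\rr^\nu\tg\|_{L^\infty}$, and, splitting pairs according to $|y_1-y_2|\lessgtr 1$, $R^{2+\sigma}[\tg]_{\sigma,Y_p(B(0,\theta R))}\le CR^{2-\nu}\|\tg\|_{\sigma,\nu,\Gamma}$. Using $(D^k h)(R\eta)=R^{-k}(D^k\ttt h)(\eta)$ and multiplying through by $R^{\nu-2}$, the Schauder estimate becomes
\[
\rr(p)^\nu\|D^2_\Gamma h\|_{L^\infty(B_p(\theta R/2))}+\rr(p)^{\nu+\sigma}[D^2_\Gamma h]_{\sigma,B_p(\theta R/2)}\le C\bigl[\|\tg\|_{\sigma,\nu,\Gamma}+\|\rr^{\nu-2}h\|_{L^\infty}\bigr],
\]
with the analogous bound $\rr(p)^{\nu-2+\sigma}[h]_{\sigma,B_p(\theta R/2)}\le C[\cdots]$ for $h$ itself. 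Since $B_p(1)\subset B_p(\theta R/2)$ for $R$ large, these control the unit-ball Hölder seminorms that define the norms $\|D^2_\Gamma h\|_{\sigma,\nu,\Gamma}$ and $\|h\|_{\sigma,\nu-2,\Gamma}$. Taking the supremum over all $p$ with $\rr(p)\ge R_0$ and handling the compact region $\{\rr\le R_0\}$ by a standard finite covering and classical interior/Schauder estimates on a fixed chart (where the operator is elliptic with smooth bounded coefficients) gives \equ{res}. The only genuine subtlety is bookkeeping: the decay of the metric perturbation and of $|A_\Gamma|^2$ exactly match the $R$-scaling so that the rescaled operator is uniformly elliptic with uniformly Hölder coefficients, which is what allows the Schauder constant to be taken independent of $p$.
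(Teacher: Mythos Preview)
Your argument is correct and is essentially the same as the paper's: rescale the equation around $p$ by $R=\rr(p)$ so that the coefficients become uniformly elliptic with uniformly $C^{0,\sigma}$-bounded coefficients on a fixed ball, apply interior Schauder, and translate back to the weighted norms, covering the compact region $\{\rr\le R_0\}$ separately. The only cosmetic difference is that the paper builds the weight powers into the rescaled functions from the start, setting $\ttt h(\py)=R^{\nu-2}h(R\py)$ and $\ttt\tg(\py)=R^{\nu}\tg(R\py)$, whereas you rescale first and multiply through by $R^{\nu-2}$ afterwards; the bookkeeping is the same.
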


\proof


We use the local coordinates \equ{cords}. Then, around a point $p$ with $r(p) =R$, for any sufficiently large $R$,
the equation reads on $B(0,2\theta R)$ for a  small, fixed $\theta>0$ as
$$
a_{ij}^0(\py) \pp_{ij} h + b_i^0(\py) \pp_i h  + |A_\Gamma(\py)|^2 h =  {\tt g}(\py) \quad \hbox{in }  B(0,2\theta R).
$$
Consider the scalings $$ \ttt h (\py ) = R^{\nu-2} h( R\py ) ,\quad \ttt {\tt g} (\py ) = R^{\blue{\nu}} \blue{\tt g}( R\py ) .$$
 Then we obtain the following equation.
$$
\ttt a_{ij}^0( \py) \pp_{ij} \ttt h + \ttt b_i^0( \py) \ttt \pp_i h + \ttt b_0(\py) h =  \ttt {\tt g} \quad \hbox{in }  B(0,2\theta ),
$$
where
$$
\ttt a_{ij}(\py) = a_{ij}^0(R \py), \quad  \ttt b_i(\py ) = R b_i^0(R \py), \quad \ttt b_0(\py) = R^2 |A_\Gamma(R \py)|^2.
$$
We will apply interior elliptic estimates to this equation. First, let us notice that from the estimates obtained for the metric, we have that
the coefficients above are all uniformly bounded an elliptic in $B(0,2\theta )$. Besides, we have that
their first derivatives are also bounded in this region, with bounds uniform on the point $p$ and on $R$.

Elliptic estimates then yield
\be
\| D^2_\py\ttt  h \|_{ C^{0, \sigma}( B(0, \theta))} +  \| \ttt h \|_{ C^{0, \sigma}( B(0, \theta))} \,\le\,  C [ \|{\ttt \tg}\|_{C^{0,\sigma}(B(0,2 \theta))} + \|\ttt h\|_{L^\infty (B(0,2\theta) )}].
\label{estelipt}\ee
Let us observe that for any $\py_1, \py_2\in B(0, 2\theta )$ we have

$$
| \ttt \tg (\py_1) |\ =\ |R^\nu \tg( R\py)| \ \le\  C\| \rr^{\nu } {\tt g}\|_{ L^\infty (\Gamma )},
$$
and
$$ \frac {|\ttt \tg (\py_1) - \ttt \tg(\py_2 ) |}{ |\py_1-\py_2|^\sigma } =  R^{\nu + \sigma} \frac {|{\tt \tg} (R \py_1) -   \tg(R\py_2) |}{ |R\py_1- R\py_2|^\sigma } \,\le \, C [  {\tt g}]_{\sigma,\nu, \Gamma} .
$$
Therefore, we have the inequalities
\be
\|{\ttt \tg }\|_{C^{0,\sigma}(B(0,2 \theta))}\ \le \ C\,\|\tg\|_{\sigma,\nu,\Gamma} , \quad  \|\ttt h\|_{L^\infty (B(0,2\theta) )} \,\le\, C\,\|\rr^{\nu-2} h\|_{L^\infty (\Gamma) }.
\label{dd}\ee

Now, we have that
$$
D^2\ttt h (\py) = R^\nu [D^2 h] (R\py)
$$
Hence for $\py_1, \py_2\in B(0, \theta R)$ we have
$$
R^\nu \frac { D^2h (\py_1) - D^2h (\py_2)}{ | \py_1 -\py_2|^\sigma }\ =\   \frac { D^2\ttt h ( R^{-1}\py_1) - D^2\ttt h ( R^{-1}\py_2)}{ | \py_1 -\py_2|^\sigma } \, \le\,  C\, R^{-\sigma } \,\| \ttt h \|_{C^{0,\sigma}( B(0, \theta))}
$$
It follows that
if $\Lambda = Y_p(B(0,\theta))$ then
$$
[ D^2_\Gamma h]_{\nu,\sigma,\Lambda} \ \le \ C\,\| D^2 \ttt h \|_{C^{0,\sigma}( B(0, \theta))}.
$$
Similarly we have that
$$
[ h ]_{\nu-2 ,\sigma,\Lambda} \ \le \ C\,\| \ttt h \|_{C^{0,\sigma}( B(0, \theta))},
$$
while clearly, also,
$$
\| \rr^{\nu-2} h\|_{L^\infty(\Lambda)}  + \| \rr^\nu D^2_\Gamma h\|_{L^\infty(\Lambda)} \ \le \ C\,[\, \| \ttt h \|_{C^{0,\sigma}( B(0, \theta))} +  \|D^2 \ttt h \|_{C^{0,\sigma}( B(0, \theta))}\, ]
$$
Hence from inequalities \equ{estelipt} and \equ{dd} we obtain

$$ \|D^2_\Gamma h \|_{\sigma,\nu, \Lambda } +
 \|h\|_{\sigma,\nu-2,\Gamma}\, \le \, C\,[\,  \|{\tt g}\|_{\sigma,\nu,\Gamma} + \|\rr^{\nu-2} h\|_{L^\infty(\Gamma)}], $$
where $C$ is uniform in $p$ con $\rr(p) >> 1$. Using this and an interior estimate for the equation on a bounded region, the desired estimate \equ{res} follows. \qed

\bigskip
\begin{corollary}\label{coro}

$1.$ The solution $h$ predicted by Proposition \ref{prop11}
satisfies the estimate
$$ \|D^2_\Gamma h \|_{\sigma,\nu,\Gamma} +
 \|h\|_{\sigma,\nu-2,\Gamma}\, \le \, C\,  \|{\tt g}\|_{\sigma,\nu,\Gamma}. $$

$2.$ The solution in Part $(a)$ of Proposition \ref{prophc}
satisfies that for any small $\tau >0$,
$$ \|D^2_\Gamma h \|_{\sigma, 4-\tau,\Gamma} +
 \|h\|_{\sigma,2-\tau,\Gamma}\, <\, +\infty $$

$3.$  The solution in Part $(b)$ of Proposition \ref{prophc}
satisfies
$$ \|D^2_\Gamma h \|_{\sigma, 3 ,\Gamma} +
 \|h\|_{\sigma, 1,\Gamma}\, <\, +\infty $$
 while for some $\mu>0$
$$
\|D_\Gamma h\|_{\sigma, 2+\mu ,\Gamma}\, <\,  +\infty.
$$
\end{corollary}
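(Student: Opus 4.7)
\textbf{Proof proposal for Corollary~\ref{coro}.} The common engine will be the weighted Schauder estimate~\equ{res} established in the immediately preceding lemma. That estimate upgrades an $L^\infty$ bound with weight $\rr^{\nu-2}$ on the solution, together with a Hölder-weighted bound $\|\tg\|_{\sigma,\nu,\Gamma}$ on the right-hand side, into the full $C^{2,\sigma}$ weighted control $\|D^2_\Gamma h\|_{\sigma,\nu,\Gamma} + \|h\|_{\sigma,\nu-2,\Gamma}$. So for each of the three parts the plan is the same: verify that $\tg$ lives in the appropriate weighted Hölder space, verify that the solution produced by Proposition~\ref{prop11} or Proposition~\ref{prophc} has the required weighted $L^\infty$ decay (possibly losing an arbitrarily small power), and then feed both into~\equ{res}.

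For Part~1, the hypothesis of Proposition~\ref{prop11} already gives $\|\tg\|_{\sigma,\nu,\Gamma}<\infty$, and the same proposition produces a solution with $\|\rr^{\nu-2}h\|_{L^\infty(\Gamma)}\le C\,\|\rr^\nu \tg\|_{L^\infty(\Gamma)}$. Inserting these two into~\equ{res} gives the conclusion immediately.

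For Part~2, I would first note that $\tg=\sum_{i=1}^8 k_i^4$ or $[\sum_{i=1}^8 k_i^2]^2$ is smooth on $\Gamma$, and since the principal curvatures of $\Gamma$ decay as $O(\rr^{-1})$ together with all their derivatives, we have $|\tg|+\rr\,|D_\Gamma\tg|=O(\rr^{-4})$, so that $\|\tg\|_{\sigma,4-\tau,\Gamma}<\infty$ for every $\tau>0$. The solution $h$ from Proposition~\ref{prophc}(a) satisfies $\|\rr^2\log\rr\,h\|_{L^\infty(\Gamma)}<\infty$, which for any small $\tau>0$ absorbs the logarithm into an arbitrarily small loss of decay and yields $\|\rr^{2-\tau}h\|_{L^\infty(\Gamma)}<\infty$. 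Applying~\equ{res} with $\nu=4-\tau$ then gives Part~2.

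For Part~3, $\tg=\sum_{i=1}^8 k_i^3$ is smooth and $|\tg|+\rr\,|D_\Gamma\tg|=O(\rr^{-3})$, so $\|\tg\|_{\sigma,3,\Gamma}<\infty$. The solution produced by Proposition~\ref{prophc}(b) satisfies $\|\rr\,h\|_{L^\infty(\Gamma)}<\infty$. Plugging these into~\equ{res} with $\nu=3$ gives the first two bounds. The last bound, $\|D_\Gamma h\|_{\sigma,2+\mu,\Gamma}<\infty$, does not follow from the weighted Schauder estimate alone (which only gives $|D_\Gamma h|=O(\rr^{-2})$), and this is the main obstacle: one has to exploit the extra gradient decay~\equ{kk1} from Lemma~\ref{lemahc}(b), which says that the explicit approximate solution $h_0$ on $\Gamma_0$ already satisfies $|\nabla_{\Gamma_0}h_0|^2=O(\rr^{-4-\mu})$. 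Transplanting $h_0$ to $\Gamma$ via the diffeomorphism $y\mapsto \pi(y)$ (whose Jacobians are uniformly bounded) preserves this decay for the approximate part, while the correction $h_2$ produced by Proposition~\ref{prop11} to absorb the remaining error $\zeta=O(\rr^{-4-\mu})$ lies in the weighted class with $\nu=4+\mu$, and the associated Schauder estimate from Part~1 gives $|D_\Gamma h_2|=O(\rr^{-3-\mu})$. Adding the two contributions and taking $\mu$ smaller than the minimum of the exponents that appear completes the proof.
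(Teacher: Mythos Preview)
Your proposal is correct and follows exactly the route the paper has in mind: the corollary is stated without proof because each part is a direct combination of the weighted Schauder estimate~\equ{res} with the $L^\infty$ decay already furnished by Proposition~\ref{prop11} and Proposition~\ref{prophc}, precisely as you lay out. For Part~3 your decomposition $h=h_1+h_2$ tracing the proof of Proposition~\ref{prophc}(b) and invoking~\equ{kk1} is the intended mechanism for the extra gradient decay; to finish the H\"older part of $\|D_\Gamma h\|_{\sigma,2+\mu,\Gamma}$ you may simply note that $[D_\Gamma h]_{\sigma,B(y,1)}\le C\|D^2_\Gamma h\|_{L^\infty(B(y,1))}=O(\rr^{-3})$, which together with $|D_\Gamma h|=O(\rr^{-2-\mu})$ gives the full norm provided $\mu+\sigma<1$.
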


\bigskip

\noindent
{\bf Acknowledgments.} Manuel del Pino acknowledges
support of Fondecyt  110181 and
Fondo Basal CMM.  Juncheng Wei is partially supported by NSERC of Canada.
 We thank  A. Farina,  M. Kowalczyk and A. Ros for useful discussions.

\bigskip


\end{document}